\theoremstyle{plain}
\newtheorem{theorem}{Theorem}[section]
\newtheorem{lemma}[theorem]{Lemma}
\newtheorem{definition-theorem}[theorem]{Definition-Theorem}
\newtheorem{proposition}[theorem]{Proposition}
\newtheorem{corollary}[theorem]{Corollary}
\newtheorem{conjecture}[theorem]{Conjecture}
\theoremstyle{definition}
\newtheorem{definition}[theorem]{Definition}
\newtheorem{example}[theorem]{Example}
\newtheorem{remark}[theorem]{Remark}
\newtheorem{notation}[theorem]{Notation}
\newcommand \bth[1] { \begin{theorem}\label{t#1} }
\newcommand \ble[1] { \begin{lemma}\label{l#1} }
\newcommand \bpr[1] { \begin{proposition}\label{p#1} }
\newcommand \bco[1] { \begin{corollary}\label{c#1} }
\newcommand \bde[1] { \begin{definition}\label{d#1}\rm }
\newcommand \bex[1] { \begin{example}\label{e#1}\rm }
\newcommand \bre[1] { \begin{remark}\label{r#1}\rm }
\newcommand \bnota[1] {\begin{notation}\label{n#1}\rm }
\newcommand {\ele} { \end{lemma} }
\newcommand {\epr} { \end{proposition} }
\newcommand {\eco} { \end{corollary} }
\newcommand {\ede} { \end{definition} }
\newcommand {\eex} { \end{example} }
\newcommand {\ere} { \end{remark} }
\newcommand {\enota} { \end{notation} }
\newcommand\pr[2]{\langle{#1},{#2}\rangle}
\def \Id { {\mathrm{Id}} }
\def \diag { {\mathrm{diag}}}
\def \pr { { \mathrm{pr}}}
\DeclareMathOperator \Span { {\mathrm{span}} }
\DeclareMathOperator \card { {\mathrm{card}}}
\DeclareMathOperator \tr { {\mathrm{tr}} }
\DeclareMathOperator \Hom { {\mathrm{Hom}} }
\DeclareMathOperator \Ext { { \mathrm{Ext}}}
\DeclareMathOperator \Ind { {\mathrm{Ind}} }
\DeclareMathOperator \Irr { {\mathrm{Irr}} }
\DeclareMathOperator \Res { {\mathrm{Res}} }
\DeclareMathOperator \rad { {\mathrm{rad}}}
\DeclareMathOperator \sgn { { \mathrm{sgn}}}
\DeclareMathOperator \im { { {\mathrm im}}}
\begin{document}
\setlength{\baselineskip}{1.2\baselineskip}
\title[Twisted Euler-Poincar\'e pairing for graded affine Hecke algebras]
{On a twisted Euler-Poincar\'e pairing for graded affine Hecke algebras}
\author[Kei Yuen Chan]{Kei Yuen Chan}
\address{
Department of Mathematics \\
University of Utah}
\email{chan@math.utah.edu}

\maketitle 

\begin{abstract}
We study a twisted Euler-Poincar\'e pairing for graded affine Hecke algebras, and give a precise connection to the twisted elliptic pairing of Weyl groups defined by Ciubotaru-He \cite{CH}. The $\Ext$-groups for an interesting class of parabolically induced modules are also studied in a connection with the twisted Euler-Poincar\'e pairing. We also study a certain space of graded Hecke algebra modules which equips with the twisted Euler-Poincar\'e pairing as an inner product.
\end{abstract}

\section{Introduction}

This paper studies a twisted Euler-Poincar\'e pairing on the space of virtual representations for the graded affine Hecke algebra. This twisted pairing is motivated from the twisted elliptic pairing of Weyl group recently developed by Ciubotaru-He \cite{CH}, and we give a precise relations between these two pairings. In the same spirit as the Euler-Poincar\'e pairing of $p$-adic groups by Schneider-Stuhler \cite{SS} and others, an appropriate subspace of the virtual representations for the graded Hecke algebra is equipped with the twisted Euler-Poincar\'e pairing as an inner product. We shall discuss those twisted elliptic spaces defined by the twisted Euler-Poincar\'e pairing (based on several previous work by others \cite{Ci}, \cite{CH0}, \cite{CH}, \cite{Re} and \cite{OS2}). 

In more detail, let $(R, V, R^{\vee}, V^{\vee})$ be a root data of a crystallographic type (Section \ref{ss basic notation}) and let $W$ be the finite reflection group acting on $R$. Let $\Delta$ be the set of simple roots. Let $\delta$ be an involution on the root system with $\delta(\Delta)=\Delta$. Then $\delta$ induces an involution on $W$ which is still denoted by $\delta$. A recent paper of Ciubotaru-He \cite{CH}  defined the $\delta$-twisted elliptic pairings on the representations $U$ and $U'$ of $W \rtimes \langle \delta \rangle$ as:
\[  \langle U, U' \rangle^{\delta-\mathrm{ellip}, V}_W = \frac{1}{|W|}\sum_{w \in W} \tr_U(w \delta)\overline{\tr_{U'}(w\delta)}\mathrm{det}_{V}(1-w\delta) ,
\]
where $\tr$ is the trace of $w$ acting on $U$ or $U'$. This twisted elliptic pairing is closely related to the Lusztig-Shoji algorithm.

When $\delta=\Id$, the pairing coincides with the one defined by Reeder \cite{Re}. Suggested by Arthur \cite{Ar} and verified by Reeder \cite{Re}, a precise relation between the Euler-Poincar\'e pairing for $p$-adic groups and an elliptic pairing of Weyl groups was established. The goal of this paper is to study an analogue of the Euler-Poincar\'e pairing relating to the $\delta$-twisted elliptic pairing considered by Ciubotaru-He. Our work is done in the level of graded affine Hecke algebra, which was introduced by Lusztig in \cite{Lu} for the study of representations of $p$-adic groups and Iwahori-Hecke algebras. 

Let $\mathbb{H}$ be the graded affine Hecke algebra associated to a crystallographic root system $(R, V, R^{\vee}, V^{\vee})$ and a parameter function $k$ (Definition \ref{def graded affine}). The action of $\delta$ can be extended to the Weyl group, and then extended to $\mathbb{H}$. For $\mathbb{H} \rtimes \langle \delta \rangle$-modules $X$ and $Y$, we define the $\delta$-twisted Euler-Poincar\'e pairing on $X$ and $Y$ (regarded as $\mathbb{H}$-modules):
\[  \mathrm{EP}_{\mathbb{H}}^{\delta}(X, Y) = \sum_{i}(-1)^i \mathrm{trace}( \delta^*: \Ext^i_{\mathbb{H}}(X, Y) \rightarrow  \Ext_{\mathbb{H}}^i(X, Y) ),\]
where $\Ext$-groups are taken in the category of $\mathbb{H}$-modules. Here $\delta^*$ is a natural map induced from the action of $\delta$ on $X$ and $Y$. Our first main result is the following:
\begin{theorem} \label{thm inner formula} (Proposition \ref{thm euler poincare}, Theorem \ref{thm twisted ext})
Suppose $\delta$ induces an inner automorphism on $W$ (equivalently $\delta=\Id$ or $\delta$ arises from the longest element in the Weyl group (see \ref{eqn involution})). For any finite dimensional $\mathbb{H} \rtimes \langle \delta \rangle$-modules $X$ and $X'$, 
\[  \mathrm{EP}^{\delta}_{\mathbb{H}}(X, X') = \langle \Res_W X, \Res_W X' \rangle^{\delta-\mathrm{ellip}, V}_W,\]
where $\Res_W$ is the restriction to the $W$-representation.
\end{theorem}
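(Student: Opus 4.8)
The plan is to pass through a $\delta$-equivariant projective resolution of $X$ over $\mathbb{H}$, reduce $\mathrm{EP}^{\delta}_{\mathbb{H}}$ to an Euler characteristic of twisted traces on a Koszul complex, and then match that with the character formula of \cite{CH}. By Lusztig's PBW theorem $\mathbb{H} \cong \mathbb{C}[W] \otimes S(V)$ as a vector space, and $\mathbb{H}$ is free as a left and as a right $\mathbb{C}[W]$-module. Hence every finite-dimensional $\mathbb{H}$-module $X$ admits a projective resolution of length $n := \dim V$,
\[ 0 \to \mathbb{H} \otimes_{\mathbb{C}[W]} ({\textstyle\bigwedge^{n} V \otimes X}) \to \cdots \to \mathbb{H} \otimes_{\mathbb{C}[W]} ({\textstyle\bigwedge^{1} V \otimes X}) \to \mathbb{H} \otimes_{\mathbb{C}[W]} X \to X \to 0, \]
where $\bigwedge^{j} V \otimes X$ is viewed as a $\mathbb{C}[W]$-module and the differential is built from the embedding $V \subset S(V) \subset \mathbb{H}$ together with the $\mathbb{H}$-action on $X$; under the vector-space identification $\mathbb{H} \otimes_{\mathbb{C}[W]} ({\textstyle\bigwedge^{j} V \otimes X}) \cong S(V) \otimes {\textstyle\bigwedge^{j} V} \otimes X$ this is the usual twisted Koszul complex, hence exact, and each term is projective because induction $\mathbb{H} \otimes_{\mathbb{C}[W]}(-)$ is left adjoint to the exact restriction functor. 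Applying $\Hom_{\mathbb{H}}(-,X')$ and adjunction identifies $\Ext^{\bullet}_{\mathbb{H}}(X,X')$ with the cohomology of the complex $\Hom_{\mathbb{C}[W]}({\textstyle\bigwedge^{\bullet} V \otimes X}, X')$.

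I would then promote this resolution to a $\langle\delta\rangle$-equivariant one. As $\delta$ is an algebra automorphism of $\mathbb{H}$ stabilizing $\mathbb{C}[W]$ and $V$ and fixing the parameter function $k$, and $X,X'$ are $\mathbb{H}\rtimes\langle\delta\rangle$-modules, $\delta$ acts on every term and commutes with the differentials once one checks that the balancing over $\mathbb{C}[W]$ is respected. The hypothesis that $\delta$ is inner on $W$ enters here: up to conjugacy inside $\mathbb{H}\rtimes\langle\delta\rangle$ it lets one assume $\delta$ is the identity on $W$ and $\pm\mathrm{Id}$ on $V$ (with $-\mathrm{Id}$ the case coming from $w_{0}$), which keeps $S(V)$ and the resolution manifestly stable and makes the equivariance uniform (for $\delta = \mathrm{Id}$ this recovers the known Euler--Poincar\'e formula). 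Since a $\langle\delta\rangle$-equivariant projective resolution computes the induced map $\delta^{*}$ on $\Ext$,
\[ \mathrm{EP}^{\delta}_{\mathbb{H}}(X,X') = \sum_{j \geq 0} (-1)^{j}\,\mathrm{trace}\!\left(\delta^{*} \colon \Hom_{\mathbb{C}[W]}({\textstyle\bigwedge^{j}V \otimes X}, X') \to \Hom_{\mathbb{C}[W]}({\textstyle\bigwedge^{j}V \otimes X}, X')\right); \]
in particular $\mathrm{EP}^{\delta}_{\mathbb{H}}(X,X')$ is independent of $k$.

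The remaining step is a character computation. Writing $\Hom_{\mathbb{C}[W]}({\textstyle\bigwedge^{j}V \otimes X}, X') = \big(({\textstyle\bigwedge^{j}V})^{*} \otimes X^{*} \otimes X'\big)^{W}$ and using that $\tfrac{1}{|W|}\sum_{w} w$ projects onto the $W$-invariants, the trace of $\delta^{*}$ on this space is $\tfrac{1}{|W|}\sum_{w \in W} \tr_{(\bigwedge^{j}V)^{*}}(w\delta)\,\tr_{X^{*}}(w\delta)\,\tr_{X'}(w\delta)$. Summing over $j$ with $\sum_{j}(-1)^{j}\tr_{(\bigwedge^{j}V)^{*}}(g) = \mathrm{det}_{V^{*}}(1-g)$ yields
\[ \mathrm{EP}^{\delta}_{\mathbb{H}}(X,X') = \frac{1}{|W|}\sum_{w \in W} \mathrm{det}_{V^{*}}(1-w\delta)\,\tr_{X^{*}}(w\delta)\,\tr_{X'}(w\delta). \]
Because $W\rtimes\langle\delta\rangle$ is finite, $\tr_{X^{*}}(w\delta) = \tr_{X}((w\delta)^{-1}) = \overline{\tr_{X}(w\delta)}$; and each $w\delta$ is a real operator of finite order on $V$, so its eigenvalues lie on the unit circle and are stable under complex conjugation, whence $(w\delta)^{-1}$ has the same eigenvalues as $w\delta$ and $\mathrm{det}_{V^{*}}(1-w\delta) = \mathrm{det}_{V}(1-(w\delta)^{-1}) = \mathrm{det}_{V}(1-w\delta)$. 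Finally the substitution $w \mapsto \delta(w)^{-1}$, a bijection of $W$ under which $w\delta \mapsto (w\delta)^{-1}$, moves the complex conjugate from $\tr_{X}$ to $\tr_{X'}$ and turns the right-hand side into $\langle \Res_{W} X, \Res_{W} X'\rangle^{\delta-\mathrm{ellip}, V}_{W}$ in the normalization of \cite{CH}.

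The main obstacle will be the homological input behind the first two paragraphs: writing down the $\delta$-equivariant Koszul resolution with explicit control of the $\delta$-action on its terms and differentials, proving exactness and projectivity, and verifying carefully that it computes the map $\delta^{*}$ appearing in the definition of $\mathrm{EP}^{\delta}_{\mathbb{H}}$ --- in particular justifying, in the $w_{0}$-case, the reduction to the automorphism $v \mapsto -v$. Once the Koszul formula for $\mathrm{EP}^{\delta}_{\mathbb{H}}$ is in place, the identification with the elliptic pairing of \cite{CH} is a formal manipulation of characters.
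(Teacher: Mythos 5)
Your proposal is correct in outline and follows the same backbone as the paper: the explicit Koszul-type resolution $\mathbb{H}\otimes_{\mathbb{C}[W]}(\Res_WX\otimes\wedge^{\bullet}V)$ of Section \ref{s koszul type resolution}, Frobenius reciprocity to land in $\Hom_{\mathbb{C}[W]}(\Res_WX\otimes\wedge^{j}V,\Res_WX')$, and a final character average over $W$. The one genuinely different step is how you pass from $\Ext$ to the chain level in the twisted case: you invoke the Hopf/Lefschetz trace principle (alternating trace on cohomology equals alternating trace on the terms of the complex, once $\delta^*$ is a chain map), whereas the paper never uses this. Instead it decomposes $X=X^{+}\oplus X^{-}$ into eigenspaces of $\theta t_{w_0}$, rewrites the differential in terms of the elements $\widetilde{v}$ (Proposition \ref{prop equal d}), shows $\widetilde{v}$ interchanges $X^{\pm}$ and that $\theta^*$ acts by the explicit scalars $\pm(-1)^i$ on $\Hom_i^{\pm}$ (Lemmas \ref{lem v tilde action}, \ref{lem v tilde action c}), and then telescopes. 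Your route is shorter and more standard for the theorem itself; the paper's route buys the finer structural facts (the $\Ext^{i}(X,Y)^{\pm}$ decomposition, Corollary \ref{cor extended} relating $\Ext_{\mathbb{H}\rtimes\langle\theta\rangle}$ to $\Ext_{\mathbb{H}}$, and ultimately the $(-1)^i$ eigenvalue statement of Theorem \ref{thm theta action}) that are reused throughout Sections \ref{s theta action} and \ref{s solvable induced}. Two points to tighten. First, your reduction "conjugate so that $\delta$ is trivial on $W$ and $\pm\Id$ on $V$" is not quite right as stated for the generators $v\in V\subset S(V)$: $\Ad(t_{w_0}^{-1})(v)\neq w_0^{-1}(v)$ in $\mathbb{H}$ because of the cross relations; it is the elements $\widetilde{v}$ of (\ref{eqn v titlde}) that satisfy $t_w\widetilde{v}=\widetilde{w(v)}t_w$ (Lemma \ref{lem tilde element}), which is exactly why the paper switches to the alternate differential $\widetilde{d}_i$. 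You also need that inner automorphisms act trivially on $\Ext$ so that the conjugation does not change the trace. Second, as written your character computation (averaging projector, $\sum_j(-1)^j\tr_{(\wedge^jV)^*}=\det_{V^*}(1-\cdot)$, the substitution $w\mapsto\delta(w)^{-1}$) nowhere visibly uses that $\delta$ is inner on $W$, yet Remark \ref{rmk twisted ext}\hspace{0pt}-adjacent text (the remark after Theorem \ref{thm twisted ext}) asserts the identity fails for outer automorphisms; you should locate precisely where your chain of equalities breaks for an outer $\delta$ --- most plausibly in verifying that the chain-level $\delta^*$ of (\ref{eqn theta* actin 1}) is a chain map computing the canonical $\delta^*$ on $\Ext$ --- or else explain why your argument legitimately proves more.
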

\noindent
Theorem \ref{thm inner formula} for $\delta=\Id$ was established by Reeder \cite{Re} for equal parameter cases, and was independently proved by Opdam-Solleveld \cite{OS} for arbitrary parameters (in different settings). Nevertheless, our approach in proving Theorem \ref{thm inner formula} is independent from their work, and is self-contained. We remark our proof of Theorem \ref{thm inner formula} also holds for non-crystallographic cases, and the consequences for those cases will be considered elsewhere.

Our study begins with the construction of an explicit projective resolution on $\mathbb{H}$-modules.  The idea of the construction came from the standard Koszul resolution. A remarkable point is that taking the $\Hom$-functor on the resolution, the $\Hom$-spaces between $\mathbb{H}$-modules are turned into $\Hom$-spaces between Weyl group representations via Frobenious reciprocity, which is also essential in the proof of Theorem \ref{thm inner formula}.

When $\delta=\Id$, the pairing defines an inner product on a subspace of the $\mathbb{H}$-representation ring. This space has been known and studied in \cite{Re} and \cite{OS}. Our focus of the remaining discussion will be on the case that $\delta$ is the automorphism $\theta$ arising from the longest element in the Weyl group (see (\ref{eqn involution})). Similar to the case for $\theta=\Id$, an appropriate subspace of the representation ring of $\mathbb{H}$ is equipped with $\mathrm{EP}^{\theta}_{\mathbb{H}}$ as an inner product. We call such space to be $\theta$-twisted elliptic as an analogue to the case in $p$-adic groups considered by Schneider-Stuhler \cite{SS}. Such $\theta$-twisted elliptic space can also be regarded as the elliptic representaion space of $\mathbb{H} \rtimes \langle \theta \rangle$. We shall describe those $\theta$-twisted elliptic space in the next paragraph.

Let $\mathcal N_{\mathrm{sol}}$ be the set of nilpotent elements which have a solvable centralizer in the related Lie algebra to the root system. This set naturally arises from the study of the spin representations of Weyl groups as well as the Dirac cohomology for the graded affine Hecke algebra (\cite{CH}, \cite{BCT}, \cite{Ch1}, \cite{Ci}). In particular, the work of Ciubotaru-He \cite{CH} implies that in the case of equal parameters, the $\theta$-twisted elliptic representation space of $\mathbb{H}$ is spanned by tempered modules which correspond to a nilpotent element in $\mathcal N_{\mathrm{sol}}$ under the Kazhdan-Lusztig parametrization (Theorem \ref{cor twisted elliptic}). For the simplicity later, we shall call those tempered modules to be solvable.

Those solvable tempered modules can be divided into three classes. The first ones are those (ordinary) elliptic tempered modules (in the sense of Reeder \cite{Re}). The second ones are those irreducible non-elliptic tempered modules which are not properly parabolically induced. This happens for the type $D_n$ for $n$ odd and $n \geq 9$ (see Remark \ref{rmk non ellip rig}). The third ones are certain irreducible, tempered and parabolically induced modules. It turns out that those irreducible tempered module in the third class can be characterized by a simple condition on the parabolic subalgebra which it is induced from. Those classes of modules are called rigid modules in Definition \ref{def rigid} and Proposition \ref{prop sol rigid}. A deeper reasoning for such condition indeed comes from the Plancherel measure and $R$-groups (in the sense of Opdam \cite{Op} and \cite{DO2} respectively). The study related to those harmonic analysis interpretations on solvable tempered modules will be carried out elsewhere \cite{Ch2} (also see Remark \ref{rmk general formulation}). 


Our second part of the paper is to study the $\Ext$-groups on the rigid modules in Definition \ref{def rigid}. (See Remark \ref{rmk degenerate limits} for more comments on the terminology.)  As mentioned above, rigid modules provide most examples of solvable tempered modules which are not elliptic. In other words, they lie in the radical of the (ordinary) Euler-Poincar\'e pairing, but not in the radical of the twisted Euler-Poincar\'e pairing. Then it is natural to ask how those rigid modules behave differently under the two pairings via a study of the $\Ext$-groups and the $\theta^*$-action.


Another main result in this paper is Theorem \ref{thm theta action intro} below. 
\begin{theorem} (Theorem \ref{thm theta action}) \label{thm theta action intro}
Let $\mathbb{H}$ be the graded affine Hecke algebra associated to a crystallographic root system and a parameter function $k$ (Definition \ref{def graded affine}). Let $X$ be a rigid of discrete series of $\mathbb{H}$ (Definition \ref{def rigid}). Then 
\[  \dim \Ext_{\mathbb{H}}^i(X,X)=\left( \begin{array}{rl} r \\ i \end{array} \right)=\frac{r!}{(r-i)!i!},  \quad \mbox{ for $ i \leq r$}
\]
for some fixed $r$ (which is described precisely in Theorem \ref{thm theta action}). Furthermore $\theta^*$  acts on $\Ext_{\mathbb{H}}^i(X, X)$ by the multiplication of a scalar of $(-1)^i$.
\end{theorem}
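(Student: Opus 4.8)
The plan is to reduce the computation of $\Ext_{\mathbb{H}}^i(X,X)$ to a combinatorial count over a suitable parabolic and to track the $\theta^*$-action along the way. First I would recall that a rigid module of discrete series is, by definition (Definition \ref{def rigid}), of the form $X = \Ind_{\mathbb{H}_M}^{\mathbb{H}} \sigma$ where $\mathbb{H}_M$ is a parabolic subalgebra attached to a subset $M \subseteq \Delta$ satisfying the explicit ``solvability'' condition, and $\sigma$ is a discrete series of $\mathbb{H}_M$ sitting at a central character whose real part is a regular point fixed by the relevant Weyl subgroup (so that $X$ is irreducible and tempered). The integer $r$ in the statement should be $r = \dim V - \dim V_M = $ the codimension of the parabolic, i.e. the rank of the central torus on which $X$ is a ``full'' induced module; I would make this precise by matching Theorem \ref{thm theta action}'s description.

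The key technical step is the following $\Ext$-computation. Using the explicit projective resolution of $\mathbb{H}$-modules constructed earlier in the paper (the Koszul-type resolution, whose salient feature is that applying $\Hom_{\mathbb{H}}(-,Y)$ converts $\Hom$-spaces of $\mathbb{H}$-modules into $\Hom$-spaces of $W$-representations via Frobenius reciprocity), I would compute $\Ext_{\mathbb{H}}^\bullet(X,X)$ as the cohomology of a Koszul-type complex $\Hom_{W}(\Res_W X, \Res_W X) \otimes \Lambda^\bullet V^*$ with a differential built from the central character. Since $X$ is induced from a parabolic whose real central character is \emph{regular} in the directions transverse to $M$, the differential is nondegenerate in exactly those $r$ transverse directions and identically zero in the remaining directions; hence the cohomology is $\Hom_{W}(\Res_W X,\Res_W X)^{\text{(zero part)}} \otimes \Lambda^\bullet(\text{$r$-dim space})$, and rigidity of $\sigma$ (discrete series of $\mathbb{H}_M$, so $\End_{\mathbb{H}_M}(\sigma)=\Cset$ and $\Ext^{>0}_{\mathbb{H}_M}(\sigma,\sigma)=0$ by the discrete series case, cf.\ the $\delta=\Id$ elliptic theory) forces the ``zero part'' to be one-dimensional. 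This yields $\dim \Ext_{\mathbb{H}}^i(X,X) = \binom{r}{i}$ for $i \le r$ and $0$ for $i > r$.

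For the $\theta^*$-action, I would observe that $\theta$ acts on the Koszul complex compatibly: on the $\Hom_W$-factor it acts through the $\langle\theta\rangle$-structure on $X$ (which contributes a sign or scalar determined by how $\theta$ permutes the constituents of $\Res_W X$), and on the exterior-algebra factor $\Lambda^\bullet(\text{transverse $r$-space})$ it acts by $\Lambda^\bullet$ of the linear map induced by $\theta = -w_0$ on the transverse part of $V^*$. The crucial point is that on the relevant $r$-dimensional transverse space the induced map is $-1$ (this is where the hypothesis that $X$ is induced from a discrete series with the solvability/rigidity condition and the specific nature of $\theta$ as coming from $w_0$ enters: $w_0$ acts as $-1$ up to a diagram automorphism, and the $\theta$-fixed data of the parabolic pins the transverse action down to be exactly $-1$). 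Consequently $\theta^*$ acts on $\Lambda^i$ of that space by $(-1)^i$, and on the one-dimensional zero-part by $+1$ after normalizing the $\langle\theta\rangle$-structure on $X$, giving the claimed scalar $(-1)^i$ on $\Ext_{\mathbb{H}}^i(X,X)$.

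The main obstacle I anticipate is the second step's degeneracy analysis: establishing rigorously that the Koszul differential governing $\Ext_{\mathbb{H}}^\bullet(X,X)$ splits as ``nondegenerate in $r$ transverse directions $\oplus$ zero in the rest,'' which requires knowing precisely the decomposition of $\Res_W X$ and the central-character action on it — essentially an avatar of the computation that $X$ is irreducibly induced and that the intertwining operators at the relevant character are invertible. A secondary subtlety is fixing the normalization of the $\theta$-action on $X$ so that the zero-part contributes $+1$ rather than $-1$; this is a choice that must be made coherently with the statement of Theorem \ref{thm inner formula}, and I would pin it down using that the ordinary $\Ext$-Euler characteristic of a rigid non-elliptic module vanishes (it lies in the radical of the untwisted pairing), which forces the alternating sum $\sum_i (-1)^i \binom{r}{i} = 0$ and is consistent with (indeed, forced by) $r \ge 1$.
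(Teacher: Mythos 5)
Your overall strategy matches the paper's: Koszul resolution plus Frobenius reciprocity, the Opdam--Solleveld vanishing of higher self-extensions of discrete series, an exterior algebra on an $r$-dimensional transverse space with $r=\dim V-\dim V_J$, and the sign $(-1)^i$ coming from $\theta$ acting by $-1$ there. But two steps are wrong or missing as stated. First, your degeneracy analysis is backwards and does not invoke the rigidity hypothesis where it is actually needed. Since $\nu=0$, the inducing datum is $U\otimes\mathbb{C}_0$ and the Koszul differential is \emph{zero}, not nondegenerate, in the $r$ transverse directions $V_J^{\bot}$ --- that is precisely why $\wedge^{\bullet}V_J^{\bot}$ survives in cohomology; if the differential were nondegenerate there, the transverse Koszul complex would be acyclic and you would not obtain the binomial coefficients. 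Moreover, the reduction of $\Ext_{\mathbb{H}}^{\bullet}(X,X)$ to $\Ext_{\mathbb{H}_J}^{\bullet}(U\otimes\mathbb{C}_0,U\otimes\mathbb{C}_0)$ is not an ``invertibility of intertwining operators'' statement: the paper proves (Lemma \ref{lem structure rigid}) that $\Res_{\mathbb{H}_J}X=U\oplus Y$ with $\Ext_{\mathbb{H}_J}^{\bullet}(U,Y)=0$, and the condition $\card(\{w\in W: w(J)=J\})=1$ enters exactly here, through a weight analysis (Proposition \ref{prop weights}) showing that no nontrivial $W^J$-translate of a weight of $U$ lands back in $V_J^{\vee}$. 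Without this step Frobenius reciprocity only gives $\Ext_{\mathbb{H}}^{\bullet}(X,X)=\Ext_{\mathbb{H}_J}^{\bullet}(U\otimes\mathbb{C}_0,\Res_{\mathbb{H}_J}X)$, and you need rigidity to discard the $Y$-part.

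Second, the $\theta^*$ computation is more delicate than ``$\theta$ acts by $-1$ on the transverse space, hence by $(-1)^i$ on $\wedge^i$''. The identification of $\Ext_{\mathbb{H}}^i(X,X)$ with a subspace of $\Hom_{W_J}(U\otimes\wedge^iV_J^{\bot},U)$ is not $\theta^*$-stable at the cochain level: $\theta$ sends $1\otimes u$ to an element involving $t_{w_0^{\theta(J)}}\otimes\theta(u)$ plus terms in $Y$ (Lemma \ref{lem structure Y 2}), and one only obtains $\theta^*(\overline{\psi})=(-1)^i\overline{\psi}+\phi$ with $\phi\in\im d_{i-1}^*$, i.e.\ equality in cohomology rather than on the nose. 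Establishing this uses Schur's lemma (cocycles act by scalars on $U$) and the vanishing $\Ext_{\mathbb{H}_J}^{\bullet}(U,Y)=0$ again to recognize the error term as a coboundary. Your worry about normalizing the $\langle\theta\rangle$-structure on $X$ is in fact harmless --- the ambiguous scalar cancels in this computation --- but the modulo-coboundary argument is the real content of the sign claim and is absent from your sketch.
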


We remark that our computation of $\Ext$-groups in Theorem \ref{thm theta action intro} essentially uses the $\Ext$-groups for discrete series from the work of Delorme-Opdam \cite{DO} and Opdam-Solleveld \cite{OS}. Apart from the deep analytic result from \cite{DO} and \cite{OS}, the main tool of our computation is the projective resolution developed in Section \ref{s koszul type resolution} with some careful analysis on the structure of rigid modules. It is possible to apply our techniques to other tempered modules, but results obtained by current approach is more complete for those rigid modules.

The approach used in this paper to study $\Ext$-groups differs from the one used by Adler-Prasad \cite{AP} for $p$-adic groups and the one by Opdam-Solleveld \cite{OS2} for affine Hecke algebras, and so we hope our study provides another perspective on the extensions of representations. Our approach should also be applicable for the study of the graded Hecke algebra of a noncrystallographic type and other similar algebraic structure such as the degenerate affine Hecke-Clifford algebra.

We briefly outline the organization of this paper. Section \ref{s involution hermitian} is to define and review several important objects such as the map $\theta$, graded affine Hecke algebras and tempered modules. In Section \ref{s koszul type resolution}, we construct an explicit projective resolution of an $\mathbb{H}$-module, which is the main tool in this paper. In Section \ref{s twisted EP}, we define the twisted Euler-Poincar'e pairing and prove Theorem \ref{thm inner formula}. Section \ref{s theta action} is devoted to compute the $\theta^*$ action on some $\mathrm{Ext}$-groups of certain modules. Section \ref{s solvable induced} is to study and describe the twisted elliptic space in terms of the Kazhdan-Lusztig model.
\noindent



\subsection{Acknowledgement}

The author would like to thank his PhD advisor Peter Trapa for initiating this research and having many useful discussions. He also thanks Peter Trapa  for pointing out the definition of the twisted Euler-Poincar\'e pairing and providing his idea on Theorem \ref{thm twisted ext}. He is also grateful for Dan Ciubotaru and Xuhua He for useful discussions on elliptic modules and their papers \cite{CH0, CH}. He would also like to thank Marteen Solleveld for providing many useful and detailed suggestions in an earlier version of this paper.
\section{Preliminaries} \label{s involution hermitian}

\subsection{Root systems and basic notations} \label{ss basic notation}
Let $R$ be a reduced root system of a crystallographic type. Let $\Delta$ be a fixed choice of simple roots in $R$. Then $\Delta$ determines the set of positive roots $R^+$. Let $W$ be the finite reflection group of $R$. Let $V_0'$ be the real space spanned by $\Delta$ and let $V_0$ be a real vector space containing $V_0'$ as a subspace. For any $\alpha \in \Delta$, let $s_{\alpha}$ be the simple reflection in $W$ associated to $\alpha$ (i.e. $\alpha \in V_0$ is in the $-1$-eigenspace of $s_{\alpha}$). For $\alpha \in R$, let $\alpha^{\vee} \in \Hom_{\mathbb{R}}(V_0, \mathbb{R})$ such that
\[   s_{\alpha}(v) = v-\langle v, \alpha^{\vee} \rangle \alpha, \]
where $\langle v, \alpha^{\vee} \rangle=\alpha^{\vee}(v)$. Let $R^{\vee} \subset \Hom_{\mathbb{R}}(V_0, \mathbb{R})$ be the collection of all $\alpha^{\vee}$. Let $V_0^{\vee}= \Hom_{\mathbb{R}}(V_0, \mathbb{R})$.

By extending the scalars, let $V=\mathbb{C} \otimes_{\mathbb{R}} V_0$ and let $V^{\vee}=\mathbb{C} \otimes_{\mathbb{R}} V_0^{\vee}$. We call $(R, V, R^{\vee}, V^{\vee})$ to be a root datum.

For any subset $J$ of $\Delta$, define $V_J$ to be the complex subspace of $V$ spanned by simple roots in $J$. Let $R_J = V_J \cap R$. Let $R_J^{\vee}= \left\{ \alpha^{\vee} \in R^{\vee}: \alpha \in R_J \right\}$. Let $V_J^{\vee}$ be the subspace of $V^{\vee}$ spanned by the coroots in $R_J^{\vee}$. Let $W_J$ be the subgroup of $W$ generated by the elements $s_{\alpha}$ for $\alpha \in J$. Define 
\[  V_J^{\bot} = \left\{  v \in V : \langle v, v_1^{\vee} \rangle =0 \quad \mbox{ for all $v_1^{\vee} \in V_J^{\vee} $ } \right\}, \]
and
\[ (V_J^{\vee})^{\bot} = \left\{ v^{\vee} \in V^{\vee} : \langle v_1, v^{\vee} \rangle =0 \quad \mbox{ for all $v_1 \in V_J $ } \right\}.\]

Let $J \subset \Delta$. Let $w_{0, J}$ be the longest element in $W_J$. When $J=\Delta$, we simply write $w_{0}$ for $w_{0, \Delta}$. Let $W^J$ be the set of minimal representatives in the cosets in $W/W_J$. Let $w_0^J$ be the longest element in $W^J$.

\subsection{Graded affine Hecke algebras}

Let $k: \Delta \rightarrow \mathbb{R}$ be a parameter function such that $k(\alpha)=k(\alpha')$ if $\alpha$ and $\alpha'$ are in the same $W$-orbit. We shall simply write $k_{\alpha}$ for $k(\alpha)$.

\begin{definition} \label{def graded affine} \cite[Section 4]{Lu}
The graded affine Hecke algebra $\mathbb{H}=\mathbb{H}_W$ associated to a root data $(R, V, R^{\vee}, V^{\vee})$ and a parameter function $k$ is an associative algebra with an unit over $\mathbb{C}$ generated by the symbols $\left\{ t_w :w \in W \right\}$ and $\left\{ f_w: w \in V \right\}$ satisfying the following relations:
\begin{enumerate}
\item[(1)] The map $w  \mapsto t_w$ from $\mathbb{C}[W]=\oplus_{w\in W} \mathbb{C}w  \rightarrow \mathbb{H }$ is an algebra injection,
\item[(2)] The map $v \mapsto f_v$ from $S(V) \rightarrow \mathbb{H}$ is an algebra injection,
\end{enumerate}
For simplicity, we shall simply write $v$ for $f_v$ from now on.
\begin{enumerate}
\item[(3)] the generators satisfy the following relation:
\[    t_{s_{\alpha}}v-s_{\alpha}(v)t_{s_{\alpha}}=k_{\alpha}\langle v, \alpha^{\vee} \rangle .\]
\end{enumerate}

\end{definition}

\begin{notation}
Let $J \subset \Delta$. Define $\mathbb{H}_J$ to be the subalgebra of $\mathbb{H}$ generated by all $v \in V$ and $t_w$ ($w \in W_J$). We also define $\overline{\mathbb{H}}_J$ to be the subalgebra of $\mathbb{H}$ generated by all $v \in V_J$ and $t_w$ ($w \in W_J$). Here $V_J$ and $W_J$ is defined in Section \ref{ss basic notation}. Note that $\mathbb{H}_J$ decomposes as
\[  \mathbb{H}_J = \overline{\mathbb{H}}_J  \otimes S(V_J^{\bot}) .
\]
Note that $\mathbb{H}_J$ is the graded affine Hecke algebra associated to the root data $(R, V_0, R^{\vee}, V_0^{\vee})$ and $\overline{\mathbb{H}}_J$ is the graded affine Hecke algebra associated to the root data $(R, V_J, R^{\vee}, V^{\vee}_J)$.
\end{notation}

\begin{notation}
According to (1) and (2), we shall view $\mathbb{C}[W]$ and $S(V)$ as the natural subalgebras of $\mathbb{H}$. For an $\mathbb{H}$-module $X$ (resp. $\mathbb{H}_J$-module $X$ with $J \subset \Delta$), denote $\Res_WX$ (resp. $\Res_{W_J}X$) be the restriction of $X$ to a $\mathbb{C}[W]$-module (resp. $\mathbb{C}[W_J]$-module). $\Res_{\mathbb{H}_J}$ and $\Res_{\overline{\mathbb{H}}_J}$ are defined similarly for $\mathbb{H}$-modules.

\end{notation}

For $v \in V$, we define the following element in $\mathbb{H}$:
\begin{eqnarray} \label{eqn v titlde}
    \widetilde{v}&= v- \frac{1}{2} \sum_{\alpha \in R^+} c_{\alpha} \langle v, \alpha^{\vee} \rangle s_{\alpha} .
\end{eqnarray}
This element is used in \cite{BCT} for the study of the Dirac cohomology for graded affine Hecke algebras.

\begin{lemma} \label{lem tilde element}
 For any $w \in W$ and $v \in V$, $t_w\widetilde{v}=\widetilde{w(v)}t_w$.
\end{lemma}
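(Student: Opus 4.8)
The plan is to verify the identity $t_w\widetilde{v}=\widetilde{w(v)}t_w$ directly from the defining relations of $\mathbb{H}$. First I would reduce to the case $w=s_\beta$ for a simple reflection $\beta\in\Delta$: since the $t_{s_\beta}$ generate $\mathbb{C}[W]\subset\mathbb{H}$, the general case follows by induction on the length of $w$ once the simple case is known, noting that $\widetilde{s_\beta(v)}$ is obtained from $\widetilde{v}$ by applying the same recipe to $s_\beta(v)$, so the recursion closes up. Thus it suffices to show
\[ t_{s_\beta}\,\widetilde{v} \;=\; \widetilde{s_\beta(v)}\,t_{s_\beta}. \]

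Next I would expand both sides using the definition $\widetilde{v}=v-\tfrac12\sum_{\alpha\in R^+}c_\alpha\langle v,\alpha^\vee\rangle s_\alpha$ (reading $c_\alpha$ as the relevant parameter, with $s_\alpha$ meaning $t_{s_\alpha}$ inside $\mathbb{H}$). For the left side, I use relation (3), $t_{s_\beta}v = s_\beta(v)t_{s_\beta} + k_\beta\langle v,\beta^\vee\rangle$, to move $t_{s_\beta}$ past the degree-one part $v$, and I use the braid/group relations in $\mathbb{C}[W]$ to rewrite $t_{s_\beta}s_\alpha = s_{s_\beta(\alpha)}t_{s_\beta}$. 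For the right side I similarly commute $t_{s_\beta}$ to the right through each term of $\widetilde{s_\beta(v)}$. Matching the two expansions, the degree-one terms $s_\beta(v)t_{s_\beta}$ agree immediately, and the sum over roots rearranges: since $s_\beta$ permutes $R^+\setminus\{\beta\}$ (sending $\alpha\mapsto s_\beta(\alpha)$) and $\langle v,\alpha^\vee\rangle = \langle s_\beta(v), s_\beta(\alpha)^\vee\rangle$, the bulk of the sum transforms correctly. What remains is to check that the scalar $k_\beta\langle v,\beta^\vee\rangle$ produced by relation (3) on the left exactly cancels the discrepancy coming from the single root $\alpha=\beta$, which $s_\beta$ sends to $-\beta\notin R^+$; here one uses that $s_\beta(v)=v-\langle v,\beta^\vee\rangle\beta$ together with the compatibility of the parameters $c_\alpha$ and $k_\alpha$ (they coincide, or are related by the normalization fixed earlier in the paper).

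The main obstacle I anticipate is precisely this bookkeeping at the root $\beta$: one must track carefully how the term $c_\beta\langle v,\beta^\vee\rangle t_{s_\beta}$ in $\widetilde{v}$, after conjugation, contributes $-c_\beta\langle v,\beta^\vee\rangle t_{s_\beta}$-type terms, and how the change of variable $v\mapsto s_\beta(v)$ in $\widetilde{s_\beta(v)}$ shifts exactly one summand out of the symmetric part; the commutator term $k_\beta\langle v,\beta^\vee\rangle$ from relation (3) is what balances the ledger. A clean way to organize this is to first establish the auxiliary identity for degree-one elements, $t_{s_\beta}v = s_\beta(v)t_{s_\beta}+k_\beta\langle v,\beta^\vee\rangle$, rewritten as $t_{s_\beta}v - \widetilde{s_\beta(v)}t_{s_\beta}$ in a form that isolates the correction, and then observe that $\widetilde{(\cdot)}$ is designed so that this correction is absorbed. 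Alternatively, one may avoid splitting off $\beta$ by noting $\widetilde{v}$ can be rewritten as a sum over \emph{all} of $R$ (not just $R^+$) with a factor of $\tfrac14$ or via the Casimir-type element from \cite{BCT}, making the $W$-equivariance manifest; I would mention this as the conceptual reason the lemma holds, while carrying out the direct computation above as the actual proof.
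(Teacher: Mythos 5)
Your proposal is correct and follows essentially the same route as the paper: reduce to a simple reflection $s_\beta$, expand $t_{s_\beta}\widetilde{v}$ using relation (3), rewrite $t_{s_\beta}t_{s_\alpha}=t_{s_{s_\beta(\alpha)}}t_{s_\beta}$, use that $s_\beta$ permutes $R^+\setminus\{\beta\}$ and that $\langle v,\alpha^\vee\rangle=\langle s_\beta(v),s_\beta(\alpha)^\vee\rangle$, and observe that the commutator term $k_\beta\langle v,\beta^\vee\rangle$ exactly compensates the sign flip at $\alpha=\beta$. You also correctly read the $c_\alpha$ in the definition of $\widetilde{v}$ as the parameter $k_\alpha$, which is how the paper's own computation treats it.
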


\begin{proof}
It suffices to show for the case that $w$ is a simple reflection $s_{\beta} \in W$.
\begin{eqnarray*}
 t_{s_{\beta}}\widetilde{v} &=&  t_{s_{\beta}}\left( v- \frac{1}{2} \sum_{\alpha \in R^+} k_{\alpha} \langle v, \alpha^{\vee} \rangle t_{s_{\alpha}} \right)  \\
                             &=&  s_{\beta}(v)t_{s_{\beta}}+k_{\beta}\langle v, \beta^{\vee}\rangle -\frac{1}{2}k_{\beta}\langle v, \beta^{\vee} \rangle -\frac{1}{2}\sum_{\alpha \in R^+\setminus \left\{\beta\right\} }  k_{\alpha} \langle  v, \alpha^{\vee} \rangle t_{s_{\beta}(\alpha)}  \\
														&=&   s_{\beta}(v)t_{s_{\beta}} -\frac{1}{2}k_{\beta}\langle v, s_{\beta}(\beta^{\vee})\rangle  -\frac{1}{2}\sum_{\alpha \in R^+\setminus \left\{\beta\right\} }  k_{\alpha} \langle  v, s_{\beta}(\alpha^{\vee}) \rangle t_{\alpha} t_{s_{\beta}} \\
														&=& s_{\beta}(v)t_{s_{\beta}}-\frac{1}{2} \sum_{\alpha \in R^+} k_{\alpha} \langle s_{\beta}(v), \alpha^{\vee} \rangle t_{\alpha}t_{s_{\beta}}  \\
														&=& \widetilde{s_{\beta}(v)}
\end{eqnarray*}

\end{proof}

\subsection{Central characters of $\mathbb{H}$}

The center of $\mathbb{H}$ can be explicitly described as below.

\begin{proposition} \label{prop center of hecke algebra} \cite[Proposition 4.5]{Lu}
The center of $\mathbb{H}$ is equal to $S(V)^W$, where $S(V)^W$ is the set of the $W$-invariant polynomials in $S(V)$
\end{proposition}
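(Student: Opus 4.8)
The plan rests on the Poincaré--Birkhoff--Witt type decomposition
\[ \mathbb{H} = \bigoplus_{w \in W} S(V)\, t_w \]
(equivalently $\mathbb{H} \cong S(V) \otimes_{\mathbb{C}} \mathbb{C}[W]$ as vector spaces), which follows from the defining relations of Definition \ref{def graded affine}; I will quote it from \cite[Section 4]{Lu}. Since $S(V)$ and $\{t_w\}_{w \in W}$ generate $\mathbb{H}$ and $S(V)$ is commutative, an element is central if and only if it commutes with every $v \in V$ and with every $t_{s_\alpha}$, $\alpha \in \Delta$. The computational engine is the commutation rule
\[ t_{s_\alpha}\, p = s_\alpha(p)\, t_{s_\alpha} + k_\alpha\, \partial_\alpha(p), \qquad \partial_\alpha(p) := \frac{p - s_\alpha(p)}{\alpha} \in S(V), \]
valid for all $p \in S(V)$ and $\alpha \in \Delta$. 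This is proved by induction on $\deg p$: the base case $p = v \in V$ is relation (3) of Definition \ref{def graded affine} (note $\partial_\alpha(v) = \langle v, \alpha^{\vee}\rangle$), and the inductive step uses the twisted Leibniz identity $\partial_\alpha(pq) = \partial_\alpha(p)\, q + s_\alpha(p)\, \partial_\alpha(q)$, together with the fact that $s_\alpha$ fixes the hyperplane $\alpha = 0$ so that $\alpha \mid (p - s_\alpha(p))$.

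For the inclusion $S(V)^W \subseteq Z(\mathbb{H})$: a $W$-invariant polynomial $p$ commutes with all of $S(V)$, and $s_\alpha(p) = p$ makes $\partial_\alpha(p) = 0$, so the commutation rule gives $t_{s_\alpha} p = p\, t_{s_\alpha}$ for every $\alpha \in \Delta$; hence $p$ is central.

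For the reverse inclusion I would first show $Z(\mathbb{H}) \subseteq S(V)$. Write a central element as $z = \sum_{w} p_w t_w$ with $p_w \in S(V)$, and set $m = \max\{\ell(w) : p_w \neq 0\}$. By induction on $\ell(w)$, using the commutation rule together with $t_{s_\alpha} t_u = t_{s_\alpha u}$ (and $\ell(s_\alpha u) = \ell(u) \pm 1$), one verifies the length-filtration estimate $t_w v - w(v)\, t_w \in \sum_{\ell(u) < \ell(w)} S(V)\, t_u$ for every $v \in V$. Consequently, for any $w$ with $\ell(w) = m$ the $t_w$-component of $zv$ is exactly $p_w\, w(v)$, whereas the $t_w$-component of $vz$ is $v\, p_w$; centrality and commutativity of $S(V)$ then force $(v - w(v))\, p_w = 0$ in the integral domain $S(V)$, so $w(v) = v$ for all $v \in V$, i.e. $w = \mathrm{Id}$. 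Thus $m = 0$ and $z = p_{\mathrm{Id}} \in S(V)$. Finally, if $z \in S(V) \cap Z(\mathbb{H})$, then $t_{s_\alpha} z = z\, t_{s_\alpha}$ combined with the commutation rule gives $(z - s_\alpha(z))\, t_{s_\alpha} = k_\alpha\, \partial_\alpha(z)$; matching the $t_{s_\alpha}$- and $t_{\mathrm{Id}}$-components in the PBW decomposition yields $s_\alpha(z) = z$ for every $\alpha \in \Delta$, hence $z \in S(V)^W$ since the $s_\alpha$ generate $W$.

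The commutation rule and the inclusion $S(V)^W \subseteq Z(\mathbb{H})$ are routine. The step needing the most care is the proof that $Z(\mathbb{H}) \subseteq S(V)$: one must establish the length-filtration estimate $t_w v \equiv w(v)\, t_w$ modulo strictly shorter terms and then carefully track which $t_w$ can contribute to a prescribed PBW-component of $zv$ and of $vz$. With the PBW decomposition in hand (quoted from \cite{Lu}), the remaining bookkeeping is formal.
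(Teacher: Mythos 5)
Your argument is correct. Note, however, that the paper offers no proof of this proposition at all: it simply cites Lusztig \cite[Proposition 4.5]{Lu}, so there is nothing internal to compare against. What you have written is essentially the standard (Bernstein--Lusztig) argument: the PBW decomposition $\mathbb{H}=\bigoplus_{w}S(V)t_w$, the Demazure-type commutation rule $t_{s_\alpha}p=s_\alpha(p)t_{s_\alpha}+k_\alpha\partial_\alpha(p)$, the length-filtration estimate to force a central element into $S(V)$, and then comparison of PBW components to force $W$-invariance. All steps check out; the twisted Leibniz identity and the divisibility $\alpha\mid(p-s_\alpha(p))$ are exactly what is needed for the commutation rule, and your bookkeeping of which $t_u$ can contribute to a top-length PBW component is sound. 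The only point worth making explicit is the very last inference in the containment $Z(\mathbb{H})\subseteq S(V)$: from $(v-w(v))p_w=0$ for all $v\in V$ you conclude $w=\mathrm{Id}$, which uses that $W$ acts faithfully on $V$; this holds here because $V$ contains the span $V'_0\otimes\mathbb{C}$ of the roots, on which the reflection representation is faithful. Also note that your argument works uniformly for all parameter functions $k$, including $k\equiv 0$, since the final step $s_\alpha(z)=z$ is read off from the $t_{s_\alpha}$-component and does not divide by $k_\alpha$.
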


\begin{definition}
Let $Z(\mathbb{H})$ be the center of $\mathbb{H}$. The central character of an irreducible $\mathbb{H}$-module $X$ is the map $\chi: Z(\mathbb{H}) \rightarrow \mathbb{C}$ such that $\chi(z)$ is the scalar that $z$ acts on $X$.
\end{definition}

According to Proposition \ref{prop center of hecke algebra}, the central character $\chi$ can be parametrized by the $W$-orbits $[v]$ in $V$ such that 
\[ \chi(z)=z(v),
\]
where $v$ is a representative of the $W$-orbit $[v]$ and $z(v)$ is regarded as the value of the polynomial $z$ evaluated at $v$.

\subsection{$*$-operation and $*$-Hermitian modules}

We first define an anti-involutive $*$-operation which naturally comes from the $p$-adic groups as follow:
\[  t_w^*=t_w^{-1} \quad \mbox{for $w \in W$}, \quad v^*=-t_{w_0}\overline{w_0(v)}t_{w_0}^{-1}=\overline{-v+\frac{1}{2}\sum_{\alpha \in R^+} \langle v, \alpha^{\vee} \rangle t_{s_{\alpha}}} .\]
Here $\overline{h}$ denotes the complex conjugation on $h$. 

\begin{definition}
Let $X$ be an $\mathbb{H}$-module. A function $f: X \rightarrow \mathbb{C}$ is said to be conjugate-linear if $f(\lambda x_1+x_2)=\overline{\lambda}f(x_1)+f(x_2)$ for all $\lambda \in \mathbb{C}$ and $x_1, x_2 \in X$. The {\it $*$-Hermitian dual} of $X$, denoted $X^*$, is the space of all the conjugate-linear functions $f: X \rightarrow \mathbb{C}$
equipped with the $\mathbb{H}$-action given by
\[   (h.f)(x)=f(h^*.x) \quad \mbox{ for all $x \in X$} . \]
It is straightforward to verify that the above $\mathbb{H}$-action is well-defined. An $\mathbb{H}$-module $X$ is said to be {\it $*$-Hermitian} if $X$ is isomorphic  to its Hermitian dual, or equivalently there exists a non-degenerate Hermitian form on $X$ such that $\langle h.x_1, x_2 \rangle =\langle x_1, h^*.x_2 \rangle$ for all $h \in \mathbb{H}$ and $x_1, x_2 \in X$.  

We say that $X$ is {\it $*$-unitary} if there exists a non-degenerate and positive-definite Hermitian form on $X$ such that $\langle h.x_1, x_2 \rangle=\langle x_1, h^*.x_2 \rangle$ for all $h \in \mathbb{H}$ and $x_1, x_2 \in X$. 
\end{definition}




\subsection{$\theta$-action} \label{ss theta action}

Let $\theta$ be an involution on $\mathbb{H}$ characterized by
\begin{eqnarray}
\label{eqn involution} 
\theta(v)=-w_0(v) \mbox{ for any $v \in V$}, \mbox{ and } \quad \theta(t_w)=t_{w_0ww_0^{-1}} \mbox{ for any $w \in W$} ,
\end{eqnarray}
where $w_0$ acts on $v$ as the reflection representation of $W$.

\begin{lemma}
For any $v \in V$, $\theta(\widetilde{v})=\widetilde{\theta(v)}$.
\end{lemma}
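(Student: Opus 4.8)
The plan is to reduce the claim to a direct computation, exploiting the fact that $\theta$ is an algebra involution (so it suffices to check the identity on generators) and that $\widetilde{v}$ is defined as an element of the subalgebra generated by $V$ and $\mathbb{C}[W]$. First I would write out both sides explicitly using the definition (\ref{eqn v titlde}): on the left,
\[
\theta(\widetilde{v}) = \theta(v) - \frac{1}{2}\sum_{\alpha \in R^+} k_{\alpha} \langle v, \alpha^{\vee}\rangle \, \theta(t_{s_{\alpha}}) = -w_0(v) - \frac{1}{2}\sum_{\alpha \in R^+} k_{\alpha}\langle v, \alpha^{\vee}\rangle \, t_{s_{w_0(\alpha)}},
\]
using $w_0 s_{\alpha} w_0^{-1} = s_{w_0(\alpha)}$ and (\ref{eqn involution}). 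On the right,
\[
\widetilde{\theta(v)} = \widetilde{-w_0(v)} = -w_0(v) - \frac{1}{2}\sum_{\beta \in R^+} k_{\beta}\langle -w_0(v), \beta^{\vee}\rangle \, t_{s_{\beta}}.
\]

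The second step is to match the two sums. I would substitute $\beta = w_0(\alpha)$ as $\alpha$ ranges over $R^+$; since $w_0$ sends $R^+$ bijectively to $R^- = -R^+$, this means $\beta$ ranges over $R^-$, so I would instead set $\beta = -w_0(\alpha) \in R^+$. Then $s_{\beta} = s_{-w_0(\alpha)} = s_{w_0(\alpha)}$ (reflections do not see the sign of the root), and the coroot of $\beta = -w_0(\alpha)$ is $-w_0(\alpha^{\vee})$, so $\langle -w_0(v), \beta^{\vee}\rangle = \langle -w_0(v), -w_0(\alpha^{\vee})\rangle = \langle v, \alpha^{\vee}\rangle$ because $w_0$ acts orthogonally with respect to the pairing. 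Finally $k_{\beta} = k_{-w_0(\alpha)} = k_{\alpha}$ since $-w_0$ preserves $W$-orbits of roots (it permutes $\Delta$ and $k$ is constant on $W$-orbits). Putting these together, the right-hand sum becomes $-\frac{1}{2}\sum_{\alpha \in R^+} k_{\alpha}\langle v, \alpha^{\vee}\rangle\, t_{s_{w_0(\alpha)}}$, which is exactly the left-hand sum.

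I do not anticipate a serious obstacle here; the only point requiring a little care is bookkeeping the signs and the change of variables $\alpha \mapsto -w_0(\alpha)$ consistently across the root, the coroot, the reflection, and the parameter, and making sure $w_0$'s action on $V^{\vee}$ is the contragredient of its action on $V$ so that the pairing $\langle w_0(v), w_0(\alpha^{\vee})\rangle = \langle v, \alpha^{\vee}\rangle$ holds. One could alternatively phrase the whole argument more slickly by noting that $\theta(v) = -w_0(v) = \mathrm{Ad}(t_{w_0})(v)$ composed with the conjugation-free part — i.e. invoking the relation $v^* $-style identity $-t_{w_0} w_0(v) t_{w_0}^{-1}$ already recorded in the $*$-operation subsection — but the direct generator computation above is cleanest and self-contained. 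Since $\theta$ is multiplicative and both $\widetilde{v}$ and $\widetilde{\theta(v)}$ lie in the span of $V \cup \{t_{s_\alpha}\}$, verifying the linear identity just computed completes the proof.
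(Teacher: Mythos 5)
Your computation is correct and is exactly the "straightforward computation" the paper omits: expand both sides via (\ref{eqn v titlde}), use $\theta(t_{s_\alpha}) = t_{s_{w_0(\alpha)}}$, and reindex the sum by $\alpha \mapsto -w_0(\alpha)$, noting that $-w_0$ permutes $R^+$, preserves $W$-orbits (so $k$ is unchanged), and is compatible with the pairing. Nothing further is needed.
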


\begin{proof}
This follows from a straightforward computation.
\end{proof}

\begin{definition}
For an $\mathbb{H}$-module $X$, define $X^{\theta}$ to be the $\mathbb{H}$-module such that $X^{\theta}$ is isomorphic to $X$ as vector spaces and the $\mathbb{H}$-action is determined by:
\[   \pi_{X^{\theta}}(h)x=\pi_X(\theta(h))x ,\]
where $\pi_X$ and $\pi_{X^{\theta}}$ are the maps defining the action of $\mathbb{H}$ on $X$ and $X^{\theta}$ respectively.
\end{definition}

\begin{definition}
Let $J \subset \Delta$. For an $\mathbb{H}_J$-module $X$, we say $\gamma \in V^{\vee}$ is a {\it weight} of $X$ if there exists a non-zero $x \in X$ such that $(v-\gamma(v))^kx=0$ for all $v \in V$ and for some positive integer $k$. We call such $x$ to be the {\it generalized weight vector} of $\gamma$.

\end{definition}

\begin{proposition} \label{prop herm}
Let $X$ be an irreducible $\mathbb{H}$-module with a real central character. Assume that $X$ satisfy one of the following conditions:
\begin{enumerate}
\item the central character of $X$ is non-zero,
\item the parameter function $k$ is identically equal to zero,
\item $k_{\alpha}\neq 0$ for all $\alpha \in \Delta$.
\end{enumerate}
Then $X^{\theta}$ is the Hermitian dual of $X$.
\end{proposition}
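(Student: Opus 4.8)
The plan is to compare $X^\theta$ with the $*$-Hermitian dual $X^*$ and show that, under the stated hypotheses, the two $\mathbb{H}$-modules carry the same central character and are both irreducible, and then to produce an isomorphism between them. The starting observation is the formula $v^* = -t_{w_0}\overline{w_0(v)}\,t_{w_0}^{-1}$ for $v\in V$, which up to complex conjugation is literally the defining formula $\theta(v) = -w_0(v)$ conjugated by $t_{w_0}$; likewise $t_w^* = t_w^{-1} = t_{w_0}\,t_{w_0^{-1}w^{-1}w_0}\,t_{w_0}^{-1}$ matches $\theta(t_w) = t_{w_0 w w_0^{-1}}$ up to the $w\mapsto w^{-1}$ inversion built into any anti-involution. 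So the rough identity to be exploited is: the anti-involution $*$ equals (complex conjugation) $\circ$ (the standard anti-involution $h\mapsto$ ``$\theta$ composed with $t_w\mapsto t_{w^{-1}}$'') $\circ$ (conjugation by $t_{w_0}$). I would first make this precise: writing $\overline{(\cdot)}$ for the conjugate-linear anti-automorphism $t_w\mapsto t_w$, $v\mapsto \overline v$ (this is an anti-automorphism because the defining relation (3) of Definition \ref{def graded affine} has real structure constants when $k$ is real, which is where the real central character / real parameter hypotheses enter), one checks on generators that $h^* = \overline{\,\theta(h)^{\mathrm{op}}\,}$ appropriately conjugated, where ${}^{\mathrm{op}}$ denotes the principal anti-automorphism fixing $V$ pointwise and sending $t_w$ to $t_{w^{-1}}$.

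Granting that bookkeeping, the Hermitian dual $X^*$ becomes: the conjugate-linear dual of $X$ with $\mathbb{H}$ acting through $\theta$ followed by the usual contragredient twist by ${}^{\mathrm{op}}$. Since $X$ is irreducible finite-dimensional with real central character, the ordinary contragredient $X^\vee$ (dual space, action via ${}^{\mathrm{op}}$) is again irreducible with the same central character, and the conjugate-linear structure only introduces a harmless semilinear identification $X^\vee \cong \overline{X}$; here is where I would invoke that a real central character forces $\overline X \cong X^\vee$ for irreducible $X$ — this uses that the central character, being a $W$-orbit in $V$, is stable under $v\mapsto -w_0(v)$ composed with conjugation precisely when it is real, which is hypothesis that the central character is real (and one of (1)–(3) handles the subtlety at central character zero, where the module need not be determined by its central character and one instead uses the parameter hypothesis to rerun the argument at the level of the Langlands/temperedquotient structure, or directly when $k\equiv 0$ so that $\mathbb{H}=\mathbb{C}[W]\otimes S(V)$ and everything is transparent). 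Combining, $X^* \cong (X^\vee)^\theta$ semilinearly, and then the key point is that $(X^\vee)^\theta \cong X^\theta$: this reduces to $X^\vee \cong X$ as $\mathbb{H}$-modules — i.e. self-duality of irreducible $\mathbb{H}$-modules with real central character — which is again standard and follows from matching central characters plus irreducibility, with cases (1)–(3) again covering the central-character-zero complication.

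I expect the main obstacle to be the central-character-zero case: when $\chi = [0]$, an irreducible module is not pinned down by its central character, so the slick ``same central character $+$ irreducible $\Rightarrow$ isomorphic'' argument breaks, and one must argue more carefully — either by passing to the associated graded / using the $W$-structure directly (using that $\theta$ restricted to $\mathbb{C}[W]$ is conjugation by $w_0$, hence inner, so $\Res_W X^\theta \cong \Res_W X$), or by invoking that under hypotheses (2) or (3) the relevant modules are controlled (for $k\equiv 0$, $\mathbb{H}$-modules with central character $0$ are just $W$-representations with $V$ acting by $0$, and for $k_\alpha\neq 0$ everywhere one can use Kato's irreducibility or the fact that the only issue is at genuinely degenerate points which these hypotheses exclude). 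The secondary obstacle is getting the $t_{w_0}$-conjugation and the ${}^{\mathrm{op}}$-twist to cancel correctly against $\theta$ on the nose rather than up to an inner automorphism; since $\theta(t_w) = t_{w_0 w w_0^{-1}}$ is conjugation by $w_0$ on the group algebra part, and the contragredient/$*$ discrepancies are likewise conjugations by $t_{w_0}$, these should cancel, but the signs in $v^* = -t_{w_0}\overline{w_0(v)}t_{w_0}^{-1}$ versus $\theta(v) = -w_0(v)$ need to be tracked through the conjugate-linearity of $X^*$ to confirm the final isomorphism is $\mathbb{H}$-linear and not merely $\mathbb{H}$-semilinear after the dust settles — this is routine but is the place where an error would hide.
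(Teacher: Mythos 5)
Your opening reduction is sound, and it is essentially the same computation the paper performs at the level of weights: the generator-level identity $h^* = t_{w_0}\,\overline{\theta(h)^{\mathrm{op}}}\,t_{w_0}^{-1}$ does hold (the conjugate-linear map $t_w\mapsto t_w$, $v\mapsto\overline{v}$ is a well-defined anti-automorphism of $\mathbb{H}$ because $k$ is real-valued and the root datum is defined over $\mathbb{R}$), and since conjugation by $t_{w_0}$ is inner, it correctly shows that $X^*$ and $X^{\theta}$ have the same weights, the same central character, and the same $\mathbb{C}[W]$-module structure. That is exactly the content of the first paragraph of the paper's proof.

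The gap is in converting this matching of invariants into an isomorphism of $\mathbb{H}$-modules. You invoke, twice, that for irreducible modules ``matching central characters plus irreducibility'' yields the identifications $\overline{X}\cong X^{\vee}$ and $X^{\vee}\cong X$, and you locate the only obstruction at central character zero. That is not where the difficulty lives: distinct irreducible $\mathbb{H}$-modules routinely share a nonzero real central character. Already for type $A_1$ with $k_{\alpha}\neq 0$, the trivial and Steinberg modules are non-isomorphic irreducibles with the same (nonzero, real) central character; so ``same central character $+$ irreducible $\Rightarrow$ isomorphic'' is false precisely in the regime of hypothesis (1), which you treat as the easy case. This is why the paper handles case (1) not by a central-character argument but by citing the substantive result \cite[Proposition 4.3.1]{BC} (see also \cite[Theorem 5.5]{EM}), whose proof goes through the Langlands classification rather than through invariants; your proposal offers no substitute for this input. (Your treatment of cases (2) and (3) --- irreducibility of $\Res_W X$ when $k\equiv 0$, and the Kato-type irreducibility of $\Ind_{S(V)}^{\mathbb{H}}\mathbb{C}_0$ when all $k_{\alpha}\neq 0$, which does give uniqueness of the irreducible module with central character $0$ --- matches the paper and is fine; it is only there that the ``unique irreducible with this central character'' argument is legitimate.)
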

\begin{proof}
We sketch the proof. Let $x_{\gamma}$ be a generalized weight vector of $X$ of a weight $\gamma \in V^{\vee}$. Then for sufficiently large $k$ and $v \in V_0$,
\[ ((v-\theta(\gamma)(v))^k.f)(t_{w_0}.x_{\gamma})=f(t_{w_0}(\theta(v)-\theta(\gamma)(v))^k.x_{\gamma}))=0.
\]
Hence $\overline{\theta(\gamma)}=\theta(\gamma)$ is a weight of the Hermitian dual of $X$. Then  have the same weights.

If $X$ satisfies (1), then the arguments in the proof of \cite[Proposition 4.3.1]{BC} (also see \cite[Theorem 5.5]{EM}) implies that $X$ and $X^{\theta}$ are isomorphic. We now assume (1) does not hold for $X$. Then the central character of $X$ is zero. If $X$ satisfies (2), then the restriction of $X$ to $\mathbb{C}[W]$ is an irreducble $W$-representation. Then it is easy to show that the Hermitian dual of $X$ and $X^{\theta}$ are isomorphic. We now assume $X$ satisfies (3). Then by \cite[Theorem 1.3]{OS0} or \cite[Proposition 2.9]{KR}, $\mathrm{Ind}^{\mathbb{H}}_{S(V)}\mathbb{C}_0$ is irreducible and hence there is only one irreducible $\mathbb{H}$-module with the central character $0$. This implies the Hermitina dual of $X$ and $X^{\theta}$ are isomorphic.


\end{proof}

\begin{remark}
We believe that Proposition \ref{prop herm} is true for all the $\mathbb{H}$-modules with a real central character (without assuming any one of the three conditions in the propsoition). An evidence is that the Hermitian dual of $X$ and $X^{\theta}$ have the same $S(V)$ and $\mathbb{C}[W]$ module structure. However, the author does not succeed to find a simple proof. For the purpose of this paper, modules satisfying any one of the three conditions suffice.

\end{remark}

\begin{corollary} \label{cor hermitian criteria}
Let $X$ be an irreducible $\mathbb{H}$-module with a real central character. Assume $X$ satisfies any one of the three conditions in Proposition \ref{prop herm}. Then $X$ is a $*$-Hermitian $\mathbb{H}$-module if and only if $X$ and $X^{\theta}$ are isomorphic.
\end{corollary}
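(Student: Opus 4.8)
The plan is to derive Corollary \ref{cor hermitian criteria} directly from Proposition \ref{prop herm}, which identifies $X^\theta$ with the $*$-Hermitian dual $X^*$ of $X$ under the stated hypotheses. Recall that by definition $X$ is $*$-Hermitian precisely when $X$ is isomorphic to $X^*$ as an $\mathbb{H}$-module. So the argument is essentially a substitution: under any of the three conditions, Proposition \ref{prop herm} gives an $\mathbb{H}$-module isomorphism $X^* \cong X^\theta$, and hence $X \cong X^*$ if and only if $X \cong X^\theta$. That is the whole content.

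In more detail, I would first recall the definition of $*$-Hermitian: $X$ is $*$-Hermitian iff there is a non-degenerate Hermitian form $\langle \cdot, \cdot \rangle$ on $X$ with $\langle h.x_1, x_2\rangle = \langle x_1, h^*.x_2 \rangle$ for all $h \in \mathbb{H}$, which is equivalent to the existence of an $\mathbb{H}$-isomorphism $X \xrightarrow{\sim} X^*$ (the form and the isomorphism determine each other in the usual way). Next, since $X$ satisfies one of the three conditions of Proposition \ref{prop herm}, that proposition tells us $X^\theta \cong X^*$ as $\mathbb{H}$-modules. Composing, $X \cong X^*$ if and only if $X \cong X^\theta$. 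This proves the stated equivalence.

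There is no real obstacle here; the corollary is a formal consequence of the preceding proposition together with the definition of $*$-Hermitian module. The only point worth being slightly careful about is the equivalence between ``admitting a non-degenerate invariant Hermitian form'' and ``being isomorphic to the $*$-Hermitian dual,'' but this is already asserted in the definition of $*$-Hermitian in the excerpt, so it may simply be cited. I would therefore write the proof as a two-line deduction: invoke Proposition \ref{prop herm} to get $X^\theta \cong X^*$, then observe that by definition $X$ is $*$-Hermitian iff $X \cong X^*$, whence the claim.
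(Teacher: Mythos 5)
Your proof is correct and is exactly the intended argument: the paper states this corollary without proof precisely because it is the immediate substitution you describe, namely Proposition \ref{prop herm} gives $X^* \cong X^{\theta}$ and the definition of $*$-Hermitian is $X \cong X^*$. Nothing further is needed.
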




\subsection{Tempered modules and discrete series} 

Tempered modules and discrete series will be studied in Section \ref{s theta action} and \ref{s solvable induced}. They provide the main examples of $\mathbb{H}$-modules $X$ with the property $X^{\theta} = X$. 

\begin{definition}
\label{def tempered}
Recall that $\mathbb{H}$ is associated to the root data $(R, V, R^{\vee}, V^{\vee})$. An $\mathbb{H}$-module $X$ is said to be {\it tempered} if for any weight $\gamma \in V^{\vee}$ of $X$, $\mathrm{Re}\langle \omega_{\alpha}, \gamma \rangle \leq 0$ for any fundamental weight $\omega_{\alpha}$ in $V$. Here $\mathrm{Re}(a)$ denotes the real part of a complex number. 

An $\mathbb{H}$-module is said to be a {\it discrete series} if $X$ is tempered and all the inequalities in the definition of tempered modules are strict. 

\end{definition}

\begin{theorem} \cite[Theorem 7.2]{So} \label{thm disc unitary}
All irreducible discrete series has a real central character and are $*$-unitary. 
\end{theorem}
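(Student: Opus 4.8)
**Proof proposal for Theorem (So, Thm 7.2): all irreducible discrete series have real central character and are $*$-unitary.**

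The plan is to combine the weight-theoretic definition of discrete series with the known structure theory for graded affine Hecke algebras, rather than to reprove everything from scratch. First I would establish that the central character is real. An irreducible $\mathbb{H}$-module $X$ has a central character $\chi$ corresponding to a $W$-orbit $[v]$ in $V$, and I write $v=v_{\mathrm{re}}+iv_{\mathrm{im}}$ for its real and imaginary parts with respect to the real form $V_0$. A standard deformation argument shows that the imaginary part $v_{\mathrm{im}}$ is, up to $W$-conjugacy, fixed: one can show that every weight $\gamma$ of $X$ has imaginary part in the $W_{v_{\mathrm{im}}}$-orbit of $v_{\mathrm{im}}$, and then the tempered/discrete-series inequalities $\mathrm{Re}\langle \omega_\alpha,\gamma\rangle<0$ for all fundamental weights force $v_{\mathrm{im}}$ to lie in the interior of the antidominant cone while also being fixed by the full longest-element symmetry; the only possibility is $v_{\mathrm{im}}=0$. (This is the usual argument that the discrete series condition is incompatible with a genuinely complex central character because it would contradict the strict negativity on a $W$-stable real subspace.) Hence $\chi$ is real.

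Next, I would prove $*$-Hermiticity. Since $X$ has real central character and is a discrete series, its weights $\gamma$ satisfy $\mathrm{Re}\langle\omega_\alpha,\gamma\rangle<0$, so in particular the central character is non-zero unless the root system is empty; thus $X$ satisfies condition (1) of Proposition \ref{prop herm} (or, in the degenerate rank-zero case, it is trivially unitary). By Proposition \ref{prop herm}, $X^\theta$ is the Hermitian dual of $X$. To conclude $*$-Hermiticity via Corollary \ref{cor hermitian criteria}, I must show $X\cong X^\theta$. The point is that $X^\theta$ is again irreducible, again a discrete series (the map $\theta$ sends the antidominant cone to itself because $w_0$ does, and $\theta$ permutes weights compatibly), and has the same central character as $X$: indeed $\theta$ acts on $Z(\mathbb{H})=S(V)^W$, and $\theta(z)(v)=z(-w_0 v)=z(v)$ since $-w_0$ is an automorphism of the root system so $-w_0v$ is $W$-conjugate to $v$ for real $v$ when $w_0=-1$, and in general $-w_0$ preserves $W$-orbits of real central characters of discrete series. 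Combined with rigidity of discrete series at a fixed real central character — at a real central character the discrete series are rigid, so an irreducible discrete series is determined by finitely much data — and the fact that $X$ and $X^\theta$ have identical $\mathbb{C}[W]$ and $S(V)$ structure, I would deduce $X\cong X^\theta$.

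Finally, for positivity (genuine $*$-unitarity rather than just $*$-Hermiticity), I would invoke the connection to $p$-adic groups / the analytic theory: discrete series of $\mathbb{H}$ correspond, under Lusztig's reduction and Opdam's spectral correspondence, to discrete series of the associated affine Hecke algebra, which are unitary because they occur discretely in the Plancherel decomposition (the Schwartz-algebra / $C^*$-algebraic completion carries a positive trace), so the invariant Hermitian form is definite. Concretely one cites the Plancherel theorem for affine/graded Hecke algebras (Opdam, Delorme--Opdam): a discrete series carries a positive-definite invariant inner product because its matrix coefficients are square-integrable with respect to the Plancherel measure. The main obstacle I anticipate is the positivity step: Hermiticity is essentially formal once Proposition \ref{prop herm} is in hand, but definiteness genuinely requires either the harmonic-analytic input (square-integrability forcing positivity of the natural form) or a direct argument that no invariant form on a discrete series can be indefinite, which is where the real content lies. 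I would therefore structure the proof so that reality and Hermiticity are self-contained, and positivity is reduced cleanly to the cited Plancherel-theoretic result.
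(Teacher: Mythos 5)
The paper does not actually prove this statement: it is quoted verbatim from Solleveld \cite[Theorem 7.2]{So}, with the analytic content ultimately resting on the Plancherel/Schwartz-algebra theory of \cite{DO}, \cite{Op}, \cite{OS1}. So you are supplying a proof where the paper supplies only a citation, and your final step --- positivity of the invariant form via square-integrability with respect to the Plancherel measure --- is indeed the standard and essentially unavoidable route; citing it is legitimate.

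However, the two parts you claim to make self-contained both have genuine gaps. First, reality of the central character: the discrete-series inequalities $\mathrm{Re}\langle \omega_\alpha,\gamma\rangle<0$ constrain only the real parts of the weights and say nothing about $v_{\mathrm{im}}$, so they cannot ``force $v_{\mathrm{im}}$ to lie in the interior of the antidominant cone''; that sentence conflates $\mathrm{Re}\,\gamma$ with $\mathrm{Im}\,\gamma$, and no contradiction with strict negativity of the real parts arises from a nonzero imaginary part. The actual proof is that the central character of a discrete series must be a residual point in the sense of Heckman--Opdam (\cite{HO}, \cite{Op}), and residual points for real parameter functions lie in the real form; this is genuine analytic/combinatorial input, not a formal consequence of Definition \ref{def tempered}. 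Second, your deduction of $X\cong X^{\theta}$ from ``identical $\mathbb{C}[W]$- and $S(V)$-structure plus rigidity'' is precisely the step the paper's own remark after Proposition \ref{prop herm} identifies as not known to follow from matching $W$- and $S(V)$-structures, and you do not supply the missing argument. For discrete series the clean logical order is the reverse of yours: unitarity comes first from the Plancherel theorem, $*$-Hermiticity is an immediate consequence, and $X\cong X^{\theta}$ then follows from Corollary \ref{cor hermitian criteria}; it is an output of the theorem, not an intermediate input. As written, only your third paragraph is sound, and it is a citation rather than an argument.
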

\begin{notation} \label{not induced modules}
Let $\Xi$ be the set of triples $(J, U, \nu)$ such that $J \subset \Delta$, $U$ is a $\overline{\mathbb{H}}_J$-discrete series, and $\nu \in V_J^{\vee}$. For any $(J, U, \nu) \in \Xi$, denote $X(J, U, \nu)$ to be the parabolically induced module $\Ind_{\mathbb{H}_J}^{\mathbb{H}}(U \otimes \mathbb{C}_{\nu}) :=\mathbb{H} \otimes_{\mathbb{H}_J}(U \otimes \mathbb{C}_{\nu})$. When $\nu=0$, we shall simply write $X(J, U)$ instead of $X(J, U, 0)$. We indeed consider $\nu=0$ most of time in this paper. We call $X(J,U)$ to have a real central character (c.f. Theorem \ref{thm disc unitary}).
\end{notation}

\begin{proposition} \label{prop Hermitian form} \cite[Corollary 1.4]{BM}
Let $(J, U, \nu) \in \Xi$. Then there exists a non-degenerate positive-definite $*$-Hermitian form $\langle ,  \rangle$ on $X(J, U, \nu)$ i.e. $\langle h.x, x' \rangle = \langle x, h^*.x' \rangle$. In particular, $X(J, U, \nu)$ is $*$-unitary.

\end{proposition}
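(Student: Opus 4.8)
This is \cite[Corollary 1.4]{BM}, specialized to our situation; I indicate the argument one would give directly in the graded setting. The only external input needed is Theorem \ref{thm disc unitary}: the $\overline{\mathbb{H}}_J$-discrete series $U$ has a real central character and is $\ast$-unitary for the natural $\ast$-operation of $\overline{\mathbb{H}}_J$ (defined as in Section \ref{ss theta action} but with $w_{0,J}$ in place of $w_0$); in particular $U$ is isomorphic to its Hermitian dual. Since $\nu$ is real, $\mathbb{C}_\nu$ is a unitary character of $S(V_J^\bot)$, so $U\otimes\mathbb{C}_\nu$ is an $\mathbb{H}_J$-module with real central character which is $\ast$-unitary for the $\ast$-operation of $\mathbb{H}_J$. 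Two things must then be shown: that $X(J,U,\nu)$ carries a non-degenerate $\ast$-Hermitian form, and that such a form is positive-definite.

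For the first point I would compute the $\ast$-Hermitian dual $X(J,U,\nu)^\ast$. Because $\ast$ is a contragredient-type anti-involution, it turns a parabolically induced module into one induced from the opposite parabolic: working with the decomposition $\mathbb{H}=\bigoplus_{w\in W^J}t_w\mathbb{H}_J$ and the explicit formula for $v^\ast$, a computation (keeping track of the signs coming from $\nu\mapsto-\nu$ and from the opposition element) yields
\[
X(J,U,\nu)^{\ast}\;\cong\;\Ind_{\mathbb{H}_{w_0(J)}}^{\mathbb{H}}\bigl((U\otimes\mathbb{C}_\nu)^{\dagger}\bigr)
\]
for an appropriate Hermitian-type dual $(U\otimes\mathbb{C}_\nu)^{\dagger}$ on the parabolic subalgebra $\mathbb{H}_{w_0(J)}$. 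Now $W_{w_0(J)}=w_0W_Jw_0^{-1}$, so conjugation by $t_{w_0}$ carries $\mathbb{H}_{w_0(J)}$ onto $\mathbb{H}_J$ and identifies this last module with some $X(J,U',\nu')$; since $U$ has a real central character the conjugation acts trivially on the relevant weights, forcing $\nu'=\nu$, and since $U$ is $\ast$-unitary it is isomorphic to its Hermitian dual, which gives $U'\cong U$. Hence $X(J,U,\nu)^\ast\cong X(J,U,\nu)$, and therefore $X(J,U,\nu)$ admits a non-degenerate $\ast$-Hermitian form $\langle\cdot,\cdot\rangle$.

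For positive-definiteness I would run the standard deformation argument (as in \cite{BM}): along the real path $s\mapsto X(J,U,s\nu)$, $s\in[0,1]$, the $\ast$-Hermitian form varies continuously (indeed real-analytically) and stays non-degenerate, so its signature is constant along the path, and it suffices to treat one value of $s$. At $s=0$ the module $X(J,U)=\Ind_{\mathbb{H}_J}^{\mathbb{H}}U$ is the parabolic induction of the unitary module $U$ at zero parameter; here the candidate form is the one transported from unitary parabolic induction of $W$-representations via the isomorphism $\Res_W X(J,U)\cong\Ind_{W_J}^{W}\Res_{W_J}U$, and it is manifestly positive-definite, while its $\ast$-invariance uses that at $\nu=0$, with $U$ of real central character, the extra $S(V)$-terms entering $v^\ast$ act by skew-adjoint operators. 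Propagating positivity back along the path shows $\langle\cdot,\cdot\rangle$ is positive-definite on $X(J,U,\nu)$, so $X(J,U,\nu)$ is $\ast$-unitary.

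The main obstacle is the bookkeeping in the second paragraph: pinning down precisely how the $\ast$-operation of $\mathbb{H}$, which is built from the global longest element $w_0$, restricts along parabolic induction to (an inner twist by $t_{w_0 w_{0,J}}$ of) the $\ast$-operation of the parabolic subalgebra $\mathbb{H}_J$, and checking that this twist is invisible on modules with real central character. Once that compatibility is in hand, both the computation of the Hermitian dual and the reduction of positivity to the Levi are routine; alternatively one can replace the deformation by exhibiting the composite of suitably normalized intertwining operators $X(J,U,\nu)\to X(J,U,-\nu)\to X(J,U,\nu)^{\ast}\cong X(J,U,\nu)$ as a positive self-adjoint operator, with positivity again reduced to the $\ast$-unitarity of $U$ on the Levi.
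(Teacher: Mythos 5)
Your route is genuinely different from the one the paper takes, and the difference matters. The paper's proof is a direct construction: it defines the projection $\pr:\mathbb{H}\to\mathbb{H}_J$ along the decomposition $\mathbb{H}=\bigoplus_{w\in W^J}t_w\mathbb{H}_J$ and sets $\langle h_1\otimes u_1, h_2\otimes u_2\rangle=\langle u_1,\pr(h_1^*h_2)u_2\rangle_J$, where $\langle\,,\,\rangle_J$ is the positive form on $U$ supplied by Theorem \ref{thm disc unitary}. With this definition the $*$-invariance is formal, and positivity is immediate from the orthogonality of the cosets: writing $x=\sum_{w\in W^J}t_w\otimes u_w$ one gets $\langle x,x\rangle=\sum_w\langle u_w,u_w\rangle_J>0$, since $\pr(t_{w'}^{-1}t_w)=\delta_{w,w'}$ for $w,w'\in W^J$. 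So the only compatibility that really has to be checked is the one you correctly identify as the ``main obstacle'' (how $*$ on $\mathbb{H}$ interacts with $\mathbb{H}_J$ and $\pr$); once that is done, both non-degeneracy and positive-definiteness come for free, with no duality computation and no deformation.

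By contrast, your argument has a genuine gap in the positivity step. The assertion that along $s\mapsto X(J,U,s\nu)$ the $*$-Hermitian form ``stays non-degenerate, so its signature is constant'' is exactly the point that fails in general: invariant Hermitian forms on parabolically induced modules do degenerate at reducibility points as the continuous parameter varies (this is precisely where complementary series end), so constancy of signature cannot be asserted without an argument, and in the Barbasch--Moy setting that argument is the content of the theorem, not an input to it. A secondary soft spot: from an isomorphism $X(J,U,\nu)^*\cong X(J,U,\nu)$ you only get a non-degenerate invariant sesquilinear pairing; upgrading it to a single \emph{Hermitian} form when $X(J,U,\nu)$ is reducible (which it can be) requires an extra step (e.g.\ decomposing into isotypic components, or checking the intertwiner is self-adjoint up to a positive scalar). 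Both issues evaporate if you replace the abstract duality-plus-deformation scheme by the explicit induced form above, whose positivity holds uniformly in $\nu$.
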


\begin{proof}
This is \cite[Corollary 1.4]{BM}. Since $U$ is an irreducible $\mathbb{H}_J$-discrete series, Theorem \ref{thm disc unitary} implies that there exists a non-degenerate $*$-Hermitian form $\langle , \rangle_{J}$ on $U$. Define a projection map $\pr: \mathbb{H} \rightarrow \mathbb{H}_J$ as follow: for $h \in \mathbb{H}$, $h$ can be uniquely written as the form $\sum_{w \in W^J} t_w h_w$, where $h_w \in \mathbb{H}_J$. Then $\pr$ is defined as $\pr(h)=h_e$, where $e$ corresponds to the trivial coset in $W/W_J$. Define the non-degenerate form $\langle ,  \rangle$ on $X(J, U)$ as 
\[   \langle h_1 \otimes u_1, h_2 \otimes u_2 \rangle = \langle u_1, \pr(h_1^*h_2)u_2 \rangle_J .\]
It remains to verify $\langle , \rangle $ satisfies the desired properties. 
\end{proof}

We shall use the following result later:

\begin{corollary}
Let $(J, U, 0) \in \Xi$. Suppose $X(J, U)$ satisfy one of the three conditions in Proposition \ref{prop herm}. Then $X(J,U)$ is isomorphic to $X(J, U)^{\theta}$ as $\mathbb{H}$-modules.

\end{corollary}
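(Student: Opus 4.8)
The plan is to combine the $*$-Hermitian structure on $X(J,U)$ provided by Proposition \ref{prop Hermitian form} with the Hermitian-duality characterization of $X^\theta$ from Proposition \ref{prop herm} and Corollary \ref{cor hermitian criteria}. Since $X(J,U)$ is $*$-unitary (in particular $*$-Hermitian), it is isomorphic to its $*$-Hermitian dual $X(J,U)^*$. On the other hand, if we knew $X(J,U)$ were \emph{irreducible}, Corollary \ref{cor hermitian criteria} would immediately give that $X(J,U)^*\cong X(J,U)^\theta$, and hence $X(J,U)\cong X(J,U)^\theta$. So the first step I would take is to reduce to, or circumvent, the irreducibility hypothesis.

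Concretely, I would proceed as follows. First, recall that $X(J,U)$ has a real central character (Theorem \ref{thm disc unitary}, as noted in Notation \ref{not induced modules}), so the hypothesis of Proposition \ref{prop herm}/Corollary \ref{cor hermitian criteria} about real central character is satisfied, and by assumption one of the three conditions holds. Second, I would observe that the argument sketched in the proof of Proposition \ref{prop herm} — passing from a generalized weight vector $x_\gamma$ of weight $\gamma$ to the vector $t_{w_0}.x_\gamma$ in the Hermitian dual, which is a generalized weight vector of weight $\theta(\gamma)=\overline{\theta(\gamma)}$ — does not actually use irreducibility of $X$; it only shows that $X^\theta$ and the Hermitian dual $X^*$ have the same $S(V)$-weights. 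The finer claim $X^\theta\cong X^*$ uses irreducibility (via \cite{BC}, \cite{OS0}, \cite{KR}). Hence the cleanest route is: show directly that $X(J,U)^*\cong X(J,U)^\theta$ as $\mathbb H$-modules, by exhibiting an explicit isomorphism, and then combine with $*$-unitarity.

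For the explicit isomorphism I would use the projection $\pr:\mathbb H\to\mathbb H_J$ from the proof of Proposition \ref{prop Hermitian form} together with the behaviour of $\theta$. On the $\overline{\mathbb H}_J$-level, $U$ is an irreducible discrete series with real central character, so $U^*\cong U^{\theta_J}$ where $\theta_J$ is the analogous involution for $\mathbb H_J$ (this is the content of Proposition \ref{prop herm} applied inside $\mathbb H_J$, whose hypotheses are inherited). Then the formula $\langle h_1\otimes u_1,h_2\otimes u_2\rangle=\langle u_1,\pr(h_1^*h_2)u_2\rangle_J$ defines the $*$-Hermitian pairing on $X(J,U)$, and chasing how $\theta$ interacts with $\pr$ — using $\theta(t_w)=t_{w_0 w w_0^{-1}}$, $\theta(v)=-w_0(v)$, and the relation $v^*=-t_{w_0}\overline{w_0(v)}t_{w_0}^{-1}$ from the definition of the $*$-operation — should identify the induced $\theta$-twist of this pairing with the pairing itself, yielding $X(J,U)^\theta\cong X(J,U)^*\cong X(J,U)$. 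Alternatively, and perhaps more simply, one can use that $X(J,U,\nu)^\theta\cong X(J,\,U^\theta,\,-w_0(\nu))$ as a general compatibility of parabolic induction with $\theta$ (induction from $\mathbb H_J$ versus $\mathbb H_{w_0(J)}$; note here $J$ need not be $\theta$-stable, so one should track the conjugated parabolic carefully), combined with $U^\theta\cong U^*\cong U$ for the discrete series $U$ (Theorem \ref{thm disc unitary}) to conclude.

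The main obstacle I anticipate is the bookkeeping of parabolic subalgebras under $\theta$: since $\theta$ is built from $w_0$, it sends $\mathbb H_J$ to $\mathbb H_{-w_0(J)}$, and $J$ is not assumed $\theta$-stable, so one must verify that $X(J,U)^\theta$ is again of the form $X(J',U',\nu')$ with $U'$ a discrete series and then match central characters — using that everything has real central character — to identify it back with $X(J,U)$. Getting the signs and the $w_0$-conjugations exactly right in the interaction between $\pr$, the $*$-operation, and $\theta$ is the delicate part; the representation-theoretic input (unitarity of discrete series, uniqueness of the module with central character $0$ under condition (3), irreducibility of $\Ind^{\mathbb H}_{S(V)}\mathbb C_0$) is all already available from Theorem \ref{thm disc unitary}, Proposition \ref{prop herm}, and the cited results, so no new hard analysis is needed.
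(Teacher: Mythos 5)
You correctly isolated the only real issue---Corollary \ref{cor hermitian criteria} is stated for \emph{irreducible} modules while $X(J,U)$ need not be irreducible---but you then chose to circumvent it rather than reduce to it, and the circumvention is where your argument is incomplete. The intended resolution is a one-line reduction: Proposition \ref{prop Hermitian form} gives a positive-definite $*$-invariant Hermitian form on $X(J,U)$, so $X(J,U)$ is completely reducible and each irreducible summand inherits a positive-definite (in particular non-degenerate) $*$-invariant form, i.e.\ is $*$-Hermitian. All summands share the real central character of $X(J,U)$ and the conditions of Proposition \ref{prop herm} are inherited by them, so Corollary \ref{cor hermitian criteria} applies summand by summand to give $X_i\cong X_i^{\theta}$, and since $\bigl(\bigoplus_i X_i\bigr)^{\theta}=\bigoplus_i X_i^{\theta}$ the corollary follows. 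That is essentially the whole proof in the paper.

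By contrast, the route you actually propose---exhibiting an explicit isomorphism $X(J,U)^{\theta}\cong X(J,U)^{*}$ at the level of the full induced module---leaves its essential step unproved: the ``chasing how $\theta$ interacts with $\pr$'' and the ``delicate part'' you flag are precisely where the content would be. Moreover, the shortcut ``$U^{\theta}\cong U^{*}\cong U$'' is not meaningful as written, since $\theta$ does not preserve $\mathbb{H}_J$ unless $\theta(J)=J$; $U^{\theta}$ is a module over $\overline{\mathbb{H}}_{\theta(J)}$, and one must first transport it back to $\overline{\mathbb{H}}_J$ by conjugating with $w_0w_{0,J}$, then invoke Proposition \ref{prop herm} for the algebra $\overline{\mathbb{H}}_J$ (whose hypotheses for the discrete series $U$ also need to be checked). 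None of this bookkeeping is carried out, and none of it is needed once one observes that $*$-unitarity gives semisimplicity.
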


\begin{proof}
By Proposition \ref{prop Hermitian form}, $X(J, U)$ is the direct sum of irreducible $*$-Hermitian modules. Then the statement is a consequence of Proposition \ref{prop Hermitian form} and Corollary \ref{cor hermitian criteria}.
\end{proof}

\subsection{$\mathrm{Ext}_{\mathbb{H}}$-groups}
The following result about $\Ext_{\mathbb{H}}$-groups will be used several times later. Here $\Ext_{\mathbb{H}}$-groups are taken in the category of $\mathbb{H}$-modules.

\begin{theorem} \label{thm char ext 0}
Let $X$ and $Y$ be $\mathbb{H}$-modules. Then if $X$ and $Y$ have distinct central characters, then $\Ext^i_{\mathbb{H}}(X, Y)=0$ for all $i$. 

\end{theorem}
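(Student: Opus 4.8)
The plan is to use the central decomposition of the category of $\mathbb{H}$-modules. Since the center $Z(\mathbb{H})=S(V)^W$ acts on any finite-dimensional $\mathbb{H}$-module $X$ by an endomorphism with finitely many eigenvalues (corresponding to the $W$-orbits in $V$ occurring as central characters), $X$ decomposes canonically as a direct sum $X=\bigoplus_{[\lambda]} X_{[\lambda]}$, where $X_{[\lambda]}$ is the generalized $[\lambda]$-eigenspace for the $Z(\mathbb{H})$-action. This is a direct-sum decomposition \emph{as $\mathbb{H}$-modules}, because $Z(\mathbb{H})$ is central and so each generalized eigenspace is $\mathbb{H}$-stable. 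Thus without loss of generality I may assume $X$ and $Y$ each have a single (generalized) central character; the hypothesis is that these two central characters $\chi_X$ and $\chi_Y$ are distinct, and the goal is $\Ext^i_{\mathbb{H}}(X,Y)=0$ for all $i$.

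First I would reduce to the case where $X$ is finitely generated, so that it admits a resolution by finitely generated projectives (for instance via the explicit Koszul-type resolution constructed in Section \ref{s koszul type resolution}, or simply because $\mathbb{H}$ is Noetherian); the general case follows by taking limits, or one simply works with finite-dimensional modules throughout as in the applications. Then $\Ext^i_{\mathbb{H}}(X,Y)$ is computed from a complex of $\Hom_{\mathbb{H}}(P_\bullet, Y)$. The key point is that $\Hom_{\mathbb{H}}(X,Y)$ is naturally a module over $Z(\mathbb{H})$ in two ways — via functoriality in the first variable and via functoriality in the second variable — and these two actions agree because $Z(\mathbb{H})$ is central (given $z\in Z(\mathbb{H})$ and $f\in\Hom_{\mathbb{H}}(X,Y)$, one has $f\circ(z\cdot)=(z\cdot)\circ f$ since $f$ is $\mathbb{H}$-linear). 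On the source $X$, $z\in Z(\mathbb{H})$ acts with the single generalized eigenvalue $\chi_X(z)$; on the target $Y$, with the single generalized eigenvalue $\chi_Y(z)$. Hence on $\Hom_{\mathbb{H}}(X,Y)$ the element $z$ satisfies both $(z-\chi_X(z))^N=0$ and $(z-\chi_Y(z))^N=0$ for large $N$. Since $\chi_X\neq\chi_Y$, pick $z$ with $\chi_X(z)\neq\chi_Y(z)$; then the scalar $\chi_X(z)-\chi_Y(z)$ is invertible, and a standard argument (the two commuting nilpotent-shifted operators have coprime "characteristic behaviour") forces $\Hom_{\mathbb{H}}(X,Y)=0$.

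To get the vanishing of all higher $\Ext^i$ rather than just $\Hom$, I would run the same argument one categorical level up: the same $z\in Z(\mathbb{H})$ induces on each $\Ext^i_{\mathbb{H}}(X,Y)$ two actions — one coming from its action on $X$, one from its action on $Y$ — and these coincide by centrality (this is the standard fact that for a central element, the two induced maps on $\Ext$ agree; it can be checked on an injective resolution of $Y$ or a projective resolution of $X$). On $\Ext^i_{\mathbb{H}}(X,Y)$, the action induced from $X$ is annihilated by $(z-\chi_X(z))^N$ and the action induced from $Y$ by $(z-\chi_Y(z))^N$ for suitable $N$; since they are the same operator and $\chi_X(z)-\chi_Y(z)\neq 0$, that operator is simultaneously invertible and nilpotent after shifting, forcing $\Ext^i_{\mathbb{H}}(X,Y)=0$. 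The main obstacle — really the only subtle point — is justifying that the two $Z(\mathbb{H})$-module structures on $\Ext^i_{\mathbb{H}}(X,Y)$ genuinely coincide; this is a routine but essential homological-algebra verification, most cleanly done by noting that for a projective resolution $P_\bullet\to X$, multiplication by $z$ on $X$ lifts to multiplication by $z$ on each $P_j$ (since $z$ is central, this lift is literally the scalar action, which is a chain map), and likewise for $Y$ via an injective resolution, so both induced maps on cohomology are computed by the same cochain endomorphism.
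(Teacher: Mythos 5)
Your argument is correct and is essentially the argument the paper invokes: the paper's proof simply cites Borel--Wallach (Theorem I.4.1), whose proof is exactly this device of letting a central element $z$ with $\chi_X(z)\neq\chi_Y(z)$ act on $\Ext^i_{\mathbb{H}}(X,Y)$ through both variables, observing the two induced endomorphisms coincide, and concluding vanishing since that single operator is annihilated by two coprime polynomials. Your write-up supplies the details the paper leaves to the reference, so no gap.
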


\begin{proof}
See for example \cite[Theorem I. 4.1]{BW}, whose proof can be modified to our setting.
\end{proof}

\section{A Koszul type resolution on $\mathbb{H}$-modules} \label{s koszul type resolution}

We keep using the notation in Section \ref{s involution hermitian}.

\subsection{Koszul-type resolution on $\mathbb{H}$-modules}


Let $X$ be an $\mathbb{H}$-module. Define a sequence of $\mathbb{H}$-module maps $d_i$ as follows:
\begin{align} \label{eqn projective resolution} 0 \rightarrow \mathbb{H} \otimes_{\mathbb{C}[W]}(\Res_WX \otimes \wedge^n V)  \stackrel{d_n}{\rightarrow} \ldots  \stackrel{d_{i+1}}{\rightarrow} \mathbb{H} \otimes_{\mathbb{C}[W]}(\Res_WX \otimes \wedge^i V) & \stackrel{d_i}{\rightarrow} \ldots  \stackrel{d_1}{\rightarrow} \mathbb{H} \otimes_{\mathbb{C}[W]}\Res_WX \stackrel{d_0}{\rightarrow}  X \rightarrow 0
\end{align}
such that $d_0: \mathbb{H} \otimes X \rightarrow X$ given by
\[  d_0(h \otimes x)= h.x
\]
and for $i \geq 1$, $d_i: \mathbb{H} \otimes_{\mathbb{C}[W]} (\Res_WX \otimes \wedge^i V) \rightarrow  \mathbb{H} \otimes_{\mathbb{C}[W]} (\Res_WX \otimes \wedge^{i-1} V)$ given by
 \begin{align}  
&d_i(h\otimes (x \otimes v_1 \wedge\ldots \wedge v_i))  \\
= & \sum_{j=0}^i (-1)^j(h v_j \otimes x \otimes v_1 \wedge \ldots \wedge \hat{v}_j \wedge \ldots \wedge v_i -h \otimes v_j. x \otimes v_1 \wedge \ldots \wedge \hat{v}_j \wedge \ldots \wedge v_i ) .
\end{align}

\begin{proposition} \label{prop well defined d}
The above $d_i$ are well-defined maps and $d^2=0$ i.e. (\ref{eqn projective resolution}) is a well-defined complex.
\end{proposition}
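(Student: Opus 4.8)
The plan is to verify two things: first that each $d_i$ is well-defined on the relative tensor product over $\mathbb{C}[W]$, and second that $d_{i-1} \circ d_i = 0$.

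For well-definedness, I need to check that the formula for $d_i$ respects the relation $ht_w \otimes \xi = h \otimes t_w.\xi$ for $w \in W$, where $\xi \in \Res_W X \otimes \wedge^i V$ and the $W$-action on $\Res_W X \otimes \wedge^i V$ is the diagonal one. Since $W$ is generated by simple reflections, it suffices to check this for $w = s_\alpha$. The key computational input is the defining relation (3) of $\mathbb{H}$, namely $t_{s_\alpha} v - s_\alpha(v) t_{s_\alpha} = k_\alpha \langle v, \alpha^\vee\rangle$, which lets me commute the factors $v_j$ in the first group of terms past $t_{s_\alpha}$; the correction terms $k_\alpha \langle v_j, \alpha^\vee\rangle$ that appear must cancel against the discrepancy coming from the second group of terms (the $h \otimes v_j.x \otimes \cdots$ pieces) together with the fact that $s_\alpha$ permutes the wedge factors $v_1 \wedge \cdots \wedge \hat v_j \wedge \cdots \wedge v_i$. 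I would organize this as a direct computation: expand $d_i((ht_{s_\alpha}) \otimes \xi)$, push all $t_{s_\alpha}$ to the left or right as needed, and compare term-by-term with $d_i(h \otimes (t_{s_\alpha}.\xi))$. The case $d_0$ is immediate since $d_0(ht_w \otimes x) = ht_w.x = d_0(h \otimes t_w.x)$.

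For $d^2 = 0$, I would compute $d_{i-1}(d_i(h \otimes (x \otimes v_1 \wedge \cdots \wedge v_i)))$ directly. The expression is a double sum over pairs of indices $j, \ell$, and splits into four types of terms according to whether each of the two differencing steps picks up the "multiply by $v$" piece or the "act by $v$" piece. The terms where both steps are of the multiplicative type contribute $\sum_{j \neq \ell} \pm h v_j v_\ell \otimes x \otimes (\cdots)$; these cancel in pairs because $v_j v_\ell = v_\ell v_j$ in $S(V) \subset \mathbb{H}$ while the sign and the wedge-omission pattern are antisymmetric in $j, \ell$. Similarly the terms where both steps are of the action type cancel because $v_j v_\ell.x = v_\ell v_j.x$. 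The mixed terms cancel against each other by the same antisymmetry, using that $h v_j \otimes v_\ell.x$-type terms match up across the two orders of differencing. This is the standard Koszul-complex cancellation pattern, with the only subtlety being careful bookkeeping of signs when a hat is inserted at position $j$ versus position $\ell$ (the relative position shifts depending on whether $j < \ell$ or $j > \ell$).

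The main obstacle I anticipate is the well-definedness check rather than $d^2 = 0$: one has to see precisely how the commutator correction term $k_\alpha\langle v, \alpha^\vee\rangle$ interacts with the $s_\alpha$-twist on the wedge factors, since $s_\alpha$ does not act trivially on $\wedge^i V$ and the element $h v_j$ must be rewritten as $h s_\alpha(v_j) t_{s_\alpha} \cdot t_{s_\alpha}^{-1} \cdots$ — in other words, I must track how the scalar corrections combine with the reordering $v_1 \wedge \cdots \wedge \hat v_j \wedge \cdots \mapsto s_\alpha(v_1) \wedge \cdots$. I expect the cleanest route is to first reduce, by $\mathbb{C}[W]$-linearity, to checking the identity on elements of the form $1 \otimes (x \otimes v_1 \wedge \cdots \wedge v_i)$ paired against $t_{s_\alpha}$ from the right, and to exploit Lemma \ref{lem tilde element} or a direct rewriting via relation (3) to absorb all corrections; once the $t_{s_\alpha}$ has been moved through, the remaining identity is purely an identity of diagonal $W$-actions on $\Res_W X \otimes \wedge^{i-1} V$.
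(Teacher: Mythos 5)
Your plan is correct and the computation it describes does go through, but it is a genuinely different argument from the one in the paper. You verify well-definedness head-on: writing $\omega_j=v_1\wedge\cdots\widehat{v}_j\cdots\wedge v_i$ and commuting $t_{s_\alpha}$ past the $v_j$'s via relation (3) in Definition \ref{def graded affine}, the leading terms match $d_i(h\otimes(t_{s_\alpha}.x\otimes s_\alpha(v_1)\wedge\cdots))$ exactly, and the residual discrepancy is
\[
\sum_j(-1)^jk_\alpha\langle v_j,\alpha^\vee\rangle\, h\otimes x\otimes\bigl(\omega_j-s_\alpha(\omega_j)\bigr);
\]
since $s_\alpha(\omega_j)=\omega_j-\sum_{p\neq j}\langle v_p,\alpha^\vee\rangle(\cdots\alpha\cdots)$, this is a double sum whose coefficients $\langle v_j,\alpha^\vee\rangle\langle v_p,\alpha^\vee\rangle$ are symmetric in $(j,p)$ while the wedge terms are antisymmetric after the sign bookkeeping you flag, so it vanishes. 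Thus the obstacle you single out is real but surmountable exactly as you describe, and your $d^2=0$ step is the standard Koszul cancellation (using that $S(V)$ is commutative and acts through the module structure), which the paper leaves as ``straightforward.'' The paper's proof of well-definedness is quite different: it inducts on $i$, observes that the discrepancy $P^w-P_w$ between the two representatives always has the form $1\otimes u$ (because $t_wv-w(v)t_w\in\mathbb{C}[W]$), notes that $d_{i-1}$ --- well-defined by induction --- annihilates it, and concludes $u=0$ from a linear-independence statement about the images under $d_{i-1}$ of such elements. The paper's route avoids the second-order sign chase at the cost of the induction and the linear-independence claim; yours is more elementary and fully explicit. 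One caution on your suggested shortcut: the identity $\widetilde{d}_i=d_i$ is Proposition \ref{prop equal d}, proved \emph{after} well-definedness, so invoking the $\widetilde{v}$-form of the differential here risks circularity, and moreover the elements $\widetilde{v}$ do not commute in $\mathbb{H}$ in general, so $d^2=0$ in that form is not the naive Koszul cancellation; your primary route via relation (3) does not need Lemma \ref{lem tilde element} and is the one to execute.
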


\begin{proof}
We proceed by an induction on $i$. It is easy to see that $d_0$ is well-defined. We now assume $i \geq 1$. To show $d_i$ is independent of the choice of a representative in $\mathbb{H} \otimes_{\mathbb{C}[W]}(X \otimes \wedge^i V)$, it suffices to show 
\begin{align} \label{eqn di map}
    d_i(t_w \otimes (x \otimes v_1\wedge \ldots \wedge v_i) =d_i(1 \otimes (t_w.x \otimes w(v_1)\wedge \ldots \wedge w(v_i))).
\end{align}
For simplicity, set
\begin{eqnarray*}
  P^w &=& d_i(t_w \otimes (x \otimes v_1\wedge \ldots \wedge v_i)  \\
      &=& t_w\sum_{k=0}^i (-1)^{k-1}[v_i \otimes (x \otimes v_1 \wedge \ldots \wedge \widehat{v}_k \wedge \ldots \wedge v_i )-  1 \otimes (v_k.x \otimes v_1\wedge \ldots \wedge \widehat{v}_k \wedge \ldots \wedge v_i )]
\end{eqnarray*}
and
\begin{eqnarray*}
  P_w &=& d_i(1 \otimes (t_w.x \otimes w(v_1)\wedge \ldots \wedge w(v_i))  \\
      &=& \sum_{i=0}^k (-1)^{k-1}w(v_k) \otimes (t_w.x \otimes w(v_1) \wedge \ldots \wedge \widehat{w(v_k)} \wedge \ldots \wedge w(v_i)) \\
			& & \quad - \sum_{i=0}^k (-1)^{k-1}  \otimes (w(v_i).t_w.x \otimes w(v_1)\wedge \ldots \wedge \widehat{w(v_k)} \wedge \ldots \wedge w(v_i)
\end{eqnarray*}
To show equation (\ref{eqn di map}), it is equivalent to show $P^w=P_w$. Regard $\mathbb{C}[W]$ as a natural subalgebra of $\mathbb{H}$. By using the fact that $t_wv-w(v)t_w \in \mathbb{C}[W]$ for $w \in W$,  $P^w -P_w$ is an element of the form $1 \otimes u$ for some $u \in X \otimes \wedge^i V$. Thus it suffices to show that $u=0$. To this end, by the induction hypothesis, $d_{i-1}$ is well-defined and then a direct computation (from the original expressions of $P^w$ and $P_w$) shows that $d_{i-1}(P^w-P_w)=0$ and hence $d_{i-1}(1 \otimes u)=0$. The statement now follows from the fact that the union of
\[   \left\{  e_{r_k} \otimes (x_{r_1,\ldots, r_{i-1}}) \otimes e_{r_1}\wedge \ldots \wedge \widehat{e}_{r_k} \wedge \ldots \wedge e_{r_{i-1}})\right\}_{1 \leq r_1 <  \ldots  < r_k < \ldots < r_{i-1} \leq n}
\]
and
\[   \left\{ 1 \otimes  e_{r_k}.(x_{r_1,\ldots, r_{i-1}}) \otimes e_{r_1}\wedge \ldots \wedge \widehat{e}_{r_k} \wedge \ldots \wedge e_{r_{i-1}})\right\}_{1 \leq r_1 <  \ldots  < r_k < \ldots < r_{i-1} \leq n}
\]
 forms a linearly independent set. Here $x_{r_1,\ldots, r_k} \in X$ and $e_1, \ldots, e_n$ is a fixed basis of $V$.

Verifying $d^2=0$ is straightforward.
\end{proof}

\begin{corollary} \label{cor projective resol}
\begin{enumerate}
\item For any $\mathbb{H}$-module $X$, the complex (\ref{eqn projective resolution}) forms a projective resolution for $X$.
\item The homological dimension of $\mathbb{H}$ is $\dim V$.
\end{enumerate}
\end{corollary}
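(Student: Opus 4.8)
The plan is to deduce both parts by reducing to the classical Koszul complex of the polynomial ring $S(V)$. For the projectivity of the terms of (\ref{eqn projective resolution}): since $\mathbb{C}[W]$ is semisimple, every $\mathbb{C}[W]$-module — in particular $\Res_W X\otimes\wedge^i V$, with $W$ acting diagonally via the reflection representation on $\wedge^i V$ — is a direct summand of a free $\mathbb{C}[W]$-module; and by the Poincar\'e--Birkhoff--Witt theorem $\mathbb{H}$ is free as a right $\mathbb{C}[W]$-module, so $\mathbb{H}\otimes_{\mathbb{C}[W]}(-)$, being left adjoint to the exact functor $\Res_W$, sends projective $\mathbb{C}[W]$-modules to projective $\mathbb{H}$-modules. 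Hence each term of (\ref{eqn projective resolution}) is projective.

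It remains to prove exactness (Proposition \ref{prop well defined d} already gives a complex), and this is the crux. I would filter $\mathbb{H}$ by $F_p\mathbb{H}=\mathbb{C}[W]\cdot S(V)_{\le p}$; the commutation relation in Definition \ref{def graded affine} gives $[t_{s_\alpha},v]\in\mathbb{C}[W]$, so this is an algebra filtration with associated graded the crossed product $S(V)\rtimes W$. Under the PBW identification $\mathbb{H}\otimes_{\mathbb{C}[W]}(\Res_W X\otimes\wedge^i V)\cong S(V)\otimes_{\mathbb{C}}X\otimes_{\mathbb{C}}\wedge^i V$ (sending $P\otimes x\otimes\omega$ with $P\in S(V)$ to $[P\otimes_{\mathbb{C}[W]}(x\otimes\omega)]$), the differential of (\ref{eqn projective resolution}) becomes literally
\[
P\otimes x\otimes v_1\wedge\cdots\wedge v_i\longmapsto\sum_{j}(-1)^j\bigl(Pv_j\otimes x\otimes v_1\wedge\cdots\widehat{v_j}\cdots\wedge v_i-P\otimes v_j.x\otimes v_1\wedge\cdots\widehat{v_j}\cdots\wedge v_i\bigr),
\]
with $Pv_j\in S(V)$ and $v_j.x$ the $\mathbb{H}$-action, and $d_0(P\otimes x)=P.x$; no reordering corrections occur. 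I would then filter the augmented complex $0\to K_n(X)\to\cdots\to K_0(X)\xrightarrow{d_0}X\to 0$ by $\widetilde{F}_pK_i(X):=S(V)_{\le p-i}\otimes X\otimes\wedge^i V$ (and $\widetilde{F}_pX:=X$ for $p\ge 0$, $=0$ otherwise). The homological shift is arranged precisely so that the differential is filtered — the term $Pv_j$ preserves $\widetilde{F}_p$ while the $v_j.x$ term drops a level — so on the associated graded the second term vanishes and $\mathrm{gr}^{\widetilde{F}}$ of the augmented complex is, in each internal degree $p$, the degree-$p$ homogeneous part of the augmented classical Koszul complex $\bigl(S(V)\otimes\wedge^\bullet V\to\mathbb{C}\to 0\bigr)$ tensored over $\mathbb{C}$ with $X$. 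This is a complex of finite-dimensional vector spaces, exact because the Koszul complex resolves $\mathbb{C}=S(V)/VS(V)$. Since $\widetilde{F}_{-1}=0$ and the filtration is exhaustive, a straightforward induction on $p$ (using the short exact sequences $0\to\widetilde{F}_{p-1}K_\bullet(X)\to\widetilde{F}_pK_\bullet(X)\to\mathrm{gr}_pK_\bullet(X)\to 0$ and their homology long exact sequences) shows each $\widetilde{F}_pK_\bullet(X)$ is exact, and since homology commutes with the union $K_\bullet(X)=\bigcup_p\widetilde{F}_pK_\bullet(X)$ the augmented complex is exact. This proves (1), for arbitrary (not necessarily finite-dimensional) $X$.

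For the homological dimension: part (1) gives a projective resolution of length $\dim V$ for every module, so the homological dimension is at most $\dim V$. For the lower bound, PBW also makes $\mathbb{H}$ free of rank $|W|$ over the polynomial ring $S(V)$, so $\mathbb{H}\otimes_{S(V)}(-)$ is exact and preserves projectives (it is left adjoint to the exact functor $\Res_{S(V)}$); hence $\Ext^i_{\mathbb{H}}(\mathbb{H}\otimes_{S(V)}M,Y)\cong\Ext^i_{S(V)}(M,\Res_{S(V)}Y)$ for all $S(V)$-modules $M$ and $\mathbb{H}$-modules $Y$. Choosing $\lambda\in V^\vee$ with $\lambda(\alpha)=k_\alpha$ for $\alpha\in\Delta$ (extended arbitrarily off the span of $\Delta$), the assignment $t_w\mapsto 1$, $v\mapsto\lambda(v)$ satisfies the relations of Definition \ref{def graded affine} and so defines a one-dimensional $\mathbb{H}$-module $Y_\lambda$ with $\Res_{S(V)}Y_\lambda=\mathbb{C}_\lambda$; then $\Ext^{\dim V}_{\mathbb{H}}(\mathbb{H}\otimes_{S(V)}\mathbb{C}_\lambda,Y_\lambda)\cong\Ext^{\dim V}_{S(V)}(\mathbb{C}_\lambda,\mathbb{C}_\lambda)\neq 0$ (the top of the Koszul resolution of $\mathbb{C}_\lambda$ over $S(V)$), so the homological dimension equals $\dim V$.

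The only genuine obstacle is the exactness in (1): arranging the homological shift so that the differential respects the filtration and the associated graded is exactly the (tensored) classical Koszul complex, together with the minor care required to pass to arbitrary $X$ via the filtered-colimit argument. Everything in part (2) is then formal — the upper bound is immediate from (1), and the lower bound is a standard transfer along the free ring extension $S(V)\subset\mathbb{H}$.
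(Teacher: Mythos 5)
Your proof is correct and follows essentially the same route as the paper: exactness of (\ref{eqn projective resolution}) is obtained by filtering $\mathbb{H}$ so that the associated graded complex reduces to the classical Koszul complex of $S(V)$ (the argument the paper delegates to \cite[Section 5.3.8]{HP}), and the lower bound on the homological dimension comes from Shapiro's lemma for induction from $S(V)$. The only, immaterial, difference is the witness in part (2): the paper takes $X=\Ind_{S(V)}^{\mathbb{H}}\mathbb{C}v_{\gamma}$ for a regular weight $\gamma$ and computes $\Ext^{i}_{\mathbb{H}}(X,X)$ using regularity to isolate the $\gamma$-weight space, whereas you pair the induced module against the one-dimensional module $Y_{\lambda}$ with $\lambda(\alpha)=k_{\alpha}$, which achieves the same reduction to $\Ext^{\dim V}_{S(V)}(\mathbb{C}_{\lambda},\mathbb{C}_{\lambda})\neq 0$.
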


\begin{proof} For (1), from Proposition \ref{prop well defined d}, we only have to show the exactness. This can be proven by an argument which imposes a grading on $\mathbb{H}$ and uses a long exact sequence (see for example \cite[Section 5.3.8]{HP}). 

We now prove (2). By (1), the homological dimension of $\mathbb{H}$ is less than or equal to $\dim V$. We now show the homological dimension attains the upper bound. Let $\gamma \in V^{\vee}$ be a regular element and let $v_{\gamma}$ be a vector with weight $\gamma \in V^{\vee}$. Define $X=\Ind_{S(V)}^{\mathbb{H}}\mathbb{C}v_{\gamma}$. By Frobenius reciprocity and using $\gamma$ is regular, $\Ext_{\mathbb{H}}^i(X,X)=\Ext_{S(V)}^i(\mathbb{C}v_{\gamma}, \mathbb{C}v_{\gamma})\neq 0$ for all $i \leq \dim V$. This shows the homological dimension has to be $\dim V$. 
\end{proof}


\subsection{Alternate form of the Koszul-type resolution}

In this section, we give another form of the differential map $d_i$, which involves the terms $\widetilde{v}$ (defined in (\ref{eqn v titlde})). There are some advantages for computations in later sections. 

We consider the maps
$\widetilde{d}_i: \mathbb{H} \otimes_{\mathbb{C}[W]} (\Res_WX \otimes \wedge^i V) \rightarrow \mathbb{H} \otimes_{\mathbb{C}[W]}(\Res_WX \otimes \wedge^{i-1}V)$ as follows:
\[  \widetilde{d}_i(h\otimes (x \otimes v_1 \wedge\ldots \wedge v_i)) =\sum_{j=0}^i (-1)^j\left( h \widetilde{v}_j \otimes x \otimes v_1 \wedge \ldots \widehat{v}_j \ldots \wedge v_i -h \otimes \widetilde{v}_j. x \otimes v_1 \wedge \ldots \widehat{v}_j \ldots \wedge v_i \right) .\]
 We show that this definition coincides with the one in the previous subsection:

\begin{proposition} \label{prop equal d}
$\widetilde{d}_i=d_i$. 
\end{proposition}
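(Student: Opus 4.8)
The plan is to show that the correction term introduced by replacing each $v_j$ with $\widetilde{v}_j$ in the differential is zero, using the crucial fact that the extra term is an element of $\mathbb{C}[W]$ and therefore interacts trivially with the tensor product over $\mathbb{C}[W]$. Recall $\widetilde{v}_j = v_j - \frac{1}{2}\sum_{\alpha \in R^+} k_{\alpha}\langle v_j, \alpha^{\vee}\rangle t_{s_{\alpha}}$, so that $\widetilde{v}_j - v_j = -\frac{1}{2}\sum_{\alpha\in R^+} k_{\alpha}\langle v_j, \alpha^\vee\rangle t_{s_\alpha} \in \mathbb{C}[W]$. Writing $c_j := \widetilde{v}_j - v_j \in \mathbb{C}[W]$, the difference $\widetilde{d}_i - d_i$ applied to $h \otimes (x \otimes v_1 \wedge \ldots \wedge v_i)$ equals
\[
\sum_{j=0}^i (-1)^j \left( h c_j \otimes x \otimes v_1 \wedge \ldots \widehat{v}_j \ldots \wedge v_i - h \otimes c_j.x \otimes v_1 \wedge \ldots \widehat{v}_j \ldots \wedge v_i \right).
\]

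First I would observe that since $c_j \in \mathbb{C}[W]$, the tensor relation over $\mathbb{C}[W]$ lets us move $c_j$ across: $h c_j \otimes (x \otimes \omega) = h \otimes (c_j . (x \otimes \omega))$, where $\mathbb{C}[W]$ acts diagonally on $\Res_W X \otimes \wedge^{i-1} V$. Here $c_j$ is a linear combination of group elements $t_{s_\alpha}$, and each $t_{s_\alpha}$ acts on $x \otimes v_1 \wedge \ldots \widehat{v}_j \ldots \wedge v_i$ by $s_\alpha.x \otimes s_\alpha(v_1) \wedge \ldots \widehat{s_\alpha(v_j)} \ldots \wedge s_\alpha(v_i)$ — note that $s_\alpha$ acts on the wedge factors as well, not just on $x$. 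So after moving $c_j$ inside, the two terms in the $j$-th summand do \emph{not} obviously cancel, because in the first term $c_j$ acts on the whole wedge $v_1 \wedge \ldots \widehat{v}_j \ldots \wedge v_i$, whereas in the second term $c_j.x$ is paired with the \emph{unmodified} wedge. Thus the naive cancellation fails, and I would instead need to carefully account for the difference. The cleanest route is to go back to the definition on representatives: verify directly that $\widetilde{d}_i$ is well-defined (the same $\mathbb{C}[W]$-bilinearity check as in Proposition~\ref{prop well defined d}, using Lemma~\ref{lem tilde element} which gives $t_w \widetilde{v} = \widetilde{w(v)} t_w$), and then compare $\widetilde{d}_i$ and $d_i$ on the generator $1 \otimes (x \otimes v_1 \wedge \ldots \wedge v_i)$, where the $h c_j$ terms become $c_j \otimes (\cdots)$ and can be rewritten via the $\mathbb{C}[W]$-relation as $1 \otimes c_j.(\cdots)$.

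Concretely, on the generator $1 \otimes (x \otimes v_1 \wedge \ldots \wedge v_i)$, the $j$-th summand of $\widetilde d_i - d_i$ is $(-1)^j\big( 1 \otimes c_j.(x \otimes v_1\wedge\ldots\widehat{v}_j\ldots\wedge v_i) - 1 \otimes (c_j.x) \otimes v_1\wedge\ldots\widehat{v}_j\ldots\wedge v_i\big)$. Expanding $c_j = -\tfrac12\sum_{\alpha\in R^+} k_\alpha\langle v_j,\alpha^\vee\rangle t_{s_\alpha}$ and using that $t_{s_\alpha}$ acts diagonally, the difference inside each summand is $-\tfrac12\sum_\alpha k_\alpha\langle v_j,\alpha^\vee\rangle\big(s_\alpha.x \otimes s_\alpha(v_1)\wedge\ldots\widehat{s_\alpha(v_j)}\ldots\wedge s_\alpha(v_i) - s_\alpha.x \otimes v_1\wedge\ldots\widehat{v}_j\ldots\wedge v_i\big)$. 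I would then sum over $j$ and argue that the total vanishes: the key identity is that for fixed $\alpha$, summing $\langle v_j, \alpha^\vee\rangle$ times the wedge with $v_j$ deleted reconstructs $s_\alpha$ applied to the full wedge — more precisely, one uses that $s_\alpha(v) = v - \langle v,\alpha^\vee\rangle\alpha$, so that $s_\alpha(v_1)\wedge\cdots\wedge s_\alpha(v_i) = v_1 \wedge \cdots \wedge v_i - \sum_j \langle v_j,\alpha^\vee\rangle\, v_1\wedge\cdots\wedge\alpha\wedge\cdots\wedge v_i$ (with $\alpha$ in slot $j$), and a parallel expansion after one $v_j$ has been deleted. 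Feeding this into the sum and matching terms shows everything cancels.

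\textbf{Main obstacle.} The delicate point is the bookkeeping in the last step: the term $s_\alpha(v_1)\wedge\ldots\widehat{s_\alpha(v_j)}\ldots\wedge s_\alpha(v_i)$ must be expanded in terms of $\alpha$-insertions, and one has to check that when summed against $\sum_j(-1)^j\langle v_j,\alpha^\vee\rangle$ the contributions telescope against the $j$-th deleted-wedge terms, with all signs working out. I expect this to be a somewhat intricate but entirely elementary multilinear-algebra computation; the conceptual input — that $c_j \in \mathbb{C}[W]$ so the correction can be absorbed into the $\mathbb{C}[W]$-action and then vanishes by the reflection formula for $s_\alpha$ — is the only real idea. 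An alternative, possibly shorter, phrasing would be to note that $\sum_j (-1)^j(\widetilde v_j - v_j) \otimes(\cdots\widehat{v}_j\cdots) $ assembles into a contraction that is manifestly symmetric in a way forcing it to vanish; but I would default to the direct verification on generators since it parallels the already-established Proposition~\ref{prop well defined d}.
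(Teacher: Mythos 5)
Your proposal is correct and follows essentially the same route as the paper: move the $\mathbb{C}[W]$-valued correction $\widetilde{v}_j-v_j$ across the tensor product so that $t_{s_\alpha}$ acts diagonally on $x\otimes(\text{wedge})$, expand $s_\alpha(v_p)=v_p-\langle v_p,\alpha^\vee\rangle\alpha$ to produce the $\alpha$-insertion terms, and observe that the alternating sum over $j$ cancels by the symmetry of $\langle v_j,\alpha^\vee\rangle\langle v_p,\alpha^\vee\rangle$. The paper likewise leaves the final sign bookkeeping as a "standard computation," so no gap relative to the published argument.
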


\begin{proof}
 Recall that for $v_i \in V$,
\[  \widetilde{v}_i =v_i-\sum_{\alpha \in R^+} k_{\alpha} \langle  v_i, \alpha^{\vee} \rangle t_{s_{\alpha}} .\]
Then 
\begin{eqnarray*}
& &\widetilde{v}_{r} \otimes (x \otimes v_{1} \wedge \ldots \wedge \widehat{v}_{r} \wedge \ldots \wedge v_{k}) -1 \otimes (\widetilde{v}_r.x \otimes v_1 \wedge \ldots \wedge \widehat{v}_r \wedge \ldots \wedge v_k) \\
&=&v_{r} \otimes (x \otimes v_{1} \wedge \ldots \wedge \widehat{v}_{r} \wedge \ldots \wedge v_{k}) -1 \otimes (v_r. x \otimes v_1 \wedge \ldots \wedge \widehat{v}_r \wedge \ldots \wedge v_k)\\
& & \quad -\sum_{\alpha \in R^+}k_{\alpha}\langle  v_{r}, \alpha^{\vee} \rangle  \otimes (t_{s_{\alpha}}.x) \otimes s_{\alpha}(v_{1})\wedge \ldots \wedge s_{\alpha}(\widehat{v}_{r})\wedge \ldots \wedge s_{\alpha}(v_{k}) \\
& & \quad +\sum_{\alpha \in R^+} k_{\alpha} \langle  v_{r}, \alpha^{\vee} \rangle  \otimes (t_{s_{\alpha}}.x)  \otimes v_{1}\wedge \ldots \wedge \widehat{v}_{r} \wedge \ldots \wedge v_{k} \\
&=& v_{r} \otimes (x \otimes v_{1} \wedge \ldots \wedge \widehat{v}_{r} \wedge \ldots \wedge v_{k})-1\otimes (v_{r}.x \otimes v_{1} \wedge \ldots \wedge \widehat{v}_{r} \wedge \ldots \wedge v_{k})\\
& & -(-1)^{p}\sum_{\alpha \in R^+} \sum_{p < r} k_{\alpha}\langle  v_{r}, \alpha^{\vee} \rangle \langle  v_{p}, \alpha^{\vee}  \rangle  \otimes (t_{s_{\alpha}}.x) \otimes \alpha \wedge s_{\alpha}(v_{1})\wedge \ldots  s_{\alpha}(\widehat{v}_{p})\wedge \ldots s_{\alpha}(\widehat{v}_{r})\wedge \ldots \wedge s_{\alpha}(v_{k}) \\
& & -(-1)^{p-1}\sum_{\alpha \in R^+} \sum_{ r<p} k_{\alpha}\langle  v_{r}, \alpha^{\vee} \rangle \langle  v_{p}, \alpha^{\vee}  \rangle  \otimes (t_{s_{\alpha}}.x)\otimes \alpha \wedge s_{\alpha}(v_1)\wedge \ldots  s_{\alpha}(\widehat{v}_{r})\wedge \ldots s_{\alpha}(\widehat{v}_{p})\wedge \ldots \wedge s_{\alpha}(v_{k}) \\
\end{eqnarray*}
With the expression above, some standard computations can verify $\widetilde{d}_i=d_i$. 

\end{proof}

\subsection{Euler-Poincar\'e pairing}

We define the Euler-Poincar\'e pairing as:
\[  \mathrm{EP}_{\mathbb{H}}(X, Y) = \sum_{i} (-1)^i \dim \Ext_{\mathbb{H}}^i(X,Y),
\]
where the $\Ext$ groups are defined in the category of $\mathbb{H}$-modules. This pairing can be realized as an inner product on a certain elliptic space for $\mathbb{H}$-modules analogue to the one in $p$-adic reductive groups in the sense of Schneider-Stuhler \cite{SS}. 

The elliptic pairing $\langle , \rangle_W^{\mathrm{ellip}, V}$ on $W$-representations $U$ and $U'$ is defined as
\[  \langle U, U' \rangle_W^{\mathrm{ellip}, V} =\frac{1}{|W|} \sum_{w \in W} \tr_U(w)\overline{\tr_{U'}(w)}\mathrm{det}_V(1-w) .
\]

\begin{proposition} \label{thm euler poincare}
For any finite-dimensional $\mathbb{H}$-modules $X$ and $Y$, 
\[  \mathrm{EP}_{\mathbb{H}}(X, Y) = \langle \Res_W(X), \Res_W(Y) \rangle_W^{\mathrm{ellip}, V} .\]
In particular, the Euler-Poincare pairing depends only on the $W$-module structure of $X$ and $Y$.
\end{proposition}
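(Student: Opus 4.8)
The plan is to use the explicit projective resolution (\ref{eqn projective resolution}) of $X$ to compute $\Ext^i_{\mathbb{H}}(X,Y)$, and then extract the alternating sum. First I would apply the functor $\Hom_{\mathbb{H}}(-,Y)$ to the resolution. Since each term $\mathbb{H}\otimes_{\mathbb{C}[W]}(\Res_WX\otimes\wedge^iV)$ is an induced module, Frobenius reciprocity gives a natural isomorphism
\[
\Hom_{\mathbb{H}}\bigl(\mathbb{H}\otimes_{\mathbb{C}[W]}(\Res_WX\otimes\wedge^iV),\,Y\bigr)\;\cong\;\Hom_{\mathbb{C}[W]}\bigl(\Res_WX\otimes\wedge^iV,\,\Res_WY\bigr).
\]
Thus the complex computing $\Ext^\bullet_{\mathbb{H}}(X,Y)$ is a complex of finite-dimensional vector spaces whose $i$-th term has dimension $\dim\Hom_W(\Res_WX\otimes\wedge^iV,\Res_WY)$. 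Because Euler characteristics of finite complexes equal the alternating sum of the homology dimensions,
\[
\mathrm{EP}_{\mathbb{H}}(X,Y)=\sum_i(-1)^i\dim\Ext^i_{\mathbb{H}}(X,Y)=\sum_i(-1)^i\dim\Hom_W\bigl(\Res_WX\otimes\wedge^iV,\Res_WY\bigr).
\]

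Next I would convert the right-hand side into a character-theoretic expression. Writing $\dim\Hom_W(A,B)=\frac{1}{|W|}\sum_{w\in W}\overline{\tr_A(w)}\,\tr_B(w)$ and using $\tr_{\wedge^iV}(w)$ together with the standard identity $\sum_i(-1)^i\tr_{\wedge^iV}(w)=\det_V(1-w)$, the alternating sum collapses to
\[
\sum_i(-1)^i\dim\Hom_W(\Res_WX\otimes\wedge^iV,\Res_WY)=\frac{1}{|W|}\sum_{w\in W}\overline{\tr_{\Res_WX}(w)}\,\tr_{\Res_WY}(w)\,\det_V(1-w).
\]
Comparing with the definition of $\langle\,,\,\rangle_W^{\mathrm{ellip},V}$ recalled just before the Proposition (noting that $\det_V(1-w)$ is real and the pairing is symmetric up to complex conjugation, so the placement of the conjugation on $X$ versus $Y$ is immaterial for the identity being claimed), this is exactly $\langle\Res_WX,\Res_WY\rangle_W^{\mathrm{ellip},V}$. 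The final clause of the Proposition — that $\mathrm{EP}_{\mathbb{H}}$ depends only on the $W$-module structure — is then immediate from the formula just derived.

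The one genuine input needed beyond formal manipulation is the exactness of (\ref{eqn projective resolution}), i.e.\ that it really is a resolution and not merely a complex; but this is Corollary \ref{cor projective resol}(1), which I may assume. The only mild subtlety in the main computation is justifying the interchange of the alternating sum over $i$ with the sum over $w\in W$ and the expansion of $\det_V(1-w)=\sum_i(-1)^i\tr_{\wedge^iV}(w)$ at the level of the Hom-spaces rather than just traces; this is routine since all sums are finite. I expect the main obstacle to be purely bookkeeping: checking that the Frobenius reciprocity isomorphism is compatible with the differentials (so that the Ext-complex is literally the complex $\Hom_W(\Res_WX\otimes\wedge^\bullet V,\Res_WY)$ with an induced differential), which matters for identifying the cohomology with $\Ext$ but, reassuringly, is irrelevant for the Euler characteristic — the Euler characteristic of a bounded complex of finite-dimensional spaces depends only on the dimensions of its terms. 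Hence the identity follows regardless of the precise form of the differential.
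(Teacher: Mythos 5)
Your proof is correct and follows essentially the same route as the paper: apply $\Hom_{\mathbb{H}}(-,Y)$ to the Koszul-type resolution, use Frobenius reciprocity to reduce to $\Hom_{\mathbb{C}[W]}(\Res_WX\otimes\wedge^iV,\Res_WY)$, note that the Euler characteristic of the complex equals the alternating sum of the dimensions of its terms, and finish with the character formula $\sum_i(-1)^i\tr_{\wedge^iV}(w)=\det_V(1-w)$. Your remarks on the placement of the complex conjugation and on the irrelevance of the precise differential for the Euler characteristic are accurate and match what the paper does implicitly.
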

\begin{proof}
\begin{eqnarray*}
\mathrm{EP}_{\mathbb{H}}(X, Y) &=& \sum_{i} (-1)^i \dim \Ext_{\mathbb{H}}^i(X,Y)  \\
                               &=& \sum_i (-1)^i (\ker d_i^*-\im d_{i-1}^*)   \\
                               &=& \sum_{i} (-1)^i \dim \Hom_{\mathbb{H}}(\mathbb{H} \otimes_{\mathbb{C}[W]}(\Res_W(X)\otimes \wedge^iV), Y) \quad (\mbox{by Corollary \ref{cor projective resol}}) \\
															&=& \sum_{i} (-1)^i \dim \Hom_{\mathbb{C}[W^-]} (\Res_W(X)\otimes \wedge^iV, \Res_W(Y)) \quad \mbox{(by Frobenius reciprocity)} \\
															&=&  \sum_{w \in W} \mathrm{tr}_{\Res_WX}(w) \overline{\mathrm{tr}_{\Res_WY} (w) }  \mathrm{tr}_{\wedge^{\pm} V}(w)    \\
															&=& \langle \Res_W(X), \Res_W(Y) \rangle_{W}^{\mathrm{ellip}, V}  
\end{eqnarray*}
Here $\wedge^{\pm} V=\bigoplus_{i \in \mathbb{Z}} (-1)^i \wedge^i V$ as a virtual representation. The last equality follows from $\mathrm{tr}_{\wedge^i V}(w)=\det(1-w)$ and the definition.
\end{proof}

\section{Twisted Euler-Poincar\'e pairing}\label{s twisted EP}





Recall that $\theta$ is defined in Section \ref{ss theta action}. For any $\mathbb{H} \rtimes \langle \theta \rangle$-module $X$, denote $\Res_W X$ to be the restriction of $X$ to a $\mathbb{C}[W]$-algebra module (Definition \ref{def graded affine} (1)). The notion $\Res_{W \rtimes \langle \theta \rangle}$ is similarly defined.

\subsection{$\theta$-twisted Euler-Poincar\'e pairing} \label{ss theta twisted}

Let $X$ and $Y$ be $\mathbb{H} \rtimes \langle \theta \rangle$-modules. The differential map $d_{i}$ induces a map from $\Hom_{\mathbb{H}}(\mathbb{H}\otimes_{\mathbb{C}[W]} (\Res_WX \otimes \wedge^i V), Y)$ to $\Hom_{\mathbb{H}}(\mathbb{H}\otimes_{\mathbb{C}[W]} (\Res_WX \otimes \wedge^{i+1} V), Y)$. Then by the Frobenius reciprocity, the differential map also induces a map, denoted $d^*$ from $\Hom_{\mathbb{C}[W]}(\Res_WX \otimes \wedge^i V, \Res_WY)$ to $\Hom_{\mathbb{C}[W]}(\Res_WX \otimes \wedge^{i+1} V, \Res_WY)$ as follows:
\begin{eqnarray} \label{eqn di* actin 1}
& &  d_{i+1}^*(\psi)(x \otimes v_1 \wedge \ldots \wedge v_{i+1}) \\ 
\label{eqn di* actin 2} &=& \sum_{j=0}^{i+1} (-1)^j v_j.\psi( x \otimes v_1 \wedge \ldots \widehat{v}_j \ldots \wedge v_{i+1})-\sum_{j=0}^i (-1)^j\psi( v_j.x \otimes v_1 \wedge \ldots \widehat{v}_j \ldots \wedge v_{i+1}),
			\end{eqnarray}


Define $\theta^*$ to be the linear automorphism on $\Hom_{\mathbb{C}[W]}(\Res_WX \otimes \wedge^i V, \Res_WY)$ given by
\begin{eqnarray} \label{eqn theta* actin 1} \theta^*(\psi)(x\otimes v_1\wedge \ldots \wedge v_i)&=&\theta\circ \psi(\theta(x)\otimes \theta(v_1)\wedge\ldots \wedge \theta(v_i)) .
\end{eqnarray}
Here $\theta$-actions on $\Res_W X$ and $\Res_W Y$ are just the natural actions from the $\theta$-actions on $X$ and $Y$ (as $\mathbb{H} \rtimes \langle \theta \rangle$-modules), and furthermore the $\theta$-action on $v_i$ comes from the action of $\theta$ on the corresponding Dynkin diagram.

\begin{lemma} \label{lem d theta commut}
$\theta^* \circ d^*= d^* \circ \theta^*$
\end{lemma}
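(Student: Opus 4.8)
**Proof plan for Lemma (the identity $\theta^* \circ d^* = d^* \circ \theta^*$).**

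The plan is to verify the identity by a direct computation on a generating element $\psi \in \Hom_{\mathbb{C}[W]}(\Res_WX \otimes \wedge^i V, \Res_WY)$ and a test vector $x \otimes v_1 \wedge \ldots \wedge v_{i+1}$, using the explicit formulas (\ref{eqn di* actin 2}) for $d^*$ and (\ref{eqn theta* actin 1}) for $\theta^*$. First I would expand $\theta^*(d^*_{i+1}(\psi))(x \otimes v_1 \wedge \ldots \wedge v_{i+1})$: by definition of $\theta^*$ this equals $\theta \circ (d^*_{i+1}\psi)(\theta(x) \otimes \theta(v_1) \wedge \ldots \wedge \theta(v_{i+1}))$, and then I substitute the formula for $d^*_{i+1}\psi$ to obtain a sum of terms of the shape $(-1)^j \,\theta\bigl(\theta(v_j).\psi(\theta(x) \otimes \theta(v_1) \wedge \ldots \widehat{\theta(v_j)} \ldots \wedge \theta(v_{i+1}))\bigr)$ minus the analogous terms with $\psi(\theta(v_j).\theta(x) \otimes \cdots)$. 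On the other side, $d^*_{i+1}(\theta^*\psi)(x \otimes v_1 \wedge \ldots \wedge v_{i+1})$ expands directly via (\ref{eqn di* actin 2}) into terms $(-1)^j v_j . (\theta^*\psi)(x \otimes v_1 \wedge \ldots \widehat{v}_j \ldots)$ and $(-1)^j (\theta^*\psi)(v_j.x \otimes \cdots)$, and each $(\theta^*\psi)(\cdots)$ unfolds again by (\ref{eqn theta* actin 1}) into $\theta \circ \psi(\theta(\cdots))$.

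The key reconciliation step is that $\theta$ is a \emph{ring} automorphism of $\mathbb{H}$ (Section \ref{ss theta action}), so for any $v \in V$ and $m$ in a module $M$ carrying a $\theta$-action we have $\theta(v . m') = \theta(v) . \theta(m')$ when $m' = \theta(m)$, i.e. $\theta\bigl(\theta(v) . \theta(x)\bigr) = v . x$-type identities hold; more to the point, $\theta(w(v)) = (w_0 w w_0^{-1})(\theta(v))$ follows from $\theta(v) = -w_0(v)$, which is exactly what makes the permutation-of-Dynkin-diagram action on $\wedge^\bullet V$ compatible with $\theta$ on $\mathbb{C}[W]$. Using these compatibilities I would match, term by term, the $j$-th summand of $\theta^* d^* \psi$ with the $j$-th summand of $d^* \theta^* \psi$: the $\theta$ applied on the outside of $d^*_{i+1}\psi$ passes through the $\mathbb{H}$-module action on $Y$ (turning $\theta(v_j).(-)$ into $v_j.\theta(-)$), and the argument $\theta(x) \otimes \theta(v_1) \wedge \ldots$ inside is precisely what $(\theta^*\psi)$ feeds to $\psi$; the signs $(-1)^j$ are unaffected because $\theta$ acts by a permutation of a chosen basis of $V$ (or, more carefully, one checks the induced action on $\wedge^i V$ is compatible up to the same global factor on both sides). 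Because $\psi$ is $\mathbb{C}[W]$-equivariant and $\theta$ normalizes $W$, no extra sign or twist is introduced when $\theta$ permutes the wedge factors.

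The main obstacle I anticipate is purely bookkeeping rather than conceptual: making sure the sign conventions coming from the action of $\theta$ on $\wedge^i V$ (a linear automorphism, hence acting with some determinant sign on top-degree pieces) are treated consistently on both sides of the identity, and checking that the "mixed" terms — the ones where $v_j$ is the element hitting $x$ versus the element acting on $Y$ — are correctly paired after $\theta$ is pushed through. Since both $d^*$ and $\theta^*$ are built from the same data ($\theta$ on $V$, on $\mathbb{C}[W]$, and on $X,Y$), and $\theta$ is an algebra involution, the two compositions are forced to agree; I would present the computation compactly, remarking that it is "a straightforward verification using that $\theta$ is an algebra automorphism," rather than writing out all $2(i+2)$ summands. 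This is the same style of argument as the proof of Lemma \ref{lem tilde element} and Proposition \ref{prop equal d}.
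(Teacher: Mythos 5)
Your proposal is correct and follows the same route as the paper: a direct expansion of both sides using the defining formulas for $d^*$ and $\theta^*$, with the identity $\theta(\theta(v_j).m)=v_j.\theta(m)$ (from $\theta$ being an algebra involution compatible with the module structures) used to match summands term by term. The sign bookkeeping you worry about is harmless since the same wedge $\theta(v_1)\wedge\ldots\wedge\widehat{\theta(v_j)}\wedge\ldots$ appears on both sides, exactly as in the paper's computation.
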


\begin{proof}
\begin{eqnarray*}
 & &(\theta^*\circ d^*)(\psi)(x\otimes v_1\wedge \ldots \wedge v_k)  \\
 &=& \theta \circ d^*(\psi)(\theta(x) \otimes \theta(v_1)\wedge \ldots \wedge \theta(v_k) )\\
&=&  \theta\circ \psi(d(\theta(x) \otimes \theta(v_1) \wedge \ldots \wedge \theta(v_k)))  \\
&=&  \sum_i (-1)^iv_r.\theta \circ \psi(\theta(x) \otimes \theta(v_1)  \wedge \ldots \wedge \theta(\widehat{v}_i) \wedge \ldots \wedge \theta(v_k)) \\
& & \quad -\sum_i (-1)^i \theta \circ \psi(\theta(v_r).\theta(x) \otimes \theta(v_1)  \wedge \ldots \wedge \theta(\widehat{v}_i) \wedge \ldots \wedge \theta(v_k) )\\
&=&  \sum_i (-1)^iv_r.\theta^*(\psi)(x \otimes v_1  \wedge \ldots \wedge \widehat{v}_i \wedge \ldots \wedge v_k) \\
& & \quad -\sum_i (-1)^i  \theta^*(\psi)(v_r.x \otimes v_1  \wedge \ldots \wedge \widehat{v}_i \wedge \ldots \wedge v_k) \\
&=& (d^* \circ \theta^*)(\psi)(x\otimes v_1\wedge \ldots \wedge v_k)  
\end{eqnarray*}

\end{proof}

 By Lemma \ref{lem d theta commut}, $\theta^*$ induces an action, still denoted $\theta^*$ on $\Ext^i_{\mathbb{H}}(X, X)$. We can then define the $\theta$-twisted Euler-Poincar\'e pairing $\mathrm{EP}_{\mathbb{H}}^{\theta}$ as follows:
\begin{definition}
For $\mathbb{H} \rtimes \langle \theta \rangle$-modules $X$ and $Y$, define
\[  \mathrm{EP}_{\mathbb{H}}^{\theta}(X, Y) = \sum_i (-1)^i \mathrm{trace}(\theta^*: \Ext_{\mathbb{H}}^i(X,Y) \rightarrow \Ext_{\mathbb{H}}^i(X, Y)) . \]
Here we also regard $X$ and $Y$ to be $\mathbb{H}$-modules equipped with the $\theta$-action.
\end{definition}
We remark that this definition also makes sense for $\theta$ to be any automorphism of $\mathbb{H}$. However, when we prove Theorem \ref{thm twisted ext} later, we essentially require $\theta$ to arise from $w_0$ in (\ref{eqn involution}).




\subsection{$\theta$-twisted elliptic pairing on Weyl groups}

We review the $\theta$-twisted elliptic representation theory of Weyl groups in \cite{CH}.

\begin{definition}
An element $w \in W$ is said to be {\it $\theta$-elliptic} if $\mathrm{det}_V(1-w\theta)\neq 0$. A $\theta$-twisted conjugacy class is the set $\left\{ ww_1\theta(w)^{-1} : w_1 \in W \right\}$ for some $w \in W$. A $\theta$-twisted conjugacy class is said to be {\it elliptic} if it contains an $\theta$-elliptic element.
\end{definition}
Define
\begin{equation} \label{eqn J theta}
 \mathcal J^{\theta} = \left\{  J \subsetneq \Delta :  \theta(J)=J   \right\}. 
\end{equation}

\begin{lemma} \label{lem not elliptic}
\begin{enumerate}
\item If $w \in W$ is not a $\theta$-elliptic element, then $w$ is $\theta$-conjugate to an element in $W_J$ for some $J \in \mathcal J^{\theta}$.
\item Let $J \in \mathcal J^{\theta}$. If $w \in W_J$, then there exists a non-zero $\gamma \in V$ such that $w\theta(\gamma)=\gamma$.
\end{enumerate}
\end{lemma}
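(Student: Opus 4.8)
\textbf{Proof plan for Lemma \ref{lem not elliptic}.}

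\emph{Part (2).} I would start with (2), which is the easier of the two and whose content actually feeds into (1). Fix $J \in \mathcal J^{\theta}$, so $\theta(J)=J$. The composite $w\theta$ acts on $V$, and since $J \subsetneq \Delta$, the subspace $V_J^{\bot} = \{ v \in V : \langle v, v_1^{\vee}\rangle = 0 \text{ for all } v_1^{\vee}\in V_J^{\vee}\}$ is nonzero (it is the annihilator of the proper subspace $V_J^{\vee}$). The plan is to show $w\theta$ preserves $V_J^{\bot}$. Indeed $\theta$ permutes the simple roots according to the diagram automorphism induced by $w_0$, and since $\theta(J)=J$, it preserves $V_J$ and $V_J^{\vee}$, hence also $V_J^{\bot}$; and $w \in W_J$ fixes $V_J^{\bot}$ pointwise (since $w$ acts trivially on anything orthogonal to all the coroots in $R_J^{\vee}$, which by definition includes every vector in $V_J^{\bot}$). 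Therefore $w\theta$ restricts to $\theta|_{V_J^{\bot}}$, a finite-order linear automorphism of a nonzero complex vector space, so it has an eigenvector $\gamma \neq 0$ with eigenvalue a root of unity $\zeta$; replacing $\gamma$ by a suitable real combination if one wants $\gamma \in V_0$ is routine, but over $\mathbb{C}$ the eigenvector already gives $w\theta(\gamma) = \zeta \gamma$. To get eigenvalue exactly $1$ as the statement demands, I would note $\theta$ is an involution on the Dynkin diagram so $\theta|_{V_J^{\bot}}$ has order $1$ or $2$, hence has a $+1$-eigenvector unless it acts as $-\Id$; in the latter degenerate case one still has $\det_V(1-w\theta)\ne 0$ contributions handled separately, but generically $\theta$ fixes a vector in $V_J^{\bot}$ and that $\gamma$ works. (If the intended claim is merely $w\theta(\gamma)=\gamma$ for \emph{some} such $J$, the argument is cleanest by taking $\gamma$ in the fixed space of $\theta$ inside $V_J^{\bot}$, which is nonzero when $\theta$ does not act freely.)

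\emph{Part (1).} Now suppose $w \in W$ is not $\theta$-elliptic, i.e.\ $\det_V(1-w\theta) = 0$, so $w\theta$ fixes a nonzero vector $\gamma \in V$. The plan is a standard "twisted parabolic descent" argument. Consider the set of roots $\alpha \in R$ with $\langle \gamma, \alpha^\vee\rangle = 0$; since $\gamma \ne 0$ this is a proper (possibly empty) closed subsystem, and after replacing $w$ by a $\theta$-twisted conjugate $w' = u w \theta(u)^{-1}$ (which replaces the fixed vector $\gamma$ by $u\gamma$) we may assume $\gamma$ lies in the closure of the dominant chamber. Then the stabilizer of $\gamma$ in $W$ is a standard parabolic $W_J$ for some $J \subsetneq \Delta$ (proper because $\gamma \ne 0$ means $\gamma$ is not fixed by all of $W$). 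It remains to see $w' \in W_J$ and $\theta(J) = J$. For the first: $w'\theta(\gamma) = \gamma$, and I would need to also arrange $\theta(\gamma) = \gamma$ — this is where one uses that $\theta$ comes from $w_0$, so $\theta$ permutes the dominant chamber's walls and one can choose $\gamma$ in the fixed space of $\theta$; then $w'$ fixes $\gamma$ and hence lies in $W_J = \Stab_W(\gamma)$. For the second: since $\theta(\gamma) = \gamma$ and $\theta$ is induced by a diagram automorphism, $\theta$ permutes the simple roots orthogonal to $\gamma$ among themselves, i.e.\ $\theta(J) = J$, so $J \in \mathcal J^{\theta}$.

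\emph{Main obstacle.} The delicate point throughout is the interplay between the fixed vector of $w\theta$ and the fixed vector of $\theta$ alone: a priori $w\theta(\gamma)=\gamma$ does not give $\theta(\gamma)=\gamma$, yet the conclusion "$W_J$ with $\theta(J)=J$" wants a $\theta$-stable parabolic. The resolution — and the step I expect to require the most care — is to show that from any $w\theta$-fixed vector one can produce, after $\theta$-twisted conjugation, a vector fixed by both $w$ and $\theta$ simultaneously; this uses that $\theta$ normalizes $W$ and permutes chambers, so a $\theta$-twisted conjugacy class meeting a nonelliptic locus meets the fixed locus of $\theta$ inside some proper parabolic. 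I would model this on the analogous untwisted statement (every non-elliptic element of a Weyl group is conjugate into a proper standard parabolic) and its known $\delta$-twisted generalization in \cite{CH}, adapting the averaging/chamber argument to keep track of the $\theta$-action on $\Delta$.
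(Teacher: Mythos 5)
Your overall route is the same as the paper's for both parts: for (1), twisted-conjugate the fixed vector of $w\theta$ into the dominant chamber, observe that the dominant representative is automatically $\theta$-fixed, and take $J$ to be the set of simple roots orthogonal to it; for (2), produce a $\theta$-fixed vector in $V_J^{\bot}$. But two steps you leave open are exactly the ones that carry the content, so let me pin them down.

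First, in (2) you hedge on whether $\theta|_{V_J^{\bot}}$ has a nonzero $+1$-eigenvector and propose to ``handle the degenerate case separately.'' There is no degenerate case: since $\theta=-w_0$ permutes the fundamental weights by $\omega_\alpha\mapsto\omega_{\theta(\alpha)}$ and $\theta(J)=J$, the vector $\gamma=\sum_{\alpha\in\Delta\setminus J}\omega_\alpha$ is nonzero (as $J\subsetneq\Delta$), lies in $V_J^{\bot}$, and is $\theta$-fixed; since $W_J$ fixes $V_J^{\bot}$ pointwise, $w\theta(\gamma)=\gamma$. Your fallback remark about ``$\det_V(1-w\theta)\neq 0$ contributions'' does not prove the stated claim, which asserts the existence of a fixed vector, so without the fundamental-weight observation your (2) is incomplete. (The paper's own proof of (2) is terser still and overstates slightly -- not every $\gamma\in V_J^{\bot}$ is $\theta$-fixed -- but the $\theta$-fixed ones suffice.)

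Second, in (1) you write that ``one can choose $\gamma$ in the fixed space of $\theta$.'' This is not a choice one gets to make; it is a consequence of dominance, and this is the crux. After replacing $w$ by $w'=w_1w\theta(w_1)^{-1}$ and $\gamma$ by the dominant $\gamma'=w_1(\gamma)$, one has $w'(\theta(\gamma'))=\gamma'$, so $\gamma'$ and $\theta(\gamma')$ lie in the same $W$-orbit; since $\theta$ preserves the dominant chamber, both are dominant, and two dominant vectors in one $W$-orbit coincide. Hence $\theta(\gamma')=\gamma'$, whence $w'\in\Stab_W(\gamma')=W_J$ and $\theta(J)=J$. You identify this mechanism in your ``main obstacle'' paragraph, so the idea is there, but the argument as written does not execute it. One further small point: your claim that $J\subsetneq\Delta$ ``because $\gamma\neq 0$ is not fixed by all of $W$'' fails when $V$ strictly contains the span of $R$; the correct reason is that $w\theta$ acts as $-\mathrm{Id}$ on $V_\Delta^{\bot}$, so any $w\theta$-fixed vector lies in the span of $R$, where a nonzero dominant vector has a proper standard parabolic stabilizer.
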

\begin{proof}
We first prove (1). Suppose $w$ is not $\theta$-elliptic element. Then there exists $\gamma \in V$ such that $w\theta(\gamma)=\gamma$. We may choose $w_1 \in W$ such that $w_1(\gamma)$ lies in the fundamental chamber. Let $\gamma'=w_1(\gamma)$.  Then the stabilizer for $\gamma'$ is $W_J$ for some $J \subset \Delta$. Since $\gamma'$ is in the fundamental chamber, $\theta(\gamma')$ is also in the fundamental chamber. The fact that $w_1w\theta(w_1^{-1})\theta(\gamma')=\gamma'$ with standard theory for root systems (see for example \cite[Theorem 1.12(a)]{Hu}) forces $\theta(\gamma')=\gamma'$. We have $w_1w\theta(w_1^{-1}) \in W_J$. It remains to show that $J \in \mathcal J^{\theta}$. For $w$ with $w(\gamma')=\gamma'$, we also have $\theta(w)(\gamma')=\theta(w)\theta(\gamma')=\theta(w(\gamma'))=\gamma'$. Hence $\theta(J)=J$ and so $J \in \mathcal J^{\theta}$ as desired.

For (2), choose $\gamma \in V_J^{\bot}$. Then $\theta(\gamma)=\gamma$ and so $w\theta(\gamma)=\gamma$ for any $\gamma \in W_J$.

\end{proof}
\begin{definition} \cite{CH} \label{def twisted ell pairing}
For any $W \rtimes \langle \theta \rangle$-representation $U$ and $U'$, the $\theta$-twisted elliptic pairing on $U$ and $U'$ is defined as:
\[   \langle U, U' \rangle^{\theta-\mathrm{ellip}, V}_W=\frac{1}{|W|}\sum_{w \in W} \tr_U(w\theta)\overline{\tr_{U'}(w\theta)}\mathrm{det}_V(1-w\theta) .
\]
Since $w_0\theta=-\mathrm{Id}_V$ on $V$, it is equivalent that
\begin{align*}
  \langle U, U' \rangle^{\theta-\mathrm{ellip}, V}_{W} &= \frac{1}{|W|}\sum_{w \in W} \tr_{U^+-U^-}(ww_0) \overline{\tr_{U'^+-U'^-}(ww_0)}\mathrm{det}_V(1+ww_0), 
\end{align*}
where $U^+$ and $U^-$ (resp. $U'^+$ and $U'^-$) are the $+1$ and $-1$-eigenspaces of $w_0\theta$ of $U$ (resp. $U'$), and $U^+-U^-$ and $U'^+-U'^-$ are regarded as virtual representations of $W$.
\end{definition}

Let $R(W \rtimes \langle \theta \rangle)$ be the virtual representation ring of $W \rtimes \langle \theta \rangle$. Since $\theta$ is an inner automorphism on $W$, $\Res_W U$ is an irreducible $W$-representation for any irreducible $W \rtimes \langle \theta \rangle$ representation $U$. Then there exists a unique $W \rtimes \langle \theta \rangle$ representation denoted $\overline{U}$ such that $U$ and $\overline{U}$ are isomorphic as $W$-representation but non-isomorphic as $W \rtimes \langle \theta \rangle$-representation. Let $R'$ be the space spanned by $U \oplus  \overline{U}$ for all $U \in \Irr(W \rtimes \langle \theta \rangle)$. 
Let
\[   \overline{R}_W = R(W \rtimes \langle \theta \rangle) / R'   .\]
Note that $\overline{R}_W$ is isomorphic to $R(W)$ as vector spaces, but there is no canonical isomorphism between them. Note that $R'$ is in the radical of $\langle , \rangle^{\theta-\mathrm{ellip},V}_W$ and so $\langle , \rangle^{\theta-\mathrm{ellip},V}_W$ descends to $ \overline{R}_W$. A natural question is to describe $\rad \langle , \rangle^{\theta-\mathrm{ellip},V}_W$ and is answered in Proposition \ref{prop radical desc}.


\begin{lemma} \label{lem fr trick}
Let $U \in R(W \rtimes \langle \theta \rangle)$. Let $J \in \mathcal J^{\theta}$ and let $U' \in R(W_J \rtimes \langle \theta \rangle)$. If 
\[ \sum_{w \in W} \tr_U(w\theta)\overline{\tr_{\Ind_{W_J}^WU'}(w\theta) } =0, \] then
\[     \sum_{w \in W_J} \tr_U(w\theta)\overline{\tr_{U'}(w\theta)} =0 .
\]
\end{lemma}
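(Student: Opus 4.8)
The statement is a standard "Frobenius reciprocity trick" for twisted class functions, and the plan is to reduce it to the orthogonality properties of twisted characters. First I would record the key identity: for any $W \rtimes \langle \theta \rangle$-representation $U$ and any $W_J \rtimes \langle \theta \rangle$-representation $U'$ with $J \in \mathcal J^{\theta}$, the twisted version of Frobenius reciprocity gives
\[
\sum_{w \in W} \tr_U(w\theta)\,\overline{\tr_{\Ind_{W_J}^W U'}(w\theta)} \;=\; \frac{|W|}{|W_J|}\sum_{w \in W_J} \tr_U(w\theta)\,\overline{\tr_{U'}(w\theta)}.
\]
To justify this, note that $\Ind_{W_J}^W U' = \Ind_{W_J \rtimes \langle \theta\rangle}^{W \rtimes \langle \theta\rangle} U'$ as $W \rtimes \langle \theta\rangle$-representations (here I use that $\theta(J) = J$, so $W_J \rtimes \langle\theta\rangle$ is an honest subgroup of $W \rtimes \langle\theta\rangle$ of index $|W|/|W_J|$, the $\theta$-part contributing a common factor of $2$ that cancels). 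Then the left-hand side of the displayed identity is $2|W| \cdot \langle U, \Ind U'\rangle_{W \rtimes \langle\theta\rangle}$ computed over the coset $W\theta$ only; by ordinary Frobenius reciprocity in the group $W \rtimes \langle\theta\rangle$ this equals the corresponding inner product for $\Res U$ and $U'$ over $W_J\theta$, which after bookkeeping is $\frac{2|W|}{|W_J|}$ times $\frac{1}{2}\sum_{w\in W_J}\tr_U(w\theta)\overline{\tr_{U'}(w\theta)}$. Cancelling the factor $2$ yields the identity.

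Granting the identity, the lemma is immediate: the hypothesis says the left-hand side vanishes, and since $|W|/|W_J| \neq 0$, the sum $\sum_{w \in W_J}\tr_U(w\theta)\overline{\tr_{U'}(w\theta)}$ must vanish as well. So the real content is the twisted Frobenius reciprocity identity, and everything else is formal.

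The cleanest way to organize the argument, avoiding any subtlety about whether $\theta$ acts nontrivially, is to work entirely on the coset $W\theta$ inside the group $G := W \rtimes \langle\theta\rangle$ (note $\theta^2 = \Id$, so $G$ has order $2|W|$ unless $\theta = \Id$, in which case the statement is the classical one). Set $G_J := W_J \rtimes \langle\theta\rangle \le G$. The induced representation $\Ind_{G_J}^G U'$ restricts (as a vector space with $W$-action) to $\Ind_{W_J}^W \Res_{W_J} U'$, and a direct computation with the induced-character formula
\[
\tr_{\Ind_{G_J}^G U'}(g) = \frac{1}{|G_J|}\sum_{\substack{x \in G \\ x^{-1}gx \in G_J}} \tr_{U'}(x^{-1}gx)
\]
evaluated at $g = w\theta$ (using that conjugation by elements of $W$ preserves the coset $W\theta$ and that $W_J\theta \subset G_J$) gives, after summing over $w \in W$ and interchanging the order of summation, exactly $\frac{|W|}{|W_J|}\sum_{w' \in W_J}\tr_U(w'\theta)\overline{\tr_{U'}(w'\theta)}$ on comparison with $\sum_{w\in W}\tr_U(w\theta)\overline{\tr_{\Ind U'}(w\theta)}$. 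The one point to check carefully is that in the double sum, for each $u \in W_J$ the number of pairs $(w, x)$ with $x \in W$ and $x^{-1}(w\theta)x = u\theta$ is exactly $|W_J|$ (the size of the centralizer-type set), which is where $\theta(J) = J$ is used; this is the main obstacle, though it is routine once one writes $x^{-1}w\theta(x) = u$ and counts solutions $w$ for fixed $x$.

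I expect no genuine difficulty here — the lemma is a bookkeeping exercise in twisted character theory — so the proof I would write is short: state the twisted Frobenius reciprocity identity above with a one-line justification via the induced-character formula on the coset $W\theta$, then conclude by dividing out $|W|/|W_J| \neq 0$.
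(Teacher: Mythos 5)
Your proposal is correct and proves the same key identity as the paper, namely
\[
\sum_{w \in W} \tr_U(w\theta)\overline{\tr_{\Ind_{W_J}^W U'}(w\theta)} = \frac{|W|}{|W_J|}\sum_{w \in W_J}\tr_U(w\theta)\overline{\tr_{U'}(w\theta)},
\]
but the execution differs slightly. The paper writes the coset sum as $2|W|\langle U, \Ind_{W_J\rtimes\langle\theta\rangle}^{W\rtimes\langle\theta\rangle}U'\rangle_{W\rtimes\langle\theta\rangle} - |W|\langle U, \Ind_{W_J}^W U'\rangle_W$ and applies ordinary Frobenius reciprocity twice (once for $W\rtimes\langle\theta\rangle\supset W_J\rtimes\langle\theta\rangle$, once for $W\supset W_J$), then recombines; your first justification gestures at this but asserts a ``coset-wise Frobenius reciprocity'' that does not follow from a single application of the ordinary statement --- the untwisted $W$-part must be subtracted off, which is exactly what the paper does. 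Your second, preferred route (the induced-character formula evaluated on the coset $W\theta$, followed by interchanging sums) is a legitimate direct computation that avoids this and is a genuine, more hands-on alternative. One small slip there: the number of pairs $(w,x)$ with $x\in W$ and $x^{-1}w\theta(x)=u$ for fixed $u\in W_J$ is $|W|$ (one $w$ per $x$), not $|W_J|$; combined with the prefactor $1/|W_J|$ from the character formula this still yields the correct factor $|W|/|W_J|$, and the method you describe (``count solutions $w$ for fixed $x$'') produces the right count, so the conclusion stands.
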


\begin{proof}
This follows from the following: 
\begin{align*}
0=&   \sum_{w \in W} \tr_U(w\theta)\overline{\tr_{\Ind_{W_J \rtimes \langle \theta \rangle}^{W \rtimes \langle \theta \rangle}U'}(w\theta) }  \\
=&   2|W|\langle U, \Ind_{W_J \rtimes \langle \theta\rangle}^{W \rtimes \langle \theta \rangle} U'\rangle_{W \rtimes \langle \theta \rangle}-|W|\langle U, \Ind^W_{W_J}U' \rangle_{W} \\
=&   2|W|\langle \Res_{W_J\rtimes \langle \theta \rangle}U, U' \rangle_{W_J \rtimes \langle \theta \rangle}-|W|\langle \Res_{W_J}U, U' \rangle_{W_J} \\
=&   \frac{|W|}{|W_J|}\sum_{w \in W_J} \tr_U(w\theta)\overline{\tr_{U'}(w\theta) }
\end{align*} 
Here $\langle , \rangle_W$ and $\langle , \rangle_{W_J}$ denotes the standard inner form on $W$-representations and $W_J$-representations respectively.
\end{proof}





\begin{proposition} \label{prop radical desc}
\begin{enumerate}
\item The radical of $\langle , \rangle^{\theta-\mathrm{ellip},V}_W$ on $\overline{R}_W$ is the image of 
\[\bigoplus_{J \in \mathcal J^{\theta}} \Ind_{W_J\rtimes \langle \theta \rangle}^{W \rtimes \langle \theta \rangle} R(W_J \rtimes \langle \theta \rangle). \]
\item The dimension of the quotient space $\overline{R}_W/\mathrm{rad}\langle , \rangle^{\theta-\mathrm{ellip},V}_W$ is equal to the number of  elliptic $\theta$-twisted conjugacy classes.
\end{enumerate}
\end{proposition}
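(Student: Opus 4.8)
The plan is to pass to $\theta$-twisted class functions. Because $\theta$ acts on $W$ by an inner automorphism, it is standard (cf. \cite{CH}) that the twisted character map $[U]\mapsto(w\mapsto\tr_U(w\theta))$ induces an isomorphism from $\overline{R}_W$ onto the space $\mathcal{C}$ of $\theta$-twisted class functions on $W$ (functions $f\colon W\to\mathbb{C}$ with $f(xw\theta(x)^{-1})=f(w)$ for all $x$), and under this isomorphism $\langle\cdot,\cdot\rangle^{\theta-\mathrm{ellip},V}_W$ becomes the Hermitian form $(f,g)\mapsto\frac{1}{|W|}\sum_{w\in W}f(w\theta)\overline{g(w\theta)}\det{}_V(1-w\theta)$. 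Since $\det_V(1-w\theta)$ is constant on each $\theta$-twisted conjugacy class, rewriting this sum over classes shows at once that $f$ lies in the radical if and only if $f$ vanishes on every elliptic $\theta$-twisted class; that is, $\mathrm{rad}\,\langle\cdot,\cdot\rangle^{\theta-\mathrm{ellip},V}_W=\mathcal{C}_0$, where $\mathcal{C}_0\subseteq\mathcal{C}$ is the subspace of twisted class functions supported on the non-elliptic classes. Part (2) is then immediate: $\dim\mathcal{C}$ is the number of $\theta$-twisted classes and $\dim\mathcal{C}_0$ is the number of non-elliptic ones, so $\dim(\overline{R}_W/\mathrm{rad})$ is the number of elliptic $\theta$-twisted classes.

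For part (1) it remains to identify $\mathcal{C}_0$ with the image of $\bigoplus_{J\in\mathcal{J}^\theta}\Ind_{W_J\rtimes\langle\theta\rangle}^{W\rtimes\langle\theta\rangle}R(W_J\rtimes\langle\theta\rangle)$ inside $\mathcal{C}$. That the image is contained in $\mathcal{C}_0$ follows from the induced-character formula together with \leref{lem not elliptic}: for $J\in\mathcal{J}^\theta$ and $U'\in R(W_J\rtimes\langle\theta\rangle)$, the value $\tr_{\Ind_{W_J\rtimes\langle\theta\rangle}^{W\rtimes\langle\theta\rangle}U'}(w\theta)$ vanishes unless $w$ is $\theta$-twisted conjugate into $W_J$, and by \leref{lem not elliptic}(2) every such $w$ has $\det_V(1-w\theta)=0$; hence $\tr_{\Ind U'}(\cdot\,\theta)$ is supported on non-elliptic classes.

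For the reverse inclusion I would compare orthogonal complements with respect to the \emph{standard} nondegenerate Hermitian pairing $\langle f,g\rangle^{0}=\frac{1}{|W|}\sum_{w\in W}f(w\theta)\overline{g(w\theta)}$ on $\mathcal{C}$. In the basis of indicator functions of $\theta$-twisted classes this pairing is diagonal, so the $\langle\cdot,\cdot\rangle^{0}$-orthogonal complement of $\mathcal{C}_0$ is exactly $\mathcal{C}_1$, the span of the indicator functions of the elliptic classes. On the other hand, the identity established in the proof of \leref{lem fr trick} shows that $\langle\Ind_{W_J\rtimes\langle\theta\rangle}^{W\rtimes\langle\theta\rangle}U',g\rangle^{0}$ is a positive scalar multiple of $\langle U',\Res_{W_J\rtimes\langle\theta\rangle}g\rangle^{0}_{W_J}$, where $\Res_{W_J\rtimes\langle\theta\rangle}g$ denotes the restriction of the twisted class function $g$ to $W_J\theta$. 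Hence $g$ is $\langle\cdot,\cdot\rangle^{0}$-orthogonal to the entire image of $\bigoplus_{J\in\mathcal{J}^\theta}\Ind$ precisely when $g$ vanishes on $W_J\theta$ for every $J\in\mathcal{J}^\theta$; since by \leref{lem not elliptic}(1) the non-elliptic elements of $W$ are exactly the $\theta$-twisted conjugates of $\bigcup_{J\in\mathcal{J}^\theta}W_J$, and $g$ is a twisted class function, this is equivalent to $g\in\mathcal{C}_1$. Thus the image of $\bigoplus_{J\in\mathcal{J}^\theta}\Ind$ and $\mathcal{C}_0$ have the same $\langle\cdot,\cdot\rangle^{0}$-orthogonal complement $\mathcal{C}_1$, and therefore coincide. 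Transporting back through the isomorphism $\overline{R}_W\cong\mathcal{C}$ proves (1).

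The step I expect to be the main obstacle is the spanning assertion inside part (1) — that every twisted class function supported on the non-elliptic set actually arises from the maps $\Ind_{W_J\rtimes\langle\theta\rangle}^{W\rtimes\langle\theta\rangle}$ — together with the care needed to apply Frobenius reciprocity in the twisted setting. The relevant reciprocity here is not the naive one for the finite groups $W\rtimes\langle\theta\rangle\supseteq W_J\rtimes\langle\theta\rangle$, but the coset version encapsulated in \leref{lem fr trick}, which relates averages of $\tr(\cdot\,\theta)$ over $W$ to averages over $W_J$. Once that is set up correctly, and once one keeps careful track that "$\Res$ of a twisted class function to $W_J\theta$" is the notion that feeds into it, the orthogonal-complement argument closes the gap cleanly.
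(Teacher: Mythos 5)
Your proposal is correct and follows essentially the same route as the paper: the containment of the induced span in the radical via \leref{lem not elliptic}(2), and the reverse inclusion via \leref{lem fr trick} together with \leref{lem not elliptic}(1) and orthogonality for the standard (untwisted) pairing. Your reformulation on the space of $\theta$-twisted class functions, with the double orthogonal complement, is just a cleaner packaging of the paper's argument (which itself follows Reeder's Proposition 2.2.2), and it makes the nondegeneracy used in the final step transparent.
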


\begin{proof}
We first prove (1). The proof follows the one in \cite[Proposition 2.2.2]{Re}.
Let $U \in \Ind_{W_J \rtimes \langle \theta \rangle}^{W \rtimes \langle \theta \rangle} R(W_J \rtimes \langle \theta \rangle)$ for some $J \in \mathcal L^{\theta}$. Then $\chi_U(w\theta)$ vanishes for all $w$ that is not $\theta$-twisted conjugate to an element in $W_J$. Then by Lemma \ref{lem not elliptic} (2), $\bigoplus_{J \in \mathcal L^{\theta}} \Ind_{W_J \rtimes \langle \theta \rangle}^{W \rtimes \langle \theta \rangle} \mathcal R(W_J \rtimes \langle \theta \rangle)$ is a subset of the radical of $\langle , \rangle^{\theta-\mathrm{ellip},V}_W$. 

We now prove the converse direction. We pick a virtual representation $U \in  \mathrm{rad}\langle , \rangle^{\theta-\mathrm{ellip},V}_W$ such that $\langle U, \Ind_{W_J}^W U'  \rangle_{W \rtimes \langle \theta \rangle}=0$ for all $J \in \mathcal L^{\theta}$ and $U' \in R(W_J \rtimes \langle \theta \rangle)$. By Lemma \ref{lem fr trick}, $\mathrm{tr}_U(w\theta)=0$ for all $w \in W_J$ and all $J \in \mathcal J^{\theta}$. By Lemma \ref{lem not elliptic}, $\mathrm{tr}_U(w\theta)=0$ for any non-elliptic element $w$. This implies that $\tr_U(w\theta)=\tr_{U^+-U^-}(ww_0)=0$ for all $w$, where $U^+$ and $U^-$ are the $+1$ and $-1$ eigenspaces for $w_0\theta$. 
Hence $U^+=U^-$ and by definition $U \in \overline{R}_W$. Thus the orthogonal complement of the image of $\bigoplus_{J \in \mathcal L^{\theta}} \Ind_{W_J}^W R(W_J\rtimes \langle \theta \rangle)$ in $\overline{R}_W$ with respect to the pairing $ \rad\langle , \rangle^{\theta-\mathrm{ellip},V}_W$ is exactly zero. This proves (1).

For (2), it follows from Definition \ref{def twisted ell pairing} and the fact that $\mathrm{det}_V(1-w\theta)$ is non-zero if and only if $w$ is $\theta$-elliptic.


\end{proof}

\subsection{Relation between two twisted elliptic pairings}

\begin{notation}
Let $X$ be an $\mathbb{H} \rtimes \langle \theta \rangle$-module.  Define $X^{\pm}$ to be the $\pm 1$ eigenspaces of the action of $\theta t_{w_0}$ on $X$ respectively. It is easy to see  $X^{\pm}$ are invariant under the action of $t_w$ for $w \in W$ (see Lemma \ref{lem v tilde action} below). We shall regard  $X^{\pm}$ as $W$-representations or $W \rtimes \langle \theta \rangle$-representations. Moreover, since $\theta t_{w_0}$ is diagonalizable, we also have $X=X^+ \oplus X^-$.
\end{notation}

\begin{lemma} \label{lem v tilde action}
Let $X$ be an $\mathbb{H} \rtimes \langle \theta \rangle$-module. Then
\begin{enumerate}
\item $X^+$ and $X^-$ are $W \rtimes \langle \theta \rangle$-invariant
\item Let $X$ be an $\mathbb{H} \rtimes \langle \theta \rangle$-module. For any $v \in V$, $\widetilde{v}.X^{\pm} \subset X^{\mp}$.
\end{enumerate}
\end{lemma}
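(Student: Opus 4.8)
The plan is to reduce both statements to a single computation about how $\theta$ and $t_{w_0}$ interact with the relevant operators, since $X^{\pm}$ are by definition the eigenspaces of $\tau := \theta t_{w_0}$ acting on $X$. For part (1), I first observe that $t_{w_0}$ is invertible in $\mathbb{H}$ and that $\theta$ is an algebra automorphism, so $\tau = \theta t_{w_0}$ is a well-defined invertible operator on any $\mathbb{H} \rtimes \langle \theta \rangle$-module. To see $X^{\pm}$ are $W$-invariant it suffices to check $t_w \tau = \tau t_{w'}$ (up to a scalar that is $1$) for a suitable $w'$: using $\theta(t_w) = t_{w_0 w w_0^{-1}}$ and the fact that $w_0^2 = e$ (so $w_0^{-1} = w_0$), one computes $\tau\, t_w = \theta(t_w)\, t_{w_0}\, \theta\!\cdot\!(\text{nothing}) $— more carefully, $\pi(\tau)\pi(t_w) = \pi(\theta)\pi(t_{w_0})\pi(t_w)$, and since on the module $\pi(\theta)\pi(h) = \pi(\theta(h))\pi(\theta)$ is \emph{not} the relation (rather $\pi(\theta)\pi(h)\pi(\theta)^{-1} = \pi(\theta(h))$ when $\theta^2 = 1$), I get $\tau t_w \tau^{-1} = \theta t_{w_0} t_w t_{w_0}^{-1} \theta^{-1} = \theta(t_{w_0 w w_0})$ evaluated via the $\theta$-automorphism, which equals $t_{w_0(w_0 w w_0)w_0} = t_w$. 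Hence $\tau$ commutes with every $t_w$, $w \in W$, so the $\tau$-eigenspaces $X^{\pm}$ are $\mathbb{C}[W]$-submodules; $\theta$-invariance of $X^\pm$ follows because $\theta$ commutes with $\tau$ (again using $\theta^2 = 1$ and that $\theta$ fixes $t_{w_0}$, since $\theta(t_{w_0}) = t_{w_0 w_0 w_0} = t_{w_0}$), giving assertion (1).

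For part (2), the key point is the behaviour of $\widetilde{v}$ under conjugation by $\tau$. Recall from Lemma \ref{lem tilde element} that $t_w \widetilde{v} = \widetilde{w(v)}\, t_w$, so $t_{w_0}$ conjugates $\widetilde{v}$ to $\widetilde{w_0(v)}$; and from the lemma just before this one, $\theta(\widetilde{v}) = \widetilde{\theta(v)} = \widetilde{-w_0(v)} = -\widetilde{w_0(v)}$, using that $\widetilde{\,\cdot\,}$ is linear in $v$ and $\theta(v) = -w_0(v)$. Combining these: $\tau\, \widetilde{v}\, \tau^{-1} = \theta\big(t_{w_0} \widetilde{v}\, t_{w_0}^{-1}\big) = \theta(\widetilde{w_0(v)}) = -\widetilde{w_0(w_0(v))} = -\widetilde{v}$. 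Therefore $\widetilde{v}$ \emph{anti}-commutes with $\tau$ as operators on $X$. Consequently, if $x \in X^{\pm}$, i.e.\ $\tau x = \pm x$, then $\tau(\widetilde{v}.x) = -\widetilde{v}.(\tau x) = \mp \widetilde{v}.x$, so $\widetilde{v}.x \in X^{\mp}$, which is exactly assertion (2).

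The only mild subtlety — and the step I would write out most carefully — is bookkeeping with the module action versus the algebra element: $\theta$ lives in $\mathbb{H} \rtimes \langle \theta \rangle$, so "$\theta(h)$" inside a conjugation $\pi(\theta)\pi(h)\pi(\theta)^{-1}$ is literally the automorphism $\theta$ applied to $h$, valid because $\theta^2 = 1$ in $\mathbb{H} \rtimes \langle \theta \rangle$. Once this identification is made, both parts are two-line computations using Lemma \ref{lem tilde element}, the identity $\theta(\widetilde{v}) = \widetilde{\theta(v)}$, $\theta(v) = -w_0(v)$, and $w_0^2 = e$. I expect no real obstacle here; the lemma is essentially a formal consequence of the compatibility relations already established, and the "hard part" is merely stating the conjugation identities $\tau t_w \tau^{-1} = t_w$ and $\tau \widetilde{v}\, \tau^{-1} = -\widetilde{v}$ cleanly.
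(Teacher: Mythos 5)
Your proof is correct and follows essentially the same route as the paper: the paper's (one-line) proof of (1) is exactly the commutation identity $\theta t_{w_0}t_w = t_w t_{w_0}\theta$, and its proof of (2) is the anticommutation of $\widetilde{v}$ with $\theta t_{w_0}$ obtained from $w_0\theta(v)=-v$ together with Lemma \ref{lem tilde element}, which is precisely what you compute. The only cosmetic remark is that your parenthetical about $\pi(\theta)\pi(h)=\pi(\theta(h))\pi(\theta)$ ``not'' being the relation is misleading --- it is the semidirect-product relation and is equivalent to the conjugation form you use since $\theta^2=1$ --- but this does not affect the argument.
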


\begin{proof}
(1) follows from $\theta t_{w_{0}}t_w=t_wt_{w_0}\theta$. (2) follows from $w_0\theta(v)=-v$ and Lemma \ref{lem tilde element}. 

\end{proof}

\begin{lemma} \label{lem v tilde action c}
For $\mathbb{H} \rtimes \langle \theta \rangle$-modules $X$ and $Y$, define 
\[\Hom^+_i = \Hom_{\mathbb{C}[W]}(X^+\otimes \wedge^i V, Y^+) \oplus \Hom_{\mathbb{C}[W]}(X^-\otimes \wedge^{i}V, Y^-) \]
 and 
\[ \Hom^-_i=\Hom_{\mathbb{C}[W]}(X^+\otimes \wedge^iV,Y^-)\oplus \Hom_{\mathbb{C}[W]}(X^-\otimes \wedge^iV, Y^+). \]
The map $d_i^*$ sends $\Hom^{\pm}_i \rightarrow \Hom^{\mp}_{i+1}$. Moreover, $\theta^*$ acts identically as $(-1)^i$ on $\Hom_i^+$ and acts identically as $-(-1)^i$ on $\Hom_i^-$. 
\end{lemma}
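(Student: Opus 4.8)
The plan is to verify the two assertions of Lemma \ref{lem v tilde action c} separately, using the alternate form $\widetilde{d}_i$ of the differential from Proposition \ref{prop equal d} together with the eigenspace decomposition $X = X^+ \oplus X^-$, $Y = Y^+ \oplus Y^-$ under $\theta t_{w_0}$.

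First I would handle the claim about $d_i^*$. Recall from (\ref{eqn di* actin 1})--(\ref{eqn di* actin 2}) and Proposition \ref{prop equal d} that $d_{i+1}^*(\psi)(x \otimes v_1 \wedge \ldots \wedge v_{i+1})$ can be written with $\widetilde{v}_j$ in place of $v_j$, i.e. as $\sum_j (-1)^j \widetilde{v}_j.\psi(x \otimes v_1 \wedge \ldots \widehat{v}_j \ldots \wedge v_{i+1}) - \sum_j (-1)^j \psi(\widetilde{v}_j.x \otimes v_1 \wedge \ldots \widehat{v}_j \ldots \wedge v_{i+1})$. By Lemma \ref{lem v tilde action}(2), $\widetilde{v}_j$ sends $X^+ \to X^-$, $X^- \to X^+$, and likewise for $Y$. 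So if $\psi \in \Hom_{\mathbb{C}[W]}(X^+ \otimes \wedge^i V, Y^+)$, then the first sum applies $\widetilde{v}_j$ to an element of $Y^+$, landing in $Y^-$, and the second sum feeds $\widetilde{v}_j.x \in X^-$ into $\psi$ — but $\psi$ is only defined on $X^+$. This is exactly the point: one must first observe that a $\psi$ defined on $X^+ \otimes \wedge^i V$ extends by zero on $X^- \otimes \wedge^i V$ (consistently with the direct sum decompositions defining $\Hom_i^{\pm}$), so the second sum vanishes on this summand and contributes instead when $\psi$ came from the $\Hom_{\mathbb{C}[W]}(X^- \otimes \wedge^i V, Y^-)$ summand. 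Tracking the four summands of $\Hom_i^+$ and $\Hom_i^-$ through this bookkeeping shows $d_i^*(\Hom_i^{\pm}) \subseteq \Hom_{i+1}^{\mp}$. One also needs that $\widetilde{v}_j$ commutes with the $\mathbb{C}[W]$-action up to the reflection action on $V$ (Lemma \ref{lem tilde element}), so that $d_i^*(\psi)$ is again a $\mathbb{C}[W]$-map; this is routine and was already implicit in the definition of $d^*$.

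Next I would compute the $\theta^*$-action on $\Hom_i^{\pm}$. Using (\ref{eqn theta* actin 1}), $\theta^*(\psi)(x \otimes v_1 \wedge \ldots \wedge v_i) = \theta \circ \psi(\theta(x) \otimes \theta(v_1) \wedge \ldots \wedge \theta(v_i))$. The key observations are: (a) on $X^{\pm}$ and $Y^{\pm}$ the operator $\theta$ acts as $t_{w_0}^{-1}$ composed with $\pm 1$, since $\theta t_{w_0} = \pm 1$ on $X^{\pm}$ gives $\theta = \pm t_{w_0}^{-1}$ there; more precisely, on $X^{\pm}$ we have $\theta = \pm t_{w_0}^{-1}$ as operators, and similarly on $Y^{\pm}$; (b) $\theta$ acts on $V$ by $v \mapsto -w_0(v)$, so $\theta$ on $\wedge^i V$ is $(-1)^i$ times the action of $w_0$ on $\wedge^i V$, which in turn is an honest $\mathbb{C}[W]$-action. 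Plugging these in: for $\psi \in \Hom_{\mathbb{C}[W]}(X^+ \otimes \wedge^i V, Y^+)$, the sign picked up is $(\text{sign from } \theta \text{ on } Y^+) \cdot (\text{sign from } \theta \text{ on } X^+) \cdot (-1)^i = (+1)(+1)(-1)^i = (-1)^i$, after using that $\psi$ is $\mathbb{C}[W]$-equivariant to absorb the $t_{w_0}^{-1}$'s and the $w_0$-action on $\wedge^i V$ (they cancel because $\psi$ intertwines). For $\psi \in \Hom_{\mathbb{C}[W]}(X^+ \otimes \wedge^i V, Y^-)$ the two $\pm$-signs are $+1$ and $-1$, giving $-(-1)^i$; and the same on the other summands by symmetry. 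Hence $\theta^* = (-1)^i$ on $\Hom_i^+$ and $\theta^* = -(-1)^i$ on $\Hom_i^-$.

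The main obstacle I anticipate is the careful sign/equivariance bookkeeping in the $\theta^*$ computation — specifically, verifying cleanly that the factors $t_{w_0}^{-1}$ on the source and target, together with the $w_0$-action on $\wedge^i V$, combine to the identity when $\psi$ is a $\mathbb{C}[W]$-morphism (this uses $w_0^{-1} = w_0$ and the intertwining property applied with $w = w_0$), leaving only the scalar $\pm 1$ factors and the $(-1)^i$ from $\theta$ acting as $-1$ on $V$. Once this identity is pinned down, both statements follow by inspecting the four summands. I would present the argument for the $X^+ \to Y^+$ summand in full and indicate that the remaining three are identical up to the sign changes recorded above.
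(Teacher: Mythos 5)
Your proposal is correct and follows essentially the same route as the paper: the first assertion via the $\widetilde{d}_i$ form of the differential (Proposition \ref{prop equal d}) and the fact that $\widetilde{v}$ interchanges the $\pm$-eigenspaces (Lemma \ref{lem v tilde action}), and the second via $w_0\theta=-\mathrm{Id}_V$, the $\mathbb{C}[W]$-equivariance of $\psi$ applied with $w=w_0$, and the eigenvalues of $\theta t_{w_0}$ on source and target. Writing $\theta=\pm t_{w_0}$ on $X^{\pm}$ is just a repackaging of the paper's insertion of $t_{w_0}\theta$ inside and outside $\psi$, so no substantive difference.
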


\begin{proof}

The first assertion follows from Lemma \ref{lem v tilde action} and Proposition \ref{prop equal d}. For the second assertion, we pick $\psi \in \Hom^+_i$. Suppose $x \in X^+$ and $v_1, \ldots, v_i \in V$. Then
\begin{align*}
 & \theta^*(\psi)(x \otimes v_1 \wedge \ldots \wedge v_i)  \\
=& \theta.\psi(\theta(x) \otimes \theta(v_1)\wedge \ldots \wedge \theta(v_i)) \\
=& t_{w_0}\theta.\psi((t_{w_0}\theta.x) \otimes w_0\theta(v_1) \wedge \ldots \wedge w_0\theta(v_i)) \\
=& (-1)^it_{w_0}\theta.\psi(x \otimes v_1 \wedge \ldots \wedge v_i) \\
=& (-1)^i\psi(x \otimes v_1 \wedge \ldots \wedge v_i)
\end{align*}
The forth equality follows from $w_0\theta(v)=-v$, $t_{w_0}\theta.x=x$, and the last equality follows from $\im\psi \in Y^+$. Other cases are similar.
\end{proof}

With $\Hom_i^{\pm}$ defined in Lemma \ref{lem v tilde action c}, we also define that 
\[   \Ext^i(X, Y)^+ = \frac{\ker(d_i^*: \Hom_i^+ \rightarrow \Hom_i^-)}{\im(d_i^*: \Hom_i^- \rightarrow \Hom_i^+)}, \]
and similarly, 
\[   \Ext^i(X, Y)^- = \frac{\ker(d_i^*: \Hom_i^- \rightarrow \Hom_i^+)}{\im(d_i^*: \Hom_i^+ \rightarrow \Hom_i^-)} .\]
Note that by the projective resolution in (\ref{eqn projective resolution}),
 \begin{align} \label{eqn decomp ext}
 \Ext_{\mathbb{H}}^i(X, Y) =\Ext^i(X, Y)^+ \oplus \Ext^i(X, Y)^-. 
\end{align}

\begin{theorem} \label{thm twisted ext}
For any finite-dimensional $\mathbb{H} \rtimes \langle \theta \rangle$-modules $X$ and $Y$ with $\theta$ defined as in (\ref{eqn involution}), 
\[    \mathrm{EP}^{\theta}_{\mathbb{H}}(X, Y)=\langle \Res_{W\rtimes \langle \theta\rangle}X, \Res_{W \rtimes \langle \theta \rangle}Y \rangle^{\theta-\mathrm{ellip}, V}_{W} .
\]
In particular, the $\theta$-twisted elliptic pairing $\mathrm{EP}^{\theta}_{\mathbb{H}}$ depends on the $W$-module structures of $X$ and $Y$ only.
\end{theorem}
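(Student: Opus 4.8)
The plan is to run the Euler-Poincar\'e principle on the Koszul-type resolution of Section~\ref{s koszul type resolution}, now keeping track of the operator $\theta^*$, in complete parallel with the proof of Proposition~\ref{thm euler poincare}. Applying $\Hom_{\mathbb{H}}(-,Y)$ to the projective resolution (\ref{eqn projective resolution}) of $X$ and using Frobenius reciprocity produces a bounded complex $(C^\bullet,d^*)$ with $C^i=\Hom_{\mathbb{C}[W]}(\Res_W X\otimes\wedge^i V,\Res_W Y)$ whose cohomology is $\Ext^i_{\mathbb{H}}(X,Y)$ by Corollary~\ref{cor projective resol}; as $X$ and $Y$ are finite-dimensional, each $C^i$ is finite-dimensional and $C^i=0$ for $i>\dim V$. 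By Lemma~\ref{lem d theta commut}, $\theta^*$ is a chain endomorphism of $(C^\bullet,d^*)$ inducing the operator $\theta^*$ on each $\Ext^i_{\mathbb{H}}(X,Y)$, so the Euler-Poincar\'e principle (the alternating trace of an endomorphism on the cohomology of a bounded complex of finite-dimensional spaces equals the alternating trace on the chain groups, provided the endomorphism commutes with the differential) gives
\[
  \mathrm{EP}^{\theta}_{\mathbb{H}}(X,Y)=\sum_i(-1)^i\,\mathrm{trace}\!\left(\theta^*\mid\Ext^i_{\mathbb{H}}(X,Y)\right)=\sum_i(-1)^i\,\mathrm{trace}\!\left(\theta^*\mid C^i\right).
\]

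Next I would evaluate $\mathrm{trace}(\theta^*\mid C^i)$ using Lemma~\ref{lem v tilde action c}; this is where the hypothesis that $\theta$ is the involution (\ref{eqn involution}) attached to $w_0$ enters in an essential way (through the alternate description $\widetilde{d}_i$ of the differential in Proposition~\ref{prop equal d} and the identity $w_0\theta=-\Id_V$ on $V$). That lemma decomposes $C^i=\Hom_i^+\oplus\Hom_i^-$ with $\theta^*$ acting by the scalars $(-1)^i$ and $-(-1)^i$ respectively, so $\mathrm{trace}(\theta^*\mid C^i)=(-1)^i(\dim\Hom_i^+-\dim\Hom_i^-)$. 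Writing $X=X^+\oplus X^-$ and $Y=Y^+\oplus Y^-$ for the $\pm1$-eigenspaces of $\theta t_{w_0}$ and regarding $X^+-X^-$ and $Y^+-Y^-$ as virtual $W$-representations, the definitions of $\Hom_i^{\pm}$ give $\dim\Hom_i^+-\dim\Hom_i^-=\dim\Hom_{\mathbb{C}[W]}\big((X^+-X^-)\otimes\wedge^i V,\,Y^+-Y^-\big)$. Summing over $i$, the sign $(-1)^i$ coming from the resolution cancels the sign $(-1)^i$ by which $\theta^*$ scales $\Hom_i^{\pm}$, leaving
\[
  \mathrm{EP}^{\theta}_{\mathbb{H}}(X,Y)=\sum_i\dim\Hom_{\mathbb{C}[W]}\big((X^+-X^-)\otimes\wedge^i V,\,Y^+-Y^-\big).
\]
(Equivalently one may package this through the $\theta^*$-eigenspace decomposition (\ref{eqn decomp ext}) of $\Ext^i_{\mathbb{H}}(X,Y)$.)

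Finally I would translate the right-hand side into characters and match it with the twisted elliptic pairing. Using $\dim\Hom_{\mathbb{C}[W]}(A,B)=\frac1{|W|}\sum_{w\in W}\overline{\chi_A(w)}\,\chi_B(w)$ (extended bilinearly) and $\sum_i\mathrm{tr}_{\wedge^i V}(w)=\det_V(1+w)$, the last display becomes $\frac1{|W|}\sum_{w\in W}\overline{\chi_{X^+-X^-}(w)}\,\chi_{Y^+-Y^-}(w)\,\det_V(1+w)$. Since $X^{\pm}$ and $Y^{\pm}$ are exactly the $\pm1$-eigenspaces of $w_0\theta$ on $\Res_{W\rtimes\langle\theta\rangle}X$ and $\Res_{W\rtimes\langle\theta\rangle}Y$, this is compared with the second formula for $\langle\,,\,\rangle^{\theta-\mathrm{ellip},V}_W$ in Definition~\ref{def twisted ell pairing}: substituting $w\mapsto ww_0$ (a bijection of $W$ under which every summand there is a function of $ww_0$), and then using that $\det_V(1+w)$ is real together with the bijection $w\mapsto w^{-1}$, for which $\chi_U(w^{-1})=\overline{\chi_U(w)}$ and $\det_V(1+w^{-1})=\det_V(1+w)$, identifies it with $\langle\Res_{W\rtimes\langle\theta\rangle}X,\Res_{W\rtimes\langle\theta\rangle}Y\rangle^{\theta-\mathrm{ellip},V}_W$; the ``depends only on the $W$-module structure'' assertion is then immediate. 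All the substantive input --- the resolution, Frobenius reciprocity, and Lemmas~\ref{lem d theta commut} and~\ref{lem v tilde action c} --- is already in place, so there is no genuine new difficulty; the one thing that needs care, and which I regard as the main point to get right, is the sign and conjugation bookkeeping: tracking the cancellation of the two families of $(-1)^i$, and matching the conventions on the two sides across the shift $w\mapsto ww_0$.
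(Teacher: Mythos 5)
Your proposal is correct and follows essentially the same route as the paper: the Koszul resolution plus Frobenius reciprocity, the decomposition $C^i=\Hom_i^+\oplus\Hom_i^-$ of Lemma \ref{lem v tilde action c}, and the character computation identifying $\sum_i(\dim\Hom_i^+-\dim\Hom_i^-)$ with the twisted elliptic pairing via $w_0\theta=-\Id_V$. The only cosmetic difference is that you invoke the Hopf trace (Euler--Poincar\'e) principle abstractly where the paper carries out the equivalent kernel--image telescoping by hand.
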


\begin{proof}
Set $d_i^{*,+}=d_i^*|_{\Hom_i^+}$ and $d_i^{*, -}=d_i^*|_{\Hom_i^-}$.
\begin{eqnarray*}
& & \mathrm{EP}_{\mathbb{H}}^{\theta}(X, Y) \\
&=& \sum_{i} (-1)^i \mathrm{trace}(\theta^*: \Ext^i_{\mathbb{H}}(X, Y) \rightarrow \Ext^i_{\mathbb{H}}(X, Y)) \\
&=& \sum_{i} (-1)^i [(-1)^i\dim \Ext^i(X, Y)^+ -(-1)^i \dim \Ext^i(X, Y)^-] \quad \mbox{ (by (\ref{eqn decomp ext}) and Lemma \ref{lem v tilde action c})}\\
&=&  \sum_{i} (\dim \Ext^i(X, Y)^+- \dim \Ext^i(X, Y)^-) \\
&=&  \sum_i [(\dim \ker d_i^{*,+}-\dim \im d_{i-1}^{*,-})-(\dim \ker d_i^{*,-}-\dim \im d_{i-1}^{*,+})] \\
&=& \sum_i (\dim \ker d_i^{*,+} +\dim \im d_{i-1}^{*,+}))-(\dim \ker d_i^{*,-}+\dim \im d_{i-1}^{*,-}) \\
&=&  \sum_i (\dim \Hom^+_i- \dim \Hom^-_i) \quad \mbox{ (definition of $\Hom^{\pm}$ in Lemma \ref{lem v tilde action c})}\\
&=& \frac{1}{|W|} \sum_{w \in W} \tr_{X^+-X^-}(w)\overline{\tr_{Y^+-Y^-}(w)}\mathrm{det}_V(1+w) \quad (\mbox{as virtual representations}) \\
&=&  \frac{1}{|W|}  \sum_{w \in W} \tr_{X}(ww_0\theta)\overline{\tr_{Y}(ww_0\theta)}\mathrm{det}_V(1-ww_0\theta) \\
&=& \langle \Res_{W\rtimes \langle \theta \rangle} (X), \Res_{W \rtimes \langle \theta \rangle} (Y) \rangle^{\theta-\mathrm{ellip}, V}_W 
\end{eqnarray*}
The third last equality follows from the fact that $\sum_i \tr_{\wedge^iV}(w)=\mathrm{det}_V(1+w)$ and $w_0\theta=-\mathrm{Id}_V$.
\end{proof}

\begin{remark}
We give an example to show that Theorem \ref{thm twisted ext} is not true in general if $\theta$ is replaced by an outer automorphism on $W$. Let $R$ be of type $A_1 \times A_1$. Let $\theta'$ be the Dynkin diagram automorphism interchanging two factors of $A_1$. Let $\mathbb{H}$ be the graded Hecke algebra of type $A_1 \times A_1$. Note that $\langle , \rangle^{\theta'-\mathrm{ellip}, V}_W \equiv 0$ as $\mathrm{tr}(w\theta')=0$ for all $w \in W$. Here $W=S_2 \times S_2$ and $V=\mathbb{C} \oplus \mathbb{C}$. However, we may choose an $\mathbb{H}$-module $X$ (e.g. the exterior tensor product of Steinberg modules) such that $\mathrm{EP}^{\theta'}_{\mathbb{H}}(X, X) \neq 0$.



\end{remark}

We give an interpretation of $\theta$-twisted Euler-Poincar\'e pairing with the Euler-Poincar\'e pairing of $\mathbb{H} \rtimes \langle \theta \rangle$-modules. Define $\mathrm{EP}_{\mathbb{H} \rtimes \langle \theta \rangle}(X, Y)=\sum_{i} (-1)^i \dim\Ext^i_{\mathbb{H} \rtimes \langle \theta \rangle}(X, Y)$, where $\Ext^i_{\mathbb{H} \rtimes \langle \theta \rangle}$ is taken in the category of $\mathbb{H} \rtimes \langle \theta \rangle$-modules.
\begin{corollary} \label{cor extended}
For any finite-dimensional $\mathbb{H} \rtimes \langle \theta \rangle$-modules $X$ and $Y$,
\[  \dim \mathrm{Ext}_{\mathbb{H} \rtimes \langle \theta \rangle}^i(X, Y) =\frac{1}{2} \dim \mathrm{Ext}_{\mathbb{H}}^i(X, Y)+\frac{1}{2}\mathrm{trace}(\theta^*: \mathrm{Ext}^i_{\mathbb{H}}(X, Y) \rightarrow \mathrm{Ext}^i_{\mathbb{H}}(X, Y) ) , \]
and
\[ \mathrm{EP}_{\mathbb{H}\rtimes \langle \theta \rangle}(X,Y) = \frac{1}{2}\mathrm{EP}_{\mathbb{H}}(X,Y)+\frac{1}{2}\mathrm{EP}^{\theta}_{\mathbb{H}}(X,Y) .\]
\end{corollary}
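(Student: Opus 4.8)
The plan is to deduce Corollary \ref{cor extended} from Theorem \ref{thm twisted ext} by a standard group-cohomology (averaging) argument applied to the two-element group $\langle \theta \rangle$, exploiting that $\theta^2 = \mathrm{Id}$ so that $\frac{1}{2}(1 + \theta^*)$ is an idempotent projecting onto $\theta^*$-invariants. The first step is to identify $\Ext^i_{\mathbb{H} \rtimes \langle \theta \rangle}(X, Y)$ with the $\theta^*$-fixed part of $\Ext^i_{\mathbb{H}}(X, Y)$. For this I would use the explicit projective resolution (\ref{eqn projective resolution}): since $X$ carries a $\theta$-action, each term $\mathbb{H} \otimes_{\mathbb{C}[W]}(\Res_W X \otimes \wedge^i V)$ inherits a $\theta$-action compatible with the differentials $d_i$ (this is precisely Lemma \ref{lem d theta commut} at the level of $\Hom$-complexes), so computing $\Ext_{\mathbb{H} \rtimes \langle \theta \rangle}$ amounts to taking $\theta$-invariants of the complex $\Hom_{\mathbb{C}[W]}(\Res_W X \otimes \wedge^\bullet V, \Res_W Y)$ computing $\Ext_{\mathbb{H}}$. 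Because we are over $\mathbb{C}$ and $\langle \theta \rangle$ has order $2$ (hence invertible order), taking invariants is exact, so it commutes with cohomology; therefore $\Ext^i_{\mathbb{H} \rtimes \langle \theta \rangle}(X,Y) = \Ext^i_{\mathbb{H}}(X,Y)^{\theta^*}$.

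The second step is the linear-algebra identity: for a finite-dimensional space $E$ with an involution $\sigma$, $\dim E^{\sigma} = \frac{1}{2}\dim E + \frac{1}{2}\operatorname{trace}(\sigma)$, since $\operatorname{trace}(\sigma) = \dim E^{+} - \dim E^{-}$ and $\dim E = \dim E^{+} + \dim E^{-}$. Applying this with $E = \Ext^i_{\mathbb{H}}(X,Y)$ and $\sigma = \theta^*$ gives exactly the first displayed formula of the corollary. The second formula then follows by multiplying by $(-1)^i$ and summing over $i$, using the definitions of $\mathrm{EP}_{\mathbb{H} \rtimes \langle \theta \rangle}$, $\mathrm{EP}_{\mathbb{H}}$, and $\mathrm{EP}^{\theta}_{\mathbb{H}}$. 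Alternatively, one can bypass the $\Ext$-level identification entirely and just observe that the second formula is a formal consequence of: $\mathrm{EP}_{\mathbb{H}}$ is given by Proposition \ref{thm euler poincare}, $\mathrm{EP}^{\theta}_{\mathbb{H}}$ by Theorem \ref{thm twisted ext}, and the trivial character identity $\chi_{\mathrm{triv}} = \frac{1}{2}(1 + \mathrm{sgn}_{\theta})$ on $\langle \theta \rangle$ combined with the fact that $\mathrm{EP}_{\mathbb{H} \rtimes \langle \theta\rangle}$ is computed by the $\langle\theta\rangle$-invariant subcomplex.

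The main obstacle I anticipate is the exactness/compatibility bookkeeping in the first step: one must check that the resolution (\ref{eqn projective resolution}) for $X$, together with its $\theta$-action, genuinely computes $\Ext_{\mathbb{H} \rtimes \langle\theta\rangle}(X,Y)$ — i.e. that $\mathbb{H} \otimes_{\mathbb{C}[W]}(\Res_W X \otimes \wedge^i V)$ is projective as an $\mathbb{H} \rtimes \langle\theta\rangle$-module (or at least $\Ext$-acyclic for the relevant functor), and that the isomorphism $\Hom_{\mathbb{H}\rtimes\langle\theta\rangle}(-, Y) \cong \Hom_{\mathbb{H}}(-,Y)^{\theta^*}$ intertwines the induced differentials $d_i^*$ with those from (\ref{eqn di* actin 1})–(\ref{eqn di* actin 2}). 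Once that is in place, the remaining steps are purely formal. I would therefore phrase the proof to lean on the linear-algebra identity applied termwise to $\Ext^i_{\mathbb{H}}(X,Y)$, citing Lemma \ref{lem d theta commut} for the well-definedness of $\theta^*$ on $\Ext$-groups and Theorem \ref{thm twisted ext} for passing to Euler characteristics, keeping the $\langle\theta\rangle$-invariants argument as the conceptual backbone for the first displayed equality.
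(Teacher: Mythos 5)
Your proposal is correct and follows essentially the same route as the paper: the paper also computes $\Ext^i_{\mathbb{H}\rtimes\langle\theta\rangle}(X,Y)$ from the Koszul resolution by identifying $\Hom_{\mathbb{C}[W]\rtimes\langle\theta\rangle}(\Res X\otimes\wedge^i V,\Res Y)$ with the $\theta^*$-invariant piece of the $\Hom_{\mathbb{C}[W]}$-complex (made explicit there via the eigenspace decomposition $\Hom_i^{\pm}$ and Lemma \ref{lem v tilde action c}), and then applies the same $\dim E^{\sigma}=\tfrac12\dim E+\tfrac12\operatorname{trace}(\sigma)$ bookkeeping. Your derivation of the second formula by summing the first with signs is a slight streamlining of the paper's character-sum computation, but the substance is identical.
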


\begin{proof}
Note that 
\[   \Hom_{\mathbb{C}[W] \rtimes \langle \theta \rangle} (\mathrm{Res}_{W \rtimes \langle \theta \rangle}X \otimes \wedge^i V, \mathrm{Res}_{W \rtimes \langle \theta \rangle} Y) \cong\left\{ \begin{array}{rl} \mathrm{Hom}^+_i &  \mbox{ if $i$ is even } \\ \mathrm{Hom}^-_i  & \mbox{ if $i$ is odd }  \end{array} \right.
\]
Then by using a Koszul type resolution as in (\ref{eqn projective resolution}), one could see that
\[   \mathrm{Ext}^i_{\mathbb{H} \rtimes \langle \theta \rangle}(X, Y) = \left\{ \begin{array}{rl} \mathrm{Ext}^+_i &  \mbox{ if $i$ is even } \\ \mathrm{Ext}^-_i  & \mbox{ if $i$ is odd }  \end{array} \right.
\]
By Lemma \ref{lem v tilde action c}, the latter expression above is equal to 
\[ \frac{1}{2}\dim \mathrm{Ext}_{\mathbb{H}}^i(X, Y)+\frac{1}{2}\mathrm{trace}(\theta^*: \mathrm{Ext}^i_{\mathbb{H}}(X, Y) \rightarrow \mathrm{Ext}^i_{\mathbb{H}}(X, Y) ) .\]

It follows from the proof of Proposition \ref{thm euler poincare} that
\begin{align*}  
& \Ext_{\mathbb{H}\rtimes \langle \theta \rangle}(X,Y) \\
=& \frac{1}{2|W|}\sum_{w \in W } \tr_X(w)\overline{\tr_Y(w)}\mathrm{det}_V(1-w)+\frac{1}{2|W|}\sum_{w \in W}\tr_X(w\theta)\overline{\tr_Y(w\theta)}\mathrm{det}_V(1-w\theta)  \\
=&\frac{1}{2} \langle \Res_{W}(X), \Res_W(Y) \rangle^{\mathrm{ellip}, V}_W+\frac{1}{2}\langle \Res_{W \rtimes \langle \theta \rangle}(X), \Res_{W\rtimes \langle \theta \rangle}(Y) \rangle^{\theta-\mathrm{ellip}, V}_W 
\end{align*}
Now the statement follows from Theorem \ref{thm twisted ext} and Proposition \ref{thm euler poincare}.
\end{proof}

\begin{corollary} \label{cor relation radical}
Let $X$ be a finite-dimensional $\mathbb{H} \rtimes \langle \theta \rangle$-module. If $X \in \mathrm{rad}(\mathrm{EP}_{\mathbb{H}}^{\theta})$, then $X \in \mathrm{rad}(\mathrm{EP}_{\mathbb{H}})$.
\end{corollary}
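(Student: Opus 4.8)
The plan is to transport the two pairings to Weyl group representations, reduce to a diagonal statement by positivity, and then exploit the structure of $X$ as an $\mathbb{H}\rtimes\langle\theta\rangle$-module. First, by Theorem~\ref{thm twisted ext} and Proposition~\ref{thm euler poincare} one has $\mathrm{EP}_{\mathbb{H}}^{\theta}(X,Y)=\langle\Res_{W\rtimes\langle\theta\rangle}X,\Res_{W\rtimes\langle\theta\rangle}Y\rangle_{W}^{\theta-\mathrm{ellip},V}$ and $\mathrm{EP}_{\mathbb{H}}(X,Y)=\langle\Res_{W}X,\Res_{W}Y\rangle_{W}^{\mathrm{ellip},V}$. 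Both of these are Hermitian forms on the corresponding representation rings which are moreover \emph{positive semi-definite}: for $w\in W$ (resp.\ $w\theta$), which acts on $V$ with eigenvalues that are roots of unity, conjugate pairs $\zeta,\bar\zeta$ contribute the nonnegative number $|1-\zeta|^{2}$ to $\det_{V}(1-\,\cdot\,)$ and real eigenvalues $\pm 1$ contribute $0$ or $2$; the same computation gives $\det_{V}(1+w)\geq 0$. Hence $X\in\mathrm{rad}(\mathrm{EP}_{\mathbb{H}}^{\theta})$ if and only if $\mathrm{EP}_{\mathbb{H}}^{\theta}(X,X)=0$, and likewise $X\in\mathrm{rad}(\mathrm{EP}_{\mathbb{H}})$ if and only if $\mathrm{EP}_{\mathbb{H}}(X,X)=0$. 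So it suffices to prove the implication $\mathrm{EP}_{\mathbb{H}}^{\theta}(X,X)=0\ \Rightarrow\ \mathrm{EP}_{\mathbb{H}}(X,X)=0$.

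Second, I would rewrite both self-pairings using the eigenspace decomposition $X=X^{+}\oplus X^{-}$ of $\theta t_{w_{0}}$, which are $W$-submodules by Lemma~\ref{lem v tilde action}. Exactly as in the proof of Theorem~\ref{thm twisted ext} (using $w_{0}\theta=-\mathrm{Id}_{V}$), $\mathrm{EP}_{\mathbb{H}}^{\theta}(X,X)=\frac{1}{|W|}\sum_{w\in W}|\tr_{X^{+}}(w)-\tr_{X^{-}}(w)|^{2}\det_{V}(1+w)$ and $\mathrm{EP}_{\mathbb{H}}(X,X)=\frac{1}{|W|}\sum_{w\in W}|\tr_{X^{+}}(w)+\tr_{X^{-}}(w)|^{2}\det_{V}(1-w)$. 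Thus the hypothesis is equivalent to $\tr_{X^{+}}(w)=\tr_{X^{-}}(w)$ for every $w$ with $\det_{V}(1+w)\neq 0$, and the desired conclusion is equivalent to $\tr(t_{w}\mid X)=\tr_{X^{+}}(w)+\tr_{X^{-}}(w)=0$ for every $w$ with no nonzero fixed vector in $V$. For such a $w$ which in addition has $\det_{V}(1+w)\neq 0$ the hypothesis already gives $\tr_{X^{\pm}}(w)=0$; so everything comes down to the $w\in W$ which act on $V$ with eigenvalue $-1$ but not $1$.

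Third, I would use the module structure for these remaining $w$. Fix $0\neq v_{0}\in V$ with $w(v_{0})=-v_{0}$. By Lemma~\ref{lem tilde element}, $t_{w}\widetilde{v}_{0}=\widetilde{w(v_{0})}t_{w}=-\widetilde{v}_{0}t_{w}$, so $t_{w}$ and $\widetilde{v}_{0}$ anticommute on $X$, and by Lemma~\ref{lem v tilde action}(2) the operator $\widetilde{v}_{0}$ interchanges $X^{+}$ and $X^{-}$. The anticommutation makes both $X_{\mathrm{inv}}=\bigcap_{n}\widetilde{v}_{0}^{\,n}X$ and $X_{\mathrm{nil}}=\bigcup_{n}\ker\widetilde{v}_{0}^{\,n}$ invariant under $t_{w}$, and $X=X_{\mathrm{inv}}\oplus X_{\mathrm{nil}}$. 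On $X_{\mathrm{inv}}$ the operator $\widetilde{v}_{0}$ is invertible and conjugates $t_{w}$ to $-t_{w}$, so $\tr(t_{w}\mid X_{\mathrm{inv}})=0$; hence $\tr(t_{w}\mid X)=\tr(t_{w}\mid X_{\mathrm{nil}})$.

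The hard part is this final step: showing $\tr(t_{w}\mid X_{\mathrm{nil}})=0$ for $w$ having $-1$ but not $1$ among its eigenvalues on $V$. The anticommutation with a single $\widetilde{v}_{0}$ is not enough on the part of $X$ where $\widetilde{v}_{0}$ is nilpotent, and one must either invoke the operators $\widetilde{v}$ attached to the whole $(-1)$-eigenspace of $w$ together with their commutation relations, or argue directly on the level of Weyl group representations: the hypothesis places $\Res_{W\rtimes\langle\theta\rangle}X$ in the radical of $\langle\,,\rangle_{W}^{\theta-\mathrm{ellip},V}$, which by Proposition~\ref{prop radical desc} and the Mackey identity $\Res_{W}\circ\Ind_{W_{J}\rtimes\langle\theta\rangle}^{W\rtimes\langle\theta\rangle}=\Ind_{W_{J}}^{W}\circ\Res_{W_{J}}$ forces $\Res_{W}X$ into $\sum_{J\subsetneq\Delta}\Ind_{W_{J}}^{W}R(W_{J})=\mathrm{rad}\,\langle\,,\rangle_{W}^{\mathrm{ellip},V}$. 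In this second approach the subtle point is that the image of $\Res_{W\rtimes\langle\theta\rangle}X$ under $\Res_{W}$ a priori also involves the subspace $R'$ spanned by the $U\oplus\overline{U}$, whose restrictions need not lie in the ordinary radical; one has to use that $X$ is an honest $\mathbb{H}\rtimes\langle\theta\rangle$-module (not merely a virtual one) to rule this contribution out.
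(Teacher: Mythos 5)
Your opening reduction is fine: both forms become positive semi-definite Hermitian forms on class functions (since $\det_V(1-w)\geq 0$ and $\det_V(1+w)\geq 0$), so membership in either radical is equivalent to vanishing of the self-pairing, and the rewriting through $X=X^{+}\oplus X^{-}$ is correct. The first genuine error is the sentence ``for such a $w$ which in addition has $\det_{V}(1+w)\neq 0$ the hypothesis already gives $\tr_{X^{\pm}}(w)=0$.'' The hypothesis gives only the \emph{equality} $\tr_{X^{+}}(w)=\tr_{X^{-}}(w)$, whereas the conclusion requires the \emph{sum} $\tr_{X^{+}}(w)+\tr_{X^{-}}(w)$ to vanish; nothing forces the common value to be zero. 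So the argument already breaks for elliptic $w$ without eigenvalue $-1$, not only at the elements you isolate as ``the hard part.'' The $\widetilde{v}_{0}$ anticommutation trick correctly kills $\tr(t_{w}\mid X_{\mathrm{inv}})$ but is vacuous on $X_{\mathrm{nil}}$ (on the Steinberg module every $\widetilde{v}$ acts by $0$, so $X_{\mathrm{nil}}=X$ there), and neither of the two routes you offer for the final step is carried out. As written, the proposal is therefore incomplete and contains a false intermediate claim.

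The obstruction you flag in your last sentence is in fact the heart of the matter, and it \emph{cannot} be removed by using that $X$ is an honest module. Let $Y$ and $\overline{Y}$ be the two extensions of the Steinberg module $\mathrm{St}$ to $\mathbb{H}\rtimes\langle\theta\rangle$ and set $X=Y\oplus\overline{Y}$: this is an honest finite-dimensional $\mathbb{H}\rtimes\langle\theta\rangle$-module with $\tr_{X}(w\theta)=0$ for all $w\in W$, hence $X\in\rad(\mathrm{EP}^{\theta}_{\mathbb{H}})$ by Theorem~\ref{thm twisted ext}, while $\mathrm{EP}_{\mathbb{H}}(X,X)=4\,\mathrm{EP}_{\mathbb{H}}(\mathrm{St},\mathrm{St})=4\neq 0$. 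Here $X^{+}\cong X^{-}\cong\sgn$, which is exactly where your erroneous step fails (in type $A_{2}$, take $w=c$ a Coxeter element: $\det_V(1\pm c)\neq 0$ and $\tr_{X^{\pm}}(c)=1$). Consequently the implication is false on the full Grothendieck group of $\mathbb{H}\rtimes\langle\theta\rangle$-modules and must be read modulo the classes $Y\oplus\overline{Y}$, i.e.\ in $\overline{R}_{W}$ (equivalently modulo the subspace $K^{1}\oplus K^{2}$ introduced later for the twisted elliptic space). With that reading, the paper's argument is exactly your second route: Theorem~\ref{thm twisted ext}, Proposition~\ref{prop radical desc} for $\theta$ and for $\Id$, the inclusion $\mathcal J^{\theta}\subseteq\mathcal J$, and $\Res_{W}\circ\Ind_{W_{J}\rtimes\langle\theta\rangle}^{W\rtimes\langle\theta\rangle}=\Ind_{W_{J}}^{W}\circ\Res_{W_{J}}$; once one works modulo $R'$ there is nothing left to rule out, and the trace/$\widetilde{v}_0$ machinery of your first three steps is not needed.
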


\begin{proof}
Proposition \ref{prop radical desc} is still valid if we replace $\theta$ by $\Id$ and replace $\mathcal J^{\theta}$ by $\mathcal J$, where $\mathcal J$ is the set of all proper subsets of $\Delta$. Since $\mathcal J^{\theta} \subseteq \mathcal J$, the statement follows from Proposition \ref{prop radical desc} and Theorem \ref{thm twisted ext}.

\end{proof}

\subsection{Semi-positiveness of the twisted Euler-Poincar\'e pairing}
Let $\widetilde{W}$ be the spin cover of $W$. For $\dim V$ even, let $S$ be the irreducible basic spin representations of $\widetilde{W}$. For $\dim V$ odd, let $S^+$ and $S^-$ be the two distinct basic spin representations of $\widetilde{W}$-representation and let $S=S^+ \oplus S^-$. For a more detail discussion of the spin cover $\widetilde{W}$ or the representation $S$, one may refer to \cite{BCT}, \cite{Ch1} or \cite{CT}. The only property we will use in this paper is the following:
\[  S \otimes S = n \wedge^{\bullet} V ,\]
where $n=1$ when $\dim V$ is even and $n=2$ when $\dim V$ is odd. For an $\mathbb{H} \rtimes \langle \theta \rangle$-module $X$, we define $\theta$-twisted Dirac index as:
\[  I^{\theta}(X)=(X^+-X^-) \otimes S , \]
as a virtual $\widetilde{W}$-representation. The terminology of the $\theta$-twisted Dirac index comes from the form of the Dirac index defined by Ciubotaru-Trapa \cite{CT} and Ciubotaru-He \cite{CH}. 
\begin{proposition} \label{prop ep dirac index}
For $\mathbb{H} \rtimes \langle \theta \rangle$-modules $X_1$ and $X_2$, 
\[   \frac{n}{2}\mathrm{EP}_{\mathbb{H}}^{\theta}(X_1, X_2)=\langle I^{\theta}(X_1), I^{\theta}(X_2) \rangle_{\widetilde{W}} , \]
where $n=1$ if $\dim V$ is even and $n=2$ if $\dim V$ is odd. Here $\langle , \rangle_{\widetilde{W}}$ is the standard inner product on $\widetilde{W}$-representations.
\end{proposition}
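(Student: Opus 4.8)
The plan is to evaluate both sides as one and the same character sum over $W$.

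For the left-hand side, I would start from Theorem~\ref{thm twisted ext}, which gives
\[ \mathrm{EP}^{\theta}_{\mathbb{H}}(X_1,X_2)=\langle \Res_{W\rtimes\langle\theta\rangle}X_1,\ \Res_{W\rtimes\langle\theta\rangle}X_2\rangle^{\theta-\mathrm{ellip},V}_W . \]
Since $w_0\theta=-\mathrm{Id}_V$, the $\pm1$-eigenspaces of $w_0\theta$ on each $X_i$ (viewed as a $W\rtimes\langle\theta\rangle$-module) are precisely the spaces $X_i^{\pm}$, so the second form of the twisted elliptic pairing in Definition~\ref{def twisted ell pairing}, after reindexing $w\mapsto ww_0$, reads
\[ \mathrm{EP}^{\theta}_{\mathbb{H}}(X_1,X_2)=\frac{1}{|W|}\sum_{w\in W}\tr_{X_1^{+}-X_1^{-}}(w)\,\overline{\tr_{X_2^{+}-X_2^{-}}(w)}\,\mathrm{det}_V(1+w). \]
(This identity is also visible in the displayed computation that proves Theorem~\ref{thm twisted ext}.)

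For the right-hand side, I would inflate the virtual $W$-representations $X_i^{+}-X_i^{-}$ along the covering $\widetilde W\to W$ and expand the $\widetilde W$-inner product of $I^{\theta}(X_1)=(X_1^{+}-X_1^{-})\otimes S$ and $I^{\theta}(X_2)=(X_2^{+}-X_2^{-})\otimes S$ in characters:
\[ \langle I^{\theta}(X_1),I^{\theta}(X_2)\rangle_{\widetilde W}=\frac{1}{|\widetilde W|}\sum_{\tilde w\in\widetilde W}\tr_{X_1^{+}-X_1^{-}}(\bar w)\,\overline{\tr_{X_2^{+}-X_2^{-}}(\bar w)}\,\bigl|\tr_S(\tilde w)\bigr|^{2}, \]
where $\bar w$ denotes the image of $\tilde w$ in $W$. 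The crucial step is then to rewrite $|\tr_S(\tilde w)|^{2}$: from $S\otimes S = n \wedge^{\bullet} V$ one gets $\tr_S(\tilde w)^{2}=n\,\tr_{\wedge^{\bullet}V}(\bar w)=n\,\mathrm{det}_V(1+\bar w)$, which is a nonnegative real number, whence $\tr_S$ is real-valued (equivalently $S$ is self-dual, so $S\otimes S^{*}=n \wedge^{\bullet} V$) and $|\tr_S(\tilde w)|^{2}=n\,\mathrm{det}_V(1+\bar w)$ depends only on $\bar w$. Substituting this and collapsing the sum over the $2$-to-$1$ surjection $\widetilde W\to W$ turns the right-hand side into an explicit constant times the character sum appearing on the left; reconciling the constant — which packages the factor $n$, the index $[\widetilde W:W]=2$, and the normalization of $\langle\,,\,\rangle_{\widetilde W}$ — yields $\tfrac{n}{2}\,\mathrm{EP}^{\theta}_{\mathbb{H}}(X_1,X_2)=\langle I^{\theta}(X_1),I^{\theta}(X_2)\rangle_{\widetilde W}$.

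The only substantive input is the relation $S\otimes S = n \wedge^{\bullet} V$, which is taken as known; everything else is elementary character bookkeeping. The one place that demands care — and where the factor $\tfrac12$ on the left originates — is the normalization: one must keep straight how the virtual $W$-modules $X_i^{+}-X_i^{-}$ are inflated to the double cover $\widetilde W$ and exactly how $\langle\,,\,\rangle_{\widetilde W}$ is normalized on the span of the genuine spin representations. Once these conventions are pinned down, the two character sums match term by term.
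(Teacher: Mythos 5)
Your route is the paper's route: the paper performs precisely this computation, packaged as the adjunction $\langle A\otimes S,\,B\otimes S\rangle_{\widetilde{W}}=\langle A,\,B\otimes S\otimes S\rangle_{\widetilde{W}}$ followed by the substitution $S\otimes S=n\wedge^{\bullet}V$ and an appeal to Theorem~\ref{thm twisted ext}, rather than as an explicit character sum over $\widetilde{W}$. Every step you actually carry out is correct, including the observation that $S$ is self-dual so that $\bigl|\tr_S(\tilde{w})\bigr|^{2}=n\,\mathrm{det}_V(1+\bar{w})$ depends only on $\bar{w}$.

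The gap is exactly the step you defer: ``reconciling the constant'' is the entire content of the proposition, and with the standard normalization $\langle\,,\,\rangle_{\widetilde{W}}=\frac{1}{|\widetilde{W}|}\sum_{\tilde{w}\in\widetilde{W}}$ there is nothing left to choose. Each $w\in W$ has exactly two preimages in $\widetilde{W}$, on which your summand takes the same value, so collapsing the sum gives
\begin{equation*}
\langle I^{\theta}(X_1),I^{\theta}(X_2)\rangle_{\widetilde{W}}
=\frac{2}{2|W|}\sum_{w\in W}\tr_{X_1^{+}-X_1^{-}}(w)\,\overline{\tr_{X_2^{+}-X_2^{-}}(w)}\;n\,\mathrm{det}_V(1+w)
= n\,\mathrm{EP}^{\theta}_{\mathbb{H}}(X_1,X_2):
\end{equation*}
the factor $2$ from the fibers of $\widetilde{W}\rightarrow W$ cancels the $\tfrac12$ from $|\widetilde{W}|=2|W|$, and no residual $\tfrac12$ appears. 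So the bookkeeping you describe does not produce the constant $\tfrac{n}{2}$; you must either exhibit a concrete source for the extra $\tfrac12$ or conclude that the constant is $n$. A sanity check in type $A_1$ (so $\dim V=1$, $n=2$, $\theta=\mathrm{Id}$) with $X_1=X_2=\mathrm{St}$ gives $X^{+}=0$, $X^{-}=\mathrm{sgn}$, $I^{\theta}(\mathrm{St})=-\,\mathrm{sgn}\otimes S\cong -S$, hence $\langle I^{\theta},I^{\theta}\rangle_{\widetilde{W}}=\langle S^{+}\oplus S^{-},S^{+}\oplus S^{-}\rangle_{\widetilde{W}}=2$, while $\mathrm{EP}^{\theta}_{\mathbb{H}}(\mathrm{St},\mathrm{St})=1$; this is consistent with the constant $n$ and not with $\tfrac{n}{2}$. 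Be aware that the paper's own proof asserts $\langle X_1^{+}-X_1^{-},(X_2^{+}-X_2^{-})\otimes\wedge^{\bullet}V\rangle_{\widetilde{W}}=\tfrac12\langle X_1,X_2\rangle^{\theta-\mathrm{ellip},V}_W$ at the corresponding point without justification, so the $\tfrac12$ you are trying to reproduce appears to be a defect of the statement itself; in any case a proof that merely declares the constants to ``match'' has not established the identity.
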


\begin{proof}
The proof is similar to the one in \cite[Proposition 3.1]{CT}.
\begin{eqnarray*}
& &\langle I^{\theta}(X_1), I^{\theta}(X_2) \rangle_{\widetilde{W}} \\
&=& \langle (X^+_1-X^-_1) \otimes S, (X^+_2-X^-_2)\otimes S \rangle_{\widetilde{W}}  \\
 &=& \langle X_1^+-X_1^-, (X^+_2-X^-_2) \otimes S \otimes S \rangle_{\widetilde{W}} \\
&=&  n\langle X_1^+-X_1^-, (X_2^+-X_2^-) \otimes \wedge^{\bullet} V \rangle_{\widetilde{W}} \\
&=&  \frac{n}{2}\langle X_1, X_2 \rangle^{\mathrm{\theta-ellip,} V}_W \\
&=& \frac{n}{2} \mathrm{EP}_{\mathbb{H}}^{\theta}(X_1, X_2) \quad \mbox{(by Theorem \ref{thm twisted ext}) }
\end{eqnarray*}
\end{proof}

\begin{corollary}  \label{cor semipositive ep}
The $\theta$-twisted Euler-Poincar\'e pairing $\mathrm{EP}^{\theta}_{\mathbb{H}}$ is semi-positive definite.
\end{corollary}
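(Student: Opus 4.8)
The plan is to deduce the semi-positivity of $\mathrm{EP}^{\theta}_{\mathbb{H}}$ directly from Proposition \ref{prop ep dirac index}. The key observation is that $\langle , \rangle_{\widetilde{W}}$ is the standard Hermitian inner product on the virtual representation ring of $\widetilde{W}$, and as such it is positive definite on genuine representations and, more to the point for us, it satisfies $\langle Z, Z \rangle_{\widetilde{W}} \geq 0$ for every \emph{virtual} representation $Z$, since expanding $Z$ in the orthonormal basis of irreducible $\widetilde{W}$-characters gives $\langle Z, Z\rangle_{\widetilde{W}} = \sum_{\sigma \in \Irr(\widetilde{W})} |m_\sigma|^2 \geq 0$, where $m_\sigma$ are the (integer, possibly negative) multiplicities.

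So the proof runs as follows. First I would recall that for any finite-dimensional $\mathbb{H} \rtimes \langle \theta \rangle$-module $X$, Proposition \ref{prop ep dirac index} applied with $X_1 = X_2 = X$ gives
\[
\frac{n}{2}\,\mathrm{EP}^{\theta}_{\mathbb{H}}(X, X) = \langle I^{\theta}(X), I^{\theta}(X) \rangle_{\widetilde{W}},
\]
where $n = 1$ or $2$ according to the parity of $\dim V$ and $I^{\theta}(X) = (X^+ - X^-) \otimes S$ is a virtual $\widetilde{W}$-representation. Second, I would note that the right-hand side is nonnegative by the orthonormality of irreducible characters: writing $I^{\theta}(X) = \sum_\sigma m_\sigma \sigma$ in $R(\widetilde{W})$ with $m_\sigma \in \mathbb{Z}$, we get $\langle I^{\theta}(X), I^{\theta}(X)\rangle_{\widetilde{W}} = \sum_\sigma m_\sigma^2 \geq 0$. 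Since $n/2 > 0$, this forces $\mathrm{EP}^{\theta}_{\mathbb{H}}(X, X) \geq 0$ for all $X$, which is exactly the assertion that $\mathrm{EP}^{\theta}_{\mathbb{H}}$ is semi-positive definite.

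There is essentially no obstacle here — the corollary is a one-line consequence of Proposition \ref{prop ep dirac index} together with the positivity of the standard inner product on a representation ring. The only point requiring a sentence of care is that $I^{\theta}(X)$ is only a virtual representation, not a genuine one, so one must invoke the orthonormal-basis argument rather than any naive "dimension of invariants" interpretation; but this is routine. I would also remark, for completeness, that by Theorem \ref{thm twisted ext} the pairing $\mathrm{EP}^{\theta}_{\mathbb{H}}(X, X)$ depends only on the $W \rtimes \langle \theta \rangle$-module structure of $X$, so the statement can equivalently be phrased purely in terms of the twisted elliptic pairing $\langle \cdot, \cdot \rangle^{\theta-\mathrm{ellip}, V}_W$ on $\overline{R}_W$, which is then itself semi-positive definite — consistent with Proposition \ref{prop radical desc} describing its radical.

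\begin{proof}
By Proposition \ref{prop ep dirac index}, for any finite-dimensional $\mathbb{H} \rtimes \langle \theta \rangle$-module $X$,
\[
\frac{n}{2}\,\mathrm{EP}^{\theta}_{\mathbb{H}}(X, X) = \langle I^{\theta}(X), I^{\theta}(X) \rangle_{\widetilde{W}},
\]
where $n = 1$ if $\dim V$ is even and $n = 2$ if $\dim V$ is odd. Writing $I^{\theta}(X) = \sum_{\sigma \in \Irr(\widetilde{W})} m_{\sigma}\, \sigma$ in $R(\widetilde{W})$ with $m_{\sigma} \in \mathbb{Z}$, and using that the irreducible characters form an orthonormal basis for $\langle , \rangle_{\widetilde{W}}$, we obtain
\[
\langle I^{\theta}(X), I^{\theta}(X) \rangle_{\widetilde{W}} = \sum_{\sigma \in \Irr(\widetilde{W})} m_{\sigma}^2 \geq 0.
\]
Since $n/2 > 0$, it follows that $\mathrm{EP}^{\theta}_{\mathbb{H}}(X, X) \geq 0$ for all $X$, i.e. $\mathrm{EP}^{\theta}_{\mathbb{H}}$ is semi-positive definite.
\end{proof}
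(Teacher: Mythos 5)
Your proof is correct and is exactly the argument the paper intends: the corollary is stated immediately after Proposition \ref{prop ep dirac index} with no separate proof, precisely because $\frac{n}{2}\mathrm{EP}^{\theta}_{\mathbb{H}}(X,X)=\langle I^{\theta}(X),I^{\theta}(X)\rangle_{\widetilde{W}}$ is a sum of squares of multiplicities and hence nonnegative. Your added remark about extending to virtual (complex) combinations via sesquilinearity is a sensible bit of extra care but does not change the approach.
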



\subsection{Twisted elliptic space}
Let $K_{\mathbb{C}}(\mathbb{H} \rtimes \langle \theta \rangle)$ be the Grothendieck group of the category of finite-dimensional $\mathbb{H}$-modules over $\mathbb{C}$. We have seen from Theorem \ref{thm twisted ext} that $\mathbb{EP}^{\theta}_{\mathbb{H}}$ does not depend on the choice of a representative of an element in $K_{\mathbb{C}}(\mathrm{Mod}_{\mathrm{fin}}(\mathbb{H} \rtimes \langle \theta \rangle))$. Hence we can extend $\mathbb{EP}^{\theta}_{\mathbb{H}}$ to a Hermitian form, still denoted $\mathrm{EP}^{\theta}_{\mathbb{H}}$ on  $K_{\mathbb{C}}(\mathrm{Mod}_{\mathrm{fin}}(\mathbb{H} \rtimes \langle \theta \rangle))$.

 For any irreducible $\mathbb{H} \rtimes \langle \theta \rangle$-module $X$, there are two possibilities:
\begin{enumerate}
\item Suppose $X|_{\mathbb{H}}$ is reducible. Then $X|_{\mathbb{H}}$ is the sum of two non-isomorphic irreducible $\mathbb{H}$-modules, denoted $X_1$ and $X_2$. In this case, $\theta(X_1)=X_2$ and so $\tr_{\Res_{W}X}(w\theta)=0$ for all $w \in W$. By Theorem \ref{thm twisted ext}, $X$ is in $\rad(\mathrm{EP}_{\mathbb{H}}^{\theta})$. 
\item Suppose $X|_{\mathbb{H}}$ is irreducible. Then there exists another $\mathbb{H} \rtimes \langle \theta \rangle$-module, denoted $\overline{X}$ such that $\overline{X}$ and $X$ are isomorphic as $\mathbb{H}$-modules, but non-isomorphic as $\mathbb{H} \rtimes \langle \theta \rangle$-modules. More precisely, let $\pi_X$ and $\pi_{\overline{X}}$ be the maps defining the action of $\mathbb{H} \rtimes \langle \theta \rangle$ on $X$ and $\overline{X}$ respectively. Those maps satisfy $\pi_{\overline{X}}(\theta)=-\pi_X(\theta)$. This implies $X \oplus \overline{X}$ lies in $\rad(\mathrm{EP}_{\mathbb{H}}^{\theta})$ by Theorem \ref{thm twisted ext}. 
\end{enumerate}
Let $K^1$ be the subspace of $\rad(\mathrm{EP}_{\mathbb{H}}^{\theta})$ spanned by all $X$ with $X$ in case (1) (i.e. $X|_{\mathbb{H}}$ being reducible). Let $K_2$ be the subspace of $\rad(\mathrm{EP}_{\mathbb{H}}^{\theta})$ spanned by all $X \oplus \overline{X}$ for all $X$ in case (2) (i.e. $X|_{\mathbb{H}}$ being reducible). 
We define the space 
\[ K_{\mathbb{H}}^{\theta}  =  K_0(\mathrm{Mod}_{\mathrm{fin}}(\mathbb{H} \rtimes \langle \theta \rangle)) /(K^1 \oplus K^2) . \]
Note that the image of all irreducible $\mathbb{H}$-modules $X$ with the property that $X^{\theta} \cong X$ forms a basis on $K_{\mathbb{H}}^{\theta}$. 

Since $K^1$ and $K^2$ are in the radical of $\mathrm{EP}^{\theta}_{\mathbb{H}}$, $\mathrm{EP}^{\theta}_{\mathbb{H}}$ descends to $K_{\mathbb{H}}^{\theta}$. We define the twisted elliptic space to be:
\[   \mathrm{Ell}_{\mathbb{H}}^{\theta}=K_{\mathbb{H}}^{\theta}/ \mathrm{rad}(EP^{\theta}_{\mathbb{H}}) . \]

\begin{corollary}
The space $\mathrm{Ell}_{\mathbb{H}}^{\theta}$ is equipped with $\mathrm{EP}^{\theta}_{\mathbb{H}}$ as an inner product. 
\end{corollary}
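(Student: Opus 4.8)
The plan is to assemble three ingredients that are already in place: that $\mathrm{EP}^{\theta}_{\mathbb{H}}$ is a well-defined Hermitian form on the Grothendieck group, that it vanishes on $K^1\oplus K^2$, and that it is positive semi-definite; then to quotient by the radical and use elementary linear algebra.

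First I would record that $\mathrm{EP}^{\theta}_{\mathbb{H}}$ is a well-defined Hermitian form on $K_{\mathbb{C}}(\mathrm{Mod}_{\mathrm{fin}}(\mathbb{H}\rtimes\langle\theta\rangle))$. Independence of the chosen representative of a class in the Grothendieck group is exactly Theorem \ref{thm twisted ext}, which identifies $\mathrm{EP}^{\theta}_{\mathbb{H}}(X,Y)$ with $\langle \Res_{W\rtimes\langle\theta\rangle}X,\Res_{W\rtimes\langle\theta\rangle}Y\rangle^{\theta-\mathrm{ellip},V}_W$, a quantity depending only on the virtual $W\rtimes\langle\theta\rangle$-characters. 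For the Hermitian symmetry I would use the formula in Definition \ref{def twisted ell pairing}: interchanging $U$ and $U'$ conjugates the product $\tr_U(w\theta)\overline{\tr_{U'}(w\theta)}$, so it suffices that $\det_V(1-w\theta)\in\mathbb{R}$ for all $w\in W$. Since $w_0\theta=-\mathrm{Id}_V$, one has $\det_V(1-w\theta)=\det_V(1+ww_0)$, and $ww_0\in W$ acts on $V$ through a real (orthogonal) matrix, whose non-real eigenvalues occur in complex-conjugate pairs; hence the determinant is real and $\overline{\mathrm{EP}^{\theta}_{\mathbb{H}}(X,Y)}=\mathrm{EP}^{\theta}_{\mathbb{H}}(Y,X)$.

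Next I would recall from the discussion preceding the statement that $K^1$ and $K^2$ lie in $\mathrm{rad}(\mathrm{EP}^{\theta}_{\mathbb{H}})$: in case (1) one has $\tr_{\Res_W X}(w\theta)=0$ for all $w$, and in case (2) the relation $\pi_{\overline{X}}(\theta)=-\pi_X(\theta)$ forces $\tr_{\Res_W(X\oplus\overline{X})}(w\theta)=0$ for all $w$; in both cases Theorem \ref{thm twisted ext} yields vanishing of all pairings. Consequently $\mathrm{EP}^{\theta}_{\mathbb{H}}$ descends to a Hermitian form on $K^{\theta}_{\mathbb{H}}=K_0(\mathrm{Mod}_{\mathrm{fin}}(\mathbb{H}\rtimes\langle\theta\rangle))/(K^1\oplus K^2)$, and by Corollary \ref{cor semipositive ep} this descended form is positive semi-definite.

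Finally I would invoke the elementary fact that a positive semi-definite Hermitian form $B$ on a complex vector space induces a positive-definite Hermitian form — that is, an inner product — on the quotient by its radical: by the Cauchy--Schwarz inequality for semi-definite forms, $B(v,v)=0$ implies $B(v,\cdot)\equiv 0$, so the radical coincides with the isotropic cone $\{v:B(v,v)=0\}$, and on the quotient the induced form is both well-defined and anisotropic, hence positive-definite. Applying this with $B=\mathrm{EP}^{\theta}_{\mathbb{H}}$ on $K^{\theta}_{\mathbb{H}}$ gives the inner product on $\mathrm{Ell}^{\theta}_{\mathbb{H}}=K^{\theta}_{\mathbb{H}}/\mathrm{rad}(\mathrm{EP}^{\theta}_{\mathbb{H}})$. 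There is no substantial obstacle here; the only steps needing a line of care are the reality of $\det_V(1-w\theta)$ used for the Hermitian property and the Cauchy--Schwarz argument identifying the radical with the isotropic cone.
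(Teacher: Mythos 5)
Your proof is correct and follows the same route as the paper: the paper's one-line argument cites exactly the semi-positivity of Corollary \ref{cor semipositive ep} together with the construction of $\mathrm{Ell}^{\theta}_{\mathbb{H}}$, and you have simply filled in the standard details (well-definedness via Theorem \ref{thm twisted ext}, reality of $\det_V(1-w\theta)$, and the Cauchy--Schwarz identification of the radical with the isotropic cone).
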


\begin{proof}
The assertion follows from Corollary \ref{cor semipositive ep} and our construction of $\mathrm{Ell}_{\mathbb{H}}^{\theta}$. 
\end{proof}

The space $\mathrm{rad}(EP^{\theta}_{\mathbb{H}})$ will be discussed more in Section \ref{s solvable induced}.



\section{$\theta^*$-action on $\mathrm{Ext}$-groups of rigid modules} \label{s theta action}

\subsection{ $\Ext$-groups of rigid modules}

Recall that tempered modules are defined in Definition \ref{def tempered}. The notion for a parabolically induced module is given in Notation \ref{not induced modules}.



The rigid modules are parabolically induced and tempered modules with a special kind of induced data described in the following definition. 
\begin{definition} \label{def rigid}
 
Let $\mathcal J_{\mathrm{rig}}$ be the collection of subsets $J$ of $\Delta$ such that 
\begin{eqnarray} \label{eqn card w J}
  \mathrm{card}(\left\{ w \in W: w(J) =J \right\}) =1 .
	\end{eqnarray}
 Let $\Xi_{\mathrm{rig}}$ be the collection of $(J, U, \nu) \in \Xi$ such that $J \in \mathcal J_{\mathrm{rig}}$. An $\mathbb{H}$-module $X$ is said to be a {\it rigid module} if $X=X(J, U)$ for some $(J, U, 0) \in \Xi_{\mathrm{rig}}$. In particular, a rigid module is a tempered and parabolically induced module.


\end{definition}


\begin{remark} \label{rmk degenerate limits}
We give two remarks on our definition of rigid modules:
\begin{enumerate}
\item[(1)] The term ''rigid'' refers to the special choice of $J$ in the induction datum for a rigid module. Such induction datum provides nice structures such as discussed Lemma \ref{lem structure rigid} and Lemma \ref{lem rigid irr} below for computing the $\Ext$-groups and $\theta^*$-action without introducing more tools.
\item[(2)] The essential algebraic structure we need in our later computations is descried in Lemma \ref{lem structure rigid}. The way we formulate the definition is easier to connect to the tempered modules in Section \ref{s solvable induced}. As mentioned in the introduction, rigid modules provide examples of solvable tempered modules, which will be discussed in the Section \ref{s solvable induced}.

\end{enumerate}
\end{remark}






\begin{remark} \label{rmk classify rigid}
For the case $\theta=\Id_V$ (i.e. non-simply laced types, $E_7$, $E_8$ and $D_n$ ($n$ even)), $w_0w_{J}(J)=J$ for any $J$ and hence only $\Delta$ can satisfy (\ref{eqn card w J}). For the case that $\theta \neq \Id_V$ (i.e $A_n$, $D_n$ ($n$ odd) and $E_6$), $J \subset \Delta$ satisfies (\ref{eqn card w J}) in Definition \ref{def rigid} if and only if $J=\Delta$ or $J$ is in one of the following case:
\begin{enumerate}
\item in type $A_n$ and if we identify subsets of $\Delta$ (up to conjugation in $W$) with partitions of $n$, $J$ corresponds to a partition of distinct parts, or equivalently $J$ is of type $A_{m_1} \times \ldots A_{m_k}$ with all $m_i$ mutually distinct and $m_1+\ldots+m_k=n-k$ or $n-k+1$;
\item $D_{n}$ ($n$ odd) and $J$ is of type $A_{n-1}$;
\item $E_6$ and $J$ is of type $D_5$ or $A_4 \times A_1$.
\end{enumerate}

From the classification, it is easy to see that all rigid modules satisfy (1) in the three conditions of Proposition \ref{prop herm}.

\end{remark}



\begin{lemma} \label{lem theta unstable}
Let $J$ be a subset of $\Delta$. If $J \in \mathcal J_{\mathrm{rig}}$, then there does not exist $J' \in \mathcal J^{\theta}$ such that $w(J) \subset J' \subsetneq \Delta$ for some $w \in W$. Here $\mathcal J_{\mathrm{rig}}$ is defined in (\ref{eqn J theta}).
\end{lemma}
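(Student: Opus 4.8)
\emph{Proof proposal.} The plan is to argue by contradiction, so suppose there are $w\in W$ and $J'\in\mathcal J^{\theta}$ with $w(J)\subseteq J'\subsetneq\Delta$. Since
\[
\{u\in W: u(w(J))=w(J)\}=w\,\{u\in W:u(J)=J\}\,w^{-1}
\]
has the same cardinality as $\{u\in W:u(J)=J\}$, the subset $w(J)$ again lies in $\mathcal J_{\mathrm{rig}}$, and $w(J)\subseteq\Delta$. Replacing $J$ by $w(J)$ we may therefore assume from the start that $J\subseteq J'\subsetneq\Delta$ and $\theta(J')=J'$, and we must derive a contradiction with $\mathrm{card}(\{u\in W:u(J)=J\})=1$.

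Next I would record the mechanism that manufactures the forbidden symmetries. For any $J'\subsetneq\Delta$ with $\theta(J')=J'$ the element $g_{J'}:=w_0w_{0,J'}\in W$ is nontrivial, because $J'\subsetneq\Delta$ forces $w_{0,J'}\neq w_0$; moreover $g_{J'}$ preserves $R_{J'}\cap R^{+}$. Indeed $w_{0,J'}$ sends $R_{J'}\cap R^{+}$ to $R_{J'}\cap R^{-}$, while from $\theta(v)=-w_0(v)$ one gets $w_0(R_{J'})=R_{\theta(J')}=R_{J'}$ and $w_0(R^{-})=R^{+}$, so $g_{J'}(R_{J'}\cap R^{+})=R_{J'}\cap R^{+}$. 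Hence $g_{J'}$ permutes the set $J'$ of simple roots of $R_{J'}$, acting on it by a diagram automorphism. The identical computation with $J'$ replaced by $J$ shows that $w_0w_{0,J}$ carries the simple system $R_{J}\cap R^{+}$ onto $R_{\theta(J)}\cap R^{+}$; in particular, if some $W$-conjugate $K\subseteq\Delta$ of $J$ satisfied $\theta(K)=K$, then $w_0w_{0,K}$ would be a nontrivial element of $\{u\in W:u(K)=K\}$, contradicting $K\in\mathcal J_{\mathrm{rig}}$. The same is true if $w(J)$ is itself $\theta$-stable: then $g_{w(J)}\in\{u:u(w(J))=w(J)\}\setminus\{e\}$. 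So the residual point is to rule out the strict containment: no $W$-conjugate of a rigid $J$ can sit inside a proper $\theta$-stable standard parabolic $J'$ (equivalently, passing to the smallest $\theta$-stable set containing it, $K\cup\theta(K)=\Delta$ for every conjugate $K\subseteq\Delta$ of $J$).

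For this last step I would invoke the explicit classification in Remark~\ref{rmk classify rigid}. If $\theta=\Id_V$ then $\mathcal J_{\mathrm{rig}}=\{\Delta\}$, and $\Delta$ is contained in no proper subset, so there is nothing to prove. If $\theta\neq\Id_V$ one is in type $A_n$, $D_n$ with $n$ odd, or $E_6$, and $J$ is either $\Delta$ (already handled) or, up to conjugacy, one of the rigid types listed there; in parallel, a $\theta$-stable $J'\subsetneq\Delta$ is exactly one whose complement $\Delta\setminus J'$ is a nonempty union of $\theta$-orbits of nodes, which gives a short list of possible Dynkin types for $J'$. Since a containment $w(J)\subseteq J'$ forces the Dynkin type of $w(J)$ to occur as a standard Levi subdiagram of $J'$, one is reduced to a finite combinatorial check, matching the ``$\theta$-asymmetric'' rigid types (a partition into distinct parts in type $A_n$; one of the two $A_{n-1}$'s swapped by $\theta$ in $D_n$; one of the two $D_5$'s or $A_4\times A_1$'s swapped by $\theta$ in $E_6$) against the $\theta$-orbit complements. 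I expect this case analysis to be the only real obstacle; everything preceding it is formal, and the $\theta$-stable case of the containment is disposed of by the element $g_{w(J)}$ as above.
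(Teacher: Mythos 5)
Your formal scaffolding is sound: replacing $J$ by $w(J)$ is legitimate because $\{u\in W: u(w(J))=w(J)\}=w\{u\in W:u(J)=J\}w^{-1}$, and for $\theta$-stable $J'\subsetneq\Delta$ the element $w_0w_{0,J'}$ is indeed nontrivial and permutes $J'$ (it sends $R_{J'}\cap R^{+}$ to itself), which correctly rules out the case where the conjugate $w(J)$ is itself a proper $\theta$-stable subset. Your overall strategy -- reduce everything to the classification in Remark \ref{rmk classify rigid} and finish with a finite combinatorial check -- is also exactly what the paper does: its entire proof is the sentence that this is ``an easy case-by-case checking'' against that remark.

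The difficulty is that the check you defer as ``the only real obstacle'' is the entire content of the lemma, and it does not close. In type $A_5$ take $J=\{\alpha_1,\alpha_2,\alpha_4\}$, of type $A_2\times A_1$, corresponding to the partition $(3,2,1)$ of $6$ into distinct parts. One verifies directly that $\{u\in W:u(J)=J\}=\{e\}$: a permutation of $\{e_1-e_2,\,e_2-e_3,\,e_4-e_5\}$ induced by $S_6$ cannot swap $e_1-e_2$ and $e_2-e_3$ (it would need $\sigma(2)=3$ and $\sigma(2)=1$ simultaneously), so it fixes each root and hence is the identity; thus $J\in\mathcal J_{\mathrm{rig}}$. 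Yet $J\subset J'=\{\alpha_1,\alpha_2,\alpha_4,\alpha_5\}$, and since $\theta(\alpha_i)=\alpha_{6-i}$ we have $\theta(J')=J'$ with $J'\subsetneq\Delta$, i.e.\ $J'\in\mathcal J^{\theta}$. So already with $w=e$ the asserted non-existence fails; analogous examples occur for $(4,3,1)$ in type $A_7$ and beyond. What your $g_{J'}$-argument genuinely proves is the weaker statement that no $W$-conjugate of a rigid $J$ lying in $\Delta$ can be \emph{equal} to an element of $\mathcal J^{\theta}$, and that is likely the statement actually intended. As written, neither your proposal nor the paper's one-line proof establishes the lemma, because the strict-containment case admits counterexamples; you should either restrict to the equality case or flag that the statement needs to be reformulated before the case analysis can succeed.
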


\begin{proof}
This is an easy case-by-case checking with the use of Remark \ref{rmk classify rigid}.

\end{proof}

To analyze the structure of rigid modules, we need the following result in \cite{BM} about weight spaces:

\begin{proposition} \cite{BM} \label{prop weights}
Let $(J, U, \nu) \in \Xi$ and $X=X(J, U, \nu)$. Then the weights of the $\mathbb{H}$-module $X$ are
\begin{eqnarray} \label{eqn weight set}  \left\{   w(\gamma) \in V^{\vee}  : w \in W^{J}, \gamma \mbox{ is a weight of $U\otimes \mathbb{C}_{\nu}$} \right\},  
\end{eqnarray}
where $W^J$ is the set of minimal representative in the coset $W/W_J$.
Moreover, the multiplicity of a weight in $X$ coincides with the number of times of the weight appearing in the set (\ref{eqn weight set}).
\end{proposition}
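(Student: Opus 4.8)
The plan is to prove Proposition \ref{prop weights} by analyzing the $S(V)$-module structure of $X = X(J,U,\nu) = \mathbb{H} \otimes_{\mathbb{H}_J}(U \otimes \mathbb{C}_\nu)$. First I would use the decomposition of $\mathbb{H}$ as a free right $\mathbb{H}_J$-module: by the PBW-type basis for graded affine Hecke algebras, $\mathbb{H} = \bigoplus_{w \in W^J} t_w \mathbb{H}_J$, where $W^J$ is the set of minimal-length coset representatives for $W/W_J$. Consequently, as a vector space (and as an $S(V_J^\bot)$-module in the obvious way), $X = \bigoplus_{w \in W^J} t_w \otimes (U \otimes \mathbb{C}_\nu)$. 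So the dimension count matches the size of the set in (\ref{eqn weight set}), and it remains only to identify the $S(V)$-weights on each summand $t_w \otimes (U \otimes \mathbb{C}_\nu)$.

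Next I would compute how $S(V)$ acts. The key commutation identity is the one governing $v \cdot t_w$ for $v \in V$, $w \in W$: iterating relation (3) of Definition \ref{def graded affine} (the Lusztig relation $t_{s_\alpha} v - s_\alpha(v) t_{s_\alpha} = k_\alpha \langle v, \alpha^\vee\rangle$), one obtains $v \, t_w = t_w \, w^{-1}(v) + (\text{lower terms})$, where the lower terms lie in $\sum_{w' < w} t_{w'} \mathbb{H}_J$ (here the ordering is Bruhat order, and the lower terms are polynomial-coefficient combinations of shorter $t_{w'}$'s). This means that with respect to the filtration of $X$ by the length of $w \in W^J$, the operator $v \in V$ acts on the associated graded piece indexed by $w$ as multiplication by $w^{-1}(v)$ applied inside $U \otimes \mathbb{C}_\nu$ — more precisely, if $x_\gamma \in U \otimes \mathbb{C}_\nu$ is a generalized weight vector of weight $\gamma$, then modulo the span of $t_{w'} \otimes (U\otimes \mathbb{C}_\nu)$ with $w' < w$, we have $v \cdot (t_w \otimes x_\gamma) \equiv \langle w(\gamma)\rangle(v)\, (t_w \otimes x_\gamma)$, i.e. $t_w \otimes x_\gamma$ contributes weight $w(\gamma)$. (One needs that $\gamma$ is a weight of $U \otimes \mathbb{C}_\nu$ as an $\mathbb{H}_J$-module, i.e. $\gamma|_{V_J}$ is a weight of $U$ and $\gamma$ agrees with $\nu$ on $V_J^\bot$; this is how the $\overline{\mathbb{H}}_J \otimes S(V_J^\bot)$ decomposition enters.) Since the filtration is finite and the leading terms are upper-triangular with these diagonal entries, the multiset of generalized $S(V)$-eigenvalues of $X$ is exactly $\{w(\gamma) : w \in W^J,\ \gamma \text{ a weight of } U \otimes \mathbb{C}_\nu\}$, counted with multiplicity. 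This gives both the weight set (\ref{eqn weight set}) and the multiplicity statement.

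The main obstacle I anticipate is the bookkeeping needed to make the triangularity argument rigorous: one must set up the correct filtration on $X$ (by the Bruhat length of the coset representative), verify that the "lower order terms" in $v\, t_w$ genuinely land in the lower filtration pieces after passing to the quotient by $\mathbb{H}_J$, and confirm that no cancellation spoils the eigenvalue count — this is where one uses that $W^J$ consists of \emph{minimal-length} representatives, so that $t_{s_\alpha} t_w = t_{s_\alpha w}$ with $s_\alpha w$ still reduced when $\ell(s_\alpha w) > \ell(w)$, etc. An alternative, cleaner route — and the one I would actually prefer to write up — is simply to cite \cite{BM} (as the proposition already does) and to record only the structural decomposition $X = \bigoplus_{w \in W^J} t_w \otimes (U \otimes \mathbb{C}_\nu)$ together with the observation that this is a standard consequence of the filtration argument above, since the weight computation for parabolically induced modules over $\mathbb{H}$ is well documented in the literature. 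For the purposes of this paper only the combinatorial shape of the weight multiset is used, so the triangularity lemma in the form stated above suffices.
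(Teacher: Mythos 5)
Your proposal is correct and follows essentially the same route as the paper: the paper's (sketched) proof also filters $X=\bigoplus_{w\in W^J} t_w\otimes(U\otimes\mathbb{C}_\nu)$ by the length of $w\in W^J$ and observes that the associated graded module, on which $S(V)$ acts with $t_w\otimes x_\gamma$ of weight $w(\gamma)$, has the same weights with multiplicity as $X$. Your write-up merely makes the triangularity of the action of $v\in V$ (via the iterated Lusztig relation $v\,t_w = t_w\,w^{-1}(v) + \text{lower terms}$) more explicit than the paper's two-line sketch.
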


\begin{proof}
We sketch the proof here. Recall that $\Ind^{\mathbb{H}}_{\mathbb{H}_J}U=\mathbb{H} \otimes_{\mathbb{H}_J}U$. By definition, \[\left\{ t_w\otimes u \in w \in W^J \mbox{ and } u \in U  \right\}\] spans the space $\Ind^{\mathbb{H}}_{\mathbb{H}_J}U$. Then we set 
\[F_i= \Span \left\{  t_w \otimes u : w \in W^J \mbox{ and } l(w)\leq i \mbox{ and } u \in U \right\} .\]
Then the graded space $Gr(X):= \oplus_{i \in \mathbb{Z}} F_i/F_{i-1}$ have the same weight spaces as $X$. This proves the proposition.

\end{proof}

\begin{lemma} \label{lem structure rigid}
Let $(J, U)$ and $(J, U')$ be in $\Xi_{\mathrm{rig}}$. Then there exists $\mathbb{H}_J$-modules $Y$ and $Y'$ such that $\Res_{\mathbb{H}_J} X(J, U)= U\oplus Y$ and $\Res_{\mathbb{H}_J}X(J, U')=U' \oplus Y'$  as $\mathbb{H}_J$-modules, and
\[  \Ext^i_{\mathbb{H}_J}(U, Y')=0 \quad \mbox{ for all integers $i$ } .\]
\end{lemma}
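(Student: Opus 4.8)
The plan is to analyze the restriction $\Res_{\mathbb{H}_J}X(J,U)$ weight-space by weight-space using Proposition \ref{prop weights}, and then leverage Theorem \ref{thm char ext 0} to reduce the Ext-vanishing to a statement about central characters. First I would observe that, as an $\mathbb{H}_J$-module, $X(J,U)$ carries a filtration whose associated graded is $\bigoplus_{w \in W^J} t_w \otimes U$ (the filtration $F_i$ appearing in the proof of Proposition \ref{prop weights}); the summand indexed by $w = e$ is precisely $U$ itself, since $\mathbb{H}_J \cdot (1 \otimes U) \cong U$ sits inside $X(J,U)$ as an $\mathbb{H}_J$-submodule (because $1 \otimes U$ is $\mathbb{H}_J$-stable and the induced module restricts to $\bigoplus_{w\in W^J} t_w \otimes U$ as a $\mathbb{C}[W_J]$-module, hence $U$ is a direct summand at the level of the associated graded). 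The point of the rigidity hypothesis $J \in \mathcal{J}_{\mathrm{rig}}$ is to guarantee that the central character of $U$ (as an $\overline{\mathbb{H}}_J$-module, extended trivially on $S(V_J^\perp)$, i.e. the $W_J$-orbit of a weight $\gamma_0$ of $U$) is \emph{not} matched by the central character of any other constituent $t_w \otimes U$ for $w \neq e$ in $W^J$: the weights of $t_w \otimes U$ are of the form $w(\gamma)$ with $\gamma$ a weight of $U$, and since $w \in W^J \setminus\{e\}$ is a nontrivial minimal coset representative, $w(\gamma_0)$ lies outside the $W_J$-orbit of $\gamma_0$.

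Here is where the rigidity condition enters decisively. The central character of $X(J,U)$ itself is the full $W$-orbit of $\gamma_0$. A weight $w(\gamma)$ of $t_w\otimes U$ lies in the $W_J$-orbit of $\gamma_0$ only if $w(\gamma) = w'(\gamma_0)$ for some $w' \in W_J$, i.e. $(w')^{-1}w$ stabilizes the relevant data; using that $U$ is a discrete series of $\overline{\mathbb{H}}_J$ (so its weights are "$J$-generic" in the appropriate sense) together with the defining property $\mathrm{card}(\{w : w(J) = J\}) = 1$, one forces $w \in W_J$, hence $w = e$ as a minimal representative. Thus, grouping the constituents $t_w \otimes U$ by the $W_J$-orbit of their weights, the summand $U$ (at $w=e$) is the \emph{unique} one whose weights fall into the $W_J$-orbit $W_J \cdot \gamma_0$; collecting all the other constituents into a complement gives an $\mathbb{H}_J$-module $Y$ with $\Res_{\mathbb{H}_J}X(J,U) \cong U \oplus Y$ and such that $U$ and $Y$ have disjoint $\mathbb{H}_J$-central-character supports. (The splitting at the level of modules, not just associated graded, follows because Ext-groups between $\mathbb{H}_J$-modules with disjoint central character supports vanish, so the filtration splits — this is a standard block-decomposition argument using Theorem \ref{thm char ext 0} applied to $\mathbb{H}_J$.) Running the same argument for $U'$ gives $\Res_{\mathbb{H}_J}X(J,U') \cong U' \oplus Y'$. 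Since $U$ and $Y'$ are built from weights in disjoint unions of $W_J$-orbits — $U$ from $W_J\cdot\gamma_0$ where $\gamma_0$ is a weight of $U$, and $Y'$ from the $w$-translates with $w \neq e$ — they have disjoint central character supports as $\mathbb{H}_J$-modules, and Theorem \ref{thm char ext 0} gives $\Ext^i_{\mathbb{H}_J}(U, Y') = 0$ for all $i$.

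The main obstacle I anticipate is making precise and rigorous the claim that the weights $w(\gamma)$, for $w \in W^J\setminus\{e\}$ and $\gamma$ a weight of $U$, never lie in $W_J\cdot \gamma_0$ — this is exactly the content that forces us to use $J \in \mathcal{J}_{\mathrm{rig}}$ rather than an arbitrary $J$, and it is where one must invoke both the discrete-series genericity of $U$ (Definition \ref{def tempered}, via the strict inequalities on fundamental-weight pairings within $V_J$) and the condition $\mathrm{card}(\{w \in W : w(J)=J\}) = 1$. I would handle this by a case analysis modeled on Remark \ref{rmk classify rigid}, or preferably a uniform argument: if $w(\gamma) \in W_J\cdot\gamma_0$ for some weight $\gamma$ of $U$ and $w \in W^J$, then comparing the "$J$-shapes" (the subsets of simple roots on which the real parts of the pairings are strictly negative) of $w(\gamma)$ and $\gamma_0$ shows $w$ must normalize $V_J$ in a way that implies $w(J) = J$, whence $w = e$. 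A secondary technical point is justifying that the $\mathbb{H}_J$-module filtration splits as claimed; this is routine given Theorem \ref{thm char ext 0} for $\mathbb{H}_J$ (the block decomposition of the category of finite-dimensional $\mathbb{H}_J$-modules by central character), so I would state it briefly and move on.
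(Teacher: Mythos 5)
Your proposal is correct and follows essentially the same route as the paper: a block decomposition of $\Res_{\mathbb{H}_J}X(J,U)$ by $\mathbb{H}_J$-central characters (via Theorem \ref{thm char ext 0}), combined with the observation that the discrete-series condition and the rigidity condition together force the only constituent with weights in $V_J^{\vee}$ to be the $w=e$ piece. The ``uniform argument'' you anticipate for the key step is exactly the paper's: a weight of $U$ is a strictly negative combination of the simple coroots in $R_J^{\vee}$, and $w\in W^J$ sends these to positive coroots, so $w(\gamma)\in V_J^{\vee}$ forces $w(R_J^{+,\vee})=R_J^{+,\vee}$, hence $w(J)=J$, hence $w=e$ by (\ref{eqn card w J}).
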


\begin{proof}
By considering the central characters of the $\mathbb{H}_J$-submodules of $X$ and using Theorem \ref{thm char ext 0}, $X$ can be written as $X=U_1 \oplus Y$, where $Y$ is the maximal $\mathbb{H}_J$-submodule of $X$ with all weights of $U_1$ in $V_J$, and $Y$ is the maximal $\mathbb{H}_J$-submodule with all weights $\gamma$ of $Y$ not in $ V_{J}$. 

We now show that $U_1=U$. According to Proposition \ref{prop weights}, for any weight $\gamma$ of $Y_1$, $\gamma=w(\sum a_{\alpha^{\vee}} \alpha^{\vee})$, where $a_{\alpha^{\vee}}<0$, $w \in W^J$ and $\alpha^{\vee}$ runs for all the simple coroots in $R_J^{\vee}$. Since $w(\alpha^{\vee})>0$ for all simple coroots in $R_J^{\vee}$ and $\gamma \in V_J^{\vee}$, this forces $w(\alpha^{\vee}) \in R_J^{\vee}$. Combining the conditions that $w(\alpha^{\vee})>0$ and $w(\alpha^{\vee}) \in R_J^{\vee}$, we have $w$ sends all the positive coroots in $R_J^{\vee}$ to the positive coroots in $R_J^{\vee}$. Hence, $w$ permutes the simple coroots in $R_J^{\vee}$ and so $w(J)=J$. Now the condition that $X$ is rigid implies that $w=1$. By counting the multiplicity of weights, we have $U_1=U$ as desired. 

Similarly, we get the decomposition $X'=U' \oplus Y'$ for $Y'$ similarly defined as $Y$. By considering the central characters of $U$ and $Y'$ as $\mathbb{H}_J$-modules and using Theorem \ref{thm char ext 0}, we have the last assertion about $\Ext$-groups in the statement.
\end{proof}

\begin{lemma} \label{lem rigid irr}
Let $(J, U, 0) \in \Xi_{\mathrm{rig}}$. Then the rigid module$X(J, U)$ is irreducible. 
\end{lemma}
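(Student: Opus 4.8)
The plan is to deduce irreducibility of $X(J,U)$ from the structure result in Lemma \ref{lem structure rigid}, applied with $U'=U$. By that lemma, $\Res_{\mathbb{H}_J}X(J,U)=U\oplus Y$ as $\mathbb{H}_J$-modules, where $U$ is an irreducible $\mathbb{H}_J$-discrete series sitting inside $V_J$ (i.e. all its weights lie in $V_J^{\vee}$), and every weight of $Y$ fails to lie in $V_J^{\vee}$. The key point is that this decomposition exhibits $U$ both as a submodule and (via Frobenius reciprocity) as a quotient of the $\mathbb{H}_J$-restriction in a way that forces any $\mathbb{H}$-submodule of $X(J,U)$ to meet $U$.

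First I would show that any nonzero $\mathbb{H}$-submodule $X'\subseteq X(J,U)$ contains the canonical copy of $U$. Indeed $X'$ is $\mathbb{H}_J$-stable, so by Theorem \ref{thm char ext 0} (distinct central characters give no extensions, hence decompose the restriction along central characters) $X'$ decomposes as $(X'\cap U)\oplus(X'\cap Y)$ relative to the decomposition $X(J,U)=U\oplus Y$; more precisely $\Res_{\mathbb{H}_J}X'$ is a direct sum of pieces, each with weights in $V_J^{\vee}$ or with weights outside $V_J^{\vee}$. If $X'\cap U=0$, then all weights of $X'$ lie outside $V_J^{\vee}$, in particular $X'$ contains no weight vector of any weight in $V_J^{\vee}$. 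But by Proposition \ref{prop weights} the weights of $X(J,U)$ in $V_J^{\vee}$ are exactly those of $U$ (this uses rigidity, exactly as in the proof that $U_1=U$ in Lemma \ref{lem structure rigid}: for $w\in W^J$, $w(\gamma)\in V_J^{\vee}$ with $\gamma$ a weight of $U$ forces $w(J)=J$, hence $w=1$). Now $X'$, being a nonzero $\mathbb{H}$-module, is generated over $S(V)$ by generalized weight vectors, and applying $\mathbb{H}$ one sees $X'$ must contain a lowest weight vector for $U$ — more carefully, I would argue via the Frobenius-reciprocity description: the projection $X(J,U)=\mathbb{H}\otimes_{\mathbb{H}_J}U\to U$ (dual to the embedding $U\hookrightarrow X(J,U)$ of Lemma \ref{lem structure rigid}) restricts nontrivially to $X'$ because $X'$ is generated over $\mathbb{H}$ by $X'\cap U$ together with higher-length pieces, and the length filtration $F_i$ from Proposition \ref{prop weights} together with the structure of $X'$ forces $X'\cap U\neq 0$. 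Since $U$ is irreducible as an $\mathbb{H}_J$-module, $X'\cap U\neq 0$ implies $U\subseteq X'$.

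Next, once $U\subseteq X'$, I would show $X'=X(J,U)$. By Frobenius reciprocity the embedding $U\hookrightarrow X'\subseteq X(J,U)$ corresponds to a nonzero $\mathbb{H}$-map $X(J,U)=\mathbb{H}\otimes_{\mathbb{H}_J}U\to X'$; but this map is just the inclusion of $X(J,U)$ generated by the copy of $U$, so its image, which lies in $X'$, is all of $X(J,U)$. Hence $X'=X(J,U)$, proving irreducibility.

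The main obstacle is the first step: carefully justifying that a nonzero $\mathbb{H}$-submodule must intersect the distinguished copy of $U$. The delicate point is ruling out a submodule all of whose weights avoid $V_J^{\vee}$; this is where rigidity of $J$ (via Remark \ref{rmk classify rigid} and the weight computation in Proposition \ref{prop weights}) is essential — without it, $W^J$ could contain a nontrivial $w$ with $w(J)=J$ producing "extra" copies of the $V_J^{\vee}$-weights outside the canonical $U$, and the argument would collapse. I expect the cleanest route is to combine the central-character decomposition of $\Res_{\mathbb{H}_J}X'$ (Theorem \ref{thm char ext 0}) with the observation that the central character of $X(J,U)$ as an $\mathbb{H}$-module is $[v_0]$ where $v_0$ is the central character of $U$, and that a generalized weight vector of $X'$ whose weight lies outside $V_J^{\vee}$ generates, under $S(V_J^{\perp})$ and $t_w$ for $w\in W_J$, a piece whose weights are all outside $V_J^{\vee}$ — so $X'\cap Y$ is itself $\mathbb{H}_J$-stable and $X'\cap U$ is forced to be the "bottom" of $X'$, which is nonzero whenever $X'\neq 0$ by looking at the minimal length component in the $F_i$-filtration.
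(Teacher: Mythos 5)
There is a genuine gap, and it sits exactly where you flag "the main obstacle": the claim that every nonzero $\mathbb{H}$-submodule $X'\subseteq X(J,U)$ satisfies $X'\cap U\neq 0$. The central-character decomposition of $\Res_{\mathbb{H}_J}X'$ (via Theorem \ref{thm char ext 0}) and the rigidity computation from Lemma \ref{lem structure rigid} do show that \emph{if} $X'$ has a generalized weight in $V_J^{\vee}$ \emph{then} it meets the canonical copy of $U$; but nothing you write rules out a nonzero submodule $X'$ contained entirely in $Y$, i.e.\ with all weights outside $V_J^{\vee}$. Your justification --- "$X'$ is generated over $\mathbb{H}$ by $X'\cap U$ together with higher-length pieces \ldots{} forces $X'\cap U\neq 0$" --- presupposes the conclusion: a submodule of an induced module need not contain the inducing subspace $1\otimes U$ (Verma modules are the standard cautionary example), and the length filtration $F_i$ only controls the associated graded, not whether the $\mathbb{H}$-action on a higher-length vector lands back in $F_0$. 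Some genuine extra input is needed here, and you never supply it.

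The paper's proof avoids this issue entirely by using unitarity: by Proposition \ref{prop Hermitian form}, $X(J,U)$ carries a positive-definite $*$-invariant form, hence is completely reducible, so irreducibility is equivalent to $\dim\Hom_{\mathbb{H}}(X,X)=1$; and the latter follows in one line from Frobenius reciprocity together with $\Hom_{\mathbb{H}_J}(U,Y)=0$ (the $i=0$ case of Lemma \ref{lem structure rigid}), since $\Hom_{\mathbb{H}}(X,X)=\Hom_{\mathbb{H}_J}(U,U\oplus Y)=\Hom_{\mathbb{H}_J}(U,U)=\mathbb{C}$. In effect, complete reducibility is what converts "$U$ occurs exactly once in $\Res_{\mathbb{H}_J}X$" into "every nonzero summand, hence every nonzero submodule, contains $U$". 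If you insert the appeal to Proposition \ref{prop Hermitian form} at the start, your second step (that the copy of $U$ generates all of $X(J,U)$, by Frobenius reciprocity) is correct and the argument closes; without it, the first step remains unproved.
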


\begin{proof}
Set $X=X(J, U)$. By Proposition \ref{prop Hermitian form}, $X$ is isomorphic to the direct sum of irreducible $\mathbb{H}$-modules. Now by Frobenius reciprocity and Lemma \ref{lem structure rigid}, 
\[  \Hom_{\mathbb{H}}(X,X)=\Hom_{\mathbb{H}_J}(U, \Res_{\mathbb{H}_J}X)=\Hom_{\mathbb{H}_J}(U, U)=\mathbb{C}. \] 
This implies $X$ is irreducible.
\end{proof}


Form Lemma \ref{lem structure rigid}, we see that the computation of $\Ext$-groups for a rigid module $X(J, U)$ can be reduced to compute the $\Ext$-groups $\Ext_{\mathbb{H}_J}^i(U, U)$. The study for the $\Ext$-groups among discrete series is out of scope from our development. We need the following result from Opdam-Solleveld for Proposition \ref{prop ext group rigid} later:

\begin{theorem}\cite[Theorem 3.8]{OS} \label{thm ext discrete}
Let $U$ and $U'$ be discrete series of $\overline{\mathbb{H}}_J$. Then
\[   \Ext^i_{\overline{\mathbb{H}}_J} (U, U')=\left\{ \begin{array} {rl}
                                                       \mathbb{C} & \quad \mbox{ if $i=0$ and $U \cong U'$ } \\
																											  0         & \quad  \mbox{ otherwise }    \\
																							\end{array}   
																			\right. 
\]
 \end{theorem}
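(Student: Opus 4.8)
The plan is to reduce to a single (real) central character, dispose of $i=0$ by Schur's lemma, and prove the higher vanishing by passing to the Schwartz-algebra completion, where discrete series become projective.

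First I would reduce. By Theorem \ref{thm char ext 0}, $\Ext^i_{\overline{\mathbb{H}}_J}(U,U')=0$ unless $U$ and $U'$ have the same central character, which by Theorem \ref{thm disc unitary} is necessarily real; so fix such a central character and work inside the corresponding finite-length block of $\overline{\mathbb{H}}_J$-modules. For $i=0$, since $U$ and $U'$ are finite-dimensional and irreducible over $\mathbb{C}$, Schur's lemma gives
\[
   \Hom_{\overline{\mathbb{H}}_J}(U,U')=\begin{cases}\mathbb{C}&\text{if }U\cong U',\\ 0&\text{otherwise.}\end{cases}
\]
It remains to prove $\Ext^i_{\overline{\mathbb{H}}_J}(U,U')=0$ for all $i\geq 1$ (note this must cover the case $U\not\cong U'$ with equal central character, which genuinely occurs).

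The heart of the argument would be to introduce the Schwartz completion $\mathcal S=\mathcal S(\overline{\mathbb{H}}_J)$, the graded-Hecke analogue of the Harish--Chandra Schwartz algebra. Every tempered $\overline{\mathbb{H}}_J$-module, in particular every discrete series, extends uniquely to a (Fréchet) $\mathcal S$-module, and conversely the tempered modules are exactly those admitting such an extension. Using Opdam's Plancherel theorem for (graded) affine Hecke algebras, the discrete series of $\overline{\mathbb{H}}_J$ are precisely the isolated points of the tempered spectrum, i.e. the pieces of the Plancherel decomposition concentrated at a single point; this exhibits each discrete series, after restricting $\mathcal S$ to the appropriate central idempotent, as a direct summand of $\mathcal S$ as a left $\mathcal S$-module. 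Hence each discrete series is a projective $\mathcal S$-module, so $\Ext^i_{\mathcal S}(U,U')=0$ for all $i\geq 1$ and all discrete series $U,U'$. To transfer this back to $\overline{\mathbb{H}}_J$, I would invoke the comparison isomorphism $\Ext^i_{\overline{\mathbb{H}}_J}(U,U')\xrightarrow{\ \sim\ }\Ext^i_{\mathcal S}(U,U')$, valid for tempered $U,U'$ in all degrees; combined with the previous step this yields the claimed vanishing.

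The main obstacle is exactly this comparison isomorphism: it is the analytic crux and requires showing that $\mathcal S$ is homologically transparent over $\overline{\mathbb{H}}_J$ on tempered modules — concretely, that a finitely generated projective $\overline{\mathbb{H}}_J$-resolution of a tempered module can be completed term by term to an $\mathcal S$-resolution, using automatic continuity of $\overline{\mathbb{H}}_J$-linear maps between tempered modules together with nuclearity and exactness properties of $\mathcal S$. Granting it, the proof is complete. As a more self-contained alternative for the self-extension case $U=U'$, one could instead run the Koszul-type complex of Section \ref{s koszul type resolution} adapted to $\overline{\mathbb{H}}_J$ (with $\wedge^\bullet V_J$ in place of $\wedge^\bullet V$), equip each term with the Hermitian inner product built from the $\ast$-unitary structure on $U$ (Theorem \ref{thm disc unitary}) and a fixed form on $\wedge^\bullet V_J$, and argue that the resulting Dirac-type Laplacian $d^\ast\delta+\delta d^\ast$ is invertible on the positive-degree part using discreteness of $U$; I expect this to recover $\Ext^{\geq 1}_{\overline{\mathbb{H}}_J}(U,U)=0$ but to be delicate to push through rigorously, so I would prioritize the Schwartz-algebra route.
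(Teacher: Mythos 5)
The paper does not actually prove this statement: it is quoted as \cite[Theorem 3.8]{OS}, and the ``proof'' given consists of citing Opdam--Solleveld's result for affine Hecke algebras and transferring it to the graded setting via Lusztig's reduction theorem (with a pointer to \cite[Section 6]{So}). Your proposal is a reasonable reconstruction of the Opdam--Solleveld strategy, and the easy parts (reduction to a common, necessarily real, central character via Theorem \ref{thm char ext 0} and Theorem \ref{thm disc unitary}; the $i=0$ case by Schur's lemma; the observation that non-isomorphic discrete series with equal central character must still be handled) are correct.

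However, as a proof it has a genuine gap, and you have located it yourself: the comparison isomorphism $\Ext^i_{\overline{\mathbb{H}}_J}(U,U')\cong\Ext^i_{\mathcal S}(U,U')$ for tempered modules is not a technical lemma one can ``grant'' --- it is essentially the main theorem of \cite{OS}, and its proof occupies most of that paper. Likewise, the projectivity of discrete series over $\mathcal S$ rests on the full Plancherel theorem of Opdam/Delorme--Opdam (isolation of the discrete spectrum), which is itself deep analysis. So your argument reduces the statement to two results each at least as hard as the statement; that is a roadmap, not a proof. There is also a structural issue you gloss over: the Schwartz completion is constructed for \emph{affine} Hecke algebras, not directly for the graded algebra $\overline{\mathbb{H}}_J$, so one must additionally pass through Lusztig's reduction theorem, exactly as the paper does. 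Your alternative Hodge/Dirac-Laplacian argument on the Koszul complex is only sketched for $U=U'$, and the invertibility of the Laplacian in positive degree is asserted, not established; note also that it would say nothing about the case $U\not\cong U'$ with equal central character, which is precisely where the content lies.
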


\begin{proof}
Apply the result \cite[Theorem 3.8]{OS} for affine Hecke algebras. The result can be interpreted in the level of the graded affine Hecke algebra by using Lusztig's reduction theorem \cite{Lu} (See the discussions in \cite[Section 6]{So}).
\end{proof}

\begin{example}
We consider the Steinberg module $\mathrm{St}$ of $\mathbb{H}$, which is a one dimensional space $\mathbb{C}x$ with $\mathbb{H}$-action defined by:
\[  t_{s_{\alpha}}.x=-x \quad \mbox{ for $\alpha \in \Delta$},
\]
\[  v.x=\rho(v)x,
\]
where $\rho$ is the half sum of all the positive coroots in $R^{\vee}$. Then $\Res_W\mathrm{St}=\sgn$, the sign representation of $W$. By the projective resolution in Corollary \ref{cor projective resol} and notations in Section \ref{ss theta twisted},
\[   \Ext_{\mathbb{H}}^i(\mathrm{St}, \mathrm{St})= \frac{\ker d^*: \Hom_{W}(\sgn\otimes \wedge^iV, \sgn )\rightarrow \Hom_W(\sgn\otimes \wedge^{i+1}V, \sgn )}{\im d^*: \Hom_{W}(\sgn\otimes \wedge^{i-1}V, \sgn )\rightarrow \Hom_W(\sgn\otimes \wedge^{i}V, \sgn) } .
\]
Recall that the map $d^*$ is determined by the $\mathbb{H}$-module structure of $\mathrm{St}$. It is well-known that $\left\{ \wedge^iV \right\}_{i=0}^{\dim V}$ are irreducible and mutually non-isomorphic $W$-representations. Hence 
\[  \Hom_{W}(\sgn\otimes \wedge^iV, \sgn )=\left\{ \begin{array}{rl}
                                                          \mathbb{C}  & \quad \mbox{ if $i=0$} \\
																													0           & \quad \mbox{ otherwise}
																									\end{array} \right.
		\]
Hence we have $\Ext_{\mathbb{H}}^i(\mathrm{St}, \mathrm{St})=\mathbb{C}$ for $i=0$ and $\Ext_{\mathbb{H}}^i(\mathrm{St}, \mathrm{St})=0$ for $i>0$ as stated in Theorem \ref{thm ext discrete}.
\end{example}

In order to reduce the amount of notation below, for $\mathbb{H}$-module $X, X'$, we simply write $\Hom_{W}(X \otimes \wedge^i V, X')$ for $\Hom_{W}(\Res_W(X) \otimes \wedge^i V, \Res_W(X'))$. Similar notation is also used for $\Hom$ functor for $W_J$-representations.

\begin{notation} \label{not hom subspace}
Let $J \subset \Delta$ and let $U$ and $U'$ be $W_J$-representations. In Proposition \ref{prop ext group rigid} below, we frequently regard the spaces $\Hom_{W_J}( U \otimes \wedge^l V_J \otimes \wedge^{i-l}V_J^{\bot}, U')$ and $\Hom_{W_J}( U \otimes \wedge^l V_J , U')$ as natural subspaces of $\Hom_{W_J}(U \otimes \wedge^i V, U')$ and $\Hom_{W_J}(U \otimes \wedge^l V, U')$ respectively. In Lemma \ref{lem theta action}, $\Hom_{W_J}(U \otimes \wedge^i V_J^{\bot}, U)$ is regarded as a natural subspace of $\Hom_{W_J}(U \otimes \wedge^l V, U)$.
\end{notation}

\begin{proposition} \label{prop ext group rigid}
Let $(J, U), (J, U') \in \Xi_{\mathrm{rig}}$. Then 
\[ \dim \Ext_{\mathbb{H}}^i(X(J, U),X(J, U'))= \left\{ \begin{array}{cl l}
&\left( \begin{array}{rl} r \\ i \end{array} \right)=\frac{r!}{(r-i)!i!}  & \quad \mbox{ if $U \cong U'$ and $ i \leq r$} \\
&             0                         & \quad \mbox{ otherwise. }
\end{array} \right.
\]
where $r=\dim V-\dim V_J$.
\end{proposition}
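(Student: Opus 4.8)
## Proof proposal

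The plan is to compute $\Ext^i_{\mathbb{H}}(X(J,U), X(J,U'))$ by combining Frobenius reciprocity on the Koszul-type resolution of Section~\ref{s koszul type resolution} with the structure result for rigid modules (Lemma~\ref{lem structure rigid}) and the vanishing of $\Ext$-groups among discrete series (Theorem~\ref{thm ext discrete}). First, using the projective resolution (\ref{eqn projective resolution}) for $X = X(J,U)$ together with Frobenius reciprocity, I would write
\[
\Ext^i_{\mathbb{H}}(X(J,U), X(J,U')) = H^i\bigl( \Hom_{\mathbb{C}[W]}(\Res_W X(J,U) \otimes \wedge^\bullet V, \Res_W X(J,U')), d^* \bigr).
\]
The first reduction is to replace the $W$-module $\Res_W X(J,U)$ by its $\mathbb{H}_J$-structure: since the resolution is built over $\mathbb{C}[W]$, but the differentials $d^*$ only involve the action of $V \subset \mathbb{H}$, it is cleaner to instead use the resolution of $X(J,U)$ restricted to $\mathbb{H}_J$ — or better, to observe that $X(J,U) = \Ind_{\mathbb{H}_J}^{\mathbb{H}}(U)$ and apply Frobenius reciprocity one more time to land in $\Hom_{\mathbb{H}_J}(U, \Res_{\mathbb{H}_J}(\text{resolution terms}))$. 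Concretely, since $\mathbb{H}$ is free as a right $\mathbb{H}_J$-module, $\Hom_{\mathbb{H}}(\mathbb{H}\otimes_{\mathbb{C}[W]}(\Res_W X \otimes \wedge^i V), X') \cong \Hom_{\mathbb{C}[W]}(\Res_W X \otimes \wedge^i V, \Res_W X')$, and I want to further identify this with something governed by $\Ext_{\mathbb{H}_J}(U, \Res_{\mathbb{H}_J} X')$.

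The key structural input is Lemma~\ref{lem structure rigid}: $\Res_{\mathbb{H}_J} X(J,U) = U \oplus Y$ and $\Res_{\mathbb{H}_J} X(J,U') = U' \oplus Y'$ with $\Ext^i_{\mathbb{H}_J}(U, Y') = 0$ for all $i$. Combined with Theorem~\ref{thm char ext 0} (central character separation) this should let me discard all the "cross terms" and reduce to computing $\Ext^i_{\mathbb{H}_J}(U, U')$, which by Theorem~\ref{thm ext discrete} (applied to $\overline{\mathbb{H}}_J$ after the decomposition $\mathbb{H}_J = \overline{\mathbb{H}}_J \otimes S(V_J^\bot)$) vanishes unless $U \cong U'$ and $i=0$. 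The decomposition $\mathbb{H}_J = \overline{\mathbb{H}}_J \otimes S(V_J^{\bot})$ is what produces the binomial coefficient: over the polynomial factor $S(V_J^\bot)$ with $\dim V_J^\bot = \dim V - \dim V_J = r$, the module $U \otimes \mathbb{C}_0$ has a Koszul resolution, so $\Ext^i_{S(V_J^\bot)}(\mathbb{C}_0, \mathbb{C}_0) = \wedge^i (V_J^\bot)^*$ has dimension $\binom{r}{i}$; a Künneth-type argument then gives $\Ext^i_{\mathbb{H}_J}(U \otimes \mathbb{C}_0, U \otimes \mathbb{C}_0) \cong \Ext^0_{\overline{\mathbb{H}}_J}(U,U) \otimes \wedge^i(V_J^\bot)^* \cong \wedge^i(V_J^\bot)^*$. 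So the plan is: (i) reduce $\Ext_{\mathbb{H}}(X(J,U),X(J,U'))$ to $\Ext_{\mathbb{H}_J}(U, U')$ via Frobenius reciprocity plus the Lemma~\ref{lem structure rigid} decomposition; (ii) reduce $\Ext_{\mathbb{H}_J}(U,U')$ to $\Ext_{\overline{\mathbb{H}}_J}(U,U') \otimes \Ext_{S(V_J^\bot)}(\mathbb{C}_0,\mathbb{C}_0)$; (iii) apply Theorem~\ref{thm ext discrete} and the Koszul computation.

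The main obstacle is step (i): making the reduction from $\Ext^i_{\mathbb{H}}(X(J,U), X(J,U'))$ to $\Ext^i_{\mathbb{H}_J}(U, U')$ genuinely rigorous at the level of complexes, not just Euler characteristics. The naive hope — that $\Ext^i_{\mathbb{H}}(\Ind_{\mathbb{H}_J}^{\mathbb{H}} U, X') \cong \Ext^i_{\mathbb{H}_J}(U, \Res_{\mathbb{H}_J} X')$ — is exactly a form of the (left-exact adjunction / change-of-rings) statement, and it holds because $\Ind_{\mathbb{H}_J}^{\mathbb{H}} = \mathbb{H} \otimes_{\mathbb{H}_J} -$ is exact ($\mathbb{H}$ is free, hence flat, as a right $\mathbb{H}_J$-module) and sends projectives to projectives, so it is a left-derived-functor-of-adjunction situation giving $\Ext^i_{\mathbb{H}}(\Ind U, X') \cong \Ext^i_{\mathbb{H}_J}(U, \Res_{\mathbb{H}_J} X')$. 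Then by Lemma~\ref{lem structure rigid} and additivity of $\Ext$, $\Ext^i_{\mathbb{H}_J}(U, \Res_{\mathbb{H}_J} X(J,U')) = \Ext^i_{\mathbb{H}_J}(U, U') \oplus \Ext^i_{\mathbb{H}_J}(U, Y') = \Ext^i_{\mathbb{H}_J}(U, U')$. I expect the bulk of the write-up to consist of carefully justifying the change-of-rings isomorphism (citing standard homological algebra, e.g. the adjunction between $\Ind$ and $\Res$ together with exactness of $\Ind$), after which steps (ii)--(iii) are short and the binomial formula drops out with $r = \dim V - \dim V_J$; I should also double-check that Proposition~\ref{prop weights} is being used only where needed (inside Lemma~\ref{lem structure rigid}) and that the vanishing "otherwise" case is fully covered by central character considerations via Theorem~\ref{thm char ext 0}.
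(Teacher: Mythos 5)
Your proposal is correct and follows essentially the same route as the paper: reduce to $\Ext^i_{\mathbb{H}_J}(U,U')$ via Frobenius reciprocity together with the decomposition $\Res_{\mathbb{H}_J}X(J,U')=U'\oplus Y'$ of Lemma~\ref{lem structure rigid}, then split off the $S(V_J^{\bot})$ factor to get the binomial coefficient and invoke Theorem~\ref{thm ext discrete}. The only difference is presentational — you justify the higher Frobenius reciprocity by the abstract change-of-rings (Shapiro) argument and package the $\wedge^{\bullet}V_J^{\bot}$ contribution as a K\"unneth factor, whereas the paper carries out the same decomposition explicitly inside its Koszul-type resolution.
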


\begin{proof}
Let $X=X(J,U)$ and $X'=X(J, U')$. By Lemma \ref{lem structure rigid} and Frobenius reciprocity, $\Ext^i_{\mathbb{H}}(X,X') =\Ext^i_{\mathbb{H}_J}(U, U' \oplus Y') = \Ext^i_{\mathbb{H}_J}(U, U')$, where $Y'$ is an $\mathbb{H}_J$-module as in Lemma \ref{lem structure rigid}. We write $V=V_J \oplus V_J^{\bot}$. For notational convenience, we shall simply write $U$ for $\Res_{W_J}(U)$ below, which should not cause confusion. 

We now apply the projective resolution in (\ref{eqn projective resolution}) on the graded Hecke algebra $\mathbb{H}_J$ which have the root datum $(R_J, V_0, R_J^{\vee}, V_0^{\vee})$ and use $d_{i,U}^*$ for the corresponding differential map as in (\ref{eqn di* actin 1}) and (\ref{eqn di* actin 2}). Note that we could decompose the space
\begin{eqnarray}
\label{eqn hom wj1}    \Hom_{W_J}(U \otimes \wedge^iV, U') &=&\bigoplus_{l=0}^i \Hom_{W_J}( U \otimes \wedge^l V_J \otimes \wedge^{i-l}V_J^{\bot}, U') \\
\label{eqn hom wj2} 		                                    &=&\bigoplus_{l=0}^i a_{r, i, l}\Hom_{W_J}( U \otimes \wedge^l V_J , U'),
\end{eqnarray}
where $a_{r, i, l}=C^r_{i-l}$ if $i-l \leq r$ and  $a_{r, i, l}=0$ if $i-l > r$. Under the above isomorphism, the map $d_{i,U}^*$ and can be in turn expressed as
\[   \bigoplus_{l=0}^i d^*_{l,U}: \Hom_{W_J}(U \otimes \wedge^l V_J, U') \rightarrow \Hom_{W_J}(U \otimes \wedge^{l+1} V_J, U'),
\]
where $\Hom_{W_J}(U \otimes \wedge^l V_J, U')$ and $\Hom_{W_J}(U \otimes \wedge^{l+1} V_J, U')$ are regarded as subspaces of $\Hom_{W_J}(U \otimes \wedge^l V, U')$ and $\Hom_{W_J}(U \otimes \wedge^{l+1} V, U')$ and by abuse of notation, $d_{i,U}^*$ are the maps restricted to the subspaces.
Then the $\Ext$-groups can be expressed as 
\begin{eqnarray} 
& &\Ext_{\mathbb{H}}^i(X, X') \\
 \label{eq ext ker im J}  &=&\bigoplus_{l=0}^ia_{r, i, l}\frac{\ker(d_{l, U}^*: \Hom_{W_J}(U \otimes \wedge^l V_J, U') \rightarrow \Hom_{W_J}(U \otimes \wedge^{l+1} V_J, U')  ) }{\im(d_{l-1, U}^*: \Hom_{W_J}(U \otimes \wedge^{l-1} V_J, U') \rightarrow \Hom_{W_J}(u \otimes \wedge^{l} V_J, U')) }.
\end{eqnarray} 
Then we have 
\begin{eqnarray} \label{eqn ext discrete}   \Ext_{\mathbb{H}}^i(X,X')= \bigoplus_{l=0}^i a_{r, i, l}\Ext^l_{\overline{\mathbb{H}}_J}(U,U') .
\end{eqnarray}
By Theorem \ref{thm ext discrete}, we obtain the statement.

\end{proof}

\subsection{$\theta^*$-action on $\Ext$-groups of rigid modules}

This subsection is devoted to compute the $\theta$-action on $\Ext$-groups of rigid modules.

Let $(J,U, 0) \in \Xi_{\mathrm{rig}}$. Define an $\overline{\mathbb{H}}_{\theta(J)}$-module $U^{\theta}$ such that $U^{\theta}$ is identified with $U$ as vector spaces and the $\overline{\mathbb{H}}_{\theta(J)}$-module structure is determined by: for $u \in U$,
\[    \pi_{U^{\theta}}(t_w)u = \pi_{U}(\theta(t_w))u , \mbox{ for $w \in W_{\theta(J)}$}\]
\[    \pi_{U^{\theta}}(v)u= \pi_U(\theta(v))u, \mbox{ for $v \in V$}. \]

\begin{lemma} \label{lem theta defining action}
Let $(J, U,0) \in \Xi_{\mathrm{rig}}$. Then $X(\theta(J), U^{\theta})$ and $X(J, U)$ are isomorphic.
\end{lemma}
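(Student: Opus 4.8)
The plan is to show that $X(\theta(J), U^\theta)$ is canonically isomorphic to $X(J,U)^\theta$, the module obtained by twisting the $\mathbb{H}$-action on $X(J,U)$ by $\theta$, and then to invoke the corollary at the end of Section~\ref{ss theta action} (together with Remark~\ref{rmk classify rigid}, which guarantees that rigid modules satisfy condition~(1) of Proposition~\ref{prop herm}) to conclude that $X(J,U)^\theta \cong X(J,U)$. The first identification is a formal ``change of variables'' argument on induced modules; the second is exactly what the corollary provides, since $X(J,U)$ is a (finite) direct sum of irreducible $*$-Hermitian modules.

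For the first step, recall $X(J,U) = \mathbb{H}\otimes_{\mathbb{H}_J} U$ and $X(\theta(J), U^\theta) = \mathbb{H}\otimes_{\mathbb{H}_{\theta(J)}} U^\theta$. Since $\theta$ is an algebra automorphism of $\mathbb{H}$ with $\theta(t_w) = t_{w_0 w w_0^{-1}}$ and $\theta(v) = -w_0(v)$, and since $w_0 W_J w_0^{-1} = W_{\theta(J)}$ (the combinatorial statement that $\theta$ permutes $\Delta$ carrying $J$ to $\theta(J)$), we have $\theta(\mathbb{H}_J) = \mathbb{H}_{\theta(J)}$. I would define the map
\[
\Phi\colon \mathbb{H}\otimes_{\mathbb{H}_{\theta(J)}} U^\theta \;\longrightarrow\; \left(\mathbb{H}\otimes_{\mathbb{H}_J} U\right)^\theta,
\qquad h\otimes u \;\longmapsto\; \theta^{-1}(h)\otimes u,
\]
and check: (a) $\Phi$ is well-defined over the relative tensor product, i.e.\ $\Phi(h\, h'\otimes u) = \Phi(h\otimes \pi_{U^\theta}(h')u)$ for $h'\in\mathbb{H}_{\theta(J)}$, which reduces to $\theta^{-1}(h\,h') = \theta^{-1}(h)\,\theta^{-1}(h')$ and the definition $\pi_{U^\theta}(h')u = \pi_U(\theta(h'))u$; (b) $\Phi$ intertwines the $\mathbb{H}$-actions, where the target carries the $\theta$-twisted action $\pi_{X(J,U)^\theta}(h) = \pi_{X(J,U)}(\theta(h))$ — again immediate from $\theta^{-1}(\theta(h) h') = h\,\theta^{-1}(h')$; (c) $\Phi$ is bijective, with inverse $h\otimes u \mapsto \theta(h)\otimes u$. (Since $\theta$ is an involution, $\theta^{-1}=\theta$, so one may write $\Phi(h\otimes u)=\theta(h)\otimes u$ throughout.) This is a routine verification with no real content beyond $\theta(\mathbb{H}_J)=\mathbb{H}_{\theta(J)}$ and the definition of $U^\theta$.

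For the second step, by Remark~\ref{rmk classify rigid} every $(J,U,0)\in\Xi_{\mathrm{rig}}$ has nonzero real central character, so $X(J,U)$ satisfies condition~(1) of Proposition~\ref{prop herm}; by Proposition~\ref{prop Hermitian form} it is $*$-unitary, hence a direct sum of irreducible $*$-Hermitian modules, and the corollary following Corollary~\ref{cor hermitian criteria} gives $X(J,U)\cong X(J,U)^\theta$. Composing with $\Phi^{-1}$ yields $X(\theta(J),U^\theta)\cong X(J,U)$.

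The only point requiring a little care — and the place I expect a referee to look — is step~(a), the well-definedness of $\Phi$ over $\otimes_{\mathbb{H}_{\theta(J)}}$: one must be sure that the module $U^\theta$ has genuinely been set up so that $\theta$ carries the $\mathbb{H}_J$-module $U$ to the $\mathbb{H}_{\theta(J)}$-module $U^\theta$ as an isomorphism of categories of modules over the respective subalgebras, which is exactly the content of the displayed definition of $U^\theta$ preceding the lemma. Everything else is bookkeeping. I would also remark in passing that $U^\theta$ is again a $\overline{\mathbb{H}}_{\theta(J)}$-discrete series, so $(\theta(J),U^\theta,0)$ indeed lies in $\Xi_{\mathrm{rig}}$ (using that $\theta(J)\in\mathcal{J}_{\mathrm{rig}}$ whenever $J\in\mathcal{J}_{\mathrm{rig}}$, since $w\mapsto w_0 w w_0^{-1}$ is a bijection of $\{w: w(J)=J\}$ onto $\{w: w(\theta(J))=\theta(J)\}$), so that the statement is not vacuous.
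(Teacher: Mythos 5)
Your proof is correct, and it shares the paper's key input but assembles the conclusion differently. The paper's own argument also starts from $X(J,U)^{\theta}\cong X(J,U)$ (via Proposition \ref{prop Hermitian form} and Corollary \ref{cor hermitian criteria}), but then deduces only that $\Hom_{\mathbb{H}_{\theta(J)}}(U^{\theta}\otimes\mathbb{C}_0,\,X(J,U))\neq 0$ and finishes by Frobenius reciprocity together with the irreducibility of the rigid modules involved (Lemma \ref{lem rigid irr}): a nonzero map between the two irreducible induced modules must be an isomorphism. You instead construct the canonical transport-of-structure isomorphism $\Phi\colon \mathbb{H}\otimes_{\mathbb{H}_{\theta(J)}}U^{\theta}\to(\mathbb{H}\otimes_{\mathbb{H}_J}U)^{\theta}$, $h\otimes u\mapsto\theta(h)\otimes u$, which is well defined precisely because $\theta(\mathbb{H}_{\theta(J)})=\mathbb{H}_J$ and $U^{\theta}$ is by definition the $\theta$-twist of $U$. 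Your route buys something: it does not use irreducibility of $X(J,U)$ at all, so the identification $X(\theta(J),U^{\theta})\cong X(J,U)^{\theta}$ holds for any $(J,U,0)\in\Xi$, with rigidity entering only through Remark \ref{rmk classify rigid} to guarantee the hypothesis of Proposition \ref{prop herm} needed for $X^{\theta}\cong X$. The paper's route is shorter on the page but leans on Lemma \ref{lem rigid irr} and leaves the embedding $U^{\theta}\hookrightarrow\Res_{\mathbb{H}_{\theta(J)}}X^{\theta}$ implicit, which is essentially the same verification you carry out explicitly in step (a). Both arguments are complete; no gap in yours.
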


\begin{proof}
Set $X=X(J, U)$. By Corollary \ref{cor hermitian criteria} and Proposition \ref{prop Hermitian form}, $X^{\theta}$ and $X$ are isomorphic. This implies $\Hom_{\mathbb{H}_{\theta(J)}}(U^{\theta} \otimes \mathbb{C}_0, X) \neq 0$. Then the irreducibility of $X$ in Lemma \ref{lem rigid irr} and Frobenius reciprocity implies the statement.

\end{proof}

By Lemma \ref{lem theta defining action}, $\mathbb{H} \otimes_{\mathbb{H}_J} U \cong \mathbb{H} \otimes_{\mathbb{H}_{\theta(J)}} U^{\theta}$ via a map denoted $T_{(J, U)}$. We also define another map $T_{\theta}: \mathbb{H} \otimes_{\mathbb{H}_J}U\rightarrow  \mathbb{H} \otimes_{\mathbb{H}_{\theta(J)}}U^{\theta}  $ given by $\theta(h) \otimes u \mapsto h \otimes u$. Then the map $ T_{(J,U)}^{-1} \circ T_{\theta}$ defines an $\theta$-action on on $\mathbb{H}\otimes_{\mathbb{H}_J} U$ and gives an $\mathbb{H} \rtimes \langle \theta \rangle$-structure on $\mathbb{H}\otimes_{\mathbb{H}_J} U$. Then we see that for any $x \in \mathbb{H}\otimes_{\mathbb{H}_J} U$, $x$ can be uniquely written as the linear combination of
\begin{eqnarray*}
\label{eqn unique expression}
   x=\sum_{w \in W^{\theta(J)}}  t_w \theta(u_w),
\end{eqnarray*}
for some $u_w \in U$.




Recall from Section \ref{ss basic notation} that for $J \subset \Delta$, $w_0^J$ denotes the longest element in $W^J$.

\begin{lemma} \label{lem structure Y 2}
Let $X$, $U$ and $Y$ be as in Lemma \ref{lem structure rigid}. Regard $U$ and $Y$ as subspaces of $X$ (see the proof of Lemma \ref{lem structure rigid}). Then 
\begin{enumerate}
\item Fix a choice of an involution $\theta_J$ on $U$ induced from the longest element in $W_J$. For any non-zero vector $u \in U$, there exists a non-zero scalar $a$ such that $u$ can be uniquely written as 
 \[  \theta_J(u)= a  t_{w_{0}^{\theta(J)}}\theta(u)+\sum_{w  \in W^{\theta(J)} \setminus \left\{ w_{0}^{\theta(J)} \right\}} t_w \theta(u_w) \]
for some $u_{w} \in U$.  (Different choice of the $\theta_J$ action changes the sign of the scalar $a$). 
\item $Y$ is the linear subspace of $X$ spanned by all vectors of the form
\begin{equation}\label{eqn vector form}         t_w \theta(u), \mbox{ for $u \in U$ and for $w \in W^{\theta(J)} \setminus \left\{ w_{0}^{\theta(J)} \right\}$ }. 
\end{equation}

\end{enumerate}

\end{lemma}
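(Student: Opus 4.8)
The plan is to deduce both statements from the single structural fact that, as subspaces of $X$,
\[ U \;=\; t_{w_0^{\theta(J)}}\,\theta(U). \]
Granting this: for $w\neq w_0^{\theta(J)}$ the summand $t_w\theta(U)$ will be seen (in proving the identity) to carry no weight in $V_J^{\vee}$, hence to lie in the submodule $Y$ of Lemma \ref{lem structure rigid}, and a dimension count then gives $Y=\bigoplus_{w\in W^{\theta(J)}\setminus\{w_0^{\theta(J)}\}} t_w\theta(U)$, which is exactly (2). For (1), the displayed identity puts $\theta_J(u)\in t_{w_0^{\theta(J)}}\theta(U)$, so in the unique expansion of $\theta_J(u)$ every term $t_w\theta(u_w)$ with $w\neq w_0^{\theta(J)}$ vanishes and $\theta_J(u)=t_{w_0^{\theta(J)}}\theta(\Lambda(u))$ for a unique $\mathbb{C}$-linear $\Lambda:U\to U$; it then remains to see that $\Lambda$ is a nonzero scalar.

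To prove the identity, I would combine Lemma \ref{lem structure rigid} with Proposition \ref{prop weights}. By the proof of Lemma \ref{lem structure rigid} applied to $X\cong X(J,U)$, the subspace $U\subseteq X$ is precisely the span of the generalized weight vectors of $X$ whose weight lies in $V_J^{\vee}$; in particular it has dimension $\dim U$. By Lemma \ref{lem theta defining action} we may realize $X=X(\theta(J),U^{\theta})$ with $(\theta(J),U^{\theta})\in\Xi$, and $\theta(J)\in\mathcal J_{\mathrm{rig}}$ because $w\mapsto w_0ww_0^{-1}$ is a bijection of $\{w\in W:w(J)=J\}$ onto $\{w\in W:w(\theta(J))=\theta(J)\}$. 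Now apply Proposition \ref{prop weights} to $X(\theta(J),U^{\theta})$: in the decomposition $X=\bigoplus_{w\in W^{\theta(J)}}t_w\theta(U)$ the $w$-summand contributes, to the associated graded of the length filtration used in that proof, the weights $w(\gamma)$ with $\gamma$ a weight of $U^{\theta}$. Since $U^{\theta}$ is a discrete series of $\overline{\mathbb{H}}_{\theta(J)}$, each such $\gamma$ is a strictly negative combination of the simple coroots of $R_{\theta(J)}$, and since $w\in W^{\theta(J)}$ the element $w$ sends positive coroots of $R_{\theta(J)}$ to positive coroots of $R$; hence $w(\gamma)\in V_J^{\vee}$ forces every simple-coroot coordinate of $w(\gamma)$ to be $\le 0$, so the coordinates along simple coroots outside $J$ vanish and $w$ carries each simple coroot of $R_{\theta(J)}$ into $R_J^{\vee,+}$. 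A cardinality count (using $|R_{\theta(J)}|=|R_J|$, as $\theta$ is a diagram automorphism) upgrades this to $w(R_{\theta(J)}^{+})=R_J^{+}$, so $w$ sends the simple roots $\theta(J)$ bijectively onto $J$; since $w_0^{\theta(J)}=w_{0,J}w_0$ has the same property, $x:=(w_0^{\theta(J)})^{-1}w$ fixes the set $\theta(J)$, and rigidity of $\theta(J)$ forces $x=1$, i.e. $w=w_0^{\theta(J)}$. Conversely $w_0^{\theta(J)}(V_{\theta(J)}^{\vee})=V_J^{\vee}$, so every weight of the $w_0^{\theta(J)}$-summand lies in $V_J^{\vee}$. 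Comparing dimensions yields $U=t_{w_0^{\theta(J)}}\theta(U)$.

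For the scalar claim in (1), first $\Lambda$ is injective (hence bijective): if $\Lambda(u)=0$ then $\theta_J(u)$ lies below the top step of the length filtration while still having weight in $V_J^{\vee}$, which by the weight analysis above pushes it into successively lower steps, forcing $\theta_J(u)=0$ and so $u=0$. To see $\Lambda$ is scalar, set $\Psi(u)=t_{w_0^{\theta(J)}}\theta(u)$, so $\Psi:U\to U$ (again by the identity) and $\theta_J=\Psi\circ\Lambda$; I claim both $\theta_J$ and $\Psi$ intertwine the $\overline{\mathbb{H}}_J$-action on $U$ with the action twisted by the involution $\theta^{(J)}$ of $\overline{\mathbb{H}}_J$ induced by $w_{0,J}$ just as in $(\ref{eqn involution})$. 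For $\theta_J$ this is its defining property; for $\Psi$ it amounts to $\mathrm{Ad}(t_{w_0^{\theta(J)}})\circ\theta=\theta^{(J)}$ on $\overline{\mathbb{H}}_J$, which one verifies on generators: on $t_w$ ($w\in W_J$) both sides equal $t_{w_{0,J}ww_{0,J}^{-1}}$, and on the elements $\widetilde v$ ($v\in V_J$, formed inside $\overline{\mathbb{H}}_J$) both sides equal $\widetilde{-w_{0,J}(v)}$, using Lemma \ref{lem tilde element}, the identity $\theta(\widetilde v)=\widetilde{\theta(v)}$, and $w_0^{\theta(J)}w_0=w_{0,J}$. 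Hence $\theta_J^{-1}\Psi=\Lambda^{-1}$ is an $\overline{\mathbb{H}}_J$-endomorphism of the irreducible module $U$, so Schur's lemma gives $\Lambda=a\cdot\mathrm{id}_U$ with $a\neq 0$; replacing $\theta_J$ by $-\theta_J$ replaces $a$ by $-a$.

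The main obstacle is the combinatorial core of the second step — forcing $w=w_0^{\theta(J)}$ and carrying out the bookkeeping with the associated graded of Proposition \ref{prop weights}, particularly the case-free cardinality argument that uses both that $\theta$ is a diagram automorphism and that $\theta(J)$ is rigid. The conjugation identity in the last step, by contrast, is routine provided one works with the elements $\widetilde v$ rather than with $v$ directly, since $\mathrm{Ad}(t_{w_0^{\theta(J)}})$ sends the $\overline{\mathbb{H}}_{\theta(J)}$-version of $\widetilde{\,\cdot\,}$ to the $\overline{\mathbb{H}}_J$-version but does not behave well on the raw generators $v$.
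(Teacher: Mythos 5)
Your argument hinges on the claim that $U = t_{w_0^{\theta(J)}}\theta(U)$ as subspaces of $X$, and that claim is false in general. The subspace $U$ of Lemma \ref{lem structure rigid} is the span of the generalized weight vectors of $X$ with weights in $V_J^{\vee}$; in particular it is $S(V)$-stable. The subspace $t_{w_0^{\theta(J)}}\theta(U)$ is not: for $v\in V$ one has $v\cdot t_{w_0^{\theta(J)}}\theta(u)=t_{w_0^{\theta(J)}}\,(w_0^{\theta(J)})^{-1}(v)\,\theta(u)+(\mbox{commutator terms})$, and the commutator terms coming from relation (3) of Definition \ref{def graded affine} land in the span $Y'$ of the $t_w\theta(U)$ with $w\neq w_0^{\theta(J)}$ and are nonzero whenever $k\neq 0$ and $J\neq\Delta$. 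What your weight analysis actually establishes (correctly, and in essentially the same way as the paper) is that the top graded piece of the length filtration is the only one carrying weights in $V_J^{\vee}$; this yields $U\cap Y'=0$ and, by dimension count, that $U$ and $t_{w_0^{\theta(J)}}\theta(U)$ are two \emph{different} complements of $Y'$ in $X$, hence congruent modulo $Y'$ but not equal. The correction sum $\sum_{w\neq w_0^{\theta(J)}}t_w\theta(u_w)$ in the statement of the lemma is precisely the discrepancy between these two complements; your deduction that ``every term $t_w\theta(u_w)$ with $w\neq w_0^{\theta(J)}$ vanishes'' would prove a strengthening that is false, and the map $\Psi(u)=t_{w_0^{\theta(J)}}\theta(u)$ is not an endomorphism of $U$, so the Schur argument does not typecheck as written.

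The repair is to run your argument systematically modulo $Y'$, which is what the paper does: show $Y'$ is an $\mathbb{H}_J$-submodule (its $W_J$-stability follows from $ww_0^{\theta(J)}W_{\theta(J)}=w_0^{\theta(J)}W_{\theta(J)}$ for $w\in W_J$, and $S(V)$-stability from the length filtration; note that stability of the whole of $Y'$ is also needed for your part (2), since the individual summands $t_w\theta(U)$ are not submodules and do not literally ``have weights''), conclude from the weight analysis that $\mathrm{pr}:U\to X/Y'$ is a linear isomorphism, define $f:U\to U$ by $\theta_J(u)\equiv t_{w_0^{\theta(J)}}\theta(f(u))\bmod Y'$, and then prove that $f\circ\theta_J$ intertwines the $\mathbb{H}_J$-action --- your conjugation identity $\mathrm{Ad}(t_{w_0^{\theta(J)}})\circ\theta=\theta^{(J)}$ on the generators $t_w$ and $\widetilde v$ is correct and is the right computation here, provided each step is read in $X/Y'$ --- so that Schur's lemma gives $f=a\,\theta_J$ with $a\neq 0$. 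With that modification your outline coincides with the paper's proof; without it, part (1) as you derive it is not valid.
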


\begin{proof}
We define $Y'$ to be the subspace of $X$ spanned by all vectors of the form $t_w\theta(u)$ for $w \in W^J$ and $u \in U$. Then there is a natural projection map $\mathrm{pr}: U \hookrightarrow X \rightarrow X/Y'$. Note that any generalized weight vector of the form 
\[  t_{w_0^{\theta(J)}}\theta(u_w)+y , \mbox{ for $y \in Y'$}\]
has a weight $\theta(w_0^{\theta(J)}(\gamma))=-w_{0,J}(\gamma)$ for some $\gamma \in V_J$. Then by the definition of non-$\theta$-induced and using similar argument as in the proof of Lemma \ref{lem structure rigid}, any generalized weight vector of $X$ lies in $Y'$ does not have a weight in $V_J$. Hence $U \cap Y'=0$ and by considering the dimension, the map $\pr$ is a linear isomorphism. Using the uniqueness of expression in (\ref{eqn unique expression}), we have a map $f$ from $U$ to $U$ such that 
\[    \theta_J(u) = t_{w_0^{\theta(J)}}\theta(f(u)) +y , \mbox{ for $y \in Y'$}.\]
We shall show that $f \circ \theta_J$ is an $\mathbb{H}_J$-module isomorphism. 

We next prove that $Y'$ is invariant under $w \in W_J$. It suffices to show that for any $w \in W_J$, $ww_0^{\theta(J)}W_{\theta(J)}=w_0^{\theta(J)}W_{\theta(J)}$ as cosets. Indeed this follows from 
\[ww_0^{\theta(J)}W_{\theta(J)}=ww_{0}w_{0,\theta(J)}W_{\theta(J)}=w_0\theta(w)w_{0,\theta(J)}W_{\theta(J)}=w_0W_{\theta(J)}.\]

Note that we also have that $Y'$ is invariant under the action of $S(V)$. Hence $Y$ is an $\mathbb{H}_J$-module. 

Now by using the uniqueness property in Lemma \ref{eqn unique expression} with some computations, one can show that $f \circ \theta_J(t_w.u)=t_w.f \circ \theta_J(u)$ for $w \in W_J$ and $f \circ \theta_J(v.u)=v.f\circ \theta_J(u)$. This proves the claim that $f \circ \theta_J$ is an $\mathbb{H}_J$-module isomorphism and Hence $f=a\theta_J$ for some nonzero scalar $a$. This proves (1).

Note that by our description of $Y$ in the proof of Lemma \ref{lem structure rigid} and the fact that any generalized weight vector of $Y'$ does not have a weight in $V_J$, we have $Y=Y'$.


\end{proof}

Let $X=X(J, U)$ be a rigid module. Lemma \ref{lem theta action}(1) below shows $\Ext_{\mathbb{H}}^i(X, X)$ can be identified with a subspace of $\Hom_{W_J}(U \otimes \wedge^i V_J^{\bot}, U)$. Recall that the $\theta^*$-action on $\Ext_{\mathbb{H}}^i(X, X)$ is defined in Section \ref{ss theta twisted}. However, there is no natural way to define a corresponding action of $\theta^*$ on $\Hom_{W_J}(U \otimes \wedge^iV_J^{\bot},U)$ in general. Thus for $\psi \in \Hom_{W_J}(U \otimes \wedge^iV_J, U)$, we define $\overline{\psi} \in \Hom_W(X \otimes \wedge^i V, X)$ such that 
\[ \overline{\psi}((t_w.u) \otimes (v_1 \wedge \ldots \wedge v_i)) =t_w\psi(u \otimes (w^{-1}(v_1)\wedge \ldots \wedge w^{-1}(v_i))) \] for any $w \in W$ and $u \in U$. Here we regard $U$ as a natural subspace of $X \cong \mathbb{H} \otimes_{\mathbb{H}_J}U$ by sending $u$ to $1 \otimes u$. 

\begin{lemma} \label{lem theta action}
Let $X=X(J, U)$ be a rigid module. Regard $U$ as a natural subspace of $X \cong \mathbb{H} \otimes_{\mathbb{H}_J}U$. Let  
\[  d_i^*: \Hom_W(X \otimes \wedge^i V, X) \rightarrow \Hom_W(X \otimes \wedge^{i+1} V, X) \]
and 
\[ d_{i, U}^*: \Hom_{W_J}(U \otimes \wedge^i V, U) \rightarrow \Hom_{W_J}(U \otimes \wedge^{i+1} V, U) \]
be the differential maps for the $\mathbb{H}$-module $X$ and the $\mathbb{H}_J$-module $U \otimes \mathbb{C}_0$ given by (\ref{eqn di* actin 2}). 
\begin{enumerate}
\item The map $\psi \mapsto \overline{\psi}$ induces an isomorphism between the complexes $\left\{ d_{i,U}^*, \Hom_{W_J}(U \otimes \wedge^i V, U) \right\}$ and $\left\{ d_i^*, \Hom_W(X \otimes \wedge^i V, X) \right\}$. The inverse map is given by the map restricting $X \otimes \wedge^i V$ to $U \otimes \wedge^i V$ (as $W_J$-representations).
\item Define $d_i^{U, *}$ to be the restriction of $d_{i,U}^*$ to the subspace $\Hom_{W_J}(U \otimes \wedge^i V_J^{\bot}, U)$ (see notation \ref{not hom subspace}). Then $\Ext^i_{\mathbb{H}}(X, X)$ can be identified with $\ker d_{i}^{U,*}$.
\item We use the identification in (2). For any $\psi \in \Ext^i_{\mathbb{H}}(X, X) \subset \Hom_{W_J}(U \otimes \wedge^i V_J^{\bot}, U)$, $\psi$ is the multiplication of a scalar in the following sense:

 for each fixed $ v_1 \wedge \ldots \wedge v_i \in  \wedge^i V_J^{\bot}$, there exists a scalar $\lambda_{v_1 \wedge \ldots \wedge v_i}$ such that $\psi(u \otimes v_1 \wedge \ldots \wedge v_i)=\lambda_{v_1 \wedge \ldots \wedge v_i} u$ for all $u \in U$.
\item We use the identification in (2). For any $\psi \in \ker d_i^{U,*}$, the map $\theta^*(\overline{\psi})$ is equal to $(-1)^i\overline{\psi} + \phi$ for some $\phi \in \im d^*_{i-1}$.  
\end{enumerate}
\end{lemma}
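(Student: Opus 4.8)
The plan is to prove the four parts in order, since each builds on the previous ones. For part (1), I would verify directly that $\psi \mapsto \overline{\psi}$ is well-defined: given $\psi \in \Hom_{W_J}(U \otimes \wedge^i V, U)$, the formula $\overline{\psi}((t_w u)\otimes(v_1\wedge\cdots\wedge v_i)) = t_w\psi(u\otimes(w^{-1}v_1\wedge\cdots\wedge w^{-1}v_i))$ is independent of the coset representative $w \in W^J$ because $\psi$ is $W_J$-equivariant. One checks that $\overline{\psi}$ is genuinely $W$-equivariant using the same formula, that restricting back to $U\otimes\wedge^i V$ recovers $\psi$ (take $w=1$), and that the composite in the other direction is the identity because any $W$-equivariant map out of $X = \bigoplus_{w\in W^J} t_w\otimes U$ is determined by its restriction to $U$. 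Compatibility with the differentials $d_{i,U}^*$ and $d_i^*$ follows from the explicit formula (\ref{eqn di* actin 2}) together with Lemma \ref{lem tilde element} (i.e.\ $t_w \widetilde v = \widetilde{w(v)} t_w$), using the alternate form $\widetilde d_i = d_i$ of Proposition \ref{prop equal d}, since $\widetilde v$ is the element that actually appears when one commutes the $V$-action past $t_w$. This identifies $\Ext^i_{\mathbb{H}}(X,X)$ with the cohomology of the complex $(\Hom_{W_J}(U\otimes\wedge^\bullet V, U), d_{\bullet,U}^*)$, which is the Koszul-type complex computing $\Ext^\bullet_{\mathbb{H}_J}(U, U)$ — consistent with Lemma \ref{lem structure rigid} and Frobenius reciprocity.

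For part (2), I would use the decomposition $V = V_J \oplus V_J^\bot$, so $\wedge^i V = \bigoplus_{p+q=i} \wedge^p V_J \otimes \wedge^q V_J^\bot$, and the fact that the differential $d_{i,U}^*$, when written in the alternate form with $\widetilde v$, only moves a factor $\widetilde v$ for $v\in V_J$ (since for $v\in V_J^\bot$ one has $\widetilde v = v$ and $v$ acts by the scalar $0$ on $U\otimes\mathbb{C}_0$, while $s_\alpha$ for $\alpha\in R^+$ with $\alpha\notin R_J$ pairs trivially — more precisely the terms involving $v\in V_J^\bot$ contribute nothing because $v.u = 0$ and $\langle v,\alpha^\vee\rangle$ for $\alpha\in R_J^+$ vanishes). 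Hence the complex splits as a direct sum over $\wedge^q V_J^\bot$ of copies of the complex $(\Hom_{W_J}(U\otimes\wedge^\bullet V_J, U), d^*)$. By Theorem \ref{thm ext discrete}, since $U$ is a discrete series of $\overline{\mathbb{H}}_J$, the latter complex has cohomology $\mathbb{C}$ only in degree $0$; its degree-$0$ cocycles are exactly $\ker d_0^* = \Hom_{W_J}(U,U) = \mathbb{C}\cdot\mathrm{Id}$, and there are no coboundaries in positive degree after tensoring with $\wedge^q V_J^\bot$ because the $V_J$-differential is acyclic in positive degrees. Putting the pieces together, $\Ext^i_{\mathbb{H}}(X,X) \cong \ker d_0^* \otimes \wedge^i V_J^\bot$, which sits inside $\Hom_{W_J}(U\otimes\wedge^i V_J^\bot, U)$, and one checks this subspace is precisely $\ker d_i^{U,*}$ (the higher-$p$ components of a cocycle must vanish by acyclicity of the $V_J$-part). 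Part (3) is then immediate: under this identification an element of $\Ext^i$ restricted to $\wedge^i V_J^\bot$ lies in $\mathbb{C}\cdot\mathrm{Id}_U$ in the $U$-factor, so $\psi(u\otimes v_1\wedge\cdots\wedge v_i) = \lambda_{v_1\wedge\cdots\wedge v_i}\, u$.

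For part (4), which I expect to be the main obstacle, the strategy is to compute $\theta^*(\overline{\psi})$ modulo coboundaries. Unwinding the definition (\ref{eqn theta* actin 1}), $\theta^*(\overline{\psi})(x\otimes v_1\wedge\cdots\wedge v_i) = \theta\circ\overline{\psi}(\theta(x)\otimes\theta(v_1)\wedge\cdots\wedge\theta(v_i))$. The point is that the $\theta$-action on $X = X(J,U) \cong X(\theta(J), U^\theta)$ is implemented, via Lemma \ref{lem theta defining action} and the map $T_{(J,U)}^{-1}\circ T_\theta$, by the unique expression $\theta_J(u) = a\, t_{w_0^{\theta(J)}}\theta(u) + (\text{lower } W^{\theta(J)}\text{-terms in } Y)$ of Lemma \ref{lem structure Y 2}(1). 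Since by Lemma \ref{lem structure rigid} we have $\Ext^i_{\mathbb{H}_J}(U, Y) = 0$, any cocycle can be modified by a coboundary so that its component landing in $Y$ is irrelevant to the cohomology class; thus modulo $\im d^*_{i-1}$ we may compute $\theta^*(\overline{\psi})$ using only the leading term $a\, t_{w_0^{\theta(J)}}\theta(u)$. Here $w_0^{\theta(J)} = w_0 w_{0,\theta(J)}$, and $\theta$ acting on $\wedge^i V_J^\bot$ is $v\mapsto -w_0(v) = w_{0,\theta(J)}(v)$ up to the sign $(-1)^i$ on a top-degree-in-$V_J^\bot$ form — more precisely $\theta$ acts on $V_J^\bot$ by an element of determinant detectable on $\wedge^i V_J^\bot$, and the scalar $a$ together with the contribution of $\theta_J^2 = \mathrm{Id}$ on $U$ forces the net effect to be multiplication by $(-1)^i$. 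The careful bookkeeping — tracking the scalar $a$, the sign $w_0\theta = -\mathrm{Id}_V$ acting on the $\wedge^i V_J^\bot$ factor giving $(-1)^i$, and confirming that all $Y$-valued correction terms are exact via $\Ext^i_{\mathbb{H}_J}(U,Y)=0$ and Theorem \ref{thm char ext 0} — is the delicate part, but conceptually it reduces to the identity $w_0\theta = -\mathrm{Id}_V$ restricted to $V_J^\bot$ combined with the rigidity constraint $\mathrm{card}\{w : w(J)=J\} = 1$ that pins down $a$ up to sign.
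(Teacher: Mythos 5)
Your proposal follows essentially the same route as the paper: part (1) via Frobenius reciprocity together with $\Ext_{\mathbb{H}_J}(U,Y)=0$ from Lemma \ref{lem structure rigid}, part (2) via the splitting $\wedge^\bullet V=\wedge^\bullet V_J\otimes\wedge^\bullet V_J^{\bot}$ and Theorem \ref{thm ext discrete} (exactly the computation in Proposition \ref{prop ext group rigid}), part (3) via Schur's lemma, and part (4) by evaluating $\theta^*(\overline{\psi})$ on the leading term $a\,t_{w_0^{\theta(J)}}\theta(u)$ of $\theta_J(u)$ from Lemma \ref{lem structure Y 2} and absorbing the $Y$-valued and $\wedge^{>0}V_J$-valued corrections into $\im d^*_{i-1}$. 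The plan is sound and matches the paper's argument in both structure and key inputs.
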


\begin{proof}

Express $X=U \oplus Y$ as in Lemma \ref{lem structure rigid}. Note that the natural inclusion $U \hookrightarrow \mathbb{H}_{\mathbb{H}_J}U \cong X$ coincides with the natural inclusion $U \hookrightarrow U \oplus Y \cong X$.

We consider (1). As $W_J$-representations, $\Res_{W_J}X= \mathbb{C}[W] \otimes_{\mathbb{C}[W_J]} \Res_{W_J} U$. (1) follows from the Frobenius reciprocity and the fact that $\Ext_{\mathbb{H}_J}(U, Y)=0$ in Lemma \ref{lem structure rigid}.

(2) is implicitly proved in Proposition \ref{prop ext group rigid}. Indeed the expression follows from the identifications in (\ref{eqn hom wj1}), (\ref{eqn hom wj2}) and  (\ref{eq ext ker im J}). Note that from (\ref{eqn hom wj1}) to (\ref{eqn hom wj2}), we drop $\wedge^i V_J^{\bot}$ because $W_J$ acts trivially on $V_J^{\bot}$. However $\theta$ does not act trivially on $V_J^{\bot}$ and so we recover $V_J^{\bot}$ for the computation of $\theta^*$-action here.

For (3), note that from the proof of Proposition of \ref{prop ext group rigid}, we also have
\[\Ext^i_{\mathbb{H}}(X, X)=\ker d_i^{U,*} \cong \Hom_{\mathbb{H}_J}(U, U)\otimes \wedge^i V_J^{\bot}.\] Then the result follows from the Schur's lemma.

We now prove (4). Pick an element $u \in U$. By Lemma \ref{lem structure Y 2}, $\theta_J(u)= at_{w_{0}^{\theta(J)}} \theta(u)+y$ for some non-zero scalar $a$ and for $y \in Y$. 

Without loss of generality, we pick $\psi$ as in (3). For $v_1 \wedge \ldots \wedge v_i \in \wedge^i V_J^{\bot}$,
\begin{eqnarray*}
& & \theta^*(\overline{\psi})(\theta_J(u) \otimes v_1 \wedge \ldots \wedge v_i) \\
&=& \theta^*(\overline{\psi}) ((at_{w_0^{\theta(J)}} \theta(u)+y)\otimes v_1 \wedge \ldots \wedge v_i)\\
& =& a\theta\overline{\psi} (t_{w_{0}^J} u\otimes \theta(v_1) \wedge \ldots \wedge \theta(v_i))+\theta^*(\overline{\psi}) (y\otimes v_1 \wedge \ldots \wedge v_i)  \\
&=&a t_{ w_{0}^{\theta(J)}}\theta\overline{\psi} (u\otimes (w_0^J)^{-1}\theta(v_1) \wedge \ldots \wedge (w_0^J)^{-1}\theta(v_i)) +\theta^*(\overline{\psi}) (y\otimes v_1 \wedge \ldots \wedge v_i) \\
&=& (-1)^ia t_{ w_{0, \theta(J)}}\lambda_{v_1 \wedge \ldots \wedge v_i}\theta(u)  +\theta^*(\overline{\psi}) (y\otimes v_1 \wedge \ldots \wedge v_i) \quad \mbox{ by (3) } \\
&=&  (-1)^i\lambda_{v_1 \wedge \ldots \wedge v_i} \theta_J(u)- (-1)^i\lambda _{v_1 \wedge \ldots \wedge v_i}y +\theta^*(\overline{\psi}) (y\otimes v_1 \wedge \ldots \wedge v_i) \\
&=&  (-1)^i \overline{\psi}(\theta_J(u) \otimes v_1 \wedge \ldots \wedge v_i)- (-1)^i\lambda_{v_1 \wedge \ldots \wedge v_i} y +\theta^*(\overline{\psi}) (y\otimes v_1 \wedge \ldots \wedge v_i) 
\end{eqnarray*}
We now define $\phi'(\theta_J(u) \otimes v_1 \wedge \ldots \wedge v_i)=- (-1)^i\lambda_{v_1 \wedge \ldots \wedge v_i} y +\theta^*(\overline{\psi}) (y\otimes v_1 \wedge \ldots \wedge v_i)$ if $v_1 \wedge \ldots \wedge v_i \in \wedge^i V_J^{\bot}$ and $\phi'(\theta_J(u) \otimes v_1 \wedge \ldots \wedge v_i)=0$ otherwise. Note that $\theta^*(\overline{\psi}) (y\otimes v_1 \wedge \ldots \wedge v_i)$ is in $Y$ by using Lemma \ref{lem structure Y 2} (2) and hence $\phi' \in \Hom_{W_J}(U\otimes \wedge^iV,  Y)$. Since $\Ext^i_{\mathbb{H}_J}(U, Y)=0$, this implies that $\phi' \in \im d_{i-1}^*$ by definition. Now 
$  \theta^*(\overline{\psi})-(-1)^i \overline{\psi}-\phi' $
is indeed a map lying in the subspace \[\bigoplus_{l=1}^{i}\Hom_{W_J}(U \otimes \wedge^{l}V_J \otimes \wedge^{i-l}V_J^{\bot}, U).\]
This is again in $\im d_{i-1}$ by following some computation in Proposition of \ref{prop ext group rigid} and we omit the detail.  
\end{proof}




\begin{theorem} \label{thm theta action}
Let $\mathbb{H}$ be the graded affine Hecke algebra associated to a crystallographic root system. Let $X=X(J, U)$ and $X'=X(J, U')$ for some $(J, U, 0), (J, U',0) \in \Xi_{\mathrm{rig}}$ (i.e. $X$ and $X'$ are rigid modules (Definition \ref{def rigid})). Then 
\[ \dim \Ext_{\mathbb{H}}^i(X,X')= \left\{ \begin{array}{cll}
&\left( \begin{array}{rl} r \\ i \end{array} \right) =\frac{r!}{(r-i)!i!} & \quad \mbox{ if $U \cong U'$ and $ i \leq r$} \\
&             0                         & \quad \mbox{ otherwise, }
\end{array} \right.
\]
where $r=\dim V-\dim V_J$. $\theta^*$ defined in (\ref{eqn theta* actin 1}) acts by the multiplication of a scalar of $(-1)^i$ on $\Ext^i(X, X')$. 
\end{theorem}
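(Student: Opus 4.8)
The plan is to combine the $\Ext$-group computation (which is essentially \prref{prop ext group rigid}, already reduced to the discrete-series case via \leref{lem structure rigid} and \thref{thm ext discrete}) with the sign analysis of the $\theta^*$-action carried out in \leref{lem theta action}. For the dimension count, I would reproduce the argument of \prref{prop ext group rigid} verbatim: by \leref{lem structure rigid} and Frobenius reciprocity, $\Ext^i_{\mathbb{H}}(X,X') \cong \Ext^i_{\mathbb{H}_J}(U,U')$; then decompose $V = V_J \oplus V_J^{\bot}$ and use that $W_J$ acts trivially on $\wedge^\bullet V_J^{\bot}$ to split the relevant $\Hom$-spaces as in \eqref{eqn hom wj1}--\eqref{eqn hom wj2}, so that the complex computing $\Ext_{\mathbb{H}}^i$ breaks into $a_{r,i,l}$ copies of the complex computing $\Ext_{\overline{\mathbb{H}}_J}^l(U,U')$. \thref{thm ext discrete} kills everything except $l=0$, $U \cong U'$, leaving $\bigoplus_{l} a_{r,i,0}\mathbb{C} = \binom{r}{i}\mathbb{C}$ when $i \le r$, where $r = \dim V - \dim V_J$. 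This reproves the displayed dimension formula.

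For the $\theta^*$-assertion, the key input is \leref{lem theta action}(2) and (4): under the identification $\Ext^i_{\mathbb{H}}(X,X) \cong \ker d_i^{U,*} \subset \Hom_{W_J}(U \otimes \wedge^i V_J^{\bot}, U)$ (realized inside $\Hom_W(X \otimes \wedge^i V, X)$ via $\psi \mapsto \overline{\psi}$), one has $\theta^*(\overline{\psi}) = (-1)^i \overline{\psi} + \phi$ with $\phi \in \im d_{i-1}^*$. Passing to the quotient $\Ext^i_{\mathbb{H}}(X,X) = \ker d_i^* / \im d_{i-1}^*$, the coboundary $\phi$ vanishes, so $\theta^*$ acts as multiplication by $(-1)^i$ on $\Ext^i_{\mathbb{H}}(X,X)$. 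For the mixed case $X \ne X'$ the statement is about a scalar acting on $\Ext^i(X,X')$: since by the dimension count this group is nonzero only when $U \cong U'$, I would reduce to the $U = U'$ case; the map $\theta^*$ of \eqref{eqn theta* actin 1} is only literally defined when source and target carry a $\theta$-action, which for rigid modules holds because $X(J,U)^\theta \cong X(J,U)$ by \leref{lem theta defining action} (using \prref{prop Hermitian form} and \coref{cor hermitian criteria}, which apply since rigid modules satisfy condition (1) of \prref{prop herm} by \reref{rmk classify rigid}). Choosing compatible isomorphisms $X \cong X^\theta$ and $X' \cong (X')^\theta$ identifies $\Ext^i(X,X')$ with $\Ext^i(X,X)$ when $U \cong U'$, and the $(-1)^i$ scalar transports across.

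The main obstacle is keeping the bookkeeping in \leref{lem theta action}(4) honest: one must verify that the ``lower-order'' correction term $\theta^*(\overline{\psi}) - (-1)^i\overline{\psi} - \phi'$, which lives in $\bigoplus_{l \ge 1} \Hom_{W_J}(U \otimes \wedge^l V_J \otimes \wedge^{i-l} V_J^{\bot}, U)$, is genuinely a coboundary — this uses \leref{lem structure Y 2} to control how $\theta$ permutes the coset representatives $W^{\theta(J)}$ and how the complementary summand $Y$ behaves, together with the vanishing $\Ext_{\mathbb{H}_J}^\bullet(U,Y) = 0$ from \leref{lem structure rigid}. I expect the genuinely delicate point is the interplay between the three choices one has to fix simultaneously — the involution $\theta_J$ on $U$, the identification $X \cong \mathbb{H}\otimes_{\mathbb{H}_J} U$, and the isomorphism $X^\theta \cong X$ — and checking that the scalar $a$ appearing in \leref{lem structure Y 2}(1) drops out of the final answer, which it does because only the sign $(-1)^i$, not $a$, survives to the $\Ext$ quotient. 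Everything else is a routine though somewhat lengthy computation with the Koszul-type differential $d_i^*$ of \eqref{eqn di* actin 2}.
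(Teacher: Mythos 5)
Your proposal is correct and follows essentially the same route as the paper: the dimension formula is exactly Proposition \ref{prop ext group rigid} (via Lemma \ref{lem structure rigid}, Frobenius reciprocity, the splitting $V=V_J\oplus V_J^{\bot}$ and Theorem \ref{thm ext discrete}), and the sign of $\theta^*$ is read off from Lemma \ref{lem theta action}(1),(2),(4) by noting that the correction term lies in $\im d_{i-1}^*$ and hence dies in the $\Ext$ quotient. Your extra care about the mixed case $U\cong U'$, $X\neq X'$, and about the choices of $\theta_J$ and the isomorphism $X^\theta\cong X$ (the scalar $a$ of Lemma \ref{lem structure Y 2} dropping out), is a reasonable elaboration of what the paper compresses into ``we only have to consider $U'=U$.''
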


\begin{proof}
The first assertion is Proposition \ref{prop ext group rigid}. For the second assertion, we only have to consider $U'=U$ in view of Proposition \ref{prop ext group rigid}. 
 With Lemma \ref{lem theta action} (1), we rewrite
\[ \Ext_{\mathbb{H}}^i(X, X) = \ker(d_i^{U,*}: \Hom_{W_J}(U \otimes \wedge^i V_J^{\bot}, U) \rightarrow \Hom_{W_J}(U \otimes \wedge^{i+1} V, U)  ),
\]  
 Now using Lemma \ref{lem theta action} (1) and (4), we have that $\theta^*$ acts by $(-1)^i$ on $\Ext_{\mathbb{H}}^i(X,X)$.

\end{proof}

\begin{remark}
The author would like to thank Maarten Solleveld for pointing out \cite[Theorem 5.2]{OS2}.

The  $\Ext$-groups for arbitrary tempered modules can be computed from a simple formula in \cite[Theorem 5.2]{OS2}. In particular, if $X=X(J,U)$ for some $(J, U, 0) \in \Xi$ and $X$ is irreducible, then $\Ext_{\mathbb{H}}^i(X, X) \cong \wedge^i V_J^{\bot}$. However, it seems not to be direct to know the $\theta^*$-action on the $\Ext$-groups from \cite{OS2}. 
\end{remark}

As a consequence of Theorem \ref{thm theta action} and Corollary \ref{cor extended}, we have the following result.

\begin{corollary} \label{cor induced twisted EP}Let $X=X(J,U)$ be a rigid module of discrete series. Set $r =\dim V_J^{\bot}$. Then
\begin{enumerate}
\item  $\mathrm{EP}^{\theta}_{\mathbb{H}}(X, X) =2^r \neq 0$. 
\item $\dim \Ext^i_{\mathbb{H} \rtimes \langle \theta \rangle}(X, X)= \left( \begin{array}{rl} r \\ i \end{array} \right)$ for all even $i$ with $i \leq r$ and $\dim \Ext^i_{\mathbb{H} \rtimes \langle \theta \rangle}(X, X)=0$ otherwise. 
\end{enumerate}
\end{corollary}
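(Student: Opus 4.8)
The plan is to deduce both assertions directly from Theorem \ref{thm theta action} together with Corollary \ref{cor extended}, so very little new work is needed. First I would record the input from Theorem \ref{thm theta action}: for the rigid module $X = X(J,U)$ with $r = \dim V_J^{\bot} = \dim V - \dim V_J$, one has $\dim\Ext^i_{\mathbb{H}}(X,X) = \binom{r}{i}$ for $0 \le i \le r$ and $0$ otherwise, and $\theta^*$ acts on $\Ext^i_{\mathbb{H}}(X,X)$ by the scalar $(-1)^i$. Hence $\mathrm{trace}(\theta^* : \Ext^i_{\mathbb{H}}(X,X) \to \Ext^i_{\mathbb{H}}(X,X)) = (-1)^i\binom{r}{i}$.

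For part (1), I would plug this into the definition of the twisted Euler-Poincar\'e pairing:
\[
\mathrm{EP}^{\theta}_{\mathbb{H}}(X,X) = \sum_i (-1)^i\,\mathrm{trace}(\theta^* : \Ext^i_{\mathbb{H}}(X,X) \to \Ext^i_{\mathbb{H}}(X,X)) = \sum_{i=0}^r (-1)^i(-1)^i\binom{r}{i} = \sum_{i=0}^r \binom{r}{i} = 2^r,
\]
which is nonzero. (Note $X$ is indeed an $\mathbb{H}\rtimes\langle\theta\rangle$-module by the Corollary following Proposition \ref{prop Hermitian form} together with Remark \ref{rmk classify rigid}, which says rigid modules satisfy condition (1) of Proposition \ref{prop herm}; so $X^{\theta}\cong X$ and the $\theta^*$-action and $\mathrm{EP}^{\theta}_{\mathbb{H}}$ are defined.)

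For part (2), I would apply the first displayed formula of Corollary \ref{cor extended} with $Y = X$:
\[
\dim\Ext^i_{\mathbb{H}\rtimes\langle\theta\rangle}(X,X) = \tfrac12\dim\Ext^i_{\mathbb{H}}(X,X) + \tfrac12\,\mathrm{trace}(\theta^* : \Ext^i_{\mathbb{H}}(X,X)\to\Ext^i_{\mathbb{H}}(X,X)) = \tfrac12\binom{r}{i} + \tfrac12(-1)^i\binom{r}{i}.
\]
When $i$ is even and $i \le r$ this equals $\binom{r}{i}$; when $i$ is odd (or $i > r$) it equals $0$. This is exactly the claimed statement, so the proof is complete.

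There is no real obstacle here: the corollary is a bookkeeping consequence of the two cited results. The only point requiring a line of justification is that $X$ genuinely carries an $\mathbb{H}\rtimes\langle\theta\rangle$-structure so that the twisted quantities make sense, and this I would handle by the remark above invoking Remark \ref{rmk classify rigid} and the Corollary to Proposition \ref{prop Hermitian form}.
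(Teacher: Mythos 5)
Your proposal is correct and is exactly the argument the paper intends: the corollary is stated as "a consequence of Theorem \ref{thm theta action} and Corollary \ref{cor extended}", and your computation (trace of $\theta^*$ on $\Ext^i_{\mathbb{H}}(X,X)$ equals $(-1)^i\binom{r}{i}$, summed with signs for (1), combined with the averaging formula for (2)) is precisely that deduction. The extra remark verifying that $X$ carries an $\mathbb{H}\rtimes\langle\theta\rangle$-structure is a sensible, correct addition.
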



There is another application of the twisted Euler-Poincar\'e pairing for the deformation or complementary series of rigid modules. 

\begin{corollary} \label{cor deform} (c.f  \cite[Remark 4.6]{BCT})
For each $(J, U, \nu) \in \Xi$, set $X_{\nu}=X(J, U, \nu)$. Assume $X_0$ satisfies one of the three conditions in Proposition \ref{prop herm}.
\begin{enumerate}
\item There exists a non-zero $\nu \in (V_J^{\vee})^{\bot}$ such that $\Res_WX_{\nu} \cong \Res_WX_{\nu}^{\theta}$ only if $X_0$ is a rigid module.
\item There exists a non-zero $\nu \in (V_J^{\vee})^{\bot} \cap V^{\vee}_0$ such that $X_{\nu}$ is $*$-Hermitian only if $X_0$ is a rigid module.
\end{enumerate}
\end{corollary}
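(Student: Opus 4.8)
The plan is to prove the contrapositive of each part: if $X_0=X(J,U)$ is \emph{not} rigid (Definition \ref{def rigid}), then no nonzero $\nu$ of the stated type can make $X_\nu$ self-$\theta$-dual (resp. $*$-Hermitian). The first step is the identification $X(J,U,\nu)^{\theta}\cong X(\theta(J),U^{\theta},\theta(\nu))$, proved exactly as Lemma \ref{lem theta defining action}: $\theta$ carries $\mathbb{H}_J$ onto $\mathbb{H}_{\theta(J)}$, $\overline{\mathbb{H}}_J$ onto $\overline{\mathbb{H}}_{\theta(J)}$, and the character $\mathbb{C}_{\nu}$ of $S(V_J^{\bot})$ onto the character $\mathbb{C}_{\theta(\nu)}$ of $S(V_{\theta(J)}^{\bot})$, with $\theta(\nu)=-w_0(\nu)\in(V_{\theta(J)}^{\vee})^{\bot}$. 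Note that at the level of $W$-modules there is nothing to check, since $\Res_W X_{\nu}\cong\Ind_{W_J}^{W}\Res_{W_J}U$ is independent of $\nu$ and $\theta$ is inner on $W$, so the argument must be run at the level of $\mathbb{H}$-modules and central characters: by Frobenius reciprocity and Theorem \ref{thm char ext 0}, an isomorphism $X_{\nu}\cong X_{\nu}^{\theta}$ places the central character $\theta(c_U+\nu)=-w_0(c_U+\nu)$ of $U^{\theta}\otimes\mathbb{C}_{\theta(\nu)}$ among those of the $\mathbb{H}_{\theta(J)}$-subquotients of $\Res_{\mathbb{H}_{\theta(J)}}X_{\nu}$, which by Proposition \ref{prop weights} lie in $\{w(c_U+\nu):w\in W^J\}$. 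Carrying out this weight-space analysis in the spirit of Lemma \ref{lem rigid irr} and Lemma \ref{lem structure rigid} should force an element $w\in W$ with $w(J)=\theta(J)$, a $w$-equivariant isomorphism $U\cong U^{\theta}$, and $w(\nu)=\theta(\nu)$.

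Next I would fix a basepoint. The hypothesis that $X_0$ satisfies one of the three conditions of Proposition \ref{prop herm} gives, via the corollary immediately following Proposition \ref{prop Hermitian form}, that $X_0\cong X_0^{\theta}$; applying the previous step with $\nu=0$ yields $w_*\in W$ with $w_*(J)=\theta(J)$ and $U\cong{}^{w_*}(U^{\theta})$. Thus $w$ and $w_*$ lie in a single coset of $\Stab=\{w'\in W:w'(J)=J,\ {}^{w'}U\cong U\}$, so $w_1:=w_*^{-1}w\in\Stab$, and the relation $w(\nu)=\theta(\nu)$ becomes a constraint of the form $w_1(\nu)=\sigma(\nu)$ for a fixed linear automorphism $\sigma$ of $V_J^{\bot}$ built from $w_*$ and $\theta$. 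If $J\in\mathcal J_{\mathrm{rig}}$ then $\{w':w'(J)=J\}=\{1\}$ and there is nothing to do; if $J\notin\mathcal J_{\mathrm{rig}}$ this set is nontrivial, and since any element of $W$ fixing $V_J^{\bot}$ pointwise lies in $W_J$, while the only element of $W_J$ stabilizing $J$ is the identity, every nontrivial element of $\{w':w'(J)=J\}$ acts by a non-identity linear map on $V_J^{\bot}$. I would then run the explicit list of Remark \ref{rmk classify rigid} — the only non-rigid, $\theta$-relevant types being $A_n$, $D_n$ with $n$ odd, and $E_6$ — to determine, for each non-rigid $J$, which elements of $\{w':w'(J)=J\}$ also fix $U$ and how they act on $V_J^{\bot}$, and check that $w_1(\nu)=\sigma(\nu)$ then has no solution with $\nu\ne 0$; Lemma \ref{lem theta unstable} is what rules out the alternative in which $w(\nu)=\theta(\nu)$ is achieved with $\nu\ne 0$ by absorbing $\nu$ into a proper $\theta$-stable parabolic direction. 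For part (2) I would reduce to part (1): for $\nu\in V_0^{\vee}$ the central character of $X_{\nu}$ is the $W$-orbit of $c_U+\nu$ with $c_U$ real, hence real, and it is nonzero whenever that of $X_0$ is because $c_U\in V_J^{\vee}$ while $\nu\in(V_J^{\vee})^{\bot}$; so the hypotheses of Corollary \ref{cor hermitian criteria} pass from $X_0$ to $X_{\nu}$, whence $X_{\nu}$ is $*$-Hermitian iff $X_{\nu}\cong X_{\nu}^{\theta}$, and part (1) finishes.

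The step I expect to be the main obstacle is the translation from an abstract isomorphism $X_{\nu}\cong X_{\nu}^{\theta}$ to honest $W$-conjugacy of the induction data $(J,U,\nu)$, and then the extraction, from the failure of rigidity, of exactly the stabilizing element of $\{w':w'(J)=J\}$ together with the verification that it cannot carry (up to the twist $\sigma$) a nonzero element of $V_J^{\bot}$ to itself. The $W$-type data is too coarse to do this directly, so one must argue with weights and central characters (Proposition \ref{prop weights}, Theorem \ref{thm char ext 0}) and with the $\mathbb{H}_J$-module decompositions of Lemma \ref{lem structure rigid} and Lemma \ref{lem structure Y 2}; the combinatorial heart — which non-rigid $J$ admit a stabilizing normalizer element and how it acts on $V_J^{\bot}$ — is the case analysis packaged in Remark \ref{rmk classify rigid}, and this is where the bulk of the work lies.
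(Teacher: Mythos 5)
You have the implication pointed in the wrong direction, and as a result the claim you set out to prove is false. The paper's own proof runs in the opposite sense: if a nonzero $\nu\in(V_J^{\vee})^{\bot}$ exists with $X_{\nu}\cong X_{\nu}^{\theta}$, then $X_0$ and $X_{\nu}$ have distinct central characters, so all $\Ext^i_{\mathbb{H}}(X_0,X_{\nu})$ vanish (Theorem \ref{thm char ext 0}) and $\mathrm{EP}^{\theta}_{\mathbb{H}}(X_0,X_{\nu})=0$; since $\Res_W X_{\nu}\cong\Res_W X_0$ and $\mathrm{EP}^{\theta}_{\mathbb{H}}$ depends only on the (twisted) $W$-module structure (Theorem \ref{thm twisted ext}), this forces $\mathrm{EP}^{\theta}_{\mathbb{H}}(X_0,X_0)=0$, contradicting $\mathrm{EP}^{\theta}_{\mathbb{H}}(X_0,X_0)=2^{r}\neq 0$ for rigid $X_0$ (Corollary \ref{cor induced twisted EP}). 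So the conclusion actually established --- and the one consistent with the citation of \cite[Remark 4.6]{BCT}, where a nonvanishing Dirac index obstructs deformation --- is that $X_0$ is \emph{not} rigid; the printed statement appears to have dropped a ``not''. Your contrapositive, ``$X_0$ not rigid $\Rightarrow$ no nonzero $\nu$'', is refuted by the example at the end of Section \ref{s theta action}: $J=\emptyset$, $U$ trivial in type $A_2$. There $J\notin\mathcal J_{\mathrm{rig}}$, yet for $\nu=(a,0,-a)$ with small real $a\neq 0$ the minimal principal series $X_{\nu}$ is irreducible with $\theta(\nu)=-w_0(\nu)=\nu$, hence $X_{\nu}\cong X_{\nu}^{\theta}$ and, by Corollary \ref{cor hermitian criteria}, $X_{\nu}$ is $*$-Hermitian --- so both (1) and (2) of your contrapositive fail.

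Beyond the direction issue, the machinery you propose (weight-space analysis of an isomorphism $X_{\nu}\cong X_{\nu}^{\theta}$, extraction of a normalizing element $w$ with $w(J)=\theta(J)$, and a case-by-case analysis over Remark \ref{rmk classify rigid}) is not what the corollary rests on; the entire content is the short Euler--Poincar\'e argument above, whose point is precisely that one never needs to understand the isomorphism $X_{\nu}\cong X_{\nu}^{\theta}$ explicitly, only that it makes $\mathrm{EP}^{\theta}_{\mathbb{H}}(X_0,X_{\nu})$ simultaneously zero (by central characters) and equal to $\mathrm{EP}^{\theta}_{\mathbb{H}}(X_0,X_0)$ (by $W$-structure). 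Your reduction of (2) to (1) via Proposition \ref{prop herm} and Corollary \ref{cor hermitian criteria} does match the paper, and your observation that the hypothesis of (1), read literally as an isomorphism of restrictions to $W$, is automatic (since $\theta$ is inner on $W$) identifies a genuine imprecision in the statement --- the intended hypothesis is an isomorphism of $\mathbb{H}$-modules --- but neither point rescues the reversed implication.
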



\begin{proof}
Suppose $\Res_WX_{\nu} \cong \Res_WX_{\nu}^{\theta}$ for some non-zero $\nu \in (V_J^{\vee})^{\bot}$. Then by considering the central characters of the modules and using Theorem \ref{thm char ext 0}, $\mathrm{EP}^{\theta}_{\mathbb{H}}(X_0, X_{\nu})=0$. Then by Theorem \ref{thm twisted ext}, $\mathrm{EP}^{\theta}_{\mathbb{H}}(X_0, X_0)=0$. Hence, $X_0$ is not a rigid module by Corollary \ref{cor induced twisted EP}. This proves (1). For (2), it follows from (1) and Proposition \ref{prop herm}.


\end{proof}

\begin{example}
The result for Theorem \ref{thm theta action} is not true for other parabolically induced modules in general. For instance, consider $\mathbb{H}$ of type $A_2$. Take $J=\emptyset$. Let $U$ be the one-dimensional trivial representation of $\overline{\mathbb{H}}_{\emptyset}=\mathbb{C}$ and let $X=X(\emptyset, U)$. Then $X(\emptyset, U)$ is an irreducible parabolically induced module of $\mathbb{H}$. Direct computation using Frobenius reciprocity shows 
\[ \dim \Ext_{\mathbb{H}}^i(X,X)= \left\{ \begin{array}{rll}
&1 & \quad \mbox{ if $i=0,2$} \\
& 2                               & \quad \mbox{ if $i=1$ } \\
& 0          & \quad \mbox{ if $i \geq 3$}
\end{array} \right.
\]
Moreover, $\theta^*$ acts as an identity on $\Ext_{\mathbb{H}}^0(X,X)$, acts as the diagonal matrix $\diag(1,-1)$ on $\Ext_{\mathbb{H}}^1(X,X)$ and acts as $-1$ on $\Ext_{\mathbb{H}}^2(X,X)$.
\end{example}





\section{Solvable tempered modules and twisted elliptic spaces} \label{s solvable induced}

The goal of this section is to put or recollect some results in \cite{Ci}, \cite{CH0}, \cite{CH}, \cite{Re} and \cite{OS2} in the framework of twisted elliptic spaces.  

\subsection{Kazhdan-Lusztig model}

In this section, let $\mathbb{H}$ be the graded affine Hecke algebras associated to a crystallographic root datum $(R, V, R^{\vee}, V^{\vee})$ and an equal parameter function $k\equiv 1$. We also assume $R$ spans $V$. Let $\mathfrak{g}$ be the Lie algebra of the corresponding type. Let $G$ be the simply-connected Lie group associated to $\mathfrak{g}$.  According to the Kazhdan-Lusztig parametrization, there is a one-to-one correspondence between the set of irreducible tempered modules $X(e, \phi)$ with real central characters and the $G$-orbits of the set 
\[  \left\{ (e, \phi): e\in \mathcal N, \phi \in \widehat{A(e)}_0 \right\}, \]
where $\mathcal N$ is the set of nilpotent elements in $\mathfrak{g}$, $A(e)$ is the component group of $e$ and $\widehat{A(e)}_0$ is the set of irreducible representation of the component group $A(e)$ that appears in the Springer correspondence.

We define $\mathcal N_{\mathrm{sol}}$ to be the set of nilpotent elements with a solvable centralizer in $\mathfrak{g}$. The interest for the set $\mathcal N_{\mathrm{sol}}$ can be found in \cite{Ci}, \cite{BCT}, \cite{Ch1} and \cite{CH}. We shall use the Bala-Carter symbols for the nilpotent orbits.

\begin{definition}
We say an irreducible tempered module $X(e, \phi)$ (with a real central character) is solvable if $e \in \mathcal N_{\mathrm{sol}}$. 
\end{definition}

 We need to use the following fact in the Kazhdan-Lusztig model \cite[6.2]{KL} (also see \cite[6.1a]{Re}): 

\begin{lemma} \label{lem induce kl model}
 Let $e$ be a nilpotent element and let $L$ be a Levi subgroup of $G$ containing $e$. Let $J$ be the subset of $\Delta$ associated to $L$ and let $A_L(e)$ be the component group of $e$ in $L$. Then for an $A_L(e)$-representation $\phi$, denote $U_J(e, \phi)$ the tempered $\overline{\mathbb{H}}_J$-module associated to the pair $(e, \phi)$ in the Kazhdan-Lusztig model. Let $X_J(e, \phi)= U_J(e, \phi) \otimes \mathbb{C}_0$ be an $\mathbb{H}_J \cong \overline{\mathbb{H}}_J \otimes S(V_J^{\bot})$-module. Then
\[   \Ind_{\mathbb{H}_J}^{\mathbb{H}} X_J(e, \phi) =X(e, \Ind_{A_L(e)}^{A(e)} \phi) .\]
\end{lemma}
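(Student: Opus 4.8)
The assertion is the compatibility of the Kazhdan--Lusztig parametrisation with parabolic induction, and the plan is to deduce it from the Springer-fibre realisation of standard and tempered modules together with induction-in-stages on the geometric side; we work throughout in the fibre over the single central character $s_{e}$ attached to $e$. First I would recall that, after this specialisation, the standard $\mathbb{H}$-module attached to $e$ is $M(e)=H_{*}(\mathcal B_{e})$, the total Borel--Moore homology of the Springer fibre $\mathcal B_{e}=\{\mathfrak b\in\mathcal B : e\in\mathfrak b\}$, carrying commuting actions of $\mathbb{H}$ and of the component group $A(e)$, with
\[
 M(e)\ \cong\ \bigoplus_{\psi\in\widehat{A(e)}_{0}} X(e,\psi)\otimes\psi
\]
as $\mathbb{H}\times A(e)$-modules (this being the defining property of $X(e,-)$; see \cite[\S6]{KL}, and \cite{Lu}, \cite[6.1a]{Re} for the passage to the graded Hecke algebra). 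Inside $L$ the same holds for $\overline{\mathbb{H}}_{J}$: with Springer fibre $\mathcal B^{L}_{e}$ and component group $A_{L}(e)\hookrightarrow A(e)$ one gets $M_{J}(e):=H_{*}(\mathcal B^{L}_{e})\cong\bigoplus_{\phi} U_{J}(e,\phi)\otimes\phi$, and $X_{J}(e,\phi)=U_{J}(e,\phi)\otimes\mathbb{C}_{0}$ is precisely the $\phi$-isotypic piece $\Hom_{A_{L}(e)}(\phi,M_{J}(e))$, now viewed as an $\mathbb{H}_{J}=\overline{\mathbb{H}}_{J}\otimes S(V_{J}^{\bot})$-module with $V_{J}^{\bot}$ acting by $0$.

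The heart of the argument is the geometric induction identity
\[
 \mathbb{H}\otimes_{\mathbb{H}_{J}} M_{J}(e)\ \cong\ M(e)
\]
as $\mathbb{H}$-modules, compatibly with the component-group actions along $A_{L}(e)\hookrightarrow A(e)$. I would obtain this from the smooth fibration $\pi\colon\mathcal B\to\mathcal P_{J}$ of the full flag variety over the type-$J$ partial flag variety, whose fibre over $\mathfrak p$ is the flag variety of a Levi $L_{\mathfrak p}\cong L$: restricting $\pi$ over $(\mathcal P_{J})_{e}=\{\mathfrak p : e\in\mathfrak p\}$ realises $\mathcal B_{e}$ as a fibration whose fibres are the Springer fibres of the $\mathfrak l_{\mathfrak p}$-component of $e$ in $L_{\mathfrak p}$, which over the component of $(\mathcal P_{J})_{e}$ relevant to $e$ are copies of $\mathcal B^{L}_{e}$; a cellular-fibration (Leray) argument then gives $H_{*}(\mathcal B_{e})\cong H_{*}\big((\mathcal P_{J})_{e}\big)\otimes H_{*}(\mathcal B^{L}_{e})$ as graded vector spaces, with $H_{*}((\mathcal P_{J})_{e})$ carrying the submodule $\mathbb{C}[W]\otimes_{\mathbb{C}[W_{J}]}(-)$. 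Matching Lusztig's formulas for the $\mathbb{H}$-action on $H_{*}(\mathcal B_{e})$ against those defining $\mathbb{H}\otimes_{\mathbb{H}_{J}}(-)$ yields the displayed isomorphism; this is the induction theorem \cite[6.2]{KL}, transported to the graded setting as in \cite[6.1a]{Re}. Since $A(e)$ permutes the fibres of $\pi|_{\mathcal B_{e}}$ while its subgroup $A_{L}(e)$ acts on a chosen fibre $\mathcal B^{L}_{e}$, the identification is $A_{L}(e)$-equivariant in the required sense, and in particular $M(e)$ is the $A(e)$-induction of the $A_{L}(e)$-module $M_{J}(e)$; a byproduct is that $\Ind_{A_{L}(e)}^{A(e)}\phi$ involves only members of $\widehat{A(e)}_{0}$, so the right-hand side of the lemma is well-defined.

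Finally I would extract isotypic components: applying $\mathbb{H}\otimes_{\mathbb{H}_{J}}(-)$ to $X_{J}(e,\phi)=\Hom_{A_{L}(e)}(\phi,M_{J}(e))$ and using the two previous steps,
\[
 \Ind_{\mathbb{H}_{J}}^{\mathbb{H}} X_{J}(e,\phi)\ \cong\ \Hom_{A_{L}(e)}\big(\phi,\ M(e)\big)\ \cong\ \bigoplus_{\psi\in\widehat{A(e)}_{0}} X(e,\psi)\otimes\Hom_{A_{L}(e)}\big(\phi,\ \Res^{A(e)}_{A_{L}(e)}\psi\big),
\]
and Frobenius reciprocity $\Hom_{A_{L}(e)}(\phi,\Res\,\psi)\cong\Hom_{A(e)}(\Ind_{A_{L}(e)}^{A(e)}\phi,\psi)$ identifies this with $X\big(e,\Ind_{A_{L}(e)}^{A(e)}\phi\big)$ by the very definition of $X(e,-)$ on a (possibly reducible) $A(e)$-representation. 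One then checks that both sides genuinely consist of tempered modules with central character $s_{e}$: temperedness of $\Ind_{\mathbb{H}_{J}}^{\mathbb{H}}$ of a tempered module with $\nu=0$ is the standard weight estimate via Proposition \ref{prop weights} (for $w\in W^{J}$, $w^{-1}$ carries the relevant dominant cone into the half-spaces defining temperedness), the central character is the one read off from the geometric identification, and Theorem \ref{thm char ext 0} excludes any spurious constituent. I expect the second step --- the $A(e)$-equivariant geometric induction identity $\mathbb{H}\otimes_{\mathbb{H}_{J}}M_{J}(e)\cong M(e)$ --- to be the main obstacle, since it is the only place where one genuinely invokes Kazhdan--Lusztig geometry rather than formal manipulations; my intention is to quote \cite[6.2]{KL} (in its graded Hecke algebra incarnation, following \cite{Lu} and \cite[6.1a]{Re}) for the $\mathbb{H}$-module statement and to supply only the short verification that the $A(e)$-equivariant structure restricts correctly along $A_{L}(e)\hookrightarrow A(e)$, after which the Clifford theory and the central-character and temperedness bookkeeping are routine.
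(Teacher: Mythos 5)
The paper offers no proof of this lemma at all: it is stated as a known fact and attributed to \cite[6.2]{KL} and \cite[6.1a]{Re}, exactly the references you ultimately lean on. So your proposal is not so much an alternative route as an unpacking of the citation: the realisation of the standard module as $H_*(\mathcal B_e)$ with commuting $\mathbb H\times A(e)$-actions, the induction theorem identifying $\mathbb H\otimes_{\mathbb H_J}M_J(e)$ with $M(e)$ equivariantly for $A_L(e)\hookrightarrow A(e)$, and then Clifford theory plus Frobenius reciprocity to extract isotypic components. That last bookkeeping step is correct and matches the paper's own convention extending $X(e,\phi)$ to reducible $\phi$ by $X(e,\phi)=\bigoplus_{\phi'}\dim\Hom_{A(e)}(\phi',\phi)\,X(e,\phi')$, and the temperedness/central-character checks are routine as you say.

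The one step I would push back on is your justification of the geometric induction identity. The map $\pi\colon\mathcal B_e\to(\mathcal P_J)_e$ is \emph{not} a locally trivial fibration with constant fibre $\mathcal B^L_e$: the fibre over $\mathfrak p$ is the Springer fibre of the image of $e$ in the Levi quotient $\mathfrak l_{\mathfrak p}$, and the orbit of that image varies over the strata and components of $(\mathcal P_J)_e$ (already for $G=SL_3$, $L$ of type $A_1$ and $e$ subregular, the fibres jump between a point and a $\mathbb P^1$). Consequently the asserted tensor-product decomposition $H_*(\mathcal B_e)\cong H_*((\mathcal P_J)_e)\otimes H_*(\mathcal B^L_e)$ is false in general, and a ``cellular fibration / Leray'' argument does not deliver the induction isomorphism; the actual proof (Lusztig's induction theorem, or \cite[6.2]{KL}) requires a genuinely different mechanism. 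Since you explicitly intend to quote \cite[6.2]{KL} for the $\mathbb H$-module statement and only verify the $A_L(e)$-equivariance yourself, the overall logic survives, but you should delete or replace the Leray heuristic rather than present it as the proof of the key identity.
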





\subsection{Dimension of twisted elliptic spaces}
For Theorem \ref{thm dimension} below, we apply the Kazhdan-Lusztig model to study the twisted elliptic spaces for non-trivial $\theta$. Anyway, we shall use \cite[Theorem 6.4]{OS2} when $\theta$ is trivial and also apply some computations in \cite{Ci}. Perhaps one may also apply \cite[Theorem 6.4]{OS2} or its line of argument to obtain Theorem \ref{thm dimension} below in general. 

\begin{theorem} \label{thm dimension}
Let $\mathbb{H}$ be a graded affine Hecke algebra associated to a crystallographic root system and an arbitrary parameter function $k$. The dimension of $\mathrm{Ell}_{\mathbb{H}}^{\theta}$ is equal to the number of $\theta$-twisted elliptic conjugacy classes.
\end{theorem}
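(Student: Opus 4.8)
The plan is to reduce the computation of $\dim \mathrm{Ell}_{\mathbb{H}}^{\theta}$ to the Weyl group side, where the answer is already known. By Corollary \ref{cor semipositive ep} and the construction of $\mathrm{Ell}_{\mathbb{H}}^{\theta}$, the dimension of $\mathrm{Ell}_{\mathbb{H}}^{\theta}$ equals the rank of the Hermitian form $\mathrm{EP}_{\mathbb{H}}^{\theta}$ on $K_{\mathbb{H}}^{\theta}$. By Theorem \ref{thm twisted ext}, $\mathrm{EP}_{\mathbb{H}}^{\theta}$ on any two finite-dimensional $\mathbb{H} \rtimes \langle \theta \rangle$-modules depends only on their restrictions to $W \rtimes \langle \theta \rangle$, so the map sending an $\mathbb{H} \rtimes \langle \theta \rangle$-module to $\Res_{W \rtimes \langle \theta \rangle}$ intertwines $\mathrm{EP}_{\mathbb{H}}^{\theta}$ with $\langle \ , \ \rangle_W^{\theta-\mathrm{ellip},V}$. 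Since the number of $\theta$-twisted elliptic conjugacy classes equals $\dim \overline{R}_W/\mathrm{rad}\langle \ , \ \rangle_W^{\theta-\mathrm{ellip},V}$ by Proposition \ref{prop radical desc}(2), it suffices to prove that the restriction map $K_{\mathbb{H}}^{\theta} \to \overline{R}_W$ has image spanning $\overline{R}_W$ modulo $\mathrm{rad}\langle \ , \ \rangle_W^{\theta-\mathrm{ellip},V}$; equivalently, that the $W \rtimes \langle \theta \rangle$-characters of $\mathbb{H} \rtimes \langle \theta \rangle$-modules separate the elliptic $\theta$-twisted conjugacy classes.

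The first key step is therefore a surjectivity (or density) statement: every class function on elliptic $\theta$-twisted conjugacy classes is, up to the radical, realized by $\Res_{W \rtimes \langle \theta \rangle}$ of a virtual $\mathbb{H} \rtimes \langle \theta \rangle$-module with real central character. By Proposition \ref{prop radical desc}(1), the radical on the Weyl group side is the span of modules induced from proper $\theta$-stable parabolics $W_J$, $J \in \mathcal J^{\theta}$. On the Hecke side, parabolic induction $\Ind_{\mathbb{H}_J}^{\mathbb{H}}$ is compatible with $\Ind_{W_J}^{W}$ on restrictions, so one can argue by induction on the rank: any virtual $\mathbb{H}\rtimes\langle\theta\rangle$-module is, modulo the image of parabolic induction from proper $\theta$-stable parabolics, determined by its elliptic part, and conversely the tempered modules (together with the classification of which irreducible $\mathbb{H}$-modules satisfy $X^{\theta}\cong X$, available through Corollary \ref{cor hermitian criteria} and Proposition \ref{prop herm}) provide enough classes. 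Here I would invoke the Kazhdan-Lusztig model of Section \ref{s solvable induced}, together with \cite[Theorem 6.4]{OS2} and the computations in \cite{Ci} for the untwisted case $\theta=\Id$, to count independent elliptic tempered modules; for nontrivial $\theta$ one uses Lemma \ref{lem induce kl model} to match $\Ind_{A_L(e)}^{A(e)}$ with parabolic induction and reduce the twisted count to the combinatorics of $\theta$-twisted conjugacy classes via the $\theta$-twisted elliptic pairing on $\widetilde W$-representations (Proposition \ref{prop ep dirac index}), which already links $\mathrm{EP}_{\mathbb{H}}^{\theta}$ to the spin/Dirac picture studied in \cite{CH}.

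The second step is a matching-of-dimensions argument: having shown the restriction map lands in $\overline{R}_W$ and that $\mathrm{EP}_{\mathbb{H}}^{\theta}$ is the pullback of $\langle \ , \ \rangle_W^{\theta-\mathrm{ellip},V}$, one concludes $\dim \mathrm{Ell}_{\mathbb{H}}^{\theta} \le \dim\bigl(\overline{R}_W/\mathrm{rad}\bigr)$ automatically, and the reverse inequality follows from exhibiting enough elliptic tempered modules whose restrictions are linearly independent modulo the radical. For $\theta=\Id$ this reverse inequality is exactly \cite[Theorem 6.4]{OS2}; for general $\theta$ it is where the solvable tempered modules of Section \ref{s solvable induced} and the rigid modules (Corollary \ref{cor induced twisted EP}, giving $\mathrm{EP}_{\mathbb{H}}^{\theta}(X,X)=2^r\neq 0$) do the work, since they furnish nonradical elements not visible to the untwisted pairing.

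The main obstacle I expect is precisely this reverse inequality in the genuinely twisted case: establishing that the $\theta$-fixed irreducible tempered modules span a space of the correct dimension modulo the radical. This is essentially a representation-theoretic completeness statement for $\mathbb{H}\rtimes\langle\theta\rangle$ at real central character, and the cleanest route is to transport the question, via Proposition \ref{prop ep dirac index}, to the known classification of genuine $\widetilde W$-representations (spin representations) and the nilpotent orbits in $\mathcal N_{\mathrm{sol}}$, invoking the results of \cite{CH} and \cite{Ci}; carefully handling the non-equal-parameter case, where the Kazhdan-Lusztig geometry is replaced by the abstract deformation arguments of \cite{OS2} and Lusztig's reduction \cite{Lu}, is the delicate bookkeeping part of the proof.
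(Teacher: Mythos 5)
Your reduction is the right one and largely coincides with the paper's: Theorem \ref{thm twisted ext} converts $\mathrm{EP}^{\theta}_{\mathbb{H}}$ into the twisted elliptic pairing on $W\rtimes\langle\theta\rangle$, so $\dim\mathrm{Ell}^{\theta}_{\mathbb{H}}$ is the rank of $\langle\,,\,\rangle^{\theta-\mathrm{ellip},V}_W$ on the image of the restriction map $K^{\theta}_{\mathbb{H}}\to\overline{R}_W$. Where you genuinely diverge is in the last step: you propose to finish with Proposition \ref{prop radical desc}(2), which already identifies $\dim\bigl(\overline{R}_W/\mathrm{rad}\langle\,,\,\rangle^{\theta-\mathrm{ellip},V}_W\bigr)$ with the number of elliptic $\theta$-twisted conjugacy classes, whereas the paper instead passes through the Dirac index (Proposition \ref{prop ep dirac index}), cites \cite[Theorem 1.0.1]{Ci} for the dimension of the span of $\{\Res_W X(e,\phi)\otimes S\}$, and then matches that number against a case-by-case count of twisted elliptic classes in types $A_n$, $D_n$ ($n$ odd) and $E_6$. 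Your route, if completed, would arguably be cleaner, since it avoids both the spin cover and the case-by-case enumeration.

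The gap is the surjectivity statement itself, which you assert but do not prove, and the mechanisms you invoke for it are not the right ones. What is needed is that the classes $[\Res_{W\rtimes\langle\theta\rangle}X]$, as $X$ runs over the ($\theta$-stable, extended) irreducible tempered modules with real central character, span $\overline{R}_W$; equivalently, that the matrix of multiplicities of the extensions $U_{\sigma}$ in these restrictions is invertible. This does follow from the unitriangularity of $\Res_W X(e,\phi)$ with respect to the Springer correspondence (the sign ambiguities in choosing extensions only multiply diagonal entries by $\pm1$), but it is not delivered by ``induction on the rank,'' nor by the rigid modules of Corollary \ref{cor induced twisted EP} (those give non-vanishing of particular self-pairings, not a spanning set), nor by Proposition \ref{prop herm} and Corollary \ref{cor hermitian criteria} (those only tell you which $X$ extend to $\mathbb{H}\rtimes\langle\theta\rangle$). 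You also omit two reductions the paper needs before the Kazhdan--Lusztig model is even available: the case $k\equiv 0$ must be treated separately (it is immediate from Theorem \ref{thm twisted ext}), and for $\theta\neq\Id$ the relevant root systems are simply laced, so a nowhere-zero parameter function is constant and may be rescaled to $k\equiv 1$; for $\theta=\Id$ with genuinely unequal parameters neither the Kazhdan--Lusztig model nor \cite{Ci} applies, and the paper simply quotes \cite[Theorem 6.4]{OS2} together with \cite[Proposition 6.4]{So}.
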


\begin{proof}

 For $\theta=\mathrm{Id}$, it follows from \cite[Theorem 6.4]{OS2} (in more detail, one also has to apply \cite[Proposition 6.4]{So}). For $\theta \neq \mathrm{Id}$, if $k_{\alpha}=0$ for all $\alpha \in \Delta$., it is easy by Theorem \ref{thm twisted ext}. Thus we only consider the case that the parameter function $k_{\alpha} \neq 0$ for all $\alpha \in \Delta$. It is well-known that $\Res_WX(e, \phi)$ (for all $e \in \mathcal N$ and $\phi \in \widehat{A(e)}_0$) spans the representation ring of $W$. Then the dimension of the spanning set of $\left\{ \Res_WX(e,\phi) \otimes S : e \in \mathcal N, \phi \in \widehat{A(e)}_0 \right\}$ is equal to the number of twisted ellitpic conjugacy classes. The last statemenet follows from a case-by-case analysis. The dimension of the spanning set follows from \cite[Theorem 1.0.1]{Ci}. The number of $\theta$-twisted elliptic conjugacy classes is as follows:
\[ A_n: \mbox{ number of partitions of $n$ with distinct parts}, \]
\[ D_n\ \mbox{($n$ odd)}: \mbox{ number of partitions of $n$ with odd number of parts},\ E_6:\ 9 . \]
Now by Theorem \ref{thm twisted ext} and Proposition \ref{prop ep dirac index}, we obtain that $\mathrm{dim} \mathrm{Ell}_{\mathbb{H}}^{\theta}$ is equal to the number of $\theta$-twisted elliptic conjugacy classes.
\end{proof}

\subsection{Description for twisted elliptic spaces}

\begin{theorem} \cite{CH}\label{cor twisted elliptic}
Let $\mathbb{H}$ be a graded affine Hecke algebra associated to a crystallographic root system and an equal parameter function $k \equiv 1$. Then
\begin{enumerate}
\item $\mathrm{EP}^{\theta}_{\mathbb{H}}(X(e, \phi), X(e, \phi)) \neq 0$ for any $\phi \in \widehat{A(e)}_0$ if and only if $e \in \mathcal N_{\mathrm{sol}}$.
\item $\mathrm{EP}^{\theta}_{\mathbb{H}}(X(e, \phi), X(e', \phi')) = 0$ if $e$ and $e'$ are not in the same nilpotent orbit.
\item The set $\left\{ [X(e, \phi)]: e \in \mathcal N_{\mathrm{sol}}, \phi \in \widehat{A(e)}_0 \right\}$ spans the $\theta$-twisted elliptic space $\mathrm{Ell}_{\mathbb{H}}^{\theta}$. 
\end{enumerate}
\end{theorem}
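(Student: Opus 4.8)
The plan is to transport all three assertions, through the identifications set up in Sections~\ref{s twisted EP}--\ref{s theta action}, into statements about $\theta$-twisted Dirac indices living in the representation ring of the spin cover $\widetilde{W}$, where they coincide with (or follow at once from) the spin-representation computations of Ciubotaru \cite{Ci} and Ciubotaru-He \cite{CH}; the role of this section is to record that translation. Statement (2) needs none of this. If $e$ and $e'$ lie in distinct nilpotent $G$-orbits, their weighted Dynkin diagrams differ, so the semisimple elements $\tfrac{1}{2}h_e$ and $\tfrac{1}{2}h_{e'}$ of Jacobson-Morozov triples through $e$ and $e'$ are not $W$-conjugate; since the central character of $X(e,\phi)$ is the $W$-orbit of $\tfrac{1}{2}h_e$, the modules $X(e,\phi)$ and $X(e',\phi')$ then have different central characters, and Theorem~\ref{thm char ext 0} gives $\Ext^i_{\mathbb{H}}(X(e,\phi),X(e',\phi'))=0$ for every $i$, whence $\mathrm{EP}_{\mathbb{H}}^{\theta}(X(e,\phi),X(e',\phi'))=0$.

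For (1), Proposition~\ref{prop ep dirac index} rewrites $\mathrm{EP}_{\mathbb{H}}^{\theta}(X(e,\phi),X(e,\phi))$ as $\tfrac{2}{n}\langle I^{\theta}(X(e,\phi)),I^{\theta}(X(e,\phi))\rangle_{\widetilde{W}}$, where $I^{\theta}(X(e,\phi))=(X(e,\phi)^{+}-X(e,\phi)^{-})\otimes S$; since $\langle\,,\,\rangle_{\widetilde{W}}$ is positive definite on genuine virtual $\widetilde{W}$-representations, this quantity is nonzero exactly when $I^{\theta}(X(e,\phi))\neq 0$. Thus (1) amounts to the statement that $I^{\theta}(X(e,\phi))\neq 0$ for all $\phi\in\widehat{A(e)}_0$ if and only if $e\in\mathcal{N}_{\mathrm{sol}}$, together with the vanishing $I^{\theta}(X(e,\phi))=0$ for every $\phi$ when $e\notin\mathcal{N}_{\mathrm{sol}}$. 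This is precisely the characterization of $\mathcal{N}_{\mathrm{sol}}$ by non-vanishing of the (twisted) Dirac index of tempered modules --- equivalently, by non-vanishing of Dirac cohomology in the sense of \cite{BCT} --- which we import from \cite{CH} (building on \cite{Ci} and \cite{BCT}) and do not reprove.

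For (3), combine this with the semi-positivity of $\mathrm{EP}_{\mathbb{H}}^{\theta}$ from Corollary~\ref{cor semipositive ep}: by Cauchy-Schwarz, $\mathrm{EP}_{\mathbb{H}}^{\theta}(X,X)=0$ forces $X\in\mathrm{rad}(\mathrm{EP}_{\mathbb{H}}^{\theta})$, so (1) already shows that every $X(e,\phi)$ with $e\notin\mathcal{N}_{\mathrm{sol}}$ maps to $0$ in $\mathrm{Ell}_{\mathbb{H}}^{\theta}$. Hence it suffices to check that the classes $[X(e,\phi)]$ with $e\in\mathcal{N}_{\mathrm{sol}}$ span a subspace of the full dimension $\dim\mathrm{Ell}_{\mathbb{H}}^{\theta}$, which by Theorem~\ref{thm dimension} equals the number $N$ of $\theta$-twisted elliptic conjugacy classes. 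By Proposition~\ref{prop ep dirac index} the Gram matrix of these classes for $\mathrm{EP}_{\mathbb{H}}^{\theta}$ equals $\tfrac{2}{n}$ times the Gram matrix of the Dirac indices $\{I^{\theta}(X(e,\phi)):e\in\mathcal{N}_{\mathrm{sol}}\}$ for the positive-definite pairing $\langle\,,\,\rangle_{\widetilde{W}}$; the rank of the latter is $\dim\Span\{I^{\theta}(X(e,\phi)):e\in\mathcal{N}_{\mathrm{sol}}\}$, and, since $I^{\theta}$ annihilates the $X(e,\phi)$ with $e\notin\mathcal{N}_{\mathrm{sol}}$, this equals $\dim\Span\{I^{\theta}(X(e,\phi)):e\in\mathcal{N}\}$, which is $N$ --- this last equality is the spin-Springer count of \cite[Theorem~1.0.1]{Ci} that already underlies Theorem~\ref{thm dimension}. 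So the Gram matrix of $\{[X(e,\phi)]:e\in\mathcal{N}_{\mathrm{sol}}\}$ has rank $N=\dim\mathrm{Ell}_{\mathbb{H}}^{\theta}$, and because $\mathrm{EP}_{\mathbb{H}}^{\theta}$ is an inner product on $\mathrm{Ell}_{\mathbb{H}}^{\theta}$ this is only possible if these classes span it.

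The real work, and the main obstacle, is entirely external: the equivalence between non-vanishing of $I^{\theta}(X(e,\phi))$ for all $\phi$ and the condition $e\in\mathcal{N}_{\mathrm{sol}}$ behind (1), and the dimension identity $\dim\Span\{I^{\theta}(X(e,\phi))\}=N$ behind (3), are theorems of \cite{Ci} and \cite{CH}. What the argument above contributes is only the passage --- via Theorem~\ref{thm twisted ext} and Proposition~\ref{prop ep dirac index} --- from those spin-side facts to statements about $\mathrm{EP}_{\mathbb{H}}^{\theta}$ and $\mathrm{Ell}_{\mathbb{H}}^{\theta}$, plus the routine check that the parity operator $\theta t_{w_0}$ and the index $I^{\theta}$ used here match the grading and the Dirac index used in those references. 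Statement (2), in contrast, is self-contained, using only that a nilpotent orbit is determined by its weighted Dynkin diagram together with Theorem~\ref{thm char ext 0}.
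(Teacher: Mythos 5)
Your argument is correct, and each of its three parts reduces to the same external inputs as the paper's proof, but the routes differ in detail. For (2) you use the central-character argument (distinct orbits give non-$W$-conjugate $\tfrac{1}{2}h_e$, then Theorem~\ref{thm char ext 0}); this is exactly the alternative the paper itself records as a remark, while its primary proof instead quotes Theorem~\ref{thm twisted ext} together with the orthogonality statements of \cite{CH}. For (1) the two proofs are essentially the same: the paper passes through Theorem~\ref{thm twisted ext} to the $\theta$-twisted elliptic pairing of $W$ and cites \cite[Theorems 1.1, 1.3]{CH}, whereas you pass through Proposition~\ref{prop ep dirac index} to the Dirac index; these are equivalent formulations of the same imported fact, and you are right (and slightly more careful than the literal statement of (1)) to note that what is actually needed downstream is the stronger vanishing $I^{\theta}(X(e,\phi))=0$ for \emph{every} $\phi$ when $e\notin\mathcal N_{\mathrm{sol}}$. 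For (3) the paper argues directly that the classes $[X(e,\phi)]$ over \emph{all} nilpotents span $\mathrm{Ell}_{\mathbb{H}}^{\theta}$ (since they span $R(W)$ and the pairing factors through $\Res_W$), and then discards the non-solvable ones; you instead run a dimension count, showing the Gram matrix of the solvable classes has rank equal to $\dim\mathrm{Ell}_{\mathbb{H}}^{\theta}$ via Theorem~\ref{thm dimension} and \cite[Theorem~1.0.1]{Ci}. Both reductions rest on the same spin-Springer count from \cite{Ci}, so neither buys more generality; the paper's spanning argument is marginally shorter, while yours makes the non-degeneracy bookkeeping explicit. One small point worth adding in either version: to apply $\mathrm{EP}^{\theta}_{\mathbb{H}}$ to $X(e,\phi)$ at all one needs $X(e,\phi)^{\theta}\cong X(e,\phi)$, which follows from temperedness and Proposition~\ref{prop herm}/Corollary~\ref{cor hermitian criteria}; you leave this implicit.
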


\begin{proof}

For (1) and (2), this is a direct consequence of Theorem \ref{thm twisted ext} and results in \cite[Theorem 1.1, Theorem 1.3]{CH}. For (3), it follows from (1) and the fact that $X(e, \phi)$ (for all nilpotent element $e$ and all $\phi \in \widehat{A(e)}_0$) span the entire representation ring of $W$. From (1), we know that for $e \notin \mathcal N_{\mathrm{sol}}$, $X(e, \phi)$ has a zero image in $\mathrm{Ell}_{\mathbb{H}}^{\theta}$. Hence, the set in (3) spans the space $\mathrm{Ell}_{\mathbb{H}}^{\theta}$. 

We remark that for (2), one can also prove directly by considering the central characters of those modules. In more detail, the central character of $X(e, \phi)$ is $\frac{1}{2}h_e$, where $h_e \in V^{\vee}$ is the semisimple element in the $\mathfrak{sl}_2$-triple $\left\{ e, h_e, f \right\}$. If two nilpotent elements $e$ and $e'$ are not in the same nilpotent orbit, then the two elements $h_e$ and $h_{e'}$ are not in the same $W$-orbit in $V^{\vee}$ (\cite[Theorem 2.2.4]{CM}, \cite[Theorem 3.2.14]{CM}). 

In the case of type $A_n$, all solvable tempered modules are rigid (see the proof of Proposition \ref{prop sol rigid} below). Thus for type $A_n$, (1) and (3) can also be obtained by Corollary \ref{cor induced twisted EP} and a simple argument using Theorem \ref{thm dimension} and using (2).





\end{proof}
\begin{remark}
For arbitrary parameters, we expect some similar results as Theorem \ref{cor twisted elliptic} can be obtained by considering tempered modules of solvable central characters. Here solvable central characters are in the sense of \cite{Ch1}. 
\end{remark}

\subsection{Relation between rigid modules and solvable tempered modules}
We extend the notation of $X(e, \phi)$ to any $A(e)$-representation $\phi$: define 
\[ X(e, \phi)=\bigoplus_{\phi' \in \widehat{A(e)}_0} m_{ \phi'}\ X(e,\phi'), \]
where $m_{ \phi'}=\dim \Hom_{A(e)}(\phi', \phi)$.
 
\begin{proposition} \label{prop sol rigid}
Let $\mathbb{H}$ be of type $A_n$, $D_n$ ($n$ odd) and $E_6$. Let $X$ be a parabolically induced tempered module with a real central character. Then $X$ is solvable and irreducible if and only if $X$ is rigid.
\end{proposition}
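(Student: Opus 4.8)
The plan is to prove both directions by combining the classification of rigid data (Remark \ref{rmk classify rigid}), the Kazhdan--Lusztig induction formula (Lemma \ref{lem induce kl model}), and the description of $\mathcal{N}_{\mathrm{sol}}$ together with the component groups $A(e)$ for the types in question. I would first dispose of the ``rigid $\Rightarrow$ solvable and irreducible'' direction. Irreducibility is immediate from Lemma \ref{lem rigid irr}, so only solvability is at stake. If $X = X(J,U)$ is rigid, then $U$ is a discrete series of $\overline{\mathbb{H}}_J$, which in the Kazhdan--Lusztig model corresponds to a distinguished nilpotent $e$ in the Levi $\l_J$ (discrete series $\Leftrightarrow$ distinguished nilpotent). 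By Lemma \ref{lem induce kl model}, $X = X(e, \Ind_{A_L(e)}^{A(e)}\phi)$ where now $e$ is viewed as a nilpotent in $\g$. The point is that the rigidity condition $\card\{w\in W : w(J)=J\}=1$, read through Remark \ref{rmk classify rigid}, forces $e$ (as a nilpotent of $\g$, induced from a distinguished one in $\l_J$) to land in $\mathcal{N}_{\mathrm{sol}}$: in type $A_n$ every nilpotent has solvable (abelian, even) centralizer once it corresponds to a partition with distinct parts via this construction; in $D_n$ ($n$ odd) and $E_6$ one checks case-by-case against the explicit list $D_{n-1}$-type, $D_5$, $A_4\times A_1$, etc., that the resulting nilpotent orbit is one of the finitely many solvable ones. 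This is the first place where a genuine (but short) case-by-case verification is needed.

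For the converse, ``solvable and irreducible $\Rightarrow$ rigid'', suppose $X = X(J,U)$ with $(J,U,0)\in\Xi$, $X$ irreducible, and $X$ solvable, i.e. $X = X(e,\phi)$ with $e\in\mathcal{N}_{\mathrm{sol}}$. I would argue contrapositively: if $J\notin\mathcal{J}_{\mathrm{rig}}$, I want to show $X$ is either reducible or not solvable. There are two sub-cases. First, if $U$ is not a discrete series of $\overline{\mathbb{H}}_J$ — but this cannot happen, since by Notation \ref{not induced modules} the data in $\Xi$ already requires $U$ to be a discrete series; so $U$ is always a discrete series and corresponds to a distinguished nilpotent in $\l_J$. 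Second, then, the failure of rigidity means $\card\{w\in W : w(J)=J\}>1$, i.e. there is a nontrivial $w$ permuting the simple roots of $J$. In that situation I would show, again via the classification in Remark \ref{rmk classify rigid} and the induction formula, that either the induced component-group representation $\Ind_{A_L(e)}^{A(e)}\phi$ is reducible (forcing $X$ reducible), or the nilpotent orbit of $e$ in $\g$ is not solvable (its centralizer in $\g$ is non-solvable), using the standard tables of $\mathcal{N}_{\mathrm{sol}}$ for $A_n$, $D_n$ ($n$ odd), $E_6$. Concretely: in type $A$, a partition with a repeated part gives a Levi $A_m\times A_m\times\cdots$ and the induced nilpotent has a non-abelian — in fact non-solvable for large enough repetition — reductive centralizer; one checks the threshold cases directly. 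In $D_n$ ($n$ odd) and $E_6$ one runs through the (short) list of proper $J$ and sees that any $J$ outside the rigid list either is $W$-conjugate into a smaller parabolic in a way that breaks irreducibility, or induces to a non-solvable orbit.

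The main obstacle, and the step I expect to absorb most of the work, is the converse direction's bookkeeping: matching up, for each of the three families, the combinatorics of subsets $J\subset\Delta$ with $\card\{w : w(J)=J\}>1$ against the list of nilpotent orbits with non-solvable centralizer, and against reducibility of $\Ind_{A_L(e)}^{A(e)}\phi$. For type $A_n$ this is clean because $A(e)$ is trivial, so reducibility of the induced representation is not an issue and everything reduces to: ``partition has a repeated part $\Rightarrow$ centralizer in $\sl_{n+1}$ is not solvable,'' which is elementary. For $D_n$ ($n$ odd) and $E_6$ the component groups $A(e)$ are nontrivial, so I must track both the orbit and $\phi$; here I would lean on the explicit data already invoked in Remark \ref{rmk classify rigid} and in the proof of Theorem \ref{cor twisted elliptic}, and on Lusztig's tables. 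A subtle point worth flagging: one must be careful that ``$X$ parabolically induced'' is taken up to $W$-conjugacy of the inducing datum, so that the rigidity test $\card\{w : w(J)=J\}=1$ is well-posed — this is exactly the normalization built into $\mathcal{J}_{\mathrm{rig}}$, and Lemma \ref{lem theta unstable} plus Remark \ref{rmk classify rigid} guarantee there is no ambiguity. Once the tables are set up, each direction is a finite check.
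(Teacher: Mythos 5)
Your proposal follows essentially the same route as the paper's proof: a case-by-case matching of the rigid subsets $J$ from Remark \ref{rmk classify rigid} against $\mathcal{N}_{\mathrm{sol}}$ via the Kazhdan--Lusztig/Bala--Carter correspondence, with Lemma \ref{lem rigid irr} supplying irreducibility, Lemma \ref{lem induce kl model} and the component groups $A(e)$ controlling reducibility, and only an organizational difference in the converse (you iterate over non-rigid $J$, while the paper iterates over solvable orbits and shows the leftover ones, e.g.\ $D_4(a_1)$ in $E_6$ where $\Ind_{A_L(e)}^{A(e)}\phi'$ cannot be a single irreducible of $S_3$, are not parabolically induced at all). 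Two small factual corrections to your type-$A$ discussion: the full centralizer of a nilpotent with distinct Jordan block sizes is solvable but generally not abelian (only its reductive part is a torus), and a single repeated part already puts an $\mathfrak{sl}_2$ inside the reductive centralizer, so there is no ``large enough repetition'' threshold to check.
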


\begin{proof}

This is a case-by-case analysis. To check which nilpotent orbits lie inside $\mathcal N_{\mathrm{sol}}$, one may use the description of the centralizer of a nilpotent element in \cite[Chapter 13]{Ca} (also see \cite{Ci}) (one may also verify by using the combinatorial criteria given in \cite[Definition 1.1]{Ch1}). 

For type $A_n$, a nilpotent element is in $\mathcal N_{\mathrm{sol}}$ if and only if the Jordan canonical form of $e$ has blocks of distinct sizes. The Bala-Carter symbols for nilpotent elements in $\mathcal N_{\mathrm{sol}}$ coincide with the list for type $A_n$ (Remark \ref{rmk classify rigid}). Furthermore, for type $A_n$, all $X(e,\phi)$ for any $\phi \in \widehat{A(e)}_0$ are irreducible and hence the statement for type $A_n$ is clear.

For type $E_6$, a nilpotent element is in $\mathcal N_{\mathrm{sol}}$ if and only if the Bala-Carter symbol for the nilpotent element is of type $E_6$, $E_6(a_1)$, $E_6(a_3)$, $D_5$, $D_5(a_1)$, $A_4+A_1$ and $D_4(a_1)$. The only type that does not appear in the classification of rigid modules is type $D_4(a_1)$. By Lemma \ref{lem rigid irr}, we only have to verify in the case that any irreducible tempered module associated to $e$ of type $D_4(a_1)$ is not a parabolically induced module. Note that the corresponding component group $A(e)$ is $S_3$ and all representations of $A(e)$ appear in the Springer correspondence.

Let $e$ be of type $D_4(a_1)$ and $\phi \in \widehat{A(e)}_0$. Suppose $X(e, \phi)=\Ind_{\mathbb{H}_J}^{\mathbb{H}} X_J(e,\phi')$ for some proper $J \subset \Delta$ and some $A_L(e)$-representation $\phi'$. Here we use the notation in Lemma \ref{lem induce kl model}. Note that $J$ can only be of type $D_5$ or $D_4(a_1)$ and the component groups $A_L(e)$ of $e$ for the Levi subgroups corresponding to $D_5$ and $D_4$ are  $S_2$, and $1$ respectively, and hence $\Ind_{A_{L}(e)}^{A(e)} \phi'$ is not a single representation of $S_3$. This contradicts the irreducibility of $X$. Hence $X(e, \phi)$ is not parabolically induced from some discrete series.

We now consider the case of $D_n$ ($n$ odd). In this case, a nilpotent element in $\mathfrak{so}(2n)$ is in $\mathcal N_{\mathrm{sol}}$ if and only if the partition of $e$ contains only odd parts and each odd part has multiplicity at most 2. Then a similar analysis as in the case of $E_6$ will yield the result. In the analysis, we need the following description of the component group of (arbitrary) nilpotent orbits for $\mathfrak{so}(2m)$ for both $m$ odd and even (see for example \cite[Chapter 6]{CM}): $A(e)=(\mathbb{Z}/2\mathbb{Z})^{\mathrm{max}(0,a-1)}$ if all odd parts have even multiplicity, $A(e)=(\mathbb{Z}/2\mathbb{Z})^{\mathrm{max}(0, a-2)}$ otherwise, where $a$ is the number of distinct odd parts in the partition of $e$. We also need the component group of any nilpotent element in $\mathfrak{sl}(p)$ is trivial. Moreover, we also need the fact that for $e \in \mathcal N_{\mathrm{sol}}$, all the representations of $A(e)$ appear in the Springer correspondence. 


\end{proof}

\begin{remark} \label{rmk non ellip rig}
In type $A_n$ and $E_6$, solvable modules which are not elliptic are indeed rigid. However, in type $D_n$ ($n$ odd) with $n \geq 9$, if $e$ is a nilpotent element corresponding to a partition satisfying the following three conditions:
\begin{enumerate}
\item $e$ has no even parts, and
\item $e$ has all odd parts with multiplicity $2$, and 
\item the number of distinct odd parts of $e$ is at least 3, 
\end{enumerate}
then $X(e, \phi)$ is solvable, but neither rigid nor elliptic. 

\end{remark}

\begin{remark} \label{rmk general formulation}
It is also possible to extend the condition of rigid modules to all solvable modules. We expect that an irreducible tempered module $X(e, \phi)$ with a real central character is solvable if and only if $X(e, \phi)$ is a submodule of a parabolically induced module $X(J, U)$ for some $(J, U, 0) \in \Xi$ such that
\[  \card \left\{ w \in W : w(J)=J,\ w(U)=U \right\}
\]
is equal to the sum of the square of the multiplicity of each irreducible submodule in $X(J, U)$. 
\end{remark}

\subsection{Description of the radical of $\mathrm{EP}^{\theta}_{\mathbb{H}}$}
We end this paper with the following description of the radical: 
\begin{conjecture}
The radical $\mathrm{rad}(\mathrm{EP}^{\theta}_{\mathbb{H}})$ in $K_{\mathbb{H}}^{\theta}$ is equal to the image of
\begin{equation*}\label{eqn radical} \bigoplus_{J \in \mathcal J^{\theta} }\Ind_{\mathbb{H_J} \rtimes \langle \theta \rangle}^{\mathbb{H} \rtimes \langle \theta \rangle}    K_{\mathbb{C}}(\mathbb{H}_J \rtimes \langle \theta \rangle) .
\end{equation*}
\end{conjecture}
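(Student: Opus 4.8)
The plan is to reduce the conjecture, via Theorem \ref{thm twisted ext} and Proposition \ref{prop radical desc}, to a statement about the kernel of restriction to $W$, and then to settle that statement using the twisted analogue (for the extended algebra $\mathbb{H}\rtimes\langle\theta\rangle$) of the theorem of Opdam--Solleveld \cite[Theorem 6.4]{OS2}. Write $\mathrm{Im}^{\mathbb{H}}_{\theta}$ for the proposed radical, i.e.\ the image of $\bigoplus_{J\in\mathcal J^{\theta}}\Ind_{\mathbb{H}_J\rtimes\langle\theta\rangle}^{\mathbb{H}\rtimes\langle\theta\rangle}K_{\mathbb{C}}(\mathbb{H}_J\rtimes\langle\theta\rangle)$ in $K_{\mathbb{H}}^{\theta}$. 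By Theorem \ref{thm twisted ext}, $\mathrm{rad}(\mathrm{EP}^{\theta}_{\mathbb{H}})$ in $K_{\mathbb{H}}^{\theta}$ is the preimage under $\Res_{W\rtimes\langle\theta\rangle}\colon K_{\mathbb{H}}^{\theta}\to\overline{R}_W$ of $\mathrm{rad}\,\langle , \rangle^{\theta-\mathrm{ellip},V}_W$, which by Proposition \ref{prop radical desc}(1) is the image of $\bigoplus_{J\in\mathcal J^{\theta}}\Ind_{W_J\rtimes\langle\theta\rangle}^{W\rtimes\langle\theta\rangle}R(W_J\rtimes\langle\theta\rangle)$. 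Since parabolic induction commutes with restriction to $W\rtimes\langle\theta\rangle$ (from the PBW decomposition of $\mathbb{H}$, as in the proof of Proposition \ref{thm euler poincare}) and restriction to $W_J\rtimes\langle\theta\rangle$ is surjective on Grothendieck groups (tempered modules span the representation rings, cf.\ the Kazhdan--Lusztig classification), $\Res_{W\rtimes\langle\theta\rangle}$ carries $\mathrm{Im}^{\mathbb{H}}_{\theta}$ onto that image; hence $\mathrm{rad}(\mathrm{EP}^{\theta}_{\mathbb{H}})=\mathrm{Im}^{\mathbb{H}}_{\theta}+\ker(\Res_{W\rtimes\langle\theta\rangle})$ in $K_{\mathbb{H}}^{\theta}$, and the conjecture is equivalent to
\[ \ker\!\left(\Res_{W\rtimes\langle\theta\rangle}\colon K_{\mathbb{H}}^{\theta}\to\overline{R}_W\right)\subseteq\mathrm{Im}^{\mathbb{H}}_{\theta}. \]
(The reverse inclusion $\mathrm{Im}^{\mathbb{H}}_{\theta}\subseteq\mathrm{rad}(\mathrm{EP}^{\theta}_{\mathbb{H}})$ is immediate from Theorem \ref{thm twisted ext}, Proposition \ref{prop radical desc}(1) and Lemma \ref{lem not elliptic}.)

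To prove the displayed inclusion I would work in the full Grothendieck group $K:=K_{\mathbb{C}}(\mathrm{Mod}_{\mathrm{fin}}(\mathbb{H}\rtimes\langle\theta\rangle))$ and use the ordinary Euler--Poincar\'e pairing of the extended algebra, which by the proof of Corollary \ref{cor extended} (combining Theorem \ref{thm twisted ext} and Proposition \ref{thm euler poincare}) satisfies $\mathrm{EP}_{\mathbb{H}\rtimes\langle\theta\rangle}(X,Y)=\langle\Res_{W\rtimes\langle\theta\rangle}X,\Res_{W\rtimes\langle\theta\rangle}Y\rangle^{\mathrm{ellip},V}_{W\rtimes\langle\theta\rangle}$. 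Lift an element of $\ker(\Res_{W\rtimes\langle\theta\rangle})$ to $\xi\in K$; then $\Res_{W\rtimes\langle\theta\rangle}\xi\in R'$, say $\Res_{W\rtimes\langle\theta\rangle}\xi=\sum_k b_k(U_k\oplus\overline{U_k})$. Choosing for each $k$ a virtual $\mathbb{H}$-module $M^{(k)}$ with $\Res_W M^{(k)}=\Res_W U_k$ (possible since the $X(e,\phi)$ span $R(W)$), set $\eta:=\sum_k b_k\,\Ind_{\mathbb{H}}^{\mathbb{H}\rtimes\langle\theta\rangle}M^{(k)}$. A character computation on $W$ and on $W\theta$ gives $\Res_{W\rtimes\langle\theta\rangle}\eta=\Res_{W\rtimes\langle\theta\rangle}\xi$, while $\Ind_{\mathbb{H}}^{\mathbb{H}\rtimes\langle\theta\rangle}N$ has class in $K^1\oplus K^2$ for every $\mathbb{H}$-module $N$ (on an irreducible $N$ it is an $\mathbb{H}\rtimes\langle\theta\rangle$-irreducible with reducible $\mathbb{H}$-restriction when $N\not\cong N^{\theta}$, and of the form $\widetilde{N}\oplus\overline{\widetilde{N}}$ when $N\cong N^{\theta}$), so $\eta$ vanishes in $K_{\mathbb{H}}^{\theta}$. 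Consequently $\Res_{W\rtimes\langle\theta\rangle}(\xi-\eta)=0$, hence $\mathrm{EP}_{\mathbb{H}\rtimes\langle\theta\rangle}(\xi-\eta,\cdot)=0$, i.e.\ $\xi-\eta\in\mathrm{rad}(\mathrm{EP}_{\mathbb{H}\rtimes\langle\theta\rangle})$ in $K$.

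The crucial remaining input is the extended analogue of \cite[Theorem 6.4]{OS2}: that $\mathrm{rad}(\mathrm{EP}_{\mathbb{H}\rtimes\langle\theta\rangle})$ in $K$ is the span of all virtual modules parabolically induced from proper standard parabolic subalgebras $\mathbb{H}_J\rtimes\Gamma_J$, where $J\subsetneq\Delta$ and $\Gamma_J=\{\gamma\in\langle\theta\rangle:\gamma(J)=J\}$. I expect this to follow from \cite{OS2} applied to the extended affine Hecke algebra attached to the same root datum, together with Lusztig's reduction theorems \cite{Lu} --- the same transfer already used for Theorem \ref{thm ext discrete} and Theorem \ref{thm dimension} --- running their argument in the crossed-product setting if it is not literally covered there. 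This is the one genuinely hard step, and it is the main obstacle. Granting it, $\xi-\eta$ is a combination of such parabolic inductions: for $J\in\mathcal J^{\theta}$ (so $\Gamma_J=\langle\theta\rangle$) these lie in $\mathrm{Im}^{\mathbb{H}}_{\theta}$, and for $J\notin\mathcal J^{\theta}$ (so $\Gamma_J=1$) one has $\Ind_{\mathbb{H}_J}^{\mathbb{H}\rtimes\langle\theta\rangle}=\Ind_{\mathbb{H}}^{\mathbb{H}\rtimes\langle\theta\rangle}\circ\Ind_{\mathbb{H}_J}^{\mathbb{H}}$, so these, like $\eta$, lie in $K^1\oplus K^2$ and die in $K_{\mathbb{H}}^{\theta}$. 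Hence the image of $\xi=(\xi-\eta)+\eta$ in $K_{\mathbb{H}}^{\theta}$ lies in $\mathrm{Im}^{\mathbb{H}}_{\theta}$, which gives the displayed inclusion and hence the conjecture.

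One could instead repackage Step~2 through positivity: by Corollary \ref{cor extended}, $\mathrm{EP}_{\mathbb{H}\rtimes\langle\theta\rangle}=\tfrac12\mathrm{EP}_{\mathbb{H}}+\tfrac12\mathrm{EP}^{\theta}_{\mathbb{H}}$ with both summands positive semi-definite (clear from the trace formulas, since $\mathrm{det}_V(1-g)\geq0$ for $g$ acting on the real form of $V$), so that $\mathrm{rad}(\mathrm{EP}_{\mathbb{H}\rtimes\langle\theta\rangle})=\mathrm{rad}(\mathrm{EP}_{\mathbb{H}})\cap\mathrm{rad}(\mathrm{EP}^{\theta}_{\mathbb{H}})$; however $\mathrm{EP}_{\mathbb{H}}$ and $\mathrm{EP}^{\theta}_{\mathbb{H}}$ behave differently on $K^1\oplus K^2$, which makes the direct kernel argument above cleaner. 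In any case, every step except the extended-algebra radical theorem is formal, so that theorem is where the real work lies.
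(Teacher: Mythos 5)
The statement you are proving is stated in the paper only as a conjecture; the paper gives no proof, remarking merely that the case $\theta=\Id$ follows from \cite[Theorem 6.4]{OS2}, that the same theorem ``or its proof'' should settle the general case, and that for nontrivial $\theta$ a direct verification is feasible in types $A_n$ and $E_6$ but not obviously in type $D_n$ ($n$ odd). Your proposal follows exactly this suggested route, and the formal reductions you carry out are sound: the identification of $\mathrm{rad}(\mathrm{EP}^{\theta}_{\mathbb{H}})$ with the preimage of $\mathrm{rad}\langle\,,\,\rangle^{\theta-\mathrm{ellip},V}_W$ (granting surjectivity of $\Res_{W\rtimes\langle\theta\rangle}$ onto $\overline{R}_W$, which you should at least flag --- it follows from the spanning statements used in Theorem \ref{thm dimension} but is not free for arbitrary parameters), the compatibility of parabolic induction with restriction, the observation that $\Ind_{\mathbb{H}}^{\mathbb{H}\rtimes\langle\theta\rangle}N$ always lands in $K^1\oplus K^2$, and the equivalence of the conjecture with $\ker(\Res_{W\rtimes\langle\theta\rangle})\subseteq\mathrm{Im}^{\mathbb{H}}_{\theta}$ are all correct.

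However, the proposal is not a proof, and you say so yourself: the entire content is deferred to the ``extended analogue of \cite[Theorem 6.4]{OS2}'', namely that $\mathrm{rad}(\mathrm{EP}_{\mathbb{H}\rtimes\langle\theta\rangle})$ on $K_{\mathbb{C}}(\mathrm{Mod}_{\mathrm{fin}}(\mathbb{H}\rtimes\langle\theta\rangle))$ is spanned by modules parabolically induced from the proper subalgebras $\mathbb{H}_J\rtimes\Gamma_J$. That statement is not in \cite{OS2} as written (their result concerns the affine Hecke algebra itself, not a crossed product by a diagram/longest-element automorphism), and transferring their analytic argument --- which rests on the Schwartz completion, the Plancherel decomposition, and the Langlands classification --- to the crossed-product setting is precisely the ``more effort'' the paper says is needed for type $D_n$ ($n$ odd). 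So the gap is not a formal one but the substantive one: you have reorganized the conjecture into an equivalent unproven assertion rather than resolved it. To make progress you would need either to actually run the Opdam--Solleveld argument for $\mathbb{H}\rtimes\langle\theta\rangle$ (or for the corresponding extended affine Hecke algebra, then descend via Lusztig's reduction), or to bypass it by an explicit spanning argument in each type, e.g.\ by showing that the classes of Theorem \ref{cor twisted elliptic}(3) together with the proposed radical exhaust $K_{\mathbb{H}}^{\theta}$ --- which is essentially the dimension count of Theorem \ref{thm dimension} plus an independence statement that still has to be checked in type $D_n$.
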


When $\theta=\Id$, it is known to be true from \cite[Theorem 6.4]{OS2}. It is also possible to apply \cite[Theorem 6.4]{OS2} or its proof for the conjecture in general. For non-trivial $\theta$, it is not too hard to verify directly for type $A_n$ and $E_6$, but it seems more effort has to be done for type $D_n$ ($n$ odd).

\end{document}